\newtheorem{theorem}{Theorem}[section]
\newtheorem{lemma}[theorem]{Lemma}
\newtheorem{proposition}[theorem]{Proposition}
\newtheorem{corollary}[theorem]{Corollary}
\newcommand{\R}{\mathbb{R}}
\numberwithin{equation}{section}
\numberwithin{figure}{section}
\theoremstyle{remark}
\numberwithin{equation}{section}
\newcommand\bes{\begin{eqnarray}}
\newcommand\ees{\end{eqnarray}}
\newcommand\bess{\begin{eqnarray*}}
	\newcommand\eess{\end{eqnarray*}}
\newcommand{\lf}{\left}
\newcommand{\rr}{\right}
\newcommand{\dd}{\displaystyle}
\newcommand{\td}{\tilde}
\newcommand{\wtd}{\widetilde}
\newcommand\yy{\infty}
\newcommand{\ol}{\overline}
\newcommand{\rd}{{\rm d}}
\newcommand{\prt}{\partial_t}
\begin{document}
	
	\title[Biological propagation with nonlocal diffusion and free boundary]{\underline{\Large Lecture Notes:} \medskip\\
	Biological propagation via reaction-diffusion equations with nonlocal diffusion and free boundary} 
	\author[Y. Du]{{\bf Yihong Du}$^\dag$\medskip\\
	University of New England, Australia}
	\thanks{$^\dag$School of Science and Technology, University of New England, Armidale, NSW 2351, Australia}
	\thanks{\mbox{\bf \ Email:} ydu@une.edu.au}
	\date{\today}
	
	\begin{abstract}
	These notes are based on the  lectures given in a mini-course at VIASM (Vietnam Institute for Advanced Study in Mathematics) 2025 Summer School. They give a brief account of the theory (with detailed proofs) for propagation governed by a nonlocal reaction-diffusion model with free boundaries in one space dimension. The main part is concerned with a KPP reaction term, though the basic results on the existence and uniqueness of solutions as well as on the comparison principles are for more general situations. The contents are mostly taken from published recent works of the author with several collaborators, where the kernel function was assumed to be symmetric: $J(x)=J(-x)$. When $J(x)$ is not symmetric, significant differences may arise in the dynamics of the model, as shown in several preprints quoted in the references at the end of these notes, but many of the existing techniques can be easily extended to cover the ``weakly non-symmetric case", and this is done here with all the necessary details.
	\end{abstract}
	\maketitle
	
	\tableofcontents
	
	\section{Maximum principle and comparison results}

\subsection{A maximum principle}
Suppose the kernel function $J_i:\R\to\R$ $(i=1,2,... n)$ satisfy 
\begin{itemize}
	\item[{\rm $(\mathbf{J})$}:]   $J_i\in C(\R)\cap L^\infty(\R)$ is nonnegative, $J_i(0)>0$,   $\displaystyle\int_\R J_i(x) \rd x=1$,\; $ i=1,2,... n$.
\end{itemize}
The associated nonlocal diffusion operator $\mathcal L_i$ is defined by
\begin{align}\label{Li}
\mathcal{L}_i[u](t,x)=\int_{\R}J_i(x-y)u(t,y)\rd y-u(t,x), \  \ \ \ \; i=1,2,..., n.
\end{align}

Let $T>0$ and $\xi\in C([0,T])$. We define the set of {\color{red}strict local semi-maximum points} of $\xi$ by
\[
\Sigma_{max}^\xi:=\{t\in (0, T]: \mbox{ There exists $\epsilon>0$ such that $\xi(t)>\xi(s)$ for $s\in [t-\epsilon, t)$}\}.
\]
Similarly the set of {\color{red}strict local semi-minimum points} of $\xi$ is given by
\[
\Sigma_{min}^\xi:=\{t\in (0, T]: \mbox{ There exists $\epsilon>0$ such that $\xi(t)<\xi(s)$ for $s\in [t-\epsilon, t)$}\}.
\]
If $\xi$ is strictly increasing, then clearly $\Sigma_{max}^\xi=(0, T]$, if $\xi$ is nondecreasing, then $ \Sigma_{min}^\xi=\emptyset$. 
In particular, if $\xi$ is a constant function, then  $\Sigma_{max}^\xi=\Sigma_{min}^\xi=\emptyset$.

\begin{theorem}[Maximum Principle]\label{lemma2.1} Let $T, h_0>0$, $g, h\in C([0,T])$ satisfy $g(t)<h(t)$ and $-g (0)=h(0)=h_0$. Denote $D_T:=\{(t,x): t\in (0, T],\; g(t)<x<h(t)\}$ and suppose that for $i,j\in \{1,2, ..., n\}$,  $\phi_i$, $\partial_t\phi_i\in C(\ol D_T)$, $d_i$, $c_{ij}\in L^\yy(D_T)$, ${\color{red}d_i\geq  0}$,  and 
	\begin{equation}\label{mp-upper}
	\begin{cases}
	\dd \partial_t\phi_i\geq d_i\mathcal{L}_i[\phi_i]+\sum_{j=1}^n c_{ij}\phi_j, & (t,x)\in D_T,\; \\[-2mm]
	\phi_i(t,x)=0, & t\in (0, T], \ x\not\in [g(t), h(t)],\\
\phi_i(t,g(t))\geq 0,& t\in\Sigma_{min}^g,\\
 \phi_i(t,h(t))\geq 0,  & t \in \Sigma_{max}^h,\; \\
\phi_i(0,x)\geq 0, &x\in [-h_0,h_0],\; 
	\end{cases}
	\end{equation}
where $\mathcal{L}_i$ is given by \eqref{Li} with every $J_i$ $(i=1,..., n)$ satisfying {\bf (J)}. Then the following conclusions hold:
\begin{itemize}
\item[\rm (i)] If $c_{ij}\geq 0$ on $ D_T$ for $i, j\in \{1,..., n\}$ and {\color{red}$i\not=j$}, then $\phi_i\geq 0$ on $\ol D_T$ for $i\in\{1,..., n\}$.
\item[\rm (ii)] If in addition $d_{i_0}>0$ in $D_T$, $\phi_{i_0}(0,x)\not\equiv0$ in $[-h_0,h_0]$, then $\phi_{i_0}> 0$ in $D_T$.
\end{itemize}
\end{theorem}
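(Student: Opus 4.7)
The plan is to argue by contradiction via the standard exponential shift. Set $\psi_i := e^{-Kt}\phi_i$ for a large constant $K>0$; a direct computation turns the inequality for $\phi_i$ into
\begin{equation*}
\partial_t \psi_i \;\geq\; d_i\mathcal{L}_i[\psi_i] + (c_{ii}-K)\psi_i + \sum_{j\neq i}c_{ij}\psi_j \quad\text{on}\ D_T,
\end{equation*}
with the same initial, boundary, and exterior data as $\phi_i$. I will choose $K > \max_i\sup_{D_T}\sum_{j}|c_{ij}|$, which builds in enough negativity on the diagonal to destroy any candidate negative minimum.

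For (i), suppose for contradiction that $m := -\min_{i,(t,x)\in\ol D_T}\psi_i(t,x) > 0$; the minimum is attained because $\ol D_T$ is compact (as $g,h\in C([0,T])$) and each $\psi_i$ is continuous. Let $(t_0,x_0)$ with index $i_0$ realise it. I would rule out every possible location. If $t_0=0$, then the initial condition gives $-m\geq 0$; if $x_0\notin[g(t_0),h(t_0)]$, the exterior vanishing gives $-m=0$; and if $x_0=h(t_0)$ with $t_0\in\Sigma_{max}^h$ (or symmetrically at $g$), the lateral boundary inequality gives $-m\geq 0$. Each alternative contradicts $m>0$, so the only remaining possibility is either $(t_0,x_0)\in D_T$, or $x_0=h(t_0)$ (resp.\ $g(t_0)$) with $t_0\notin\Sigma_{max}^h$ (resp.\ $\Sigma_{min}^g$); in the latter boundary case the very definition of $\Sigma_{max}^h$ provides a sequence $s_n\nearrow t_0$ with $(s_n,x_0)\in\ol D_T$.

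In each remaining case, minimality gives $\partial_t\psi_{i_0}(t_0,x_0)\leq 0$ (directly in the interior, and via the sequence $s_n$ together with $\partial_t\psi_{i_0}\in C(\ol D_T)$ at the boundary). Since $\psi_{i_0}(t_0,y)\geq -m$ for every $y\in\R$ (trivially $=0\geq -m$ outside $[g(t_0),h(t_0)]$, by minimality inside), one has $\mathcal{L}_{i_0}[\psi_{i_0}](t_0,x_0)\geq -m - (-m)=0$. Combined with $d_{i_0}\geq 0$, $c_{i_0 j}\geq 0$ for $j\neq i_0$, and $\psi_j\geq -m$, the shifted inequality at $(t_0,x_0)$ delivers
\begin{equation*}
0\;\geq\;\partial_t\psi_{i_0}(t_0,x_0)\;\geq\;(c_{i_0 i_0}-K)(-m)+\sum_{j\neq i_0}c_{i_0 j}(-m)\;\geq\;m\Big(K-\sum_{j}c_{i_0 j}(t_0,x_0)\Big)\;>\;0
\end{equation*}
by the choice of $K$, the required contradiction; hence $\psi_i\geq 0$ and so $\phi_i\geq 0$. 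For (ii), assume (i) and suppose $\phi_{i_0}(t_1,x_1)=0$ at some $(t_1,x_1)\in D_T$. Since $\phi_{i_0}\geq 0$ this is an interior minimum, so $\partial_t\phi_{i_0}(t_1,x_1)\leq 0$ and $\mathcal{L}_{i_0}[\phi_{i_0}](t_1,x_1)\geq 0$; substituted into the inequality with $d_{i_0}>0$, $c_{i_0 j}\geq 0$ for $j\neq i_0$, and $\phi_j\geq 0$, every nonnegative term must vanish, forcing $\int_\R J_{i_0}(x_1-y)\phi_{i_0}(t_1,y)\,\rd y=0$. Continuity of $J_{i_0}$ and $J_{i_0}(0)>0$ then yield $\phi_{i_0}(t_1,\cdot)\equiv 0$ on a neighbourhood of $x_1$; iterating this kernel-localisation across the connected interval $[g(t_1),h(t_1)]$ extends the vanishing there, and a Gronwall-type backward-in-time step along each fibre $x=$const propagates $\phi_{i_0}\equiv 0$ back to $t=0$, contradicting $\phi_{i_0}(0,\cdot)\not\equiv 0$.

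The main obstacle I expect is the lateral boundary handling in (i): when $x_0\in\{g(t_0),h(t_0)\}$ but $t_0$ lies outside the relevant $\Sigma$-set, no differentiability of $g$ or $h$ is available, and the one-sided bound $\partial_t\psi_{i_0}(t_0,x_0)\leq 0$ has to be extracted via the approaching sequence $s_n\nearrow t_0$ supplied by the complement of $\Sigma_{max}^h$ (or $\Sigma_{min}^g$) together with $\partial_t\psi_{i_0}\in C(\ol D_T)$. The sets $\Sigma_{max}^h,\Sigma_{min}^g$ appear to be defined precisely so that this argument goes through cleanly without any regularity of $g,h$ beyond continuity. The backward propagation of zeros in (ii) is mildly technical owing to the nonlocal coupling, but reduces to standard Gronwall manipulations once $J_{i_0}(0)>0$ localises the vanishing spatially.
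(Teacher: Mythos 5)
Your part (i) argument is correct and is a genuine variant of the paper's. The paper proves a strict-inequality version first (by a first-touching-time argument) and then reduces the weak version to it by adding $\epsilon e^{At}$; you instead perform the multiplicative shift $\psi_i = e^{-Kt}\phi_i$ with $K>\sup\sum_j|c_{ij}|$ and argue directly at a negative minimum, which collapses both steps into one. The handling of the lateral boundary via a sequence $s_n\nearrow t_0$ when $t_0\notin\Sigma_{max}^h$ (resp.\ $\Sigma_{min}^g$) and $\partial_t\phi_i\in C(\ol D_T)$ is exactly the device used in the paper.

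For part (ii), however, your sketch has a genuine gap in the backward propagation step. You write that once $\phi_{i_0}(t_1,\cdot)\equiv 0$ on $[g(t_1),h(t_1)]$, ``a Gronwall-type backward-in-time step along each fibre $x=\text{const}$ propagates $\phi_{i_0}\equiv 0$ back to $t=0$.'' But a fixed fibre $\{x=\text{const}\}$ need not stay inside the moving interval $[g(t),h(t)]$ for all $t\in[0,t_1]$: since $g$ and $h$ are only continuous and not assumed monotone, a point $x\in(g(t_1),h(t_1))$ may fail to lie in $(g(t),h(t))$ at intermediate times, and $\phi_{i_0}(\cdot,x)$ is not $C^1$ across the re-entry times (it is set to zero outside $[g(t),h(t)]$). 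So the Gronwall estimate along a fibre only works as long as the fibre remains in $\ol D_T$, and in general $\bigcap_{t\in[t_0,t_1]}(g(t),h(t))$ can be empty. This is precisely the nontrivial case the paper addresses (its ``Case 2''): after switching to $\Phi_1=e^{Kt}\phi_1$ to get $\partial_t\Phi_1\geq 0$ inside $D_T$, the paper constructs a piecewise horizontal/vertical path in $G=\{(x,t):x\in(g(t),h(t)),\,t\in[t_0,T_1]\}$ connecting a point at time $T_1$ back to $y_0$ at time $t_0$, alternately (a) propagating the zero backward in $t$ along a vertical segment (fibre) that does lie in $G$, and (b) spreading the zero across the whole cross-section $[g(t_i),h(t_i)]$ by the same kernel-positivity argument you used at time $t_1$, before moving to the next vertical segment at a different abscissa. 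Without this zig-zag (or some substitute), your argument only reaches $t=0$ when a common fibre exists, so the proof of (ii) as written is incomplete.
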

\begin{proof} Since $\phi_i(t,x)=0$ for $x\not\in [g(t), h(t)]$, we have
\[
\mathcal{L}_i[\phi_i](t,x)=\int_{g(t)}^{h(t)}J_i(x-y)\phi_i(t,y)\rd y-\phi_i(t,x),\ i=1,..., n.
\]

{\bf Proof of part (i)}. We prove part (i) in two steps.\medskip

{\bf Step 1}. We first prove that if $(\phi_1,...,\phi_n)$  satisfies
\begin{equation}\label{1.2}
\begin{cases}
\dd \partial_t\phi_i> d_i\mathcal{L}_i[\phi_i]+\sum_{j=1}^nc_{ij}\phi_j, & (t,x)\in \ol D_T,\; i\in\{1,..., n\}\\
\phi_i(t,g(t))> 0, & t\in\Sigma_{min}^g,\; i\in\{1,..., n\},\\
 \phi_i(t,h(t))> 0,  & t \in \Sigma_{max}^h,\; i\in \{1,..., n\},\\
\phi_i(0,x)> 0 &x\in [-h_0,h_0],\; i\in\{1,..., n\},
\end{cases}
\end{equation}
then $\phi_i> 0$ on $\ol D_T$. 

Define 
\begin{align*}
T_1=\sup\{0<t\leq T: \phi_i(s,x)>0\ {\rm for }\ (s,x)\in D_t,\; i=1,..., n\}. 
\end{align*}
We have $T_1>0$ since $\phi_i(0,x)>0$ in $[-h_0,h_0]$ and $\phi_i$  is continuous for $i=1,..., n$.  If $T_1<T$, then there exists $i_0\in \{1,..., n\}$ and   $x_1\in [g(T_1),h(T_1)]$ such that
\begin{align}\label{2.3}
\phi_{i_0}(T_1,x_1)=0, \ \ {\rm and}\ \phi_{i}(t,x)\geq 0 \  {\rm for}\  (t,x)\in D_{T_1},\; i=1,..., n.
\end{align} 
We claim that
\bes\label{part_t}
\partial_t\phi_{i_0}(T_1,x_1)\leq 0.
\ees
This fact is evident if $x_1\in(g(T_1), h(T_1))$. If $x_1=g(T_1)$, then from \eqref{1.2} we can conclude that $T_1\not\in \Sigma_{min}^g$,
and hence there exists an increasing sequence $t_k\to T_1$ such that $g(t_k)\leq g(T_1)$. It follows that for all large $k$,  
$x_1=g(T_1)\in [g(t_k), h(t_k)]$ and hence $\phi_{i_0}(t_k, x_1)\geq 0$. This and the assumption $\partial_t\phi_i\in C(\overline D_T)$ clearly imply \eqref{part_t}. If $x_1=h(T_1)$, the proof of \eqref{part_t} is analogous.

Without loss of generality, we assume from now on $i_0=1$.    From $c_{1j}\geq 0$ for $j=2,..., n$, \eqref{2.3}, \eqref{part_t} and the first inequality of \eqref{1.2}, we obtain
\begin{align*}
0\geq \partial_t\phi_{1}(T_1,x_1)>d_1(T_1, x_1)\int_{g(T_1)}^{h(T_1)}J_1(x-y)\phi_1(T_1,y)\rd y+\sum_{j=2}^nc_{1j}\phi_j(T_1,x_1)\geq 0,
\end{align*}
which is a contradiction. Hence 
\[
\mbox{$T_1=T$,  $\phi_i(t,x)>0$ for $t\in [0, T)$, $x\in [g(t), h(t)]$, $i=1,..., n$.}
\]
It follows that $ \phi_{i}(t,x)\geq 0 \  {\rm for}\  (t,x)\in D_{T},\; i=1,..., n$. To complete the proof of Step 1, it remains to show
\[
 \phi_{i}(T,x)> 0 \  {\rm for}\  x\in [g(T), h(T)],\; i=1,..., n.
\]
If there exists $i_0\in \{1,..., n\}$ and   $x_0\in [g(T),h(T)]$ such that
$\phi_{i_0}(T,x_0)=0$, then we can repeat the above argument with $T_1=T$ to derive a contradiction. \bigskip

{\bf Step 2}. We  apply the conclusion in Step 1 to show the desired results. 

For $i\in\{1,..., n\}$, let $\psi_i(t,x)=\phi_i(t,x)+\epsilon e^{At}$ for some positive constants $\epsilon$ and $A$. Then 
\begin{equation*}
\begin{cases}
\psi_i(t,g(t))=\phi_i(t,g(t))+\epsilon e^{At}> 0,  & t\in\Sigma_{min}^g,\\
\psi_i(t,h(t))=\phi_i(t,h(t))+\epsilon e^{At}> 0,  & t\in\Sigma_{max}^h,\\
\psi_i(0,x)=\phi_i(0,x)+\epsilon\geq \epsilon> 0, &x\in [-h_0,h_0].
\end{cases}
\end{equation*}
Moreover,
\begin{align*}
&\dd \partial_t\psi_i-d_i\mathcal{L}_i[\psi_i] -\sum_{j=1}^nc_{ij}\psi_j\\
&=\dd \partial_t\phi_i- d_i\mathcal{L}_i[\phi_i] -\sum_{j=1}^nc_{ij}\phi_j
 +\epsilon Ae^{At}-d_i\epsilon e^{At}\Big[\int_{g(t)}^{h(t)}J_i(x-y)\rd y-1\Big]-\epsilon e^{At}\sum_{j=1}^nc_{ij}\\
&\geq \Big(A-d_i-\sum_{j=1}^n c_{ij}\Big)\epsilon e^{At}>0\ \   {\rm for}\ (t,x)\in \ol D_T,
\end{align*}
provided that $\dd A> \max_{1\leq i,j\leq n}\{||c_{ij}||_{L^{\yy}(D_T)}+d_i\}$. It then follows from Step 1 that  for any $\epsilon>0$ and $\dd A> \max_{1\leq i,j\leq n}\{||c_{ij}||_{L^{\yy}(D_T)}+d_i\}$, 
\begin{align*}
\psi_i(t,x)=\phi_i(t,x)+\epsilon e^{At}>0\ \ \  \ \ \  {\rm for}\ (t,x)\in \ol D_T,\; i=1,..., n.
\end{align*}
Fix $A$ and let  $\epsilon\to 0$, it gives $\phi_i\geq 0$ on $\ol D_T$ for $i=1,..., n$. This completes the proof of (i).
\medskip

{\bf Proof of part (ii)}. We now prove part (ii), that is,  $\phi_{i_0}> 0$ on $D_T$ under the additional conditions 
\begin{equation}\label{extra-c}
\mbox{$d_{i_0}(t,x)>0$ in $D_T$, \ $\phi_{i_0}(0,x)\not\equiv 0$  in $[-h_0,h_0]$.}
\end{equation}
  Suppose, on the contrary, 
  \[
  \mbox{\it there  exists a point $(T_1,x_1)\in D_T$ such  that $\phi_{i_0}(T_1,x_1)=0$.}
  \]
   To simplify notations, without loss of generality, let us again assume  $i_0=1$.

First, we claim that
\begin{align}\label{T1}
\phi_1(T_1,x)=0\ \ \  \ \  \ {\rm for}\ x\in (g(T_1),h(T_1)). 
\end{align}
If this is not true, then $\phi_1(T_1,\hat x_1)>0$ for some $\hat x_1\in (g(T_1), h(T_1))$. Let $I$ be the maximal open interval containing $\hat x_1$ such that $\phi_1(T_1, x)>0$ for $x\in I$. Then the existence of $x_1$ implies that at least one of the two boundary points of $I$ must be in $(g(T_1), h(T_1))$. So there exists
\begin{align*}
\tilde x_1\in (g(T_1),h(T_1))\cap \partial\{x\in(g(T_1),h(T_1)):  \phi_1(T_1,x)>0\}.
\end{align*}
Then it follows from $\phi_1(T_1,\tilde x_1)=0$, $c_{ij}\geq 0$ and $\phi_j\geq 0$ for $j\in\{2,..., n\}$ that
\begin{align*}
0&\geq \partial_t \phi_1(T_1,\tilde x_1)\geq d_1(T_1, \tilde x_1)\int_{g(T_1)}^{h(T_1)}J_1(\tilde x_1-y)\phi_1(T_1,y)\rd y+\sum_{j=2}^nc_{1j}(T_1,\tilde x_1)\phi_j(T_1,\tilde x_1)\\
&\geq  d_1(T_1, \tilde x_1)\int_{g(T_1)}^{h(T_1)}J_1(\tilde x_1-y)\phi_1(T_1,y)\rd y>0,\ \ \mbox{\color{red} [strict inequality due to $J(0)>0$]}
\end{align*}
which is a contradiction. Hence, $\phi_1(T_1,x)=0$ for all $x\in[g(T_1),h(T_1)]$.  \medskip

  Define
\[\mbox{
 $\Phi_i(t,x):=e^{Kt}\phi_i(t,x)$ with $K=d_1+\|c_{11}\|_\infty,\ i=1,..., n.$}
 \]
  Then $\Phi_1(t,x)$ satisfies $\Phi_1(t,x)\geq 0$,
\begin{align}\label{3.5a}
\Phi_1(T_1,x)=0\ \ \ \ {\rm for}\ x\in[g(T_1),h(T_1)]
\end{align}
and
\begin{equation}\label{3.6a}\begin{aligned}
\partial_t \Phi_1&=e^{Kt}(\phi_1)_t+K\Phi_1\\
&\geq e^{Kt}\Big[d_1\int_{g(t)}^{h(t)}J_1(x-y)\phi_1(t,y)\rd y+(-d_1+c_{11})\phi_1+\sum_{j=2}^nc_{1j}\phi_j\Big]+K\Phi_1\\
&= d_1\int_{g(t)}^{h(t)}J_1(x-y)\Phi_1(t,y)\rd y+(K-d_1+c_{11})\Phi_1+\sum_{j=2}^nc_{1j}\Phi_j\\
& \geq 0\ \mbox{ for $(t,x)\in  D_T$.}
\end{aligned}
\end{equation}

 We next use \eqref{3.5a} and \eqref{3.6a} to drive a contradiction. 
By \eqref{extra-c}
\[
\Omega_0:=\{x\in (-h_0, h_0): \Phi_1(0,x)>0\}\not=\emptyset.
\]
Since $g$ and $h$ are continuous and satisfy $g(t)<h(t)$  for all $t\in [0,T]$,  for any fixed $y_0\in\Omega_0$ there is a small constant $t_0 \in (0, T_1)$    such that 
\begin{align*}
g(t)<y_0<h(t) \mbox{ for } t\in[0, t_0].
\end{align*}
  
We claim that
\bes\label{y0}
\Phi_1(t_0,y_0)=0.
\ees

If this claim is proved, then  by \eqref{3.6a}, $\prt \Phi_1(t,y_0)\geq 0$ for $t\in [0,t_0]$, i.e., $\Phi(t,y_0)$ is nondecreasing for $t\in [0,t_0]$, which indicates that $\Phi_1(0,y_0)\leq 0$. However, this contradicts with  $y_0\in\Omega_0$. 

Therefore, to complete the proof, it suffices to show \eqref{y0}.
For clarity, we carry out the proof of \eqref{y0} according to two cases.

{\bf Case 1}. $\cap_{t\in [t_0,T_1]} (g(t),h(t))\neq \emptyset$.

In this case, we  take
\[
 y_1\in \cap_{t\in [t_0,T_1]} (g(t),h(t)),\ \ \ \
 \]
 and recall from \eqref{3.5a} and \eqref{3.6a} that $\Phi_1(T_1,y_1)=0$,  $\prt \Phi_1(t,y_1)\geq 0$ for $t\in [t_0,T_1]$. We then immediately see from $\Phi_1(t_0,y_1)\geq 0$ that $\Phi_1(t_0,y_1)= 0$. Now we may repeat the argument used to prove \eqref{T1} to conclude that 
 $\Phi_1(t_0,x)=0$ for $x\in [g(t_0),h(t_0)]$. In particular
$
\Phi_1(t_0,y_0)=0$, as desired.
This completes the proof in Case 1.\medskip

 {\bf Case 2}. $\cap_{t\in [t_0,T_1]} (g(t),h(t))=\emptyset$.

In this case we use a geometric argument in the  two-dimensional plane with $x$ and $t$ being the horizontal and vertical axis respectively.   Since $g(t)<h(t)$,
the continuous path given by
\[
\gamma_0:=\{(x,t): x=\xi(t)=\frac 12 [g(t)+h(t)],\; t\in [t_0, T_1]\},
\]
 is contained in the region $G:=\{(x,t): x\in (g(t),h(t)),\ t\in [t_0,T_1]\}$.
Clearly a small tubular neighbourhood of $\gamma_0$ still lies in $G$, and hence we can find a continuous path $\gamma_1$ close to $\gamma_0$ such that
$\gamma_1$ lies in $G$,  it consists of finitely many line segments in the $xt$-plane, and 
\[
\gamma_1\cap\{t=T_1\}=(\xi(T_1), T_1),\; \gamma_1\cap \{t=t_0\}=(\xi(t_0), t_0).
\]
For example, we could take $\gamma_1$ the piecewise linear curve connecting the points $p_i\in\gamma_0$, where $p_i=(\xi(s_i),s_i)$ with $s_i=\frac ik (T_1-t_0)+t_0$, $i=0,..., k$, for a large enough positive integer $k$.\medskip

 Similarly, a small tubular neighbourhood of $\gamma_1$ still lies in $G$, which allows us to find a
 continuous path $\gamma_2$ close to $\gamma_1$ with the following two properties:\medskip
 
\begin{itemize}
\item[(i)] $\gamma_2\subset G$, and $\gamma_2\cap\{t=T_1\}=(\xi(T_1), T_1),\; \gamma_2\cap \{t=t_0\}=(\xi(t_0), t_0)$,
\item[(ii)] $\gamma_2$ consists of finitely many line segments which are either vertical or horizontal. 
\end{itemize}

Let the horizontal line segments of $\gamma_2$ be denoted by $H_i$, $i=1,..., m$.
Then we can find $t_0<t_1< ... <t_m<t_{m+1}=T_1$ such that $H_i\subset \{ t=t_i\}$, $i=1, ..., m$. Let $V_j$ denote the vertical line segments of $\gamma_2$
that lies between $t_{j-1}$ and $t_j$, $j=1,..., m+1$,  then there exists $ x_j\in (g(t_j), h(t_j))$ such that 
$V_j\subset\{x=x_j\}$, $j=1,..., m+1$.
Thus
	\begin{equation*}
\begin{cases}
 \mbox{ the two end points of $V_i$ are } (x_i,t_{i-1})\ {\rm and\ } (x_i,t_{i}),& 1\leq i\leq m+1,\\
\mbox{ the two end points of $H_i$ are } (x_i,t_{i})\ \mbox{ and\ } (x_{i+1},t_{i}),&1\leq i\leq m.
\end{cases}
	\end{equation*}

We show that $\Phi_1(t_{m},x)=0$ for $x\in [g(t_m),h(t_m)]$. Thanks to  \eqref{3.5a} and \eqref{3.6a}, we have $\Phi_1(T_1,x_{m+1})=0$ and $\prt \Phi_1(t,x_{m+1})\geq 0$ for $t\in [t_m,T_1]$. This, 
combined with  $\Phi_1(t_m,x_{m+1})\geq 0$, yields $\Phi_1(t_m,x_{m+1})= 0$. The arguments leading to \eqref{T1} now infers that $\Phi_1(t_m, x)\equiv 0$ for $x\in 
[g(t_m), h(t_m)]$.
In other words, $\Phi_1(t_{m+1},x_{m+1})=0$ implies $\Phi_1(t_{m},x)=0$ for $x\in [g(t_m),h(t_m)]$.\medskip

Repeating the above argument, we can show $\Phi_1(t_i, x_i)=0$ implies $\Phi_1(t_{i-1}, x)=0$ for $x\in [g(t_{i-1}, h(t_{i-1})]$, $i=m,..., 1$. Thus we have  $\Phi_1(t_0,x)=0$  for $x\in [g(t_{0}),h(t_{0})]$, which clearly implies \eqref{y0}. The proof is now complete.
\end{proof}

\subsection{An example} A free boundary model for West Nile virus \cite{dn20}:
\begin{equation}\label{1}
\begin{cases}
		\dd H_t=d_1 \mathcal{L}_1[H](t,x)+a_1(e_1-H)V-b_1 H, & x\in (g(t), h(t)),\ t>0,\\
		\dd V_t=d_2 \mathcal{L}_2[V](t,x)+a_2(e_2-V)H-b_2 V , & x\in (g(t), h(t)),\ t>0,\\
		H(t,x)= V(t,x)=0, & t >0, \; x\in\{g(t), h(t)\},\\
	\dd h'(t)= \mu \int_{g(t)}^{h(t)}\int_{h(t)}^{\yy}J_1(y-x)H(t,y)\rd x, & t >0,\\[3mm]		
	\dd g'(t)= -\mu \int_{g(t)}^{h(t)}\int_{-\yy}^{g (t)}J_1(y-x)H(t,y)\rd x, & t >0,\\
	H(0,x)=u_1^{0}(x),\ V(0,x)=u_2^{0}(x), \ &x\in [-h_0,h_0].
\end{cases}
\end{equation}
Here  $H(t,x)$ and $V(t,x)$ stand for the densities of the infected bird (host) and mosquito (vector) populations at  time $t$ and spatial location $x$, respectively.  The interval $[g(t), h(t)]$ is the evolving region of virus infection. The parameters here are all  positive constants.
 The initial functions $u_{i}^{0}(x)$ $(i=1,2)$ satisfy 
\begin{equation}\label{1.2a}
\begin{cases}
u_{i}^{0}\in C([-h_0,h_0]),\ \ u_{i}^{0}(-h_0)=u_{i}^{0}(h_0)=0,\\
0<u_{i}^{0}(x)\leq e_i\ {\rm for}\ x\in (-h_0,h_0),\; i=1,2.
\end{cases}
\end{equation}

We can easily apply  Theorem \ref{lemma2.1} to obtain the following comparison results.  

\begin{corollary}\label{lemma3.7}  Assume   $(\mathbf{J})$  holds,   $T>0$, $ g,\,  h\in C([0,T])$ satisfy $ g(t)<  h(t)$, and $ D_T=\{(t,x): t\in (0,T],\;  g(t)<x< h(t)\}$. If  $H, \; V,\;  \tilde H,\; \tilde V \in C(\ol D_T)$ satisfy the following conditions:
\begin{itemize}
\item[\rm (i)] $\Phi_t\in C( \ol D_T)$ for $\Phi\in\{H,V, \td H,\td V\}$, 
\item[\rm (ii)]    $0\leq \Phi\leq e_1$ for $\Phi\in\{H,\td H\}$,\;  $0\leq \Psi\leq e_2$ for $\Psi\in\{V,\tilde V\}$,
\item[\rm (iii)] for $(t,x)\in  D_T$,
	\begin{equation}\label{upper}
	\begin{cases}
	\dd \tilde H_t\geq d_1 \int_{ g(t)}^{ h(t)}J_1(x-y)\tilde H(t,y)\rd y-d_1\tilde H+a_1(e_1-\tilde H) \tilde V-b_1 \tilde H, \\[3mm]
	\dd \tilde V_t\geq d_2 \int_{ g(t)}^{ h(t)}J_2(x-y)\tilde V(t,y)\rd y-d_2\tilde V+a_2(e_2-\tilde V)\tilde H-b_2\tilde V, 
		\end{cases}
	\end{equation}
	\item[\rm (iv)] for $(t,x)\in  D_T$, $(H, V)$ satisfies \eqref{upper} but with  the inequalities reversed, 
	\item[\rm (v)] at the boundary,
	\[\begin{cases}
	  H(t,g(t))\leq \tilde H(t,g(t)),\ V(t,g(t))\leq \tilde V(t,g(t))&{\rm for} \ t\in \Sigma_{min}^g,   \\
	  H(t,h(t))\leq \tilde H(t,h(t)),\ V(t,h(t))\leq \tilde V(t,h(t))&{\rm for} \ t\in \Sigma_{max}^h,
	\end{cases}\]
	\item[\rm (vi)] at the initial time,
	\[
	H(0,x)\leq \tilde H(0,x),\; V(0,x)\leq \tilde V(0,x)\ \ {\rm for} \ x\in [ g(0),  h(0)],
	\]
	\end{itemize}
	 then
	\begin{align*}
	  H(t,x)\leq \tilde H(t,x),\ V(t,x)\leq \tilde V(t,x)\ \ {\rm for} \ (t,x)\in  D_T.
	\end{align*}
\end{corollary}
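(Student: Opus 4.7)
The plan is to reduce the claim directly to Theorem \ref{lemma2.1} with $n=2$, applied to the differences $\phi_1 := \tilde H - H$ and $\phi_2 := \tilde V - V$, each extended by zero for $x \notin [g(t), h(t)]$. With this extension, the truncated convolution $\int_{g(t)}^{h(t)} J_i(x-y)\phi_i(t,y)\,dy$ that arises after subtracting the inequalities in (iv) from those in (iii) coincides with $\mathcal{L}_i[\phi_i] + \phi_i$, so the nonlocal diffusion part of the resulting differential inequality immediately takes the form demanded by \eqref{mp-upper}.

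The substantive step is to rewrite the reaction differences as $c_{i1}\phi_1 + c_{i2}\phi_2$ with nonnegative off-diagonal entries. The key algebraic identity I would use is
\begin{align*}
a_1(e_1 - \tilde H)\tilde V - a_1(e_1 - H)V &= a_1(e_1 - H)(\tilde V - V) - a_1\tilde V(\tilde H - H)\\
&= -a_1\tilde V\,\phi_1 + a_1(e_1 - H)\,\phi_2,
\end{align*}
together with the symmetric splitting for the $V$-equation. This leads to a system of the form in \eqref{mp-upper} with coefficients
\[
c_{11} = -a_1\tilde V - b_1,\quad c_{12} = a_1(e_1 - H),\quad c_{21} = a_2(e_2 - V),\quad c_{22} = -a_2\tilde H - b_2.
\]
The crucial point is that $c_{12}, c_{21} \geq 0$ precisely because of the a priori bounds $H\leq e_1$ and $V\leq e_2$ supplied in hypothesis (ii); boundedness of all four functions additionally guarantees $c_{ij}\in L^\infty(D_T)$, and $d_1,d_2>0$ trivially.

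To finish, I would observe that hypotheses (v) and (vi) translate verbatim into the boundary inequalities $\phi_i(t,g(t))\geq 0$ for $t\in \Sigma^g_{min}$, $\phi_i(t,h(t))\geq 0$ for $t\in \Sigma^h_{max}$, and the initial inequality $\phi_i(0,x)\geq 0$ on $[g(0),h(0)]$, while the zero extension supplies the condition $\phi_i(t,x) = 0$ for $x\notin [g(t), h(t)]$. An application of Theorem \ref{lemma2.1}(i) then yields $\phi_1, \phi_2\geq 0$ in $\overline D_T$, which is exactly the asserted comparison. I do not anticipate a genuine obstacle here; the only content-bearing step is the algebraic splitting that forces the cross-coupling coefficients $c_{12}, c_{21}$ to be nonnegative, and it is this feature that makes the cooperative maximum principle of Theorem \ref{lemma2.1} applicable to the $(H,V)$-system.
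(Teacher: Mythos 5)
Your proposal is correct and follows essentially the same route as the paper: set $\phi_1=\tilde H - H$, $\phi_2 = \tilde V - V$, split the bilinear reaction differences so that the off-diagonal coefficients are nonnegative thanks to the a priori bounds in (ii), and invoke Theorem~\ref{lemma2.1}(i) with $n=2$. The only (cosmetic) difference is in how you allocate the cross terms: you arrive at $c_{12}=a_1(e_1-H)$ and $c_{21}=a_2(e_2-V)$, whereas the paper uses $c_{12}=a_1(e_1-\tilde H)$ and $c_{21}=a_2(e_2-\tilde V)$; both are valid algebraic decompositions of the same quantity, and both are nonnegative under (ii), so the argument goes through identically.
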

\begin{proof}
Define 
\[
\phi_1:=\tilde H-H,\; \phi_2:=\tilde V-V,
\]
and
\[
c_{11}:=-(b_1+a_1V),\; c_{12}:=a_1(e_1-\tilde H),\; c_{21}:=a_1(e_2-\tilde V),\; c_{22}=-(b_2+a_2H).
\]
Then it is easily checked that $(\phi_1,\phi_2)$ satisfies \eqref{mp-upper} with $n=2$. Therefore $\phi_1\geq 0$ and $\phi_2\geq 0$ in $D_T$.
\end{proof}

\subsection{A comparison result for a scalar nonlocal free boundary problem}
Suppose the kernel function $J(x)$ satisfies the basic condition 
\begin{description}
\item[{\bf (J)}] $ J\in C(\R)\cap L^\infty(\R) \mbox{ is nonnegative},\  J(0)>0,~\displaystyle \int_{\mathbb{R}}J(x)dx=1$.
\end{description}
The function $f: \mathbb{R}^+\times\mathbb{R}\times\mathbb{R}^+
\rightarrow\mathbb{R}$  satisfies
\begin{description}
\item[(f1)] $f(t,x,0)\equiv 0$ and $f(t,x,u)$
is continuous in $(t,x,u)$ and locally Lipschitz in

 $u\in\R^+$, i.e., for any $L>0$,  there exists a constant $K=K(L)>0$ such that
\[\hspace{0.8cm}
\mbox{$\left|f(t,x,u_1)-f(t,x,u_2)\right|\le K|u_1-u_2|$ for $u_1, u_2\in [0, L]$,\  $(t,x)\in \R^+\times \R$.}
\]
\end{description}

\medskip

The nonlocal  free boundary problem to be considered has
 the following form:
\begin{equation}
\left\{
\begin{aligned}
&u_t=d\int_{g(t)}^{h(t)}J(x-y)u(t,y)dy-du(t,x)+f(t,x,u),
& &t>0,~x\in(g(t),h(t)),\\
&u(t,g(t))=u(t,h(t))=0,& &t>0,\\
&h'(t)=\mu\int_{g(t)}^{h(t)}\int_{h(t)}^{+\infty}
J(y-x)u(t,x)dydx,& &t>0,\\
&g'(t)=-\mu\int_{g(t)}^{h(t)}\int_{-\infty}^{g(t)}
J(y-x)u(t,x)dydx,& &t>0,\\
&u(0,x)=u_0(x),~h(0)=-g(0)=h_0,& &x\in[-h_0,h_0],
\end{aligned}
\right.
\label{101}
\end{equation}
where $x=g(t)$ and $x=h(t)$ are the moving boundaries
to be determined together with $u(t,x)$, which is always assumed to be identically 0 for $x\in \R\setminus [g(t), h(t)]$; $d$ and $\mu$
 are positive constants. The initial
function $u_0(x)$ satisfies
\begin{equation}
u_0(x)\in C([-h_0,h_0]),~u_0(-h_0)=u_0(h_0)=0
~\text{ and }~u_0(x)>0~\text{ in }~(-h_0,h_0),
\label{102}
\end{equation}
with  $[-h_0,h_0]$ representing the initial population range of the species.

\begin{theorem}\label{thm-CP}
$($Comparison principle$)$ Assume that ${\bf (J)}$ and  ${\bf (f1)} $ hold,  $u_0$ satisfies \eqref{102} and $(u,g,h)$ satisfies
\eqref{101}\footnote{Here we implicitly require $g, h\in C^1([0, T])$ and $u_t, u$ are continuous for $t\in [0, T]$, $x\in [g(t), h(t)]$.} for $0\leq t\leq T\in(0,+\infty)$. Suppose that $\overline h,\overline
g\in C^1([0,T])$ and
$\overline u_t(t,x), \overline u(t,x)$ are continuous for $t\in [0, T]$, $x\in[\overline g(t), \overline h(t)]$, and 
\begin{equation}\label{CP-upper}
\left\{
\begin{aligned}
&\overline u_t\ge d\int_{\overline g(t)}^{\overline h(t)}J(x-y)
\overline u(t,y)dy-d\overline u+f(t,x,\overline u), & &0<t\le T,~x\in(\overline g(t),\overline h(t)),\\
&\overline u(t,\overline g(t))\geq 0, \ \overline u(t,\overline h(t))\geq 0, & &0<t\le T,\\
&\overline h'(t)\ge\mu\int_{\overline g(t)}^{\overline h(t)}\int_{\overline h(t)}^{+\infty}J(y-x)\overline u(t,x)dydx,  & &0<t\le T,\\
&\overline g'(t)\le-\mu\int_{\overline g(t)}^{\overline h(t)}\int_{-\infty}^{\overline g(t)}J(y-x)\overline u(t,x)dydx,  & &0<t\le T,\\
&\overline u(0,x)\ge u_0(x),~\overline h(0)> h_0,~\overline g(0)<-h_0,  & & x\in[-h_0,h_0],\\
&\overline u(0,x)\geq 0, && x\in [\overline g(0), \overline h(0)].
\end{aligned}
\right.
\end{equation}
Then
\begin{equation}
u(t,x)<\overline u(t,x),~g(t)>\overline g(t)~\text{ and
}~h(t)<\overline h(t)~\text{ for }~0<t\leq T~\text{ and }~x
\in [g(t), h(t)].
\label{CP-compare}
\end{equation}
\end{theorem}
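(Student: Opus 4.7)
The plan is a continuity argument on the free boundaries, driven by Theorem~\ref{lemma2.1} (the maximum principle, applied with $n=1$) and a strict velocity comparison at a hypothetical first touching time. I would set
\[
t^* := \sup\{t \in (0, T]:\ \bar g(s) < g(s) \text{ and } \bar h(s) > h(s) \text{ for all } s \in [0, t]\}.
\]
The strict initial inequalities $\overline h(0) > h_0$, $\overline g(0) < -h_0$ and continuity give $t^* > 0$. The goal is to show $t^* > T$; the strict pointwise comparison $u < \overline u$ would then follow by one more application of the maximum principle.

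On $[0, t^*]$ the inclusion $[g(t), h(t)] \subset (\overline g(t), \overline h(t))$ holds, so $w := \overline u - u$ is defined on $D^* := \{(t,x):\ 0 \le t \le t^*,\ g(t) \le x \le h(t)\}$. Subtracting the $u$-equation from the $\overline u$-inequality, using $\overline u \ge 0$ on its larger spatial domain and the Lipschitz bound in \textbf{(f1)} to write $f(t,x,\overline u) - f(t,x,u) = c(t,x) w$ with $|c| \le K$, I obtain a scalar differential inequality for $w$ fitting the template of Theorem~\ref{lemma2.1}. The boundary values are nonnegative: $w(t, g(t)) = \overline u(t, g(t)) \ge 0$ and $w(t, h(t)) = \overline u(t, h(t)) \ge 0$, after a preliminary use of Theorem~\ref{lemma2.1}(i) on $\overline u$ alone to secure $\overline u \ge 0$ on all of $[\overline g(t),\overline h(t)]$. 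Part (i) then gives $w \ge 0$ on $D^*$. Applying part (ii) to $\overline u$ (using $\overline u(0,\cdot) \ge u_0 \not\equiv 0$) upgrades this to $\overline u > 0$ on the open region $\{(t,x):\ 0 < t \le T,\ \overline g(t) < x < \overline h(t)\}$.

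The crux is to derive a contradiction from $t^* \le T$. By continuity, one of $\overline h(t^*) = h(t^*)$ and $\overline g(t^*) = g(t^*)$ holds; I take the first WLOG (the other is symmetric). Since $\overline h - h > 0$ on $[0, t^*)$ and vanishes at $t^*$, the left derivative gives $\overline h'(t^*) \le h'(t^*)$. On the other hand, from the free-boundary inequalities, $\overline h(t^*) = h(t^*)$, $\overline u \ge u \ge 0$ on $[g(t^*), h(t^*)]$, and $\overline u \ge 0$ on $[\overline g(t^*), g(t^*)]$,
\[
\overline h'(t^*) - h'(t^*) \ge \mu \int_{\overline g(t^*)}^{g(t^*)} \overline u(t^*, x) K(x)\, dx + \mu \int_{g(t^*)}^{h(t^*)} w(t^*, x) K(x)\, dx,
\]
with $K(x) := \int_{h(t^*)}^{\infty} J(y-x)\, dy$. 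The main obstacle is to force this sum to be strictly positive. Since $J \in C(\mathbb{R})$ with $J(0) > 0$, there is $\delta_0 > 0$ with $K(x) > 0$ on $(h(t^*) - \delta_0, h(t^*))$. If $(\overline g(t^*), g(t^*)) \cap (h(t^*) - \delta_0, h(t^*))$ has positive measure, the first integral is positive by $\overline u > 0$ and we are done. Otherwise I fall back on the nonnegative source $d R(t,x) := d \int_{[\overline g(t), g(t)] \cup [h(t), \overline h(t)]} J(x-y) \overline u(t,y)\, dy$ that enters the inequality for $w$: for $t \in (0, t^*)$, $\overline h(t) > h(t)$ combined with $\overline u > 0$ and $J(0)>0$ makes $R(t,x) > 0$ for $x$ in a left neighborhood of $h(t)$. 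A Hopf-type pointwise argument---if $w(t_0, x_0) = 0$ at an interior point, the inequality yields $w_t(t_0, x_0) \ge d R(t_0, x_0) > 0$, whereas $w \ge 0$ with $w(t_0, x_0) = 0$ forces $w_t(t_0, x_0) \le 0$---propagates positivity of $w$ to a left neighborhood of $h(t^*)$, making the second integral strictly positive. Either way, $\overline h'(t^*) > h'(t^*)$, a contradiction. Hence $t^* > T$. Finally, $u < \overline u$ on $[g(t), h(t)]$ for $t > 0$ follows from the same $R$-based positivity argument in the interior, combined with $u = 0 < \overline u$ at the endpoints $x = g(t), h(t)$ (now interior points of $[\overline g(t), \overline h(t)]$ where $\overline u > 0$).
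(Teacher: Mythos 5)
Your overall strategy is the same as the paper's: run a continuity argument on the free boundaries up to a first touching time, apply Theorem~\ref{lemma2.1} to $w=\overline u-u$ to get sign information, then derive a contradiction from a velocity comparison at the touching time. The preliminary reduction (write $f(t,x,\overline u)=c\,\overline u$ via {\bf (f1)}, apply Theorem~\ref{lemma2.1}(i) and then (ii) to $\overline u$ alone to get $\overline u>0$ in the open $\overline u$-region) matches the paper exactly.

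The gap is in the ``crux'' step. You want $\displaystyle\int_{g(t^*)}^{h(t^*)}w(t^*,x)K(x)\,dx>0$, which requires $w(t^*,x)>0$ for some $x$ in a left neighborhood of $h(t^*)$, and you try to extract this from the source term
$R(t,x)=\int_{[\overline g(t),g(t)]\cup[h(t),\overline h(t)]}J(x-y)\overline u(t,y)\,dy$ via a Hopf-type pointwise argument. But at $t=t^*$ the interval $[h(t^*),\overline h(t^*)]$ collapses to a single point, so the contribution to $R(t^*,\cdot)$ near $x=h(t^*)$ vanishes; the only surviving contribution is from $[\overline g(t^*),g(t^*)]$, which may be far away and invisible to $J$ if $J$ has compact support. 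Thus the Hopf contradiction ($w_t\ge dR>0$ versus $w_t\le 0$) is \emph{not} available at $(t^*,x_0)$ with $x_0$ near $h(t^*)$. Your statement that the positivity for $t<t^*$ ``propagates'' to $t=t^*$ is exactly the strong maximum principle, but you have not invoked it: continuity alone only yields $w(t^*,\cdot)\geq0$.

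The fix, and what the paper actually does, is to apply Theorem~\ref{lemma2.1}(ii) to $w$ itself, not just to $\overline u$. For any $t_0\in(0,t^*)$ one has $h(t_0)\in(\overline g(t_0),\overline h(t_0))$ strictly, hence $w(t_0,h(t_0))=\overline u(t_0,h(t_0))>0$ (you recorded only the weaker $\ge 0$), so $w(t_0,\cdot)\not\equiv0$ on $[g(t_0),h(t_0)]$. Applying Theorem~\ref{lemma2.1}(ii) with initial time $t_0$ and terminal time $t^*$ gives $w>0$ on the whole region $\{t_0<t\le t^*,\ g(t)<x<h(t)\}$, in particular $w(t^*,x)>0$ for $x\in(g(t^*),h(t^*))$. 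Combined with $K(x)>0$ near $h(t^*)$ (from $J(0)>0$ and continuity), this yields the strict velocity inequality and closes the contradiction cleanly, with no case distinction and no Hopf argument on the interior source.
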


The triplet $(\overline u,\overline g,\overline h)$  above
is called an upper solution of \eqref{101}. We can define a lower
solution and obtain analogous results by reversing the inequalities
in (\ref{CP-upper}) and (\ref{CP-compare}).

\begin{proof} Due to ${\bf (f1)}$, we can write $f(t,x,\overline u(t,x))= c(t,x)\overline u(t,x)$ with $c\in L^\infty$. Hence we can apply Theorem \ref{lemma2.1} with $n=1$ to conclude that $\overline u>0$ for $0<t\leq T,~\overline g(t)<x<\overline h(t)$,  and thus  both $\overline h$ and $-\overline g$ are strictly increasing.

We claim that  $h(t)<\overline h(t)$ and $g(t)
>\overline g(t)$ for all $t\in(0,T]$.
 Clearly, these hold true for
small $t>0$. Suppose by way of contradiction that there exists
$t_1\in (0, T]$ such that
$$
h(t)<\bar h(t),\ g(t) >\overline g(t)\  \textrm{for}\  t\in(0,t_1)\  \textrm{and} \ [h(t_1) -\overline h(t_1)][  g(t_1)- \overline g(t_1)]=0.
$$
Without loss of generality, we may assume that
\[
 h(t_1) =\overline h(t_1) \mbox{ and }  g(t_1)\geq  \overline g(t_1).
 \]

We now compare $u$ and $\overline u$ over the region
$$
\Omega_{ t_1}:=\left\{(t,x)\in\mathbb{R}^2:0<t\leq t_1,
~g(t)<x<h(t)\right\}.
$$
Let  $w(t,x):=\overline u(t,x)-u(t,x)$. Then for  $(t,x)
\in\Omega_{t_1}$, we have
\begin{equation}
w_t\ge d\int_{g(t)}^{h(t)}J(x-y)w(t,y)dy
-dw(t,x)+C(t,x)w(t,x),
\label{304}
\end{equation}
for some  $L^\infty$ function $C(t, x)$. Moreover,
\[
w(t, g(t))> 0,\ w(t, h(t))> 0 \mbox{ for } t\in (0, t_1),\ w(0,x)\geq 0 \mbox{ for }x\in [-h_0, h_0].
\]
Therefore it follows from Theorem \ref{lemma2.1} that $w(t,x)\geq 0$ in $\Omega_{t_1}$. Moreover, for any $t_0\in (0, t_1)$,
$w(t_0, h(t_0))>0$ and so $w(t_0, x)\geq,\not\equiv 0$ in $[g(t_0), h(t_0)]$. So we can apply Theorem \ref{lemma2.1} over $t\in [t_0, t_1]$, $x\in [g(t), h(t)]$ to deduce $w(t, x)>0$
in this range. Since $t_0$ can be arbitrarily small we obtain 
\[
\mbox{$w(t,x)=\overline u(t,x)-u(t,x)>0$ for $t\in (0, t_1]$, $x\in [g(t), h(t)]$.}
\]

On the other hand, by the definition of $t_1$, we have   
\[
h(t_1)=\overline h(t_1),\ h'(t_1) \ge\overline h'(t_1).
\]
 This leads to the following contradiction:
\begin{align*}
0 &\ge  \overline h'(t_1)-h'(t_1)\\
  & \geq  \mu\int_{\overline g(t_1)
}^{\overline h(t_1)}\!\!\!\int_{\overline h(t_1)}^{+\infty}\!\!\!J(y-x)
 \overline u(t_1,x) dydx - \mu\int_{  g(t_1)
}^{  h(t_1)}\!\!\!\int_{ h(t_1)}^{+\infty}\!\!\!J(y-x)
  u(t_1,x) dydx\\
  &\geq \mu\int_{  g(t_1)
}^{  h(t_1)}\!\!\!\int_{ h(t_1)}^{+\infty}\!\!\!J(y-x)
 \big[\overline u(t_1,x)- u(t_1,x)\big] dydx>0. \ \mbox{\color{red} [strict inequality due to $J(0)>0$]}
\end{align*}
 The claim is thus proved, i.e., we always have $h(t)<\overline h(t)$ and $g(t)
>\overline g(t)$ for all $t\in(0,T]$. 

We may now use the comparison principle to obtain $\overline u(t,x)\geq u(t,x)$ for $t\in [0, T]$, $x\in [g(t), h(t)]$, and $\overline u(t,x)>u(t,x)$ for $t\in (t_0, T]$, $x\in [g(t), h(t)]$ for any $t_0\in (0, T)$.
\end{proof}

{\bf Remarks:} Theorem \ref{lemma2.1} is a simple variation of Lemma 3.1 in \cite{dn20}. Theorem \ref{thm-CP} is a simple variation of Theorem 3.1 in \cite{cdjfa}.

	\section{Existence and uniqueness} 

The following theorem is the main result to be proved here.

\begin{theorem}
Suppose that {\rm \bf (J)} and {\rm \bf (f1)-(f2)} hold. Then for any given $h_0>0$ and $u_0(x)$
satisfying \eqref{102}, problem \eqref{101} admits a unique
 solution $(u(t,x), g(t), h(t))$ defined for all $t>0$. Moreover, for any $T>0$, $g\in \mathbb G_{h_0, T},\;
 h\in\mathbb H_{h_0, T}$ and $u\in \mathbb{X}_{u_0, g, h}$.
\label{Thm22}
\end{theorem}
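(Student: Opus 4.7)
The plan is to establish local existence and uniqueness by a Banach fixed point argument applied to the pair $(g,h)$ of free boundaries, and then extend globally using the comparison principle from Theorem \ref{thm-CP}. First, fix $T_0>0$ small and introduce the complete metric space $\mathcal A_{T_0}$ of pairs $(g,h)\in C([0,T_0])^2$ with $g(0)=-h_0$, $h(0)=h_0$, both $-g$ and $h$ nondecreasing, and each Lipschitz with constant at most some $L_0$ to be chosen, equipped with the sup metric. Given $(g,h)\in\mathcal A_{T_0}$, I would first solve for $u=u[g,h]$ on the moving cylinder $D_{T_0}=\{(t,x):0<t\le T_0,\; g(t)<x<h(t)\}$ with zero lateral condition and $u(0,x)=u_0(x)$. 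Because $\mathcal L$ is bounded on $L^\infty$ and $f$ is locally Lipschitz in $u$ by \textbf{(f1)}, this reduces to a nonlinear integral equation in $(t,x)$ solvable by Picard iteration in $C(\overline{D_{T_0}})$; uniform bounds on $u[g,h]$ follow from $\textbf{(f2)}$ and Gronwall.

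Next, define $\Gamma(g,h)=(\tilde g,\tilde h)$ by integrating the free boundary ODEs in \eqref{101}:
\begin{align*}
\tilde h(t)&=h_0+\mu\int_0^t\!\!\int_{g(s)}^{h(s)}\!\!\int_{h(s)}^{\infty}\!J(y-x)u[g,h](s,x)\,dy\,dx\,ds,\\
\tilde g(t)&=-h_0-\mu\int_0^t\!\!\int_{g(s)}^{h(s)}\!\!\int_{-\infty}^{g(s)}\!J(y-x)u[g,h](s,x)\,dy\,dx\,ds.
\end{align*}
For $T_0$ sufficiently small, depending only on $\|u_0\|_\infty$, $\mu$, $\|J\|_\infty$, the Lipschitz constant of $f$, and $h_0$, one checks that $\Gamma$ maps $\mathcal A_{T_0}$ into itself and is a contraction in the sup norm. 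The unique fixed point gives a local solution $(u,g,h)$ of \eqref{101} on $[0,T_0]$, and the desired regularity $g\in\mathbb G_{h_0,T_0}$, $h\in\mathbb H_{h_0,T_0}$, $u\in\mathbb X_{u_0,g,h}$ falls out of the integral formulas for $\tilde g$, $\tilde h$ and the Picard construction of $u$.

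To globalize, suppose $[0,T_{\max})$ is the maximal interval of existence. By Theorem \ref{thm-CP}, comparing $u$ with the spatially homogeneous supersolution arising from the ODE $\bar u'(t)=\sup_x f(t,x,\bar u)$ with $\bar u(0)=\|u_0\|_\infty$ (which is controlled thanks to \textbf{(f2)}) yields a uniform bound $\|u\|_{L^\infty}\le M$ on $[0,T_{\max})$. The free boundary formulas then give $0\le h'(t)\le\mu M[h(t)-g(t)]$ and similarly for $-g'$, so $h(t)-g(t)$ stays bounded on any finite interval. Since the local existence time $T_0$ can be chosen uniformly in terms of $M$ and the current diameter of the domain, iterating the local argument yields $T_{\max}=\infty$. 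Global uniqueness follows either from local uniqueness propagated along $[0,\infty)$ or directly from Theorem \ref{thm-CP}.

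The main obstacle is verifying the contraction property of $\Gamma$, because $u[g,h]$ depends on $(g,h)$ both through the spatial domain $[g(t),h(t)]$ and through the moving endpoints inside the nonlocal integral $\int_{g(s)}^{h(s)}J(x-y)u(s,y)\,dy$. Writing $w=u[g_1,h_1]-u[g_2,h_2]$ after extending both by zero outside their respective domains, $w$ satisfies a perturbed integral equation whose forcing term is controlled by $\|h_1-h_2\|_\infty+\|g_1-g_2\|_\infty$ using $J\in L^\infty$ and the uniform bound on $u$; Gronwall then yields $\|w\|_\infty\le CT_0\bigl(\|h_1-h_2\|_\infty+\|g_1-g_2\|_\infty\bigr)$, and substituting this estimate into the defining formulas for $\Gamma$ produces a prefactor of order $T_0^2$, which is contractive for $T_0$ small. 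No spatial regularity of $u$ is required, since the triple integrals depend Lipschitz-continuously on the endpoints.
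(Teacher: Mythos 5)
Your proposal follows essentially the same architecture as the paper's proof: solve the auxiliary initial-boundary-value problem for $u[g,h]$ given $(g,h)$, then set up a contraction for the pair of free boundaries via the integrated boundary ODEs, and finally extend globally. There is, however, a genuine gap in the contraction step, and it concerns precisely the place you flag as ``the main obstacle.''

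Your class $\mathcal A_{T_0}$ requires only that $h$ and $-g$ be nondecreasing and Lipschitz with some upper Lipschitz constant $L_0$. This is not enough. The paper's invariant set $\Sigma_s$ additionally imposes a strictly positive \emph{lower} Lipschitz bound, namely $\inf_{t_1<t_2}\frac{h(t_2)-h(t_1)}{t_2-t_1}\ge \mu c_0>0$ and the analogous bound on $-g$. This lower bound is indispensable for the estimate of $w=u[g_1,h_1]-u[g_2,h_2]$ near the moving boundary, where the two domains differ. Concretely, if $h_1(t)<x^*<h_2(t)$ then $u_1(t,x^*)=0$ while $u_2(t,x^*)>0$, and $u_2(t,x^*)=\int_{t_{x^*}}^t(\partial_t u_2)\,d\tau$ with $t_{x^*}$ the first time $x^*$ lies inside the second domain; to bound this by $\|h_1-h_2\|_\infty$ one needs $t-t_{x^*}\le (\mu c_0)^{-1}|h_2(t)-h_2(t_{x^*})|$, which only holds with the lower Lipschitz bound. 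Without it, the ``forcing is controlled by $\|h_1-h_2\|_\infty+\|g_1-g_2\|_\infty$'' claim is simply false in the region between the two free boundaries, and Gronwall does not apply there because $w=-u_2$ is not governed by a small forcing. So you must add the lower bound to the class, verify that $\Gamma$ preserves it — this is where $J(0)>0$ and a pointwise lower bound $V_{g,h}(t,x)\ge e^{-(d+K)T}u_0(x)$ are used, via an estimate like $\tilde h'(t)\ge \mu c_0>0$ — and then carry out the $w$-estimate case by case near the boundary. A secondary inaccuracy: even with the correct class, the bound is $\|w\|_\infty\le C\big(\|h_1-h_2\|_\infty+\|g_1-g_2\|_\infty\big)$ with no extra factor of $T_0$ (the boundary-region contribution does not gain a factor of time), so the contraction prefactor on $\Gamma$ is $O(T_0)$, not $O(T_0^2)$; this does not affect the conclusion but the claimed power is wrong.
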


Here, and in what follows, for given $h_0, T>0$ we define
\begin{align*}
&\mathbb H_{h_0, T}:=\Big\{h\in C([0,T])~:~h(0)=h_0,
\; \inf_{0\le t_1<t_2\le T}\frac{h(t_2)-h(t_1)}{t_2-t_1}>0\Big\},\\
&\mathbb G_{h_0, T}:=\Big\{g\in C([0,T])~:-g\in\mathbb{H}_{h_0, T}\Big\},\\
&C_0([-h_0, h_0]):=\Big\{u\in C([-h_0, h_0]): u(-h_0)=u(h_0)=0\Big\}.
\end{align*}
\medskip

For $g\in \mathbb G_{h_0, T}$, $h\in\mathbb H_{h_0, T}$ and $u_0\in C_0([-h_0, h_0])$ nonnegative, we define\medskip

\begin{align*}
&\Omega=\Omega_{g, h}:=\left\{(t,x)\in\mathbb{R}^2: 0<t\leq T,~g(t)<x<h(t)\right\},\\
&\mathbb{X}=\mathbb X_{u_0,g,h}:=\Big\{\phi\in C(\overline\Omega_{g,h})~:~\phi\ge0~\text{in}
~\Omega_{g,h},~\phi(0,x)=u_0(x)~\text{for}~x\in [-h_0,h_0]~\\
& \hspace{5.6cm} \text{and} \;\; \phi(t,g(t))=
\phi(t,h(t))=0~\text{for }0\le t\le T\Big\}.
\end{align*}

\subsection{An auxiliary initial boundary value problem}

For any $T>0$ and $(g, h)\in\mathbb G_{h_0,T}\times \mathbb H_{h_0, T}$,
 we consider the following problem:
 \medskip
 
\begin{equation}
\left\{
\begin{aligned}
&v_t=d\int_{g(t)}^{h(t)}J(x-y)v(t,y)dy-dv+f(t,x,v),
& &0<t\leq T,~x\in(g(t),h(t)),\\
&v(t,h(t))=v(t,g(t))=0,& &0<t\leq T,\\
&v(0,x)=u_0(x),& &x\in[-h_0,h_0]
\end{aligned}
\right.
\label{201}
\end{equation}
\medskip

\begin{lemma}
Suppose that {\rm \bf (J)} and {\rm \bf (f1)-(f2)} hold, $h_0>0$ and $u_0(x)$
satisfies \eqref{102}. Then \eqref{201} admits a unique  solution, denoted by $V_{g,h}(t,x)$.
 Moreover $V_{g,h}$ satisfies
\begin{equation}
0<V_{g,h}(t,x)\le \max\left\{\max_{-h_0\le x\le h_0}u_0(x),
~K_0\right\}~\mbox{ for } 0<t\leq T,~x\in(g(t),h(t)),
\label{v-bound}
\end{equation}
where $K_0$ is defined in the assumption {\rm\bf (f2)}.
\label{Lemma202}
\end{lemma}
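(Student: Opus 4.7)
The plan is to construct $V_{g,h}$ locally in time via Banach's fixed point theorem applied to an integral reformulation of \eqref{201}, derive the a priori bound \eqref{v-bound} by comparing $V_{g,h}$ with the constant $M_0$ through Theorem \ref{lemma2.1}, and then iterate the local construction to obtain a global solution on $[0,T]$.

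I would begin by recasting \eqref{201} as an integral equation. Multiplying by $e^{dt}$, integrating in $t$, and using the \emph{entry time} $t_x\in[0,T]$ defined by $t_x=0$ for $x\in[-h_0,h_0]$, $t_x=h^{-1}(x)$ for $x>h_0$, $t_x=g^{-1}(x)$ for $x<-h_0$ (well defined and continuous in $x$ by the strict monotonicity built into $\mathbb H_{h_0,T}$ and $\mathbb G_{h_0,T}$), one obtains
\begin{equation*}
v(t,x)=e^{-d(t-t_x)}\eta(x)+\int_{t_x}^{t}e^{-d(t-s)}\Bigl[d\int_{g(s)}^{h(s)}J(x-y)v(s,y)\,dy+f\bigl(s,x,v(s,x)\bigr)\Bigr]ds
\end{equation*}
for $(t,x)\in\overline\Omega_{g,h}$, with $\eta(x)=u_0(x)$ when $t_x=0$ and $\eta(x)=0$ otherwise. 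This formulation automatically encodes both $v(0,\cdot)=u_0$ and the lateral boundary condition $v(t,g(t))=v(t,h(t))=0$.

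For local existence, fix $M>\max\{\max_{[-h_0,h_0]}u_0,K_0\}$, let $K=K(M)$ be the Lipschitz constant from \textbf{(f1)}, and let $\Gamma$ denote the operator given by the right-hand side above, acting on the closed subset of $\mathbb X_{u_0,g,h}$ restricted to $[0,\tau]$ consisting of functions with $\|v\|_\infty\le M$. Using $\int_\R J=1$, standard estimates show that $\Gamma$ preserves this set and satisfies $\|\Gamma v_1-\Gamma v_2\|_\infty\le\tau(d+K)\|v_1-v_2\|_\infty$, so Banach's theorem yields a unique continuous solution on $[0,\tau]$ whenever $\tau<1/(d+K)$. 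Since $\tau$ depends only on $M,d,K$, once the bound \eqref{v-bound} is in place the local solution extends inductively to all of $[0,T]$.

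To obtain \eqref{v-bound}, set $M_0:=\max\{\max_{[-h_0,h_0]}u_0,K_0\}$, so $f(t,x,M_0)\le 0$ by \textbf{(f2)}. Writing $f(t,x,v)-f(t,x,M_0)=\tilde c(t,x)(v-M_0)$ with $\tilde c\in L^\infty$ via \textbf{(f1)}, a short computation shows that $\phi:=M_0-v$ (extended by zero outside $\Omega_{g,h}$) satisfies in $\Omega_{g,h}$
\begin{equation*}
\phi_t\ge d\int_{g(t)}^{h(t)}J(x-y)\phi(t,y)\,dy-d\phi+\tilde c(t,x)\phi+dM_0\Bigl(1-\int_{g(t)}^{h(t)}J(x-y)\,dy\Bigr),
\end{equation*}
and since the last term is nonnegative, together with $\phi(t,g(t))=\phi(t,h(t))=M_0\ge 0$ and $\phi(0,\cdot)\ge 0$, Theorem \ref{lemma2.1}(i) (with $n=1$, $c_{11}=\tilde c$) yields $\phi\ge 0$, i.e., $v\le M_0$. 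Strict positivity $v>0$ in $\Omega_{g,h}$ follows from Theorem \ref{lemma2.1}(ii) applied to $v$ itself, after rewriting $f(t,x,v)=c(t,x)v$ with $c\in L^\infty$ via \textbf{(f1)} and $f(t,x,0)=0$, using $u_0\not\equiv 0$ and $J(0)>0$. The main technical nuisance is verifying continuity of the fixed-point solution across the curves $x=\pm h_0$, where $t_x$ transitions from $0$ to positive values; this reduces to $u_0(\pm h_0)=0$ from \eqref{102} together with the continuity of $g$ and $h$, so is routine.
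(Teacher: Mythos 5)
Your overall architecture is genuinely different from the paper's. For existence, you use a single Duhamel-type integral equation in $v$ and apply Banach's fixed point theorem directly; the paper instead proceeds in two stages: for each frozen $\phi\in\mathbb X_{u_0,g,h}$ it solves the parametrised ODE
\[
v_t = d\int_{g(t)}^{h(t)}J(x-y)\phi(t,y)\,dy - dv + \tilde f(t,x,v),\qquad v(t_x,x)=\tilde u_0(x),
\]
(with a truncated $\tilde f$) to get $V_\phi$, and then contracts $\phi\mapsto V_\phi$. For the a priori bound $v\le M_0$, you invoke Theorem \ref{lemma2.1}(i) with $\phi=M_0-v$, whereas the paper runs a direct first-contact argument on the ODE (if $v$ reaches $M_0+\epsilon$ at $(t^*,x^*)$, then $v_t(t^*,x^*)\ge 0$, $f\le 0$, and the nonlocal term gives a contradiction because $v<M_0+\epsilon$ near the lateral boundary). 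Both routes are legitimate; yours is shorter but leans on Theorem \ref{lemma2.1}, while the paper's is more self-contained and does not require $\phi_t\in C(\overline D_T)$-type regularity to be verified for the auxiliary $M_0-v$.

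There is one genuine gap in the existence step. You let $\Gamma$ act on the subset of $\mathbb X_{u_0,g,h}|_{[0,\tau]}$ with $\|v\|_\infty\le M$, but membership in $\mathbb X_{u_0,g,h}$ requires $v\ge 0$, and $\Gamma$ does \emph{not} preserve nonnegativity: for $v\ge 0$ the term $\int_{t_x}^t e^{-d(t-s)}f(s,x,v)\,ds$ can be strictly negative (e.g.\ when $v$ is near $K_0$ and $x$ is close to the moving boundary so that $\eta(x)$ is tiny), so $\Gamma v$ can dip below zero. Hence the claimed invariance fails and the Banach argument as stated does not get off the ground. A closely related issue is that by \textbf{(f1)} the nonlinearity $f$ is only defined for $u\in\R^+$, so the map $\Gamma$ is not even well defined once iterates leave $\{v\ge 0\}$. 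The fix is exactly the paper's device: replace $f$ by $\tilde f$ with $\tilde f(t,x,u)=f(t,x,u)$ for $u\ge 0$ and $\tilde f(t,x,u)=0$ for $u<0$, run Banach on the closed ball $\{v\in C(\overline\Omega_\tau):\|v\|_\infty\le M,\ v(0,\cdot)=u_0,\ v=0\text{ on the lateral boundary}\}$ (dropping the sign constraint), and then establish $v\ge 0$ \emph{a posteriori} by the very Theorem \ref{lemma2.1} argument you already describe, after which $\tilde f=f$ on the range of $v$ and the fixed point solves \eqref{201}. A second, minor point: $\tau<1/(d+K)$ alone does not give $\|\Gamma v\|_\infty\le M$; one needs $\max u_0 + \tau(d+K)M\le M$, which forces $M$ to be taken a safe margin above $\max u_0$ (the paper uses $M>4\|u_0\|_\infty$). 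Neither issue is fatal, but the nonnegativity one must be repaired before the fixed point exists.
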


\noindent
{\bf Strategy of the proof of Theorem \ref{Thm22}:}
By Lemma \ref{Lemma202},
for any $T>0$ and $(h,g)\in\mathbb{G}_{h_0,T}\times\mathbb H_{h_0,T}$, we can find a
unique $V_{g,h}\in\mathbb{X}_{u_0,g,h}$ that solves (\ref{201}), and it has the property
$$
0<V_{g,h}(t,x)\le M_0:=\max\big\{\|u_0\|_\infty,~K_0\big\} \mbox{ for  } (t,x)\in
\Omega_{g,h}.
$$

\noindent
\underline{\it A nonlinear mapping}: Using  $V_{g,h}(t,x)$, we define a mapping $\tilde \Gamma$ by 
\[\tilde \Gamma (g,h)=
\left(\tilde g,\,\tilde h\right), \ \mbox{ where, for $0<t\leq T$,}\]
\begin{equation*}
\left\{
\begin{aligned}
&\tilde g(t)=-h_0-\mu\int_0^t\int_{g(\tau)}^{h(\tau)}\int_{-\infty}^{g(\tau)}J(y-x)V_{g,h}(\tau,x)dydxd\tau,\\
&\tilde h(t)=h_0+\mu\int_0^t\int_{g(\tau)}^{h(\tau)}\int_{h(\tau)}^{+\infty}J(y-x)V_{g,h}(\tau,x)dydxd\tau.
\end{aligned}
\right.
\label{2003}
\end{equation*}
 \smallskip

\noindent
\underline{\it Local existence}: We will show
that if $T$ is small enough, then $\tilde\Gamma$ maps  a suitable closed subset $\Sigma_T$ of $\mathbb{G}_{h_0,T}\times\mathbb{H}_{h_0,T}$ into itself,
and  is a {\color{red}contraction mapping}. This  implies that $\tilde \Gamma$ has a unique {\color{red}fixed point} $(g,h)$ in $\Sigma_T$,
which gives a solution $(V_{g,h}, g, h)$ of \eqref{101} defined for $t\in (0, T]$. 
\smallskip

\noindent
\underline{\it Global existence}:  We will then show that this unique solution
defined locally in time can be extended uniquely for all $t>0$.
\bigskip

\noindent
{\bf Proof of Lemma \ref{Lemma202}:} We break the proof into three steps.

\medskip

\noindent {\bf Step 1:} {\it A parametrized ODE problem.}

For given $x\in[g(T),h(T)]$, define
\begin{equation}\begin{aligned}
\tilde u_0(x)&:=
\begin{cases}
0 &\mbox{ if } x\not\in[-h_0,h_0],\\
u_0(x)&\mbox{ if }  x\in[-h_0,h_0].
\end{cases}
\\
t_x&:=
\begin{cases}
t_{x,g}& \mbox{ if  $x\in[g(T),-h_0)$ and $g(t_{x,g})=x$},\\
0 &\mbox{ if } x\in[-h_0,h_0],\\
t_{x,h} &\mbox{ if $x\in(h_0,h(T)]$ and $h(t_{x,h})=x$}.
\end{cases}
\end{aligned}
\label{definition-of-t_x}
\end{equation}
Clearly $t_x=T$ for $x=g(T)$ and $x=h(T)$,  $t_x<T$ for $x\in (g(T), h(T))$, and 
\[\mbox{$x\to t_x$ is continuous over $[g(T), h(T)]$.}
\]

 For any given $\phi\in
\mathbb{X}_{u_0, g,h}$, consider the following ODE initial value problem (with parameter $x$):
\begin{equation}\label{202}
\left\{
\begin{aligned}
 &v_t=d\int_{g(t)}^{h(t)}J(x-y){\color{red}\phi(t,y)}dy-dv(t,x)+\tilde f(t,x,v),& &t_x<t\le T,\\
&v(t_x,x)=\tilde u_0(x),& &x\in(g(T),h(T)),
\end{aligned}
\right.
\end{equation}
where 
\[
\tilde f(t,x,v):=\begin{cases} 0 & \mbox{ for $v<0$,}\\ 
f(t,x,v) & \mbox{ for $v\geq 0$.}
\end{cases}
\]
 Clearly $\tilde f$ also satisfies  {\rm \bf (f1)-(f2)}.
Denote
$$
F(t,x,v):=d\int_{g(t)}^{h(t)}J(x-y)\phi(t,y)dy-dv(t,x)
+\tilde f(t,x,v).
$$
Thanks to the assumption \textbf{(f1)}, for any $v_1,v_2
\in (-\infty, L]$, we have
$$
\Big|F(t,x,v_1)-F(t,x,v_2)\Big|\le\Big|\tilde f(t,x,v_1)-\tilde f(t,x,v_2)\Big|
+d\Big|v_1-v_2\Big|\le K_1\Big|v_1-v_2\Big|,
$$
where
\[
  L:=1+\max\Big\{\|\phi\|_{C(\overline\Omega_T)},  K_0\Big\},\; K_1:=d+K(L).
\]
In other words, the function $F(t,x,v)$ is Lipschitz continuous in $v$ for $v\in (-\infty, L]$
 with Lipschitz
constant $K_1$, uniformly for $t\in [0, T]$ and $x\in (g(T), h(T))$. Additionally, $F(t,x,v)$ is continuous
in all its variables in this range.
Hence it follows from the Fundamental Theorem of
ODEs  that, for every fixed $x\in (g(T), h(T))$, problem (\ref{202}) admits a
unique solution, denoted by $V_{\phi}(t,x)$ defined in some  interval $[t_x,T_x)$ of $t$.

We claim that $t\to V_\phi(t,x)$ can be uniquely extended to $[t_x, T]$. Clearly it suffices to show that if $V_\phi(t,x)$ is uniquely defined for $t\in [t_x, \tilde T]$ with $\tilde T\in (t_x, T]$, then
\begin{equation}\label{V-bd}
0\leq V_\phi(t,x)< L \mbox{ for } t\in (t_x, \tilde T].
\end{equation}

We first show that $V_\phi(t,x)<L$ for $ t\in (t_x, \tilde T]$. Arguing indirectly we assume that this inequality does not hold,
and hence, in view of $V_\phi(t_x,x)=\tilde u_0(x)\leq \|\phi\|_{C(\overline\Omega_T)} <L$, there exists some $t^*\in (t_x, \tilde T]$ such that
$V_\phi(t,x)<L$ for $t\in (t_x, t^*)$ and $V_\phi (t^*,x)=L$. It follows that $(V_\phi)_t(t^*,x)\geq 0$ and $\tilde f(t^*,x, V_\phi(t^*,x))\leq 0$
(due to $L>K_0$). We thus obtain from the differential equation satisfied by $V_\phi(t,x)$ that
\[
dL=dV_\phi(t^*,x)\leq d\int_{g(t^*)}^{h(t^*)}J(x-y)\phi(t^*,y)dy\leq d\|\phi\|_{C(\overline\Omega_T)}\leq d(L-1).
\]
It follows that $L\leq L-1$. This contradiction proves our claim.

We now prove the first inequality in \eqref{V-bd}. Since
\[
\tilde f(t,x,v)=\tilde f(t,x,v)-\tilde f(t,x,0)\geq -K(L) |v| \mbox{ for } v\in (-\infty, L],
\]
we have
\[
(V_\phi)_t\geq -K_1{\rm sgn}(V_\phi) V_\phi +d\int_{g(t)}^{h(t)}J(x-y)\phi(t,y)dy\geq -K_1 {\rm sgn}(V_\phi)V_\phi \mbox{ for } t\in [t_x, \tilde t].
\]
Since $V_\phi(t_x,x)=\tilde u_0(x)\geq 0$, the above inequality immediately gives $V_\phi(t,x)\geq 0$ for $t\in [t_x, \tilde T]$.
 We have thus proved \eqref{V-bd}, and therefore the solution $V_\phi(t,x)$ of  \eqref{202} is uniquely defined for $t\in [t_x, T]$.

\medskip

\noindent {\bf Step 2:} {\it A fixed point problem.}

Let us note that $V_{\phi}(0,x)=u_0(x)$ for $x\in [-h_0, h_0]$,
and $V_{\phi}(t,x)=0$ for $t\in [0, T)$ and $x\in\partial(g(t), h(t))=\big\{ g(t), h(t)\big\}$. Moreover, by the continuous dependence of the unique ODE solution on the initial value and on the parameters in the equation, we also see that $V_\phi(t,x)$ is continuous in $(t,x)\in\overline \Omega_T$, and hence $V_\phi\in \mathbb X_{u_0, g, h}$.  We now define
$\Gamma: \mathbb X_{u_0, g, h}\to \mathbb X_{u_0, g, h}$ by
\[
\Gamma \phi=V_{\phi}.
\]
and notice that $\phi$ solves \eqref{201} if it is a fixed point of $\Gamma$.

We want to show that $\Gamma$ is a contraction mapping if $T$ is replaced by a sufficiently small $s\in (0, T]$. 
For convenience of notation, we define
for any $s\in(0,T]$,
\[
\Omega_s:=\Big\{(t,x)\in\Omega_{g,h}: t\leq s\Big\},\; \mathbb X_s:=\Big\{\psi|_{\overline\Omega_s}:\;\psi\in\mathbb X_{u_0,g,h}\Big\}.
\]
We then
define the mapping $\Gamma_s: \mathbb X_s\to \mathbb X_s$ by
\[
\Gamma_s \psi=V_{ \psi}.
\]
Clearly, if $\Gamma_s \psi=\psi$ then $\psi(t,x)$ solves \eqref{201} for $t\in (0, s]$, and vice versa.

We show next that for sufficiently small $s>0$, $\Gamma_s$ has a unique fixed point in $\mathbb X_s$.
We will prove this conclusion by the contraction mapping theorem; namely we prove that for such $s$, {\color{red}$\Gamma_s$ is a contraction mapping on a closed subset of $\mathbb X_s$, and any fixed point of $\Gamma_s$ in $\mathbb X_s$ lies in this closed subset.}

Firstly we note that
 $\mathbb{X}_s$ is a complete metric space
with the metric
$$
d(\phi_1,\phi_2)=\|\phi_1-\phi_2\|_{C(\overline\Omega_s)}.
$$
Fix $M> \max\big\{4\|u_0\|_\infty, K_0\big\}$ and define
\[
\mathbb X_s^M:=\big\{ \phi\in \mathbb X_s: \; \|\phi\|_{C(\overline \Omega_s)}\leq M\big\}.
\]
Clearly $\mathbb X_s^M$ is a closed subset of $\mathbb X_s$. We show next that there exists $\delta>0$ small depending on $M$ such that
for every $s\in (0, \delta]$, $\Gamma_s$ maps $\mathbb X_s^M$ into itself, and is a contraction mapping.

Let $\phi\in \mathbb X_s^M$ and denote $v=\Gamma_s \phi$. Then $v$ solves  \eqref{202}  with $T$ replaced by $s$. It follows that \eqref{V-bd} holds
with {$\tilde T$} replaced by $s$ and $V_\phi$ replaced by $v$. We prove that for all small $s>0$,
\[
v(t,x)\leq M \mbox{ for } t\in [t_x, s],\; x\in (g(s), h(s)),
\]
which is equivalent to $\|v\|_{C(\overline \Omega_s)}\leq M$.

Let us observe that due to {\rm\bf (f1)-(f2)}, there exists $K_*>0$ such that
\[
f(t,x,u)\leq K_* u \mbox{ for all } u\in [0,\infty).
\]
Now from \eqref{202} we obtain, for $ t\in [t_x, s]$ and $ x\in (g(s), h(s))$,
\[
v_t\leq d\int_{g(t)}^{h(t)}J(x-y)\phi(t,y)dy+K_*v\leq d\|\phi\|_{C(\overline \Omega_s)}+K_*v.
\]
It follows that, for such $t$ and $x$,
\[
e^{-K_*t} v(t, x)- e^{-K_*t_x}  v(t_x,x)\leq d\int_{t_x}^t e^{-K_*\tau}d\tau \|\phi\|_{C(\overline \Omega_s)},
\]
and
\[
v(t,x)\leq \|u_0\|_\infty e^{K_*t}+d(t-t_x) e^{K_*t}  \|\phi\|_{C(\overline \Omega_s)}\leq \|u_0\|_\infty e^{K_*s}+ds e^{K_* s } M.
\]
If $\delta_1>0$ is small enough such that
\[
d\delta_1  e^{K_* \delta_1} \leq \frac 14,\; e^{K_*\delta_1}\leq 2,
\]
then for $s\in (0,\delta_1]$ we have
\[
v(t,x)\leq \frac 14(8 \|u_0\|_\infty+M)\leq M \mbox{ in } \Omega_s.
\]
Thus $v=\Gamma_s\phi\in \mathbb X_s^M$, as we wanted. Let us note from the above choice of $\delta_1$ that it only depends on
$d$ and $K_*$.

Next we show that by shrinking $\delta_1$ if necessary, $\Gamma_s$ is a contraction mapping on $\mathbb X_s^M$ when $s\in (0, \delta_1]$.
So let $\phi_1,\phi_2\in\mathbb{X}_s^M$, and denote
$V_i=\Gamma_s\phi_i$, $i=1,2$. Then $w=V_1-V_2$ satisfies
\begin{equation}
\left\{
\begin{aligned}
&w_t+c_1(t,x)w=d\int_{g(t)}^{h(t)}J(x-y)\left(\phi_1-\phi_2\right)(t,y)dy,& &t_x<t\le s,~x\in(g(t),h(t)),\\
&w(t_x,x)=0,& &x\in (g(t), h(t)),
\end{aligned}
\right.
\label{203}
\end{equation}
where
$$
c_1(t,x):=d-\frac{f(t,x,V_1)-f(t,x,V_2)}{V_1-V_2} \mbox{ and hence } \|c_1\|_\infty\leq K_1(M):=d+K(M).
$$
It follows that, for $t_x<t\le s$ and $x\in  (g(t),h(t))$,
\begin{align*}
w(t,x)=de^{-\int_{t_x}^tc_1(\tau,x)d\tau}
\int_{t_x}^te^{\int_{t_x}^\xi c_1(\tau,x)d\tau}\int_{g(\xi)}^{h(\xi)}J(x-y)\left(
\phi_1-\phi_2\right)(\xi,y)dyd\xi.
\end{align*}
We thus deduce, for such $t$ and $x$,
\begin{align*}
\Big|w(t,x)\Big|&\le de^{K_1(M)(t-t_x)}\|\phi_1-\phi_2\|_{C(
\overline\Omega_s)}\int_{t_x}^te^{K_1(M)(\xi-t_x)}d\xi\\
&\le de^{K_1(M)s}\|\phi_1-\phi_2\|_{C(\overline \Omega_s)}\cdot
(t-t_x)e^{K_1(M)(t-t_x)}\\
&\le s d\, e^{2K_1(M)s}\|\phi_1-\phi_2\|_{C(\overline\Omega_s)}.
\end{align*}
Hence
$$
\|\Gamma_s \phi_1-\Gamma_s\phi_2\|_{C(\overline\Omega_s)} =\|w\|_{C(\overline\Omega_s)}\le\frac 12\|\phi_1-\phi_2\|_{C(\overline
\Omega_s)}~\text{ for }~s\in (0, \delta],
$$
provided that $\delta\in (0,\delta_1]$ satisfies
\[
\delta d\, e^{2K_1(M)\delta}\leq \frac 12.
\]
For such $s$ we may now apply the Contraction Mapping Theorem to conclude that $\Gamma_s$ has a unique fixed point $V$ in $\mathbb X_s^M$. It follows that $v=V$ solves \eqref{201} for $0<t\leq s$.

If we can show that any solution $v$ of \eqref{201} must satisfy $0\leq v\leq M$ in $\Omega_s$, then $v$ would coincide with the unique fixed point $V$ of $\Gamma_s$ in $\mathbb X_s^M$, and uniqueness of the local solution to \eqref{201} is proved.

 We next prove such an estimate for $v$.
We note that $v\geq 0$ already follows from \eqref{V-bd}. So we only need to prove $v\leq M$. We actually prove the following stronger inequality
\begin{equation}\label{v-upper-bd}
v(t,x)\leq M_0:=\max\big\{\|u_0\|_\infty,\; K_0\big\}<M \mbox{ for } t\in [t_x, s],\; x\in (g(s), h(s)).
\end{equation}
It suffices to show that the above inequality holds with $M_0$ replaced by $M_0+\epsilon$ for any given $\epsilon>0$.
We argue by contradiction. Suppose this is not true. Then due to
$v(t_x, x)=\tilde u_0(x)\leq \|u_0\|_\infty<M_\epsilon:=M_0+\epsilon$, there exists some  $t^*\in (t_x, s]$ and $x^*\in (g(s), h(s))$  such that
\[
v(t^*, x^*)=M_\epsilon \mbox{ and }
0\leq v(t,x)<M_\epsilon \mbox{ for } t\in [t_x, t^*),\; x\in (g(s), h(s)).
\]
It follows that $v_t(t^*, x^*)\geq 0$ and $f(t^*, x^*, v(t^*, x^*))\leq 0$.
Hence from \eqref{201} we obtain
\[
0\leq v_t(t^*,x^*)\leq d\int_{g(t^*)}^{h(t^*)}J(x^*-y) v(t^*,y)dy-d v(t^*, x^*).
\]
Since $v(t^*,g(t^*))=v(t^*, h(t^*))=0$, for $y\in (g(t^*), h(t^*))$ but close to the boundary of this interval, $v(t^*,y)<M_\epsilon$.
It follows that
\[
dM_\epsilon=dv(t^*,x^*)\leq d\int_{g(t^*)}^{h(t^*)}J(x^*-y) v(t^*,y)dy<dM_\epsilon\int_{g(t^*)}^{h(t^*)}J(x^*-y)dy\leq dM_\epsilon.
\]
This contradiction proves \eqref{v-upper-bd}.
 Thus $v$ satisfies the wanted inequality and hence coincides with the unique fixed point
of $\Gamma_s$ in $\mathbb X_s^M$. We have now proved the fact that for every $s\in (0,\delta]$, $\Gamma_s$ has a unique fixed point
in $\mathbb X_s$, which is the unique solution to \eqref{201} with $T$ replaced by $s$.
\smallskip

\noindent
{\bf Step 3:} {\it Extension and completion of the proof.}

From Step 2 we know that \eqref{201} has a unique solution defined for $t\in [0, s]$ with $s\in (0, \delta]$. Applying Step 2
to \eqref{201} but with the initial time $t=0$ replaced by $t=s$ we see that the unique solution can be extended to a slightly bigger interval
of $t$. Moreover, by \eqref{v-upper-bd} and the definition of $\delta$ in Step 2, we see that the new extension can be done by
increasing $t$ by at least some $\tilde\delta>0$, with $\tilde\delta$ depends only on $M_0$ and $d$. Furthermore, from the above proof of
\eqref{v-upper-bd} we easily see that the extended solution $v$ satisfies \eqref{v-upper-bd} in the newly extended range of $t$.
Thus the extension by $\tilde \delta$ for $t$ can be repeated. Clearly by repeating this process finitely many times, the solution of
\eqref{201} will be uniquely extended to $t\in [t_x, T)$. As explained above, now
 \eqref{v-upper-bd} holds for $t\in [t_x, T)$, and hence to prove \eqref{v-bound}, it only remains to show $V_{g,h}(t,x)>0$ for $t\in (0, T)$ and $x\in (g(t), h(t))$. However, due to {\rm\bf (f1)-(f2)} and \eqref{v-upper-bd}, we may write $f(t,x,V_{g,h}(t,x))=c(t,x)V_{g,h}(t,x)$ with $c\in L^\infty( \Omega_s)$ for any $s\in (0, T)$. Thus we can use the maximum principle Theorem 2.1 to conclude.
\hfill $\Box$

\subsection{Proof of Theorem \ref{Thm22}}

By Lemma \ref{Lemma202},
for any $T>0$ and $(h,g)\in\mathbb{G}_{h_0,T}\times\mathbb H_{h_0,T}$, we can find a
unique $V_{g,h}\in\mathbb{X}_{u_0,g,h}$ that solves (\ref{201}), and it has the property
$$
0<V_{g,h}(t,x)\le M_0:=\max\big\{\|u_0\|_\infty,~K_0\big\} \mbox{ for  } (t,x)\in
\Omega_{g,h}.
$$

Using such a $V_{g,h}(t,x)$, we define the mapping $\tilde \Gamma$ by $\tilde \Gamma (g,h)=
\left(\tilde g,\,\tilde h\right)$, where, for $0<t\leq T$,
\begin{equation}
\left\{
\begin{aligned}
&\tilde h(t)=h_0+\mu\int_0^t\int_{g(\tau)}^{h(\tau)}\int_{h(\tau)
}^{+\infty}J(y-x)V_{g,h}(\tau,x)dydxd\tau,\\
&\tilde g(t)=-h_0-\mu\int_0^t\int_{g(\tau)}^{h(\tau)}\int_{-\infty
}^{g(\tau)}J(y-x)V_{g,h}(\tau,x)dydxd\tau.
\end{aligned}
\right.
\label{2003}
\end{equation}
 To stress the dependence on $T$, we will write
\[
\mathbb G_T=\mathbb G_{h_0,T},\; \mathbb H_T=\mathbb H_{h_0,T},\; \Omega_T=\Omega_{g,h},\; \mathbb X_T=\mathbb X_{u_0, g, h}.
\]

To prove this theorem, we will show
that if $T$ is small enough, then $\tilde\Gamma$ maps  a suitable closed subset $\Sigma_T$ of $\mathbb{G}_T\times\mathbb{H}_T$ into itself,
and  is a contraction mapping. This clearly implies that $\tilde \Gamma$ has a unique fixed point in $\Sigma_T$,
which gives a solution $(V_{g,h}, g, h)$ of \eqref{101} defined for $t\in (0, T]$. We will show that any solution $(u, g,h)$ of \eqref{101}  with  $(g,h)\in \mathbb{G}_T\times\mathbb{H}_T$ must satisfy $(g,h)\in \Sigma_T$, and hence $(g,h)$ must coincide with the unique fixed point of $\tilde\Gamma$ in $\Sigma_T$, which then implies that  $(u,g,h)=(V_{g,h}, g,h)$ is the unique solution of \eqref{101}.  

We will finally  show that this unique solution
defined locally in time can be extended uniquely for all $t>0$.
\medskip

This plan is carried out below in four steps.
\medskip

\noindent
{\bf Step 1:} {\it Properties of $(\tilde g, \tilde h)$ and a closed subset of $\mathbb{G}_T\times\mathbb{H}_T$.}

Let $(g,h)\in \mathbb{G}_T\times\mathbb{H}_T$.
The definitions of $\tilde h(t)$ and
$\tilde g(t)$ indicate that they belong to $C^1([0, T])$  and for $0<t\le T$,
\begin{equation}
\left\{
\begin{aligned}
&\tilde h'(t)= \mu \int_{g(t)}^{h(t)}\int_{h(t)
}^{+\infty}J(y-x)dyV_{g,h}(t,x)dx,\\
&\tilde g'(t)=-\mu\int_{g(t)}^{h(t)}\int_{-\infty
}^{g(t)}J(y-x)dyV_{g,h}(t,x)dx.
\end{aligned}
\right.
\label{2006}
\end{equation}
These identities already imply $\tilde\Gamma (g,h)=(\tilde g, \tilde h)\in  \mathbb{G}_T\times\mathbb{H}_T$, but in order to show $\tilde \Gamma$ is a contraction mapping, we need to
prove some further properties of $\tilde g$ and $\tilde h$, and then choose a suitable closed subset of  $\mathbb{G}_T\times\mathbb{H}_T$, which
is invariant under $\tilde\Gamma$, and on which $\tilde \Gamma$ is a contraction mapping.

Since  $v=V_{g,h}$ solves (\ref{201})  we  obtain by using {\bf (f1)-(f2)} and  \eqref{v-bound} that
\begin{equation}
\left\{
\begin{aligned}
&\left(V_{g,h}\right)_t(t,x)\ge-dV_{g,h}(t,x)-K(M_0)V_{g,h}(t,x), & &0<t\le T,~x\in(g(t),h(t)),\\
&V_{g,h}(t,h(t))=V_{g,h}(t,g(t))=0,& &0<t\le T,\\
&V_{g,h}(0,x)=u_0(x),& &x\in[-h_0,h_0].
\end{aligned}
\right.
\label{2007}
\end{equation}
It follows that
\begin{equation}
\label{V>cu_0}
V_{g,h}(t,x)\ge e^{-(d+K(M_0))t}u_0(x)\ge e^{-(d+K(M_0))T}u_0(x) \mbox{ for } x\in [-h_0, h_0],\; t\in (0, T].
\end{equation}
By \textbf{(J)}  there
exist constants $\epsilon_0\in (0, h_0/4)$ and $\delta_0>0$ such that
\begin{equation}
\label{J>delta_0}
\mbox{$J(z)
\ge\delta_0$ if $|z|\le\epsilon_0$.}
\end{equation}
Using \eqref{2006} we easily see
\[
[\tilde h(t)-\tilde g(t)]'\leq \mu M_0 [h(t)-g(t)] \mbox{ for } t\in [0, T].
\]
We now assume  that $(g, h)$ has the extra property that
\[\mbox{
    $h(T)-g(T)\leq 2h_0+\frac{\epsilon_0}{4}$.}
    \]
    Then
\[
\tilde h(t)-\tilde g(t)\leq 2h_0 +T\mu M_0(2h_0+\frac{\epsilon_0}{4})\leq 2h_0+\frac{\epsilon_0}{4} \mbox{ for } t\in [0, T],
\]
provided that $T>0$ is small enough, depending on $ (\mu, M_0, h_0, \epsilon_0)$.
We fix such a  $T$ and notice from the above extra assumption on $(g, h)$  that
\[
h(t)\in [h_0,  h_0+ \frac{\epsilon_0}{4}],\; g(t)\in [-h_0-\frac{\epsilon_0}{4}, -h_0]  \mbox{ for } t\in [0, T].
\]
Combining this with \eqref{V>cu_0} and \eqref{J>delta_0} we obtain, for such $T$ and $t\in (0, T]$,
\begin{align*}
\int_{g(t)}^{h(t)}\int_{h(t)}^{+\infty}J(y-x)V_{g,h}
(t,x)dydx&\ge\int_{h(t)-\frac{\epsilon_0}{2}}^{h(t)}\int_{
h(t)}^{h(t)+\frac{\epsilon_0}{2}}J(y-x)V_{g,h}(t,x)dydx\\
&\ge e^{-(d+K(M_0))T}\int_{h_0-\frac{\epsilon_0}{4}}^{h_0}\int_{
h_0+\frac{\epsilon_0}{4}}^{h_0+\frac{\epsilon_0}{2}}J(y-x)dyu_0(x)dx\\
&\ge\frac 14\epsilon_0\delta_0e^{-(d+K(M_0))T}\int_{h_0-\frac{
\epsilon_0}{4}}^{h_0}u_0(x)dx=:c_0>0,
\end{align*}
with $c_0$ depending only on $(J, u_0, f)$. Thus, for sufficiently small $T=T(\mu, M_0, h_0, \epsilon_0)>0$,
\begin{equation}
\label{tilde-h'}
\tilde h'(t)\geq \mu c_0 \mbox{ for } t\in [0, T].
\end{equation}
We can similarly obtain, for such $T$,
\begin{equation}
\label{tilde-g'}
\tilde g'(t)\leq -\mu \tilde c_0 \mbox{ for } t\in [0, T],
\end{equation}
for some positive constant $\tilde c_0$ depending only on $(J, u_0, f)$.

We now define, for $s\in (0, T_0]:=(0, T(\mu, M_0, h_0, \epsilon_0)]$,
\begin{align*}
\Sigma_s:=&\Big\{(g,h)\in\mathbb G_s\times \mathbb H_s: \sup_{0\leq t_1<t_2\leq s}\frac{g(t_2)-g(t_1)}{t_2-t_1}\leq -\mu \tilde c_0,\;
\inf_{0\leq t_1<t_2\leq s}\frac{h(t_2)-h(t_1)}{t_2-t_1}\geq \mu c_0,\\
& \hspace{7.5cm} h(t)-g(t)\leq 2h_0+\frac{\epsilon_0}{4} \mbox{ for } t\in [0, s]\Big\}.
\end{align*}
Our analysis above shows that
\[
\tilde \Gamma (\Sigma_s)\subset \Sigma_s \mbox{ for } s\in (0, T_0].
\]

\medskip

\noindent {\bf Step 2:} {\it $\tilde\Gamma$ is a
contraction mapping on $\Sigma_s$ for sufficiently small $s>0$.}
\medskip

Let $s\in (0, T_0]$, $(h_1,g_1), (h_2,g_2)\in\Sigma_s$,
 and note that $\Sigma_s$ is a complete metric space
under the metric
$$
d\left((h_1,g_1),(h_2,g_2)\right)=\|h_1-h_2\|_{C([0,s])}+
\|g_1-g_2\|_{C([0,s])}.
$$

For $i=1,2$, let us denote
\[
\mbox{
 $V_i(t,x):=V_{h_i,g_i}(t,x)$ and
$\tilde\Gamma\left(h_i,g_i\right):=\left(\tilde h_i,\tilde g_i
\right)$. }
\]
We also define
\begin{align*}
&H_{min}(t):=\min\left\{h_1(t),~h_2(t)\right\},~~~H_{max}(t):=\max\left\{
h_1(t),~h_2(t)\right\},\\
&G_{min}(t):=\min\left\{g_1(t),~g_2(t)\right\},~~~~G_{max}(t):=\max\left\{
g_1(t),~g_2(t)\right\},\\
&\Omega_{*s}=\Omega_{G_{min}, H_{max}}:=\Omega_{g_1, h_1}\cup \Omega_{g_2, h_2}.
\end{align*}
For $t\in [0,s]$, we have
\[
2h_0\leq H_{max}(t)-G_{min}(t)\leq 2h_0+\epsilon_0\leq 3h_0,
\]
and
\begin{align*}
~&\left|\tilde h_1(t)-\tilde h_2(t)\right|
\\
\le~&\mu\int_0^t\left|\int_{g_1(\tau)}^{h_1(\tau)}\int_{h_1(\tau)
}^{+\infty}J(y-x)V_1(\tau,x)dydxd\tau-\int_{g_2(\tau)}^{h_2(\tau)}
\int_{h_2(\tau)}^{+\infty}J(y-x)V_2(\tau,x)dydx\right|d\tau
\\
\le~&\mu\int_0^t\int_{g_1(\tau)}^{h_1(\tau)}\int_{h_1(\tau)
}^{+\infty}J(y-x)\Big|V_1(\tau,x)-V_2(\tau,x)\Big|dydxd\tau
\\
&~+\mu\int_0^t\left|\left(\int_{h_1(\tau)}^{h_2(\tau)}\int_{h_1(\tau)
}^{+\infty}+\int_{g_2(\tau)}^{g_1(\tau)}\int_{h_1(\tau)
}^{+\infty}+\int_{g_2(\tau)}^{h_2(\tau)}\int_{h_1(\tau)}^{h_2(\tau)}\right)
J(y-x)V_2(t,x)dydx\right|d\tau
\\
\le~&3h_0\mu\|V_1-V_2\|_{C(\overline\Omega_{*s})}s+\mu M_0\big(1+3h_0\|J\|_\infty\big)\|h_1-h_2\|_{C([0,s])}s
+\mu M_0\|g_1-g_2\|_{C([0,s])}s\\
\le ~ &C_0s\Big[\|V_1-V_2\|_{C(\overline\Omega_{*s})}+\|h_1-h_2\|_{C([0,s])}+\|g_1-g_2\|_{C([0,s])}\Big],
\end{align*}
where $C_0$ depends only on $(\mu, u_0, J, f)$. Let us recall that $V_i$ is always extended by 0 in $\big([0,\infty)\times \mathbb R\big)\setminus \Omega_{g_i, h_i}$ for $i=1,2$.

Similarly, we have, for $t\in [0, s]$,
\begin{align*}
\Big|\tilde g_1(t)-\tilde g_2(t)\Big|\le C_0s\Big[\|V_1-V_2\|_{C(\overline\Omega_s)}+\|h_1-h_2\|_{C([0,s])}+\|g_1-g_2\|_{C([0,s])}\Big].
\end{align*}
Therefore,
\begin{equation}
\begin{aligned}
&~\|\tilde h_1-\tilde h_2\|_{C([0,s])}+\|\tilde g_1-\tilde g_2
\|_{C([0,s])}\\
\le&~2C_0s\Big[\|V_1-V_2\|_{C(\overline\Omega_{*s})}+\|h_1-h_2\|_{C([0,s])}+\|g_1-g_2\|_{C([0,s])}\Big].
\end{aligned}
\label{20013}
\end{equation}

\medskip

Next, we  estimate $\|V_1-V_2\|_{C(\overline\Omega_{*s})}$. We denote $U=V_1-V_2$, and
for fixed $(t^*,x^*)\in\Omega_{*s}$,  we consider
three cases separately.

\smallskip
\noindent
\underline{Case 1.}  $x^*\in[-h_0, h_0]$.

It
follows from the equations satisfied by $V_1$ and $V_2$ that
$U(0,x^*)=0$ and for $0<t\leq s$,
\begin{equation}
U_t(t,x^*)+c_1(t,x^*)U(t,x^*)=A(t,x^*),
\label{2008}
\end{equation}
where
\begin{align*}
&c_1(t,x^*):=d-\frac{f(t,x^*,V_1(t,x^*))-f(t,x^*,V_2(t,x^*))}
{V_1(t,x^*)-V_2(t,x^*)} \mbox{ and so } \|c_1\|_\infty\leq d+K(M_0),\\
&A(t,x^*):=d\int_{g_1(t)}^{h_1(t)}J(x^*-y)V_1(t,y)dy
-d\int_{g_2(t)}^{h_2(t)}J(x^*-y)V_2(t,y)dy.
\end{align*}
Thus
\begin{align*}
U(t^*,x^*)
=e^{-\int_0^{t^*}c_1(\tau,x^*)d\tau}
\int_0^{t^*}e^{\int_0^tc_1(\tau,x^*)d\tau}A(t,x^*)dt.
\end{align*}
We have
\begin{align*}
\Big|A(t,x^*)\Big|&=d\left|\int_{g_1(t)}^{h_1(t)}J(x^*-y)
V_1(t,y)dy-\int_{g_2(t)}^{h_2(t)}J(x^*-y)V_2(t,y)dy\right|\\
&\le d\int_{g_1(t)}^{h_1(t)}J(x^*-y)\big|V_1(t,y)-V_2(t,y)\big|dy
+d\left|\left(\int_{g_2(t)}^{g_1(t)}+\int_{h_1(t)}^{h_2(t)}\right)J(x^*-y)V_2(t,y)dy\right|\\
&\le d\|U\|_{C(\overline \Omega_{*s})}+d\|J\|_\infty M_0\left[\|h_1-h_2\|_{C([0,s])}+\|g_1-g_2\|_{C([0,s])}\right].
\end{align*}
Thus for some $C_1>0$ depending only on
$(d,u_0, M_0, J)$, we have
\begin{equation}
\max_{t\in [0,s]}\Big|A(t,x^*)\Big|
\le C_1 \left(\|U\|_{C(\overline \Omega_{*s})}+\|h_1
-h_2\|_{C([0,s])}+\|g_1-g_2\|_{C([0,s])}\right).
\label{A}
\end{equation}
It follows that
\begin{equation}
\Big|U(t^*,x^*)\Big|
\le C_1s\, e^{2(d+K(M_0))s} \left(\|U\|_{C(\overline \Omega_{*s})}+\|h_1
-h_2\|_{C([0,s])}+\|g_1-g_2\|_{C([0,s])}\right).
\label{20010}
\end{equation}
\smallskip

 \noindent
  \underline{Case 2.}   $x^*\in(h_0, H_{min}(s))$.

  In this case there exist $t_1^*,\, t_2^*\in (0, t^*)$ such
that $x^*=h_1(t_1^*)=h_2(t_2^*)$. Without loss of generality, we may assume that $0<t_1^*
\leq t_2^*$.  Now  we use (\ref{2008}) for $t\in [t_2^*, t^*]$, and obtain
\begin{align*}
U(t^*,x^*)=e^{-\int_{t_2^*}^{t^*}c_1(\tau,x^*)d\tau}\left[U(
t_2^*,x^*)+\int_{t_2^*}^{t^*}e^{\int_{t_2^*}^tc_1(\tau,x^*)d\tau}
A(t,x^*)dt\right].
\end{align*}
It follows that
\begin{equation}
\label{U*}
\begin{aligned}
\Big|U(t^*,x^*)\Big|&\le e^{(d+K(M_0))t^*}\left[\Big|U(t_2^*,x^*)
\Big|+\int_{t_2^*}^{t^*}e^{(d+K(M_0))t}\Big|A(t,x^*)\Big|dt\right]\\
&\le e^{(d+K(M_0))s}\Big|U(t_2^*,x^*)\Big|+s e^{2(d+K(M_0))s}\max_{t\in [0, s]}|A(t,x^*)|.
\end{aligned}
\end{equation}
Since $V_1(t_1^*,x^*)=V_2(t_2^*, x^*)=0$, we have
\begin{align*}
U(t_2^*,x^*)=V_1(t_2^*,x^*)-V_1(t_1^*,x^*)=\int_{t_1^*}^{t_2^*}(V_1)_t(t,x^*)dt,
\end{align*}
and hence from the equation satisfied by $V_1$ we obtain
\begin{align*}
\Big|U(t_2^*,x^*)\Big|&\le\int_{t_1^*}^{t_2^*}\left|d\int_{g_1
(t)}^{h_1(t)}J(x^*-y)V_1(t,y)dy
-dV_1(t,x^*)+f(t,x^*,V_1(t,x^*))\right|dt\\
&\le C_2\Big(t_2^*-t_1^*\Big), \mbox{\;\; for some $C_2>0$ depending only on $(d, M_0, f)$}.
\end{align*}
If $t_1^*=t_2^*$ then clearly $U(t_2^*, x^*)=0$. If $t_1^*<t_2^*$, then
using $\frac{h_1(t_2^*)-h_1(t_1^*)}
{t_2^*-t_1^*}\ge \mu c_0$ we obtain
\[
t_2^*-t_1^*\le\Big|h_1(t_2^*)-h_1(t_1^*)\Big|(\mu c_0)^{-1}.
\]
Since
$$
0=h_1(t_1^*)-h_2(t_2^*)=h_1(t_1^*)-h_1(t_2^*)+h_1(t_2^*)-h_2(t_2^*),
$$
we have $h_1(t_2^*)-h_1(t_1^*)=h_1(t_2^*)-h_2(t_2^*)$, and thus
$$
t_2^*-t_1^*\le\Big|h_1(t_2^*)-h_1(t_1^*)\Big|(\mu c_0)^{-1}=
\Big|h_1(t_2^*)-h_2(t_2^*)\Big|(\mu c_0)^{-1}.
$$
Therefore there exists some positive constant $C_3=C_3(\mu c_0,C_2)$
such that
$$
\Big|U(t_2^*,x^*)\Big|\le C_3\|h_1-h_2\|_{C([0,s])}.
$$
Substituting this and \eqref{A} proved in Case 1 above to \eqref{U*}, we obtain
\begin{equation}
\begin{aligned}
&\Big|U(t^*,x^*)\Big|
\le\ e^{(d+K(M_0))s}C_3\|h_1-h_2\|_{C( [0,s])} \\
& \ \ \ \ \  + C_1 s e^{2(d+K(M_0))s} \left(\|U\|_{C(\overline \Omega_{*s})}+\|h_1
-h_2\|_{C([0,s])}+\|g_1-g_2\|_{C([0,s])}\right).
\end{aligned}
\label{2009}
\end{equation}

\smallskip

\noindent
\underline{Case 3.}  $x^*\in [H_{min}(s), H_{max}(s))$.

Without loss of generality we assume that $h_1(s)<h_2(s)$. Then $H_1(s)=h_1(s),\; H_2(s)=h_2(s)$ and
\[
h_1(t^*)\leq h_1(s)<x^*<H_2(t^*) =h_2(t^*),
\]
\[
\mbox{$V_1(t, x^*)=0$ for $t\in [t_2^*, t^*]$,\;  $0<h_2(t^*)-h_2(t_2^*)\leq h_2(t^*)-h_1(t^*)$}.
\]
We have
\begin{align*}
0<V_2(t^*, x^*)&=\int_{t_2^*}^{t^*} \left[d\int_{g_2(t)}^{h_2(t)}J(x^*-y)V_2(t, y)dy-dV_2(t, x^*)+f(t, x^*, V_2(t, x^*))\right]dt\\
&\leq (t^*-t_2^*)\big[d+K(M_0)\big]M_0\\
&\leq \big[h_2(t^*)-h_2(t_2^*)\big](\mu c_0)^{-1}\big[d+K(M_0)\big]M_0\\
&\leq (\mu c_0)^{-1}\big[d+K(M_0)\big]M_0 \big[h_2(t^*)-h_1(t^*)\big]\\
&\leq C_4\|h_1-h_2\|_{C([0,s])},
\end{align*}
with $C_4:= (\mu c_0)^{-1}\big[d+K(M_0)\big]M_0$.

 We thus obtain
\begin{equation}
|U(t^*, x^*)|=V_2(t^*,x^*)\leq C_4\|h_1-h_2\|_{C([0,s])}.
\label{20011}
\end{equation}

The inequalities (\ref{20010}),
(\ref{2009}) and (\ref{20011}) indicate that, there exists $C_5>0$ depending only on $(\mu c_0, d, u_0, J, f)$ such that,
whether we are in Cases 1, 2 or 3, we always have
\begin{equation}
|U(t^*, x^*)|\leq C_5 \left(\|U\|_{C(\overline \Omega_{*s})}s+\|h_1
-h_2\|_{C([0,s])}+\|g_1-g_2\|_{C([0,s])}\right).
\label{20012}
\end{equation}

Analogously, we can examine the cases $x^*\in (G_2(s), -h_0)$ and $x^*\in (G_1(s), G_2(s)]$ to obtain a constant $C_6>0$ depending only on
$(\mu \tilde c_0, d, u_0, J, f)$ such that \eqref{20012} holds with $C_5$ replaced by $C_6$. Setting $C^*:=\max\big\{C_5, C_6\big\}$, we thus obtain
\[
|U(t^*, x^*)|\leq C^* \left(\|U\|_{C(\overline \Omega_{*s})}s+\|h_1
-h_2\|_{C([0,s])}+\|g_1-g_2\|_{C([0,s])}\right) \mbox{ for all } (t^*, x^*)\in\Omega_{*s}.
\]
It follows that
\[
\|U\|_{C(\overline \Omega_{*s})}\leq  C^* \left(\|U\|_{C(\overline \Omega_{*s})}s+\|h_1
-h_2\|_{C([0,s])}+\|g_1-g_2\|_{C([0,s])}\right).
\]
Let us recall that the above inequality holds for all $s\in (0, T_0]$ with $T_0$ given near the end of Step 1. Set $T_1:=\min\Big\{T_0,\; \frac{1}{2C^*}\Big\}$.
Then we easily deduce
\[
\|U\|_{C(\overline \Omega_{*s})}\leq  2C^* \left(\|h_1
-h_2\|_{C([0,s])}+\|g_1-g_2\|_{C([0,s])}\right)\; \mbox{ for } s\in (0, T_1].
\]
Substituting this inequality into (\ref{20013}) we obtain, for $s\in (0, T_1]$,
\begin{align*}
&~\|\tilde h_1-\tilde h_2\|_{C([0,s])}+\|\tilde g_1-\tilde g_2
\|_{C([0,s])}\\
\le&~ 2C_0(2C^*+1)s\left[\|h_1-h_2\|_{C([0,s])}
+\|g_1-g_2\|_{C([0,s])}\right].
\end{align*}
Thus if we define
$T_2$  by $2C_0(2C^*+1)T_2=\frac 12$, and $T^*:=\min\big\{T_1, T_2\big\}$, then
\begin{align*}
\|\tilde h_1-\tilde h_2\|_{C([0,T^*])}+\|\tilde g_1-\tilde g_2
\|_{C([0,T^*])}\le\frac 12\left[\|h_1-h_2\|_{C([0,T^*])}
+\|g_1-g_2\|_{C([0,T^*])}\right],
\end{align*}
 i.e., $\tilde\Gamma$
is a contraction mapping on $\Sigma_{T^*}$.

\medskip

\noindent
{\bf Step 3:} {\it Local existence and uniqueness.}

By Step 2 and the Contraction Mapping Theorem we know that \eqref{101} has a solution $(u, g,h)$ for $t\in (0, T^*]$. If we can show that
$(g,h)\in \Sigma_{T^*}$  holds for any solution $(u,g,h)$ of \eqref{101} defined over $t\in (0, T^*]$, then it is the unique fixed point of $\tilde \Gamma$ in $\Sigma_{T^*}$ and the uniqueness of $(u,g,h)$ follows.

So let $(u,g,h)$ be an arbitrary solution of \eqref{101} defined for $t\in (0, T^*]$. Then
\[
\left\{
\begin{aligned}
& h'(t)= \mu \int_{g(t)}^{h(t)}\int_{h(t)
}^{+\infty}J(y-x)u (t,x)dydx,\\
& g'(t)=-\mu\int_{g(t)}^{h(t)}\int_{-\infty
}^{g(t)}J(y-x)u(t,x)dydx.
\end{aligned}
\right.
\]
By Lemma \ref{Lemma202}, we have
\[
0<u(t,x)\leq M_0 \mbox{ for } t\in [0, T^*], \; x\in (g(t), h(t)).
\]
It follows that
\[
[h(t)-g(t)]'=\mu\int_{g(t)}^{h(t)}\Big[1-\int_{g(t)}^{h(t)}J(y-x)dy\Big]u(t,x)dx\leq \mu M_0[h(t)-g(t)] \mbox{ for } t\in (0, T^*].
\]
We thus obtain
\begin{equation}
\label{h-g}
h(t)-g(t)\leq 2h_0 e^{\mu M_0 t} \mbox{ for } t\in (0, T^*].
\end{equation}
Therefore if we shrink $T^*$  if necessary so that
\[
2h_0e^{\mu M_0 T^*}\leq 2h_0+\frac{\epsilon_0}{4},
\]
then
\[
h(t)-g(t)\leq 2h_0+\frac{\epsilon_0}{4} \mbox{ for } t\in [0, T^*].
\]
Moreover, the proof of \eqref{tilde-h'} and \eqref{tilde-g'} gives
\[
h'(t)\geq \mu c_0,\; g'(t)\leq -\mu \tilde c_0 \mbox{ for } t\in (0, T^*].
\]
Thus indeed $(g,h)\in\Sigma_{T^*}$, as we wanted.
This proves the local existence and uniqueness of the  solution to \eqref{101}.

\medskip

\noindent
{\bf Step 4:} {\it Global existence and uniqueness.}\medskip

By Step 3, we see the \eqref{101} has a unique solution $(u,g,h)$  for some initial time interval $(0, T)$, and for
any $s\in (0, T)$, $u(s,x)>0$ for $x\in (g(s), h(s))$ and $u(s,\cdot)$ is continuous over $[g(s), h(s)]$.
This implies that we can treat $u(s,\cdot)$ as an initial function and use Step 3 to extend the solution from $t=s$ to some $T'\geq T$.
Suppose $(0, \hat T)$ is the maximal interval that the solution $(u,g,h)$ of \eqref{101} can be defined through this extension process.
We show that $\hat T=\infty$. Otherwise $\hat T\in (0, \infty)$ and we are going to derive a contradiction.\medskip

Firstly we notice that \eqref{h-g} now holds for $t\in (0, \hat T)$. Since $h(t)$ and $g(t)$ are monotone functions over $[0, \hat T)$,
we may define
\[
h(\hat T):=\lim_{t\to\hat T} h(t),\; g(\hat T):=\lim_{t\to\hat T} g(t) \; \mbox{ with } h({\hat T})-g(\hat T)\leq 2h_0e^{\mu M_0 \hat T}.
\]
The third and fourth equations in \eqref{101}, together with $0\leq u\leq M_0$ indicate that $h'$ and $g'$ belong to $L^\infty([0, \hat T))$ and
hence with $g(\hat T)$ and $h(\hat T)$ defined as above, $g, h\in C([0,\hat T])$. It also follows that the right-hand side of the first equation in \eqref{101}
belongs to $L^\infty(\Omega_{\hat T})$, where $\Omega_{\hat T}:=\big\{(t,x): t\in [0, \hat T],\; g(t)< x<h(t)\big\}$.
It follows that $u_t\in L^\infty(\Omega_{\hat T})$. Thus for each $x\in (g(\hat T), h(\hat T))$,
\[
u(\hat T,x):=\lim_{t\nearrow \hat T}u(t,x) \mbox{ exists},
\]
and $u(\cdot, x)$ is continuous at $t=\hat T$. We may now view $u(t,x)$ as the unique solution of the ODE problem in Step 1 of the proof of Lemma \ref{Lemma202}
(with $\phi=u$),
which is defined over $[t_x, \hat T]$. Since $t_x$, $J(x-y)$ and $f(t,x,u)$ are all continuous in $x$, by the continuous dependence of the ODE solution to
the initial function and the parameters in the equation, we see that {\color{red}$u(t,x)$ is continuous in $\Omega_{\hat T}$}.
By assumption, {\color{red}$u\in C(\overline \Omega_s)$ for any $s\in (0, \hat T)$.} To show this also holds with $s=\hat T$, it remains to show that
\[
\mbox{$u(t,x)\to 0$ as $(t,x)\to (\hat T, g(\hat T))$ and as $(t,x)\to (\hat T, h(\hat T))$ from $\Omega_{\hat T}$.}
\]

 We only prove the former as the other case can be shown similarly.
We note that as $x\searrow g(\hat T)$, we have $t_x\nearrow \hat T$, and so
\begin{align*}
|u(t,x)|&=\left|\int_{t_x}^t \left[ d\int_{g(t)}^{h(t)}J(x-y)u(\tau, y)dy-d u(\tau, x)+f(\tau, x, u(\tau,x))\right]d\tau\right|\\
&\leq (t-t_x)\big[2d+K(M_0)\big]M_0\\
&\to 0 \mbox{ as } \Omega_{\hat T} \ni (t,x)\to (\hat T, g(\hat T)).
\end{align*}

Thus we have shown that $u\in C(\overline \Omega_{\hat T})$ and $(u,g,h)$ satisfies \eqref{101} for $t\in (0, \hat T]$. By Lemma 2.2 we have
$u(\hat T, x)>0$ for $x\in (g(\hat T), h(\hat T))$. Thus we can regard $u(\hat T, \cdot)$ as an initial function and apply Step 3 to conclude that
the solution of \eqref{101} can be extended to some $(0, \tilde T)$ with $\tilde T>\hat T$. This contradicts the definition of $\hat T$. Therefore we must have $\hat T=\infty$.
\hfill $\Box$
\medskip

{\bf Remark:} The material in this section is taken from \cite{cdjfa} with some minor variations.

\section{Spreading-vanishing dichotomy and criteria}

We investigate the long-time dynamics of 
\begin{equation}
\left\{
\begin{aligned}
&u_t=d\int_{g(t)}^{h(t)}J(x-y)u(t,y)dy-du+f(u),
& &t>0,~x\in(g(t),h(t)),\\
&u(t,g(t))=u(t,h(t))=0,& &t>0,\\
&h'(t)=\mu\int_{g(t)}^{h(t)}\int_{h(t)}^{+\infty}
J(y-x)u(t,x)dydx,& &t>0,\\
&g'(t)=-\mu\int_{g(t)}^{h(t)}\int_{-\infty}^{g(t)}
J(y-x)u(t,x)dydx,& &t>0,\\
&u(0,x)=u_0(x),~h(0)=-g(0)=h_0,& &x\in[-h_0,h_0],
\end{aligned}
\right.
\label{101'}
\end{equation}
where $d, \mu, h_0$ are given positive constants. The initial
function $u_0(x)$ satisfies \eqref{102}. The kernel function
$J: \mathbb{R}\rightarrow\mathbb{R}$ satisfies the basic condition 
\begin{description}
\item[(J)] $J\in C(\R)\cap L^\infty(\R)$, $J\geq 0$, $J(0)>0,~\int_{\mathbb{R}}J(x)dx=1$.
\end{description}
The growth term $f: \mathbb{R}^+
\rightarrow\mathbb{R}$  satisfies the KPP condition
\begin{description}
\item[${\bf (f_{KPP})}$] $\begin{cases}f\in C^1, \ f(0)=f(1)=0, \ f'(0)>0>f'(1),\\
 \mbox{$f(u)/u$ is non-increasing in $(0,\infty)$.}\end{cases}$
\end{description}

We are going to prove the following two theorems from \cite{cdjfa}.

\begin{theorem}[Spreading-vanishing dichotomy]\label{thm2}
Suppose {\bf (J)} and ${\bf (f_{KPP})}$ hold,  $u_0$ satisfies \eqref{102} and {\color{red}$J$ is symmetric}: $J(x)=J(-x)$. Let $(u,g,h)$ be the unique solution of problem
\eqref{101'}. Then one of the following alternatives must happen for \eqref{101'}:
\begin{itemize}
\item[(i)] \underline{\rm Spreading:}\ \  $\begin{cases}  \lim_{t\to+\infty} (g(t), h(t))=\mathbb R,\\ \lim_{t
\rightarrow+\infty}u(t,x)=1\ \mbox{ locally uniformly  in
$\mathbb{R}$,}\end{cases}$
\item[(ii)] \underline{\rm Vanishing:}\ \  $\begin{cases} \lim_{t\to+\infty} (g(t), h(t))=(g_\infty, h_\infty) \mbox{ is a finite interval},\\
 \lim_{t\rightarrow+\infty}u(t,x)=0 \mbox{ uniformly for $x\in [g(t),h(t)]$.}\end{cases}$
\end{itemize}
\end{theorem}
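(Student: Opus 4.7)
Since $u>0$ in the interior of $\Omega$ by Theorem~1.1(ii), the free-boundary ODEs force $h(t)$ strictly increasing and $g(t)$ strictly decreasing, so the monotone limits
\[
 h_\infty := \lim_{t\to\infty} h(t) \in (h_0,\infty], \qquad g_\infty := \lim_{t\to\infty} g(t) \in [-\infty,-h_0)
\]
exist. The plan is to split according to whether $\ell_\infty := h_\infty - g_\infty$ is finite or infinite and prove three assertions: (I) if $\ell_\infty<\infty$, then $\|u(t,\cdot)\|_\infty \to 0$; (II) if $\ell_\infty = \infty$, then in fact both $h_\infty = \infty$ and $g_\infty = -\infty$; (III) under the hypothesis of (II), $u(t,x)\to 1$ locally uniformly in $\mathbb{R}$.

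\textbf{Vanishing (I).} The crucial tool is the principal eigenvalue $\lambda_p(\ell)$ of the nonlocal Dirichlet-type problem
\[
 d\int_{-\ell/2}^{\ell/2} J(x-y)\varphi(y)\,dy - d\varphi + f'(0)\varphi = \lambda\varphi, \quad x\in(-\ell/2,\ell/2),
\]
which, thanks to symmetry of $J$, is monotone in $\ell$ and admits a well-defined critical length $\ell^*\in(0,\infty]$ with $\lambda_p(\ell)>0$ exactly when $\ell>\ell^*$. I would first show $\ell_\infty\le\ell^*$: otherwise for some large $t_0$ the interval $(g(t_0),h(t_0))$ already has $\lambda_p>0$, and the positive principal eigenfunction $\varphi_p$, scaled small, is a strict subsolution lying below $u(t_0,\cdot)$ (using strong positivity from Theorem~1.1(ii)). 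The comparison principle Theorem~1.3 then yields $u(t,\cdot)\ge \epsilon\varphi_p$ for all $t\ge t_0$; substituting into $h'(t)=\mu\iint J\,u$ produces a uniform positive lower bound for $h'(t)$, contradicting $h_\infty<\infty$. With $\ell_\infty \le \ell^*$ in hand, a supersolution argument using the nonlocal logistic problem on the fixed interval $(g_\infty,h_\infty)$, whose zero steady state is globally attracting when $\lambda_p\le 0$, yields $u(t,x)\to 0$ uniformly.

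\textbf{Ruling out one-sided spreading (II) and spreading (III).} For (II), suppose $h_\infty=\infty$ but $g_\infty>-\infty$ (the reverse is analogous). Since eventually $h(t)>g_\infty+L$ for any $L$, comparison with the truncated KPP equation on $(g_\infty,g_\infty+L)$ with zero exterior data (admitting a positive steady state $V_L$ when $L>\ell^*$) produces a pointwise lower bound $u(t,x)\ge \delta>0$ on some fixed subinterval of the form $[g_\infty+\delta_1,g_\infty+\delta_2]$ for large $t$. Because $g(t)\searrow g_\infty$, this subinterval is eventually contained in $(g(t),h(t))$; combined with the fact $J(0)>0$, which forces $\int_{-\infty}^{g(t)} J(y-x)\,dy \ge c_0>0$ on a right-neighborhood of $g(t)$, the free-boundary ODE gives $-g'(t)\ge \mu c_0\delta>0$ for all large $t$, contradicting $g_\infty$ finite. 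For (III), given any $R>0$, pick $L>\max(R,\ell^*)$, so the truncated nonlocal logistic equation on $(-L,L)$ has a unique positive steady state $V_L$; it is standard (and used in \cite{cdjfa}) that $V_L\to 1$ locally uniformly as $L\to\infty$. Once $(g(t_0),h(t_0))\supset[-L,L]$, Theorem~1.3-style comparison with initial data $\epsilon V_L\le u(t_0,\cdot)$ gives $\liminf_{t\to\infty} u(t,x)\ge V_L(x)$ on $[-L,L]$; the matching $\limsup u\le 1$ follows by dominating $u$ with the ODE $U'=f(U),\,U(0)=\|u_0\|_\infty$, whose solution decreases to $1$. Letting $L\to\infty$ delivers $u(t,x)\to 1$ locally uniformly.

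\textbf{Main obstacle.} The hardest step is (I): converting finiteness of the free boundaries into uniform decay of $u$. It hinges on a sharp spectral theory for the nonlocal Dirichlet operator on the limiting interval---in particular, monotonicity of $\lambda_p(\ell)$ in $\ell$ and the existence of a single critical length $\ell^*$, which is exactly where the symmetry $J(x)=J(-x)$ is used in an essential way---together with the strong maximum principle Theorem~1.1(ii) to upgrade positivity of $u$ into a pointwise subsolution comparison $u\ge\epsilon\varphi_p$. Ruling out one-sided spreading in (II) is likewise delicate because it requires transporting the ``close to $V_L$'' information across the domain to a right-neighborhood of the bounded boundary, and then using $J(0)>0$ to extract a positive lower bound for $|g'(t)|$; the KPP structure of $f$ is what makes this transportation possible.
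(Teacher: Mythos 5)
Your proposal is correct and follows essentially the same route as the paper. The decomposition into (I) vanishing when $h_\infty-g_\infty<\infty$, (II) ruling out one-sided unbounded expansion, and (III) spreading when $h_\infty-g_\infty=\infty$ corresponds exactly to the paper's Lemmas \ref{thm-vanishing}, \ref{lemma-same}, and \ref{thm-spreading}, and all three parts rely on the same tools: the principal eigenvalue theory (Proposition \ref{c-l}, Lemma \ref{dicho2}), the long-time dynamics of the fixed-interval nonlocal KPP problem (Proposition \ref{prop-single}), the comparison principles, and the fact that $J(0)>0$ forces a positive lower bound on the boundary fluxes once $u$ stays bounded below near the front.

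The one genuine variation is in part (I): you propose to use the scaled principal eigenfunction $\epsilon\varphi_p$ directly as a \emph{static} subsolution to get a uniform positive lower bound for $u$, whereas the paper compares $u$ with the time-dependent solution of the fixed-interval problem (starting from $u(T_\epsilon,\cdot)$) and quotes Proposition \ref{prop-single} for convergence to the positive steady state $W_\epsilon$. Both reach the same conclusion; your version avoids invoking the full convergence theorem but requires a small extra step to verify that $\epsilon\varphi_p$ is a subsolution of the nonlinear equation for $\epsilon$ small, using $f(s)=f'(0)s+o(s)$ and $\lambda_p>0$.

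There is one small slip to flag. You assert that ``$\varphi_p$, scaled small, lies below $u(t_0,\cdot)$'' on $[g(t_0),h(t_0)]$, justified by strong positivity. But the nonlocal principal eigenfunction on a closed interval is strictly positive \emph{including at the endpoints}, while $u(t_0,g(t_0))=u(t_0,h(t_0))=0$; no scaling makes $\epsilon\varphi_p\le u(t_0,\cdot)$ hold up to the endpoints. The standard fix (which the paper uses in Lemma \ref{thm-vanishing}) is either to work on a compact subinterval $[g_\infty+\epsilon',h_\infty-\epsilon']$ strictly interior to the eventual range, or equivalently to start the comparison at a slightly later time $t_1>t_0$, when $[g(t_0),h(t_0)]\subset(g(t_1),h(t_1))$ and $u(t_1,\cdot)$ is bounded below by a positive constant on the closed interval by the strong maximum principle. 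After this adjustment the rest of your argument goes through unchanged. (A minor notational point: $\ell^*\in[0,\infty)$ here, with $\ell^*=0$ exactly when $f'(0)\ge d$; it is always finite under the standing assumptions, as recorded in Lemma \ref{dicho2}.)
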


\begin{theorem}[Spreading-vanishing criteria]\label{thm3}
Under the conditions of Theorem \ref{thm2}, if $d\in (0, f'(0)]$, then spreading always happens. If $d>f'(0)$, then there exists a unique $\ell^*>0$ such that
spreading always happens if $h_0\geq \ell^*/2$; and for $h_0\in (0, \ell^*/2)$, there exists a unique $\mu^*>0$ so that spreading happens exactly when $\mu>\mu^*$.
\end{theorem}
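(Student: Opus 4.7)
The strategy is to introduce the principal eigenvalue of the linearization at $u=0$ and translate spreading versus vanishing into an eigenvalue condition on the eventual support. For $L>0$, let $\lambda_p(L)$ denote the principal eigenvalue of
\[
\mathcal{M}_L[\phi](x) := d\int_{-L}^{L} J(x-y)\phi(y)\,dy - d\,\phi(x) + f'(0)\phi(x), \quad x \in (-L,L).
\]
Using $J(0)>0$, the symmetry $J(x)=J(-x)$, and standard tools for symmetric nonlocal operators, $\lambda_p(L)$ is attained with a positive continuous eigenfunction, is continuous and strictly increasing in $L$, and satisfies $\lim_{L\downarrow 0}\lambda_p(L) = f'(0)-d$ together with $\lim_{L\to\infty}\lambda_p(L) = f'(0)$. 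The bridge to Theorem \ref{thm2} is a vanishing criterion: if vanishing occurs with finite limit interval $(g_\infty,h_\infty)$, then $\lambda_p((h_\infty-g_\infty)/2) \leq 0$. Indeed, linearising at $u=0$ on the limit interval and testing against the positive eigenfunction $\phi$, strict positivity of $\lambda_p$ would force $\int u(t,x)\phi(x)\,dx$ to grow exponentially, contradicting $u\to 0$ uniformly.

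If $d\leq f'(0)$, strict monotonicity gives $\lambda_p(L) > f'(0)-d \geq 0$ for every $L > 0$, so vanishing is impossible and Theorem \ref{thm2} forces spreading. If $d > f'(0)$, the intermediate value theorem yields a unique $L^* > 0$ with $\lambda_p(L^*) = 0$; set $\ell^* := 2L^*$. When $h_0 \geq L^*$, one has $h(t)-g(t) \geq 2h_0 \geq \ell^*$, and the strict monotonicity of $h$ and $-g$ recorded in Theorem \ref{Thm22} gives $(h(t)-g(t))/2 > L^*$ for every $t>0$; hence $\lambda_p$ is strictly positive on the limit interval and the vanishing criterion forces spreading.

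For $h_0 \in (0,L^*)$ the plan is to exploit monotone dependence on $\mu$. Using the comparison principle Theorem \ref{thm-CP} applied to two solutions sharing the same initial data but different expansion coefficients, larger $\mu$ produces pointwise larger domains $(g_\mu(t),h_\mu(t))$, so $S := \{\mu>0 : \text{spreading occurs}\}$ is upward closed in $(0,\infty)$. For small $\mu$ I would build a stationary supersolution supported on an interval of length strictly less than $\ell^*$, available because $\lambda_p(L)<0$ for $L$ slightly above $h_0$, to sandwich $(u,g,h)$ from above and deduce $h_\infty - g_\infty < \ell^*$, hence vanishing; this removes a whole neighbourhood of $0$ from $S$. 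For large $\mu$, a uniform positive lower bound on $u_\mu$ on a short time interval independent of $\mu$, combined with the free boundary identities, forces $h_\mu(t)-g_\mu(t)$ to exceed $\ell^*$ in finite time, reducing to the previous case and proving $\mu\in S$. Setting $\mu^* := \inf S$ and using continuous dependence of $(u_\mu,g_\mu,h_\mu)$ on $\mu$ together with the strict sign of $\lambda_p$ away from $L^*$, one concludes $\mu^* \in (0,\infty)$ and spreading occurs precisely when $\mu > \mu^*$.

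The main obstacle is the eigenvalue theory underpinning the first paragraph: proving that $\lambda_p(L)$ is a genuine eigenvalue rather than only the infimum of the spectrum, together with its strict monotonicity in $L$ and the two boundary limits, requires careful exploitation of $J(0)>0$ and symmetry of $J$, since $\mathcal{M}_L$ enjoys no compactness from a differential part. A secondary difficulty is the transition at $\mu=\mu^*$, where one must rule out a borderline regime in which the dynamics neither spreads nor vanishes; here Theorem \ref{thm2}'s sharp dichotomy combined with the strict inequalities $\lambda_p(L)\neq 0$ for $L\neq L^*$ closes the argument.
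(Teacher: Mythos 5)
Your overall architecture mirrors the paper's: you pass from spreading/vanishing to a principal-eigenvalue condition on the limit interval, use the limit values of $\lambda_p(L)$ as $L\to 0^+$ and $L\to\infty$ and strict monotonicity to produce the threshold length $\ell^*$, then treat $\mu$ by monotone dependence together with constructions showing vanishing for small $\mu$ and spreading for large $\mu$, finally extracting $\mu^*$ by a connectedness argument. The $d\le f'(0)$ case and the $h_0\ge \ell^*/2$ case are handled exactly as in the paper. Your derivation of the vanishing criterion by integrating the equation against the principal eigenfunction on a slightly shrunk interval is a legitimate alternative to the paper's argument, which instead derives $\lambda_p\le 0$ directly from $h_\infty-g_\infty<\infty$ (via a positivity lower bound that would force $h'(t)$ to stay bounded away from $0$); yours does use the symmetry of $J$ when moving $J$ from one factor to the other under the double integral, so be aware that it would need modification in the merely weakly non-symmetric setting covered by Theorem \ref{thm4.3}.

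There is, however, one concrete step that would fail as stated: the claim that for small $\mu$ one can ``build a \emph{stationary} supersolution supported on an interval of length strictly less than $\ell^*$'' and thereby deduce $h_\infty-g_\infty<\ell^*$. A time-independent positive supersolution $\hat w$ on a fixed interval, no matter how small, gives the lower bound $\hat h'(t)\ge\mu\int\!\int J\,\hat w>0$ with a \emph{constant} positive right-hand side, so the upper-solution free boundary $\hat h(t)$ would have to grow at least linearly in $t$ and could never stay within a compact interval. To obtain $h_\infty-g_\infty<\infty$ one needs the supersolution's amplitude to \emph{decay in time}: the paper takes $\hat w(t,x)=C_1 e^{\lambda_1 t/4}\phi_1(x)$ with $\lambda_1=\lambda_p(\mathcal{L}^0_{(-h_1,h_1)})<0$, so that $\hat h'(t)\lesssim \mu\,e^{\lambda_1 t/4}$ is integrable in $t$ and $\hat h(\infty)<h_1<\ell^*/2$ provided $\mu\le\underline\mu$. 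Without this decay the whole small-$\mu$ branch of your dichotomy collapses. A secondary issue is that your final paragraph on $\mu^*$ is too vague: ``strict sign of $\lambda_p$ away from $L^*$'' does not by itself settle whether $\mu=\mu^*$ lies in the spreading or vanishing set. The paper settles this (Lemma \ref{thm-mu-critical}) by continuous dependence on $\mu$: if spreading occurred at $\mu^*$, then $-g_{\mu^*}(T),h_{\mu^*}(T)>\ell^*/2$ at some finite $T$, and the same would hold for $\mu$ slightly below $\mu^*$, contradicting the definition of $\mu^*$; you should make this explicit rather than appeal to the eigenvalue sign.
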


As we will see in the proof, $\ell^*$ depends only on $(f'(0),  d, J)$. On the other hand, $\mu^*$ depends also on $u_0$.

\medskip

\noindent{\bf Extension to weakly non-symmetric kernels}
\smallskip

It turns out that  the symmetry requirement of $J$ in Theorems \ref{thm2} and \ref{thm3} can be significantly relaxed in the above two theorems. For a non-symmetric $J$ satisfying ${\bf (J)}$, the following two quantities determined by $J$ and $f'(0)$ alone play an important role:
\begin{align*}
	\displaystyle	c_*^- = \sup_{\nu<0} \frac{\displaystyle d\int_{\mathbb{R}} J(x)e^{\nu x}\,dx - d + f'(0)}{\nu}, \quad
		c_*^+ = \inf_{\nu>0} \frac{\displaystyle d\int_{\mathbb{R}} J(x)e^{\nu x}\,dx - d + f'(0)}{\nu},
	\end{align*}
	
	 It can be shown that $c_*^-$ is achieved by some $\nu<0$ when it is finite, and a parallel conclusion holds for $c_*^+$.
	 It is easily checked that $c_*^-$ is finite if and only if $J$ satisfies additionally the following {\color{red}thin-tail} condition at $x=-\infty$,\smallskip

${\bf (J_{thin}^-):}$ There exists $\lambda>0$ such that $\displaystyle \int_{0}^{+\infty}J(-x)e^{\lambda x}\,dx<+\infty$.
\medskip
	
	\noindent Similarly,  $c_*^+$ is finite if and only if $J$ satisfies\smallskip
	
	${\bf (J_{thin}^+):}$ There exists $\lambda>0$ such that $\displaystyle \int_{0}^{+\infty}J(x)e^{\lambda x}\,dx<+\infty$.

\medskip

If we define
\begin{equation}\label{J-thin-not}\begin{cases}
c_*^-=-\infty \mbox{ when ${\bf (J_{thin}^-)}$ does not hold},\\[3mm]
 c_*^+=+\infty \mbox{ when ${\bf (J_{thin}^+)}$ does not hold},
 \end{cases}
\end{equation}
then  the propagation dynamics of the corresponding Cauchy problem of \eqref{101},
\begin{equation}
\label{cau3}
	\left\{
	\begin{array}{ll}
		U_t = \displaystyle d \int_{\mathbb{R}} J(x-y) U(t,y) \, dy - d U(t,x) + f(U), & t > 0, \; x \in \mathbb{R}, \\
		U(0, x) =U_0(x)
	\end{array}
	\right.
\end{equation}
has the properties described in the following result:

\noindent
{\bf Theorem A.}(\cite{du25}) {\it
Suppose that ${\bf(J)}$ and ${\bf (f_{KPP})}$ hold. Then for any initial function $U_0(x)$ which is continuous and nonnegative with non-empty compact support, the unique solution $U(t,x)$ of \eqref{cau3} satisfies
\begin{align*}
	 \lim_{t\to \infty} U(t,x)=\begin{cases} 1 \mbox{ uniformly for } x\in [a_1t, b_1t] \mbox{ provided that } [a_1,b_1]\subset (c_*^-, c_*^+),\\[2mm]
	 0 \mbox{ uniformly for $x\leq a_2t$ provided that $c_*^->-\infty$ and $a_2<c_*^-$},\\[3mm]
	 0 \mbox{ uniformly for $x\geq b_2t$ provided that $c_*^+<\infty$ and $b_2>c_*^+$}.
	 \end{cases}
	\end{align*}
}

Following \cite{aw}, the conclusions in Theorem A can be interpreted as indicating a leftward spreading speed of $c_*^-$ and rightward spreading speed of $c_*^+$ for \eqref{cau3}.
The following  result of Yagisita \cite{yagisita}  (see also Theorem 1.5 in \cite{coville2}) on traveling waves  provides further meanings for $c_*^-$ and $c_*^+$.  \smallskip

\noindent
{\bf Theorem B.} (\cite{yagisita}) {\it
Suppose that ${\bf(J)}$ and ${\bf (f_{KPP})}$ are satisfied. Then the following conclusions hold.
\begin{itemize}
	\item[{\rm (i)}] The \underline{rightward} traveling wave problem
	\begin{equation}
		\label{e6-0}
		\begin{cases}
			\displaystyle d\int_{\mathbb{R}} J(x-y)\phi(y)\,dy - d\phi(x) + c\phi'(x) + f(\phi(x)) = 0, & x \in \mathbb{R}, \\[3mm]
			\phi(-\infty) = 1, \quad \phi(+\infty) = 0
		\end{cases}
	\end{equation}
	has a solution pair $(c,\phi) \in \mathbb R\times L^{\infty}(\mathbb{R})$ with $\phi$ nonincreasing if and only if $c_*^+<\infty$. Moreover, in such a case, for every $c\geq c_*^+$,  \eqref{e6-0} has a solution $\phi \in C^1(\mathbb{R})$ that is strictly decreasing, and  \eqref{e6-0} has no such solution for $c<c_*^+$.

	\item[{\rm (ii)}]  The \underline{leftward} traveling wave problem
	\begin{equation}
		\label{e7-}
		\left\{
		\begin{array}{ll}
			\displaystyle d\int_{\mathbb{R}}J(x-y)\psi(y)\,dy-d\psi(x)-c\psi'(x)+f(\psi(x))=0, & x\in\mathbb{R}, \\
			\psi(-\infty)=0, \ \ \psi(+\infty)=1,
		\end{array}
		\right.
	\end{equation}
	has a solution pair $(c,\psi)\in \mathbb{R}\times L^{\infty}(\mathbb{R})$ with $\psi$ nondecreasing if and only if $c_*^->-\infty$. Moreover, in such a case, for each $c\geq -c_*^-$,  \eqref{e7-} has a solution $\psi \in C^1(\mathbb{R})$ that is strictly increasing, and  \eqref{e7-} has no such solution for $c<-c_*^-$.
	\end{itemize}
}

Problem \eqref{cau3} and its many variations have been extensively studied in the literature; see, for example, \cite{BCV, coville2, DJ, XLR} and the references therein as a small sample of these works. It can be shown as in \cite{du21} that \eqref{cau3} is the limiting problem of \eqref{101} when $\mu\to\infty$.
\medskip

\noindent
{\bf Definition:} For a kernel function $J$ satisfying {\bf (J)} we say it is {\bf weakly non-symmetric} if 
\begin{equation}\label{weak-}
-\infty\leq c_*^-<0<c_*^+\leq \infty.
\end{equation}

\begin{theorem}\label{thm4.3}
Theorems \ref{thm2} and \ref{thm3} remain valid if $J(x)$ is weakly non-symmetric.
\end{theorem}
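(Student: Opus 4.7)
The plan is to revisit the proofs of Theorems \ref{thm2} and \ref{thm3} (as given in \cite{cdjfa} for symmetric $J$), isolate the places that use $J(x)=J(-x)$, and replace each of them with an argument that needs only the weak non-symmetry condition \eqref{weak-}. Symmetry was used essentially in two places: (a) to rule out ``one-sided spreading'' in which exactly one of $h_\infty=\infty$ or $-g_\infty=\infty$ occurs, and (b) to drive the spreading side via a rightward/leftward semi-wave with strictly positive speed. All of the principal-eigenvalue arguments used to characterize $\ell^*$ and $\mu^*$, as well as the argument that $(g_\infty,h_\infty)$ bounded forces $u\to 0$, depend only on the positivity of the principal eigenvalue of the nonlocal operator on a bounded interval, which does not use symmetry and carries over verbatim.

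For the dichotomy (Theorem \ref{thm2}), monotonicity of $h$ and $-g$ yields four a priori scenarios according to whether each of $h_\infty,-g_\infty$ is finite. In the ``both finite'' case, the standard eigenvalue/compactness argument (identical to the symmetric case) gives $u\to 0$ uniformly on $[g(t),h(t)]$. To exclude the one-sided scenarios, suppose for contradiction that $h_\infty=\infty$ but $-g_\infty<\infty$; the weak non-symmetry gives $-c_*^->0$, so Theorem B supplies a $C^1$ strictly increasing leftward traveling wave $\psi$ with speed $-c_*^->0$. Using a suitable translate of $\psi$ as a lower solution in $[g(t),h(t)]$ once $h(t)$ is large, the comparison principle (Theorem \ref{thm-CP}) forces $u(t,x)\to 1$ locally uniformly near $x=g(t)$; the free-boundary equation for $g$ then yields $-g'(t)\ge\mu c>0$ for all large $t$, contradicting $-g_\infty<\infty$. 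The mirror scenario $-g_\infty=\infty,\,h_\infty<\infty$ is ruled out analogously using the rightward semi-wave at speed $c_*^+>0$. In the ``both infinite'' case, one compares from below with the Cauchy problem \eqref{cau3} started from a smaller compactly supported datum and applies Theorem A: since $c_*^-<0<c_*^+$, $[0,0]\subset(c_*^-,c_*^+)$ and the Cauchy solution tends to $1$ locally uniformly in $\mathbb R$, whence $u(t,x)\to 1$ locally uniformly.

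For the criteria (Theorem \ref{thm3}), the critical length $\ell^*$ is defined via the principal eigenvalue $\lambda_1(\ell)$ of
\begin{equation*}
\mathcal{K}_\ell[\varphi](x)=d\int_{-\ell/2}^{\ell/2}J(x-y)\varphi(y)\,dy-d\varphi(x)+f'(0)\varphi(x),\qquad x\in(-\ell/2,\ell/2).
\end{equation*}
The eigenvalue theory for $\mathcal{K}_\ell$ (existence of a positive principal eigenpair, monotonicity of $\lambda_1$ in $\ell$, and the fact that $\lambda_1(\ell)>0$ for all $\ell>0$ when $d\le f'(0)$ while $\lambda_1(\ell)$ changes sign at a unique $\ell^*$ when $d>f'(0)$) does not rely on symmetry of $J$, since only the strict positivity of $J(0)$ and the irreducibility it produces are needed. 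When $\lambda_1(2h_0)\ge 0$, a lower solution of the form $\delta e^{\lambda_1 t}\varphi$ together with the dichotomy of the previous paragraph forces spreading. When $\lambda_1(2h_0)<0$ (i.e.\ $h_0<\ell^*/2$), the standard monotone dependence of $(u,g,h)$ on $\mu$ (proved by a comparison argument that is again symmetry-free) yields the sharp threshold $\mu^*$.

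The hard part is step~(a) in the dichotomy, namely ruling out one-sided spreading. The traveling waves from Theorem B live on the whole line, whereas the lower solution must be supported in the evolving interval $[g(t),h(t)]$ and vanish at its endpoints. The key technical point is therefore to construct an admissible lower solution from $\psi$: either by translating $\psi$ so that the domain $[g(t),h(t)]$ lies in a region where $\psi$ is already essentially zero on the appropriate side, or by subtracting a small exponentially decaying corrector that satisfies a linearized supersolution inequality. This is the only place where the sign condition $c_*^-<0<c_*^+$ (rather than just one-sided thin-tail hypotheses) is truly needed: it supplies strictly positive semi-wave speeds in both directions, which is precisely the analytic input required to force two-sided spreading.
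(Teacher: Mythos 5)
Your diagnosis that the principal-eigenvalue arguments behind $\ell^*$ and $\mu^*$ are symmetry-free is correct, and you have the right idea that weak non-symmetry $c_*^-<0<c_*^+$ is exactly what replaces symmetry in guaranteeing $\lim_{l\to\infty}\lambda_p(\mathcal L^0_{(-l,l)})>0$ (Proposition \ref{c-l}, Lemma \ref{dicho2}). However, the route you propose for the dichotomy, and especially for ruling out one-sided spreading, is genuinely different from the paper's and contains an unresolved gap.

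The paper does not use traveling waves from Theorem B anywhere in the proof of Theorems \ref{thm2}--\ref{thm3}. To exclude the scenario $h_\infty=+\infty$, $-g_\infty<+\infty$, Lemma \ref{lemma-same} uses only bounded-interval eigenvalue theory: weak non-symmetry gives some $h_1>0$ with $\lambda_p(\mathcal L^0_{(0,h_1)})>0$, so for large $t$ the interval $(g_\infty+\epsilon, h_1)\subset(g(t),h(t))$ carries a positive eigenvalue, and Proposition \ref{prop-single} plus comparison give a positive lower bound on $u$ there. Then the free-boundary law for $g'(t)$, together with $J(0)>0$, forces $g'(t)\le -c<0$ eventually, contradicting $-g_\infty<+\infty$. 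This completely sidesteps the difficulty you flag at the end of your proposal, namely that a traveling wave $\psi$ on all of $\mathbb R$ does not vanish at the free boundary and so is not directly admissible as a lower solution in the comparison principle Theorem \ref{thm-CP}. You name this as the ``hard part'' but the exponential-corrector or translation fix you gesture at is not carried out, and one should be aware that it is exactly the step that the paper's eigenvalue argument renders unnecessary.

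A second, smaller issue: in the ``both infinite'' case you propose comparing from below with the Cauchy problem \eqref{cau3}. This cannot work as a direct application of comparison, since the Cauchy solution $U$ is strictly positive on all of $\mathbb R$ while $u$ vanishes at $x=g(t),h(t)$ (and is extended by $0$ outside), so $u\geq U$ fails at the free boundaries no matter how small the initial datum for $U$. The paper instead compares with a nested family of fixed-interval problems on $[g(t_n),h(t_n)]$ (Lemma \ref{thm-spreading}), whose solutions vanish at the interval endpoints for all time and so are legitimate lower solutions; then $\lambda_p(\mathcal L^0_{(g(t_n),h(t_n))})>0$ for large $n$ (again by weak non-symmetry) and Proposition \ref{prop-single} give $\liminf u\geq 1$ locally uniformly. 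This is the cleaner and correct route, and it stays entirely within the bounded-interval framework already developed.

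In short: your observation about where the hypothesis enters is basically right, and the criteria half of your proposal matches the paper; but the dichotomy half takes a harder, traveling-wave path with two unresolved technical points (boundary-compatible lower solution, and Cauchy comparison), both of which the paper avoids by working only with bounded-interval principal eigenvalues.
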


{\bf Remark:} If $J(x)$ is not weakly non-symmetric, then fundamental differences arise in the long-time behaviour of \eqref{101}; such a case was considered in \cite{DFN1}.

\subsection{The associated problem over a fixed spatial interval}
For $c\in\mathbb R$ and $\Omega=(l_1,l_2)$  a bounded interval,
define
\begin{align*}
	\mathcal{L}^c_{\Omega}[\phi](x) := d\int_{\Omega}J(x-y)\phi(y)\,dy - d\phi(x) + c\phi'(x) + f'(0)\phi(x),\quad \phi \in C^1(\Omega)\cap C(\overline{\Omega}).
\end{align*}
It is known \cite{li, coville20} that
$$\lambda_p(\mathcal{L}^c_\Omega):=\inf\{\lambda\in \mathbb{R}:\ \mathcal{L}^c_\Omega[\phi]\leq \lambda\phi, \,\phi>0\,\text{in}\,\,\Omega
\,\,\text{for some}\,\,\phi\in C(\bar{\Omega})\}$$
is a principal eigenvalue of $\mathcal{L}^c_\Omega$, which corresponds to a positive eigenfunction. From the definition it is easily seen that
\[
\lambda_p(\mathcal{L}^c_{(l_1, l_2)})=\lambda_p(\mathcal{L}^c_{(0, l_2-l_1)}).
\]
Moreover, the following conclusions hold:
\begin{proposition}[\cite{coville20, du25}]\label{c-l}
	Suppose that the kernel $J$ satisfies ${\bf (J)}$ and $c\in\mathbb R$. Then $l\to \lambda_p(\mathcal{L}^c_{(-l,l)})$ is continuous and strictly increasing in $l\in (0,\infty)$, and
	$$	\lim\limits_{l\to \infty}\lambda_p(\mathcal{L}^c_{(-l,l)}) = \inf\limits_{\nu\in\mathbb{R}}\left[ d\int_{\mathbb{R}} J(x)e^{-\nu x}\,dx +  c\nu  \right] -d+f'(0).
	$$
	Moreover,
	\[
\lim\limits_{l\to \infty}\lambda_p(\mathcal{L}^c_{(-l,l)})>0 \mbox{ if and only if } c\in (c_*^-, c_*^+).
\]
\end{proposition}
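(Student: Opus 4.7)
The plan is to establish Proposition~\ref{c-l} in three parts—strict monotonicity and continuity of $l \mapsto \lambda_p(\mathcal L^c_{(-l,l)})$, the explicit limit formula, and the sign characterization in terms of $(c_*^-, c_*^+)$—and to address them in sequence.

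Monotonicity rests on the domain monotonicity of the nonlocal integral term. Given $0 < l_1 < l_2$ and a positive principal eigenfunction $\phi_2$ of $\mathcal L^c_{(-l_2,l_2)}$ with eigenvalue $\lambda_2 = \lambda_p(\mathcal L^c_{(-l_2,l_2)})$, the restriction of $\phi_2$ to $(-l_1,l_1)$ satisfies
\begin{equation*}
\mathcal L^c_{(-l_1,l_1)}[\phi_2](x) = \lambda_2 \phi_2(x) - d\!\!\int_{(-l_2,l_2)\setminus(-l_1,l_1)}\!\! J(x-y)\phi_2(y)\,dy \leq \lambda_2 \phi_2(x),
\end{equation*}
so the inf-characterization of $\lambda_p$ yields $\lambda_p(\mathcal L^c_{(-l_1,l_1)}) \leq \lambda_2$. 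Strictness uses that $J(0)>0$ implies $J>0$ on a neighborhood of $0$, which forces the subtracted integral to be strictly positive on some open subset of $(-l_1,l_1)$; pairing this with the eigenfunction equation on $(-l_1,l_1)$ upgrades $\leq$ to $<$. Continuity of $l \mapsto \lambda_p$ then follows from monotonicity together with a perturbation argument bounding the principal eigenvalue between those of slightly smaller and slightly larger domains via continuously varying test functions.

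For the limit formula, the upper bound comes from the trial function $\phi(x) = e^{-\nu x}$. A direct computation using the substitution $z = x-y$ gives
\begin{equation*}
\mathcal L^c_{(-l,l)}[e^{-\nu x}] \leq e^{-\nu x}\left[d\int_{\mathbb R} J(z) e^{\nu z}\,dz - d - c\nu + f'(0)\right],
\end{equation*}
so $\lambda_p(\mathcal L^c_{(-l,l)}) \leq d\int_{\mathbb R} J(z) e^{\nu z}\,dz - d - c\nu + f'(0)$ for every $\nu \in \mathbb R$; taking the infimum and substituting $\nu \mapsto -\nu$ yields the claimed formula as an upper bound on the limit. The matching lower bound is the technical crux. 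My plan is to take principal eigenfunctions $\phi_l>0$ on $(-l,l)$, normalize $\max \phi_l = 1$ at some point $x_l$, pass to a subsequential limit $\phi_\infty$ as $l \to \infty$ after translating by $x_l$ if necessary (the translation being compatible with the drift term $c\phi'$), and identify $\phi_\infty$ as a bounded positive generalized eigenfunction of $\mathcal L^c_{\mathbb R}$ with eigenvalue $\lambda_\infty := \lim \lambda_p$. A Laplace-transform or exponential-barrier analysis then forces $\lambda_\infty \geq \inf_\nu [d\int J(z) e^{\nu z}\,dz - d - c\nu + f'(0)]$, matching the upper bound.

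Finally, the sign characterization is algebraic once the limit formula is in hand. Let $\tilde a(\mu) := d\int J(x) e^{\mu x}\,dx - d + f'(0)$, which is convex with $\tilde a(0) = f'(0) > 0$. Substituting $\nu = -\mu$, the claimed limit equals $\inf_\mu [\tilde a(\mu) - c\mu]$, which is positive iff $\tilde a(\mu) > c\mu$ for every $\mu \in \mathbb R$. For $\mu>0$ this is $c < \tilde a(\mu)/\mu$ for all $\mu>0$, i.e., $c < c_*^+$; for $\mu<0$ it is $c > \tilde a(\mu)/\mu$ for all $\mu<0$, i.e., $c > c_*^-$. The attainability of $c_*^\pm$ when finite (stated in the preamble) ensures the strict vs.\ nonstrict inequalities align: at the attainer $\mu^*$, $\tilde a(\mu^*) - c\mu^* = (c_*^\pm - c)\mu^*$ is strictly positive precisely when $c$ lies in the open interval $(c_*^-, c_*^+)$. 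The cases $c_*^\pm = \pm\infty$ are covered by the convention, since $\tilde a \equiv +\infty$ on the relevant half-line when the corresponding thin-tail condition fails. The main obstacle throughout is the lower-bound half of the limit formula: extracting a nontrivial limit $\phi_\infty$ from the principal eigenfunctions $\phi_l$ and pinning down its eigenvalue requires a Harnack-type estimate for the nonlocal drift operator $\mathcal L^c$ together with careful Laplace-transform analysis.
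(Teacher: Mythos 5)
The paper does not actually prove Proposition~\ref{c-l}: its ``proof'' is a pair of citations, deferring the monotonicity and continuity to \cite{coville20} and the limit formula and sign characterization to Theorem~1.2 and Proposition~5.1 of \cite{du25}. Your proposal is therefore an independent attempt at something the paper itself delegates, so the question is whether it would in fact deliver a proof rather than whether it matches the (nonexistent) in-text argument.

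The easy half of your outline is sound. The upper bound $\lambda_p(\mathcal L^c_{(-l,l)})\le \inf_\nu\bigl[d\int_{\mathbb R}J(x)e^{-\nu x}\,dx+c\nu\bigr]-d+f'(0)$ from the test function $e^{-\nu x}$ is correct, the domain-comparison argument gives $\le$ for the monotonicity, and once the limit formula is granted, your convexity calculation with $\tilde a(\mu)=d\int J e^{\mu x}\,dx-d+f'(0)$ does yield the sign characterization (one must also invoke the coercivity of $\tilde a(\mu)-c\mu$ at $\pm\infty$ when $c_*^\pm$ are finite, plus the convention $\tilde a\equiv+\infty$ on the relevant half-line otherwise, to pass from ``positive everywhere'' to ``infimum positive''; you gesture at this but do not spell it out).

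The genuine gap is the one you yourself flag: the matching lower bound for $\lim_{l\to\infty}\lambda_p$. Your plan---normalize the principal eigenfunctions $\phi_l$, translate, extract a locally uniform subsequential limit $\phi_\infty$ and identify it as a bounded positive generalized eigenfunction of $\mathcal L^c_{\mathbb R}$ with eigenvalue $\lambda_\infty$, then invoke ``Laplace-transform or exponential-barrier analysis''---is a plausible route, but it is precisely the substance of Theorem~1.2 of \cite{du25} and it is not close to being executed. Several nontrivial issues are left open: whether the normalization points $x_l$ stay bounded (if not, the translated limit lives on a half-line rather than $\mathbb R$, changing the problem), how to obtain the compactness/Harnack-type estimate that makes the limit nontrivial for the drift operator $\mathcal L^c$, and how exactly a bounded positive eigenfunction forces the claimed lower bound on $\lambda_\infty$ (the exponential-barrier step). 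Similarly, the strictness of the monotonicity requires more than ``pairing with the eigenfunction equation'': the subtracted term $d\int_{(-l_2,l_2)\setminus(-l_1,l_1)}J(x-y)\phi_2(y)\,dy$ is not bounded below by a positive multiple of $\phi_2$ uniformly in $x$ (it may vanish for $x$ deep inside $(-l_1,l_1)$ if $J$ has compact support), so one needs a strong-maximum-principle argument, not a simple $\epsilon$-perturbation of the eigenvalue. As written, the proposal is a correct roadmap but would not compile to a proof without importing the main work of the cited references.
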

\begin{proof}
The continuity and monotonicity property of $l\to \lambda_p(\mathcal{L}^c_{(-l,l)})$ were proved in \cite{coville20}, the formula for the limit $\lim\limits_{l\to\infty}\lambda_p(\mathcal{L}_{(-l,l)})$ is given in Theorem 1.2 of \cite{du25}, and the last conclusion is taken from Proposition 5.1 of \cite{du25}.
\end{proof}


Consider the problem
\begin{equation}\label{l-0}
			\left\{
			\begin{array}{ll}
				\displaystyle V_t=d\int_{-l}^lJ(x-y)V(t,y)\,dy-dV+f(V), & t>0,\  x\in (-l, l), \\
				V(0,x)=V_0(x),  &x\in [-l, l].
			\end{array}
			\right.
		\end{equation}
		By Theorem 1.3 of \cite{du25}, the following conclusion holds.
	\begin{proposition}[\cite{du25}]\label{prop-single}
		Suppose that  ${\bf (J)}$ and ${\bf (f_{KPP})}$ hold, and $V_0\in C([-l,l])$ is nonnegative and not identically $0$. Then \eqref{l-0} has a unique solution $V(t,x)$ and
		\[
		\lim_{t\to\infty} V(t,x)=\begin{cases} 0 &\mbox{ uniformly in $x\in[-l,l]$ if } \lambda_p(\mathcal L^0_{(-l,l)})\leq 0,\\
		V_l(x) &\mbox{ uniformly in $x\in[-l,l]$ if } \lambda_p(\mathcal L^0_{(-l,l)})> 0,
		\end{cases}
		\]
		where $V_l(x)$ is the unique positive stationary solution of \eqref{l-0}. Moreover, when $\lambda_p(\mathcal L^0_{\mathbb R})>0$ and hence
		$\lambda_p(\mathcal L^0_{(-l,l)})> 0$ for all large $l>0$, we have
		\[
		\lim_{l\to\infty} V_l(x)=1 \mbox{ uniformly for $x$ in any bounded interval of $\mathbb R$}.
		\]
		\end{proposition}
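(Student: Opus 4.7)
The plan is to prove the three parts of the proposition---global well-posedness, the long-time dichotomy controlled by $\lambda_p(\mathcal L^0_{(-l,l)})$, and the limit of $V_l$ as $l\to\infty$---in sequence, using the comparison principle (Theorem \ref{lemma2.1}) and the spectral facts in Proposition \ref{c-l} as the main ingredients.

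For existence and uniqueness of $V(t,x)$ on $(0,\infty)\times[-l,l]$, I would mimic the contraction-mapping argument from Lemma \ref{Lemma202}, which here is substantially simpler because the spatial domain is fixed and no boundary motion has to be tracked. Comparison with the constant super-solution $\max\{\|V_0\|_\infty,1\}$ and the sub-solution $0$ (using $f(u)\le 0$ for $u\ge 1$ from ${\bf (f_{KPP})}$ together with $\int_{-l}^{l} J(x-y)\,dy\le 1$) yields the a priori bound $0\le V(t,x)\le \max\{\|V_0\|_\infty,1\}$, so the local solution extends globally; positivity of $V$ for $t>0$ when $V_0\not\equiv 0$ follows from Theorem \ref{lemma2.1}(ii) applied to the linearization $V_t=d\int_{-l}^l J(x-y)V\,dy-dV+c(t,x)V$ with $c:=f(V)/V\in L^\infty$.

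For the long-time dichotomy, let $\phi_l>0$ be a principal eigenfunction of $\mathcal L^0_{(-l,l)}$ with eigenvalue $\lambda:=\lambda_p(\mathcal L^0_{(-l,l)})$. When $\lambda\le 0$, the function $w(t,x):=Ce^{\lambda t}\phi_l(x)$ with $C$ large enough that $V_0\le C\phi_l$ is a super-solution, because ${\bf (f_{KPP})}$ gives $f(w)\le f'(0)w$; comparison yields $V\le w$. For $\lambda<0$ this is exponential decay to $0$, while for $\lambda=0$ one rules out any positive limit by excluding non-trivial stationary solutions: a putative $V^*>0$ would satisfy $(\mathcal L^0_{(-l,l)}-q)V^*=0$ with $q(x):=f'(0)-f(V^*)/V^*\ge 0$, whence $\lambda_p(\mathcal L^0_{(-l,l)}-q)=0$; the KPP inequality $f(u)/u<f'(0)$ on the range of $V^*$ makes $q\not\equiv 0$, and strict monotonicity of the principal eigenvalue in the zeroth-order coefficient forces $\lambda>0$, a contradiction. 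When $\lambda>0$, a small multiple $\varepsilon\phi_l$ is a stationary sub-solution (from $\mathcal L^0_{(-l,l)}[\varepsilon\phi_l]=\lambda\varepsilon\phi_l$ and $f(\varepsilon\phi_l)/(\varepsilon\phi_l)\to f'(0)$ as $\varepsilon\to 0$), while $\max\{\|V_0\|_\infty,1\}$ remains a super-solution; the monotone iterations from these two ends give stationary solutions $V^-\le V^+$, and the standard sweeping argument exploiting non-increase of $f(u)/u$ forces $V^-=V^+=:V_l$, yielding both existence and uniqueness of the positive stationary solution. For any non-trivial $V_0$, positivity of $V(t_0,\cdot)$ at some $t_0>0$ allows $V$ to be trapped between the two iterations for $t\ge t_0$, and their monotone convergence (uniform by Dini) then gives $V(t,\cdot)\to V_l$ uniformly.

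For the limit $l\to\infty$, comparison shows $l\mapsto V_l$ is non-decreasing on its common domain of definition, bounded above by $1$ (since the constant $1$ is a stationary super-solution for every $l$), so $V_l\nearrow V_\infty(x)\le 1$ pointwise. When $\lambda_p(\mathcal L^0_{\mathbb R})>0$, Proposition \ref{c-l} produces some $l_0$ with $\lambda_p(\mathcal L^0_{(-l_0,l_0)})>0$; then $V_{l_0}>0$ exists, and $V_l\ge V_{l_0}$ on $[-l_0,l_0]$ for every $l\ge l_0$ forces $V_\infty\not\equiv 0$ on $(-l_0,l_0)$. Passing to the limit in the stationary equation for $V_l$ (dominated convergence, using $V_l\le 1$) identifies $V_\infty$ as a bounded, non-negative, non-trivial stationary solution of the nonlocal KPP equation on all of $\mathbb R$, and the standard Liouville-type rigidity for such equations forces $V_\infty\equiv 1$; Dini's theorem then promotes the monotone pointwise convergence of the continuous functions $V_l$ to the continuous limit $1$ to uniform convergence on each bounded interval. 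I expect the two most delicate points to be the borderline case $\lambda=0$, where the linear super-solution no longer decays and one must invoke strict monotonicity of $\lambda_p$ in the zeroth-order coefficient, and the Liouville-type uniqueness for the positive bounded stationary solution on $\mathbb R$, which is invoked from the literature rather than reproved here.
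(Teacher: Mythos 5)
The paper does not contain a proof of this proposition: it is imported wholesale from reference [du25] via ``By Theorem 1.3 of [du25], the following conclusion holds'', so there is no in-text argument to compare against. Your overall strategy (contraction mapping for well-posedness on the fixed interval; exponential super-solution $Ce^{\lambda t}\phi_l$ for decay when $\lambda_p<0$; monotone iteration plus a sweeping argument for existence and uniqueness of the positive steady state $V_l$ when $\lambda_p>0$; monotone dependence of $V_l$ on $l$, dominated convergence, and a Liouville-type rigidity on $\mathbb{R}$ for the final limit; Dini for uniformity) is the standard route for KPP-type nonlocal equations and is essentially sound. The intermediate observation that the restriction of $V_{l'}$ to $(-l,l)$ is a super-solution on $(-l,l)$ because the kernel collects extra nonnegative mass from the larger interval is the right way to justify monotonicity in $l$.

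One step deserves scrutiny: the borderline case $\lambda_p(\mathcal L^0_{(-l,l)})=0$. You conclude that a putative positive stationary solution $V^*$ would give $q:=f'(0)-f(V^*)/V^*\not\equiv 0$ ``by the KPP inequality $f(u)/u<f'(0)$'', but ${\bf (f_{KPP})}$ as stated only requires $f(u)/u$ to be \emph{non-increasing}, which allows $f(u)=f'(0)u$ on some initial interval $(0,u_0]$. In that degenerate situation, if $\lambda_p=0$ then any sufficiently small positive multiple of the principal eigenfunction $\phi_l$ is a genuine positive stationary solution of the nonlinear problem, and the claimed convergence of $V(t,\cdot)$ to $0$ fails for such initial data; so the step as written does not go through from the stated hypotheses alone. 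You either need to invoke the strict form of the KPP inequality (which is what the cited reference presumably uses, and what is almost always intended) or treat this case by a different mechanism. The same caveat applies to your sweeping argument for uniqueness of $V_l$: if equality $f(\rho u)=\rho f(u)$ can persist on the relevant range, the contradiction forcing $\rho^*=1$ needs a supplementary step. Aside from this degeneracy issue, the rest of the argument, including the passage to the limit $l\to\infty$ and the appeal to a Liouville theorem on $\mathbb{R}$, is correct as sketched.
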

		
		\begin{lemma}\label{dicho2}
Assume ${\bf(J)}$ and ${\bf(f_{KPP})}$ hold and $J$ is weakly non-symmetric, i.e., \eqref{weak-} holds. Then there exists $l_*\geq 0$ such that
 $\lambda_p(\mathcal{L}^0_{(-l, l)})>0$ if and only if $l>l_*$; moreover,  $l_*=0$ when $f'(0)\geq d$, and $l_*>0$ when $f'(0)<d$.
\end{lemma}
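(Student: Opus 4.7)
The plan is to regard $\Lambda(l):=\lambda_p(\mathcal{L}^0_{(-l,l)})$ as a real-valued function of $l\in(0,\infty)$, use the qualitative information about $\Lambda$ supplied by Proposition \ref{c-l}, pin down the two endpoint limits $l\to 0^+$ and $l\to\infty$, and then obtain the threshold $l_*$ via the intermediate value theorem together with strict monotonicity.

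First, the endpoint at infinity. Since $J$ is weakly non-symmetric, $c_*^-<0<c_*^+$, so $0\in(c_*^-,c_*^+)$. Taking $c=0$ in the last conclusion of Proposition \ref{c-l} gives
\[
\lim_{l\to\infty}\Lambda(l) = \inf_{\nu\in\R}\Big[d\int_\R J(x)e^{-\nu x}\,dx\Big] - d + f'(0) > 0.
\]

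Second, the endpoint at zero. Let $\phi_l\in C([-l,l])$ be a positive principal eigenfunction associated with $\Lambda(l)$. Evaluating the eigenvalue equation at a point $x^*$ where $\phi_l$ attains its maximum and using $\phi_l(y)\leq\phi_l(x^*)$ for $y\in(-l,l)$ together with $\int_{-l}^l J(x^*-y)\,dy\leq 2l\|J\|_{L^\infty}$ yields
\[
\Lambda(l)\leq f'(0) - d + 2dl\|J\|_{L^\infty}.
\]
Evaluating at a minimum point $x_*$ and using $\phi_l(y)\geq\phi_l(x_*)>0$ gives
\[
\Lambda(l)\geq d\int_{-l}^l J(x_*-y)\,dy - d + f'(0) \geq f'(0)-d.
\]
Hence $\lim_{l\to 0^+}\Lambda(l) = f'(0)-d$.

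Finally, I combine these facts with the continuity and strict monotonicity of $\Lambda$ from Proposition \ref{c-l}. If $f'(0)\geq d$, then $\lim_{l\to 0^+}\Lambda(l)\geq 0$, and strict monotonicity of $\Lambda$ on $(0,\infty)$ forces $\Lambda(l)>\lim_{l'\to 0^+}\Lambda(l')\geq 0$ for every $l>0$; thus $l_*=0$ works. If $f'(0)<d$, then $\lim_{l\to 0^+}\Lambda(l)=f'(0)-d<0<\lim_{l\to\infty}\Lambda(l)$, so by continuity there exists $l_*>0$ with $\Lambda(l_*)=0$, and by strict monotonicity this $l_*$ is unique and $\Lambda(l)>0$ iff $l>l_*$. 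The only step requiring care is the computation of $\lim_{l\to 0^+}\Lambda(l)$; everything else is a direct application of Proposition \ref{c-l}.
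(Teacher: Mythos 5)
Your proposal is correct and follows essentially the same strategy as the paper: compute $\lim_{l\to 0^+}\lambda_p(\mathcal{L}^0_{(-l,l)})=f'(0)-d$, invoke Proposition \ref{c-l} for continuity, strict monotonicity, and the positivity of the limit at infinity (which uses \eqref{weak-}), and conclude via the intermediate value theorem. The only technical difference is in the computation of the $l\to 0^+$ limit: the paper multiplies the eigenvalue equation by $\phi_l$ and integrates, estimating the resulting $L^2$ quotient, whereas you evaluate pointwise at the maximum and minimum of the eigenfunction; both are standard and equally short.
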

\begin{proof} We first prove the following conclusion:
$$\lim\limits_{l\to 0}\lambda_p(\mathcal{L}^0_{(-l,l)})=f'(0)-d.$$
Since $\lambda_l := \lambda_p(\mathcal{L}^0_{(-l,l)})$ is a principal eigenvalue,   there exists  a strictly positive function $\phi_l \in C([-l,l])$ such that
$$
d \int_{-l}^l J(x-y) \phi_l(y)dy - d \phi_l(x)+f'(0) \phi_l(x)  = \lambda_l \phi_l\ \ \  \textrm{in}\ [-l, l].
$$
Therefore
\begin{eqnarray*}
\big| \lambda_l  -f'(0) +d \big|& =& \frac{d\displaystyle \int_{-l}^l \int_{-l}^l J(x-y) \phi_l(y) \phi_l (x)dy dx  }{ \displaystyle\int_{-l}^l \phi_l^2 (x)dx} \leq  \frac{d \displaystyle\|J\|_\infty \left( \int_{-l}^l  \phi_l (x)dx \right)^2 }{\displaystyle \int_{-l}^l \phi_l^2 (x)dx} \\
&\leq & \frac{d \|J\|_\infty 2l \displaystyle \int_{-l}^l  \phi_h^2 (x)dx  }{ \displaystyle\int_{-l}^l \phi_h^2 (x)dx}=2ld\|J\|_\infty \to 0\;\; \mbox{ as $l \rightarrow 0^+$.}
\end{eqnarray*}

By Proposition \ref{c-l}, $l\to\lambda_l$ is continuous and strictly increasing, and due to  \eqref{weak-}, $\lim_{l\to\infty} \lambda_l>0$.
Therefore,  
\[
d\in (0,f'(0)]\implies \lambda_l>\lim_{h\to 0}\lambda_h=f'(0)-d\geq 0 \mbox{ for every fixed } l>0,
\]
and $d>f'(0)$ implies the existence of a unique $l_*>0$ such that 
\[
\lambda_l<0 \mbox{ for } l\in (0, l_*),\ \lambda_{l_*}=0,\ \lambda_l>0 \mbox{ for } l>l_*.
\]
This completes the proof.
\end{proof}

\subsection{Proof of Theorem \ref{thm2}}
Throughout this subsection, we assume that ${\bf(J)}$, ${\bf(f_{KPP})}$ hold and $J$ is weakly non-symmetric, i.e., \eqref{weak-} holds.

\begin{lemma}\label{thm-vanishing}
If $h_\infty-g_\infty< + \infty$,
then $u(t,x)\rightarrow0$ uniformly in $[g(t),h(t)]$ as
$t\rightarrow+\infty$ and
$\lambda_p(\mathcal{L}^0_{(g_\infty, h_\infty)})\leq 0$.
\end{lemma}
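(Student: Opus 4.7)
My plan is to prove the two conclusions in sequence: first establish $\lambda_p := \lambda_p(\mathcal L^0_{(g_\infty, h_\infty)}) \leq 0$ by contradiction, then deduce uniform decay of $u$ by comparison with a super-solution on the fixed interval $(g_\infty, h_\infty)$. The ordering is forced because Proposition \ref{prop-single} only produces vanishing of the fixed-domain solution when $\lambda_p\le 0$.

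For the eigenvalue bound, suppose instead that $\lambda_p > 0$. By the continuity and strict monotonicity of $l \mapsto \lambda_p(\mathcal L^0_{(-l,l)})$ from Proposition \ref{c-l}, together with its translation invariance, I can fix $t_0$ so large that $\lambda_p(\mathcal L^0_{(g(t_0), h(t_0))}) > 0$ and $h_\infty - h(t_0) < \rho/4$, where $\rho > 0$ is chosen via the continuity of $J$ so that $J \geq J(0)/2$ on $[-\rho,\rho]$. Let $W(t,x)$ solve the fixed-interval KPP problem on $(g(t_0), h(t_0))$ with $W(t_0,\cdot) = u(t_0,\cdot)$. Since $[g(t_0), h(t_0)] \subset [g(t), h(t)]$ for $t \geq t_0$, on this fixed subinterval $u$ is a super-solution of $W$'s equation. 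Extending $u - W$ by zero outside $[g(t_0), h(t_0)]$ and applying Theorem \ref{lemma2.1} with the constant boundaries $g \equiv g(t_0), h \equiv h(t_0)$ (so $\Sigma_{min}^g = \Sigma_{max}^h = \emptyset$), I obtain $u \geq W$ on $[g(t_0), h(t_0)]$. Proposition \ref{prop-single} then gives $W(t,\cdot) \to W_{t_0}(\cdot) \geq c_0 > 0$ uniformly. For $x \in (h(t_0) - \rho/4, h(t_0))$ and large $t$, the choice of $t_0$ forces $0 < h(t) - x < \rho/2$, hence $\int_{h(t)}^\infty J(y-x)\,dy \geq J(0)\rho/4$; combined with $u(t,x) \geq c_0/2$ this makes the free-boundary formula for $h'(t)$ uniformly bounded below by some $\alpha > 0$ for all large $t$, contradicting $h_\infty < \infty$.

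Once $\lambda_p \leq 0$ is in hand, let $\tilde V(t,x)$ solve the fixed-domain KPP problem on $(g_\infty, h_\infty)$ with initial data $u_0$ extended by zero to $[g_\infty, h_\infty]$. Setting $\psi := \tilde V - u$ on the moving domain and extending by zero outside, the splitting $\int_{g_\infty}^{h_\infty} J(x-y) \tilde V(t,y)\,dy = \int_{g(t)}^{h(t)} J(x-y) \tilde V(t,y)\,dy + \int_{(g_\infty, h_\infty) \setminus (g(t), h(t))} J(x-y) \tilde V(t,y)\,dy$ contributes a nonnegative source to the inequality satisfied by $\psi$, while $\psi \geq 0$ at $x = g(t), h(t)$ (since $u = 0$ there and $\tilde V \geq 0$) and $\psi(0,\cdot) = 0$. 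Theorem \ref{lemma2.1} then yields $u \leq \tilde V$ on $[g(t), h(t)]$, and Proposition \ref{prop-single} (using $\lambda_p \leq 0$) forces $\tilde V(t,\cdot) \to 0$ uniformly on $[g_\infty, h_\infty]$. Hence $u(t,x) \to 0$ uniformly on $[g(t), h(t)]$.

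The main obstacle is setting up the two comparisons to fit the framework of Theorem \ref{lemma2.1}, because the reference boundary must be chosen differently in each step (constant $g(t_0), h(t_0)$ in the sub-solution comparison; the moving $g, h$ in the super-solution comparison); in each case one must verify that the surplus piece of the integral has the favorable sign and that the candidate $\phi$ satisfies the required nonnegativity at the corresponding boundary. A secondary quantitative point is checking that the stationary solution $W_{t_0}$ from Proposition \ref{prop-single} is strictly positive on the closed interval, so that the lower bound on $h'(t)$ in the contradiction step is genuinely uniform.
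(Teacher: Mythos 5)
Your proposal is correct and takes essentially the same route as the paper: prove $\lambda_p(\mathcal{L}^0_{(g_\infty,h_\infty)})\leq 0$ by contradiction via a lower comparison with a fixed-interval solution that converges to a positive stationary state, extract a uniform lower bound on $h'(t)$ near $h_\infty$ using $J(0)>0$, and then conclude uniform decay by an upper comparison with the fixed-interval solution on $(g_\infty,h_\infty)$ started from $u_0$ extended by zero. The only cosmetic difference is that the paper compares on $(g_\infty+\epsilon, h_\infty-\epsilon)$ after a large time $T_\epsilon$, while you compare on $(g(t_0),h(t_0))$ for a fixed large $t_0$; both work, and your careful verification of the sign of the surplus integral and of the (vacuous) $\Sigma_{\min}^g,\Sigma_{\max}^h$ conditions in the application of Theorem~\ref{lemma2.1} is exactly the ``simple comparison argument'' the paper leaves to the reader. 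One small point: the pointwise bound $u(t,x)\geq c_0/2$ up to $x=h(t_0)$ requires $W_{t_0}$ to be positive at the closed endpoint (true here since the nonlocal stationary equation forces $W_{t_0}(h(t_0))>0$, but not explicitly stated in Proposition~\ref{prop-single}); alternatively one can integrate $u\geq W\to W_{t_0}>0$ over a compact subinterval strictly inside $(g(t_0),h(t_0))$, which is what the paper effectively does, avoiding the endpoint issue entirely.
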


\begin{proof}
We first prove that
$$
\lambda_p(\mathcal{L}^0_{(g_\infty, h_\infty)})\leq 0.
$$
Suppose that
$
\lambda_p(\mathcal{L}^0_{(g_\infty, h_\infty)}) > 0$.
Then
$\lambda_p(\mathcal{L}^0_{(g_\infty + \epsilon, h_\infty - \epsilon)}) > 0$ for small $\epsilon>0$, say $\epsilon\in (0,\epsilon_1)$. Moreover, for such $\epsilon$, there exists $ T_{\epsilon}>0$ such
that
$$
h(t)>h_\infty-\epsilon, \ \ g(t)<g_\infty+\epsilon\;\;\;  \mbox{ for $t> T_{\epsilon}$}.
$$
Consider the problem 
\begin{equation}\label{single-epsilon}
\left\{
\begin{aligned}
&w_t=d\int_{g_\infty + \epsilon}^{h_\infty - \epsilon}J(x-y)w(t,y)dy-dw +f(w),
&& t >  T_{\epsilon},~x\in [g_\infty + \epsilon, h_\infty - \epsilon],\\
&w( T_{\epsilon} ,x)=u(T_{\epsilon}, x), && x\in [g_\infty + \epsilon, h_\infty - \epsilon].
\end{aligned}
\right.
\end{equation}
Since $\lambda_p(\mathcal{L}^0_{(g_\infty + \epsilon, h_\infty - \epsilon)}) > 0$, Proposition \ref{prop-single} indicates that
the  solution   $ w_{\epsilon}(t,x)$ of   (\ref{single-epsilon})  converges to the unique steady state $W_{\epsilon}(x)$ of (\ref{single-epsilon}) uniformly in $[g_\infty + \epsilon, h_\infty - \epsilon]$ as $t \rightarrow+\infty$.

Moreover, by the maximum principle Theorem 2.1 and a simple  comparison argument we have
$$
u(t,x)\ge  w_{\epsilon}(t,x)~\text{ for }~t> T_{\epsilon}~\text{ and }~x\in[g_\infty+\epsilon,h_\infty-\epsilon].
$$
Thus, there exists $T_{1\epsilon}> T_{\epsilon}$ such that
$$
u(t,x)\ge {1\over 2} W_{\epsilon}(x)>0~\text{ for }~t> T_{1\epsilon}~\text{ and }~x\in[g_\infty+\epsilon,h_\infty-\epsilon].
$$

Note that since $J(0)>0$, there exist $\epsilon_0>0$ and $\delta_0>0$  such that $J(x)> \delta_0$ if $|x| < \epsilon_0$. Thus for $0<\epsilon <
\min\big\{\epsilon_1, \epsilon_0 /2\big\}$ and $t> T_{1\epsilon}$, we have
\begin{eqnarray*}
h'(t) &= & \mu\int_{g(t)}^{h(t)}\int_{h(t)}^{+\infty} J(y-x)u(t,x)dydx
      \geq  \mu\int_{g_\infty+\epsilon}^{h_\infty - \epsilon}\int_{h_\infty}^{+\infty} J(y-x)u(t,x)dydx\\
      &\geq & \mu\int_{h_\infty - \epsilon_0 /2}^{h_\infty - \epsilon}\int_{h_\infty}^{h_\infty + \epsilon_0 /2} \delta_0 {1\over 2} W_{\epsilon}(x) dydx >0.
\end{eqnarray*}
 This implies  $h_\infty=+\infty$, a contradiction  to the assumption that $h_\infty-g_\infty<+\infty$.
Therefore, we must have
$$
\lambda_p(\mathcal{L}^0_{(g_\infty, h_\infty)})\leq 0.
$$

We are now ready to show that $u(t,x)\rightarrow 0$ uniformly in $[g(t),h(t)]$ as
$t\rightarrow+\infty$.
Let $\bar u(t,x)$ denote the unique solution of
\begin{equation}
\label{single-infinity}
\left\{
\begin{aligned}
&\bar u_t=d\int_{g_\infty}^{h_\infty}J(x-y)\bar u(t,y)dy-d\bar u(t,x)+f(\bar u),&  & t>0,~x\in [g_\infty,h_\infty],\\
&\bar u(0,x)=\tilde u_0(x), & &  x\in  [g_\infty,h_\infty],
\end{aligned}
\right.
\end{equation}
where
$$
\tilde u_0(x)=u_0(x)~\text{ if }-h_0\le x\le h_0,~\text{ and }
\tilde u_0(x)=0~\text{ if }~x\not\in [-h_0, h_0].
$$
By the maximum principle Theorem 2.1, we have
$0\leq u(t,x)\le\bar u(t,x)$ for $t>0$ and $x\in[g(t),h(t)]$.
Since
$$
\lambda_p(\mathcal{L}^0_{(g_\infty, h_\infty)})\leq 0,
$$
Proposition \ref{prop-single} implies that   $\overline u(t,x)
\rightarrow0$ uniformly in $x\in [g_\infty,h_\infty]$ as $t
\rightarrow+\infty$. Hence $u(t,x)\rightarrow0$ uniformly in
$x\in[g(t),h(t)]$ as $t\rightarrow+\infty$. This completes
the proof.
\end{proof}

\begin{lemma}\label{lemma-same}
 $h_\infty< + \infty$ if and only if
$-g_\infty<  + \infty$.
\end{lemma}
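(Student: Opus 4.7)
The claim is symmetric in the two sides, so it suffices to prove the implication $h_\infty<+\infty\Rightarrow g_\infty>-\infty$; the reverse implication follows by a fully parallel argument, since the weak non-symmetry condition $-\infty\le c_*^-<0<c_*^+\le+\infty$ treats the two spreading rates on equal footing. I would proceed by contradiction, assuming $h_\infty<+\infty$ while $g_\infty=-\infty$, and derive that $h$ must also spread to $+\infty$.

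The first step is to identify a bounded interval of the form $(-L,h_\infty-\epsilon)$ on which the principal eigenvalue of $\mathcal L^0$ is strictly positive. Weak non-symmetry combined with the last statement of Proposition \ref{c-l} gives $\lim_{l\to\infty}\lambda_p(\mathcal L^0_{(-l,l)})>0$, and since $\lambda_p(\mathcal L^c_{(l_1,l_2)})$ depends only on the length $l_2-l_1$ and is strictly increasing in that length, one can choose $L$ large and $\epsilon>0$ small so that $\lambda_p(\mathcal L^0_{(-L,h_\infty-\epsilon)})>0$. Using $g_\infty=-\infty$ and $h(t)\nearrow h_\infty$, there is some $T>0$ with $[-L,h_\infty-\epsilon]\subset (g(t),h(t))$ for all $t\ge T$.

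The second step mirrors the comparison argument in Lemma \ref{thm-vanishing}. Let $w$ be the solution of the fixed-domain KPP problem on $(-L,h_\infty-\epsilon)$ with initial datum $u(T,\cdot)|_{[-L,h_\infty-\epsilon]}$. Because $u\ge 0$ and $u$ satisfies the nonlocal equation on the larger moving interval, restricting the nonlocal integral to $(-L,h_\infty-\epsilon)$ turns $u$ into a supersolution of the auxiliary problem; Theorem \ref{lemma2.1} then yields $u(t,x)\ge w(t,x)$ on $[-L,h_\infty-\epsilon]$ for $t\ge T$. Since $\lambda_p$ is positive on this interval, Proposition \ref{prop-single} gives $w(t,\cdot)\to V(\cdot)$ uniformly, where $V$ is the unique positive stationary solution. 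Consequently there exist $T_1>T$, $\delta>0$, and a compact sub-interval $I\subset(-L,h_\infty-\epsilon)$ on which $u(t,x)\ge \delta$ for every $t\ge T_1$.

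The remaining step converts this lower bound into a strictly positive lower bound for $h'(t)$, and is where the parameter bookkeeping must be arranged carefully. Choose $\epsilon_0,\delta_0>0$ with $J(z)\ge\delta_0$ for $|z|\le\epsilon_0$ (possible because $J(0)>0$ and $J$ is continuous), insist from the outset that $\epsilon<\epsilon_0/3$, and take $I\subset[h_\infty-5\epsilon/2,\,h_\infty-3\epsilon/2]$ (compatible with $L$ large). Then for $t\ge T_1$, $x\in I$, and $y\in(h(t),h(t)+\epsilon/2)$, the difference $y-x$ lies in $(0,\epsilon_0)$, so $J(y-x)\ge\delta_0$. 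Substituting into
\[
h'(t)=\mu\int_{g(t)}^{h(t)}\int_{h(t)}^{+\infty}J(y-x)u(t,x)\,dy\,dx
\]
yields $h'(t)\ge c>0$ for all $t\ge T_1$, where $c$ depends only on $\mu,\delta,\delta_0,\epsilon$. Integration then forces $h_\infty=+\infty$, contradicting the standing hypothesis. The only subtle point I anticipate is the simultaneous choice of $L$, $\epsilon$, and $I$ so that both the eigenvalue positivity and the kernel-size lower bound are available at once; once these are fixed, the rest is routine.
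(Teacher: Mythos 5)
Your proof is correct and follows essentially the same approach as the paper: assume one front is finite and the other infinite, use Proposition~\ref{c-l} (via the weak non-symmetry) to find a fixed interval contained in $(g(t),h(t))$ for large $t$ with positive principal eigenvalue, compare $u$ with the fixed-domain solution to get a persistent lower bound via Proposition~\ref{prop-single}, and then use $J(0)>0$ to bound the front speed away from zero, contradicting the finiteness assumption. The paper mirrors this with the roles of $g$ and $h$ swapped (assuming $h_\infty=+\infty$ and $-g_\infty<\infty$ and deriving $g'(t)<-c<0$), and defers the flux details to its proof of Lemma~\ref{thm-vanishing}, which you have spelled out explicitly.
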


\begin{proof}
Arguing indirectly, we assume, without loss of generality,  that $h_\infty=+\infty$
and $-g_\infty<+\infty$.     By Proposition \ref{c-l},  there exists  $h_1>0$ such that $\lambda_p(\mathcal{L}^0_{(0, h_1)}) > 0$. Moreover, for any $\epsilon>0$ small, there exists $ T_{\epsilon}>0$ such
that
$
h(t)>h_1, \ \ g(t)<g_\infty+\epsilon<0
$ for $t> T_{\epsilon}$.
In particular,
$$
\lambda_p(\mathcal{L}^0_{(g_\infty+\epsilon, h_1)}) >\lambda_p(\mathcal{L}^0_{(0, h_1)})  > 0.
$$
We now consider the problem
\begin{equation*}
\left\{
\begin{aligned}
&w_t=d\int_{g_\infty + \epsilon}^{h_1}J(x-y)w(t,y)dy-dw +f(w),
& & t >  T_{\epsilon},~x\in [g_\infty + \epsilon, h_1],\\
& w( T_{\epsilon} ,x)=u(T_{\epsilon}, x),  & & x\in [g_\infty + \epsilon, h_1].
\end{aligned}
\right.
\end{equation*}
Similar to  the proof  of Theorem \ref{thm-vanishing},   by choosing $\epsilon < \epsilon_0/2$, we have
$g'(t) <- c<0 $ for all large $t$. This is a contradiction to  $-g_\infty<+\infty$.
\end{proof}

\begin{lemma}\label{thm-spreading}
If $h_\infty
-g_\infty= + \infty$, then $\lim_{t\rightarrow+\infty}u(t,x)=1$ locally uniformly in $\mathbb{R}$.
\end{lemma}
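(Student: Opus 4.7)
The plan is to sandwich $u(t,x)$ between asymptotic bounds that both tend to $1$, locally uniformly in $\mathbb R$. By Lemma \ref{lemma-same}, the hypothesis gives $h_\infty=+\infty$ and $g_\infty=-\infty$.

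For the upper bound, let $\bar v(t)$ solve the ODE $\dot v=f(v)$ with $\bar v(0)=\max\{1,\|u_0\|_\infty\}$; the KPP condition forces $\bar v(t)\to 1$ monotonically. Since $\int_{g(t)}^{h(t)} J(x-y)\,dy\le 1$, the $x$-independent function $\bar v$ is a super-solution of the first equation of \eqref{101'}, and clearly $\bar v(t)>0=u(t,g(t))=u(t,h(t))$ and $\bar v(0)\ge u_0$. Applying Theorem \ref{lemma2.1} to $\phi:=\bar v-u$ (extended by $0$ outside $[g(t),h(t)]$) gives $u(t,x)\le\bar v(t)$, whence $\limsup_{t\to\infty}u(t,x)\le 1$ uniformly in $x$.

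For the lower bound, fix a compact $K\subset\mathbb R$ and $\varepsilon>0$. Because $J$ is weakly non-symmetric, $0\in(c_*^-,c_*^+)$, so Proposition \ref{c-l} gives $\lim_{L\to\infty}\lambda_p(\mathcal L^0_{(-L,L)})>0$; combined with Proposition \ref{prop-single}, I can choose $L$ large enough that $K\subset(-L,L)$, $\lambda_p(\mathcal L^0_{(-L,L)})>0$, and the unique positive stationary solution $V_L$ of \eqref{l-0} satisfies $V_L(x)\ge 1-\varepsilon$ on $K$. Since $h(t)\nearrow+\infty$ and $g(t)\searrow-\infty$, pick $T_0$ with $[-L,L]\subset(g(t),h(t))$ for all $t\ge T_0$. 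Let $w(t,x)$ solve \eqref{l-0} on $[-L,L]$ for $t>T_0$ with initial datum $w(T_0,\cdot)=u(T_0,\cdot)|_{[-L,L]}>0$; by Proposition \ref{prop-single}, $w(t,\cdot)\to V_L$ uniformly on $[-L,L]$. For $(t,x)\in[T_0,\infty)\times[-L,L]$, the inequality
\begin{equation*}
u_t\ge d\int_{-L}^{L}J(x-y)u(t,y)\,dy-du+f(u)
\end{equation*}
holds because the discarded tail $d\int_{(g(t),h(t))\setminus[-L,L]}J(x-y)u(t,y)\,dy$ is nonnegative; thus $u$ is a super-solution of the fixed-interval problem solved by $w$. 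The comparison principle for the nonlocal equation on $[-L,L]$ then gives $u\ge w$ on $[T_0,\infty)\times[-L,L]$, so $\liminf_{t\to\infty}u(t,x)\ge V_L(x)\ge 1-\varepsilon$ uniformly on $K$. Letting $\varepsilon\to 0$ and combining with the upper bound yields $u(t,x)\to 1$ locally uniformly.

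The main obstacle is the comparison $u\ge w$ on the fixed interval $[-L,L]$: because $u(t,\pm L)>0$, the triple $(u,-L,L)$ does not fit directly into Theorem \ref{lemma2.1}. One must work with the difference $U:=u-w$, which satisfies a linear nonlocal inequality on $[-L,L]$ with a bounded zeroth-order coefficient, zero initial data, and a nonnegative tail source term; a Duhamel representation, or the $e^{Kt}$-shift device used in Step 2 of the proof of Theorem \ref{lemma2.1}, then forces $U\ge 0$, exactly as already exploited implicitly in the proof of Lemma \ref{thm-vanishing}.
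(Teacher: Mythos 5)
Your proof is correct and follows essentially the same route as the paper: the upper bound by comparison with the ODE $v'=f(v)$, and the lower bound by comparing $u$ on a fixed interval with the solution of the truncated problem \eqref{l-0} and then invoking Propositions \ref{c-l} and \ref{prop-single}; the paper uses the time-dependent intervals $[g(t_n),h(t_n)]$ rather than $[-L,L]$, but that is a cosmetic difference. Your closing worry about the comparison $u\ge w$ is unnecessary: with $g\equiv -L$ and $h\equiv L$ the sets $\Sigma_{min}^g$ and $\Sigma_{max}^h$ are empty, so Theorem \ref{lemma2.1} applies directly to $\phi:=u-w$ extended by zero outside $[-L,L]$, exactly as the paper does when it writes $u\ge \underline u_n$ and $u\ge w_\epsilon$.
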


\begin{proof}
Thanks to Lemma \ref{lemma-same}, $h_\infty-g_\infty=+\infty$ implies  $h_{\infty} = -g_{\infty}= +\infty$. Choose an increasing sequence $\{ t_n\}_{n\geq 1}$ satisfying
$$
\lim_{n\rightarrow +\infty} t_n =   +\infty,\; \lambda_p(\mathcal{L}^0_{(g(t_n), h(t_n))})>0 \mbox{ for all } n\geq 1.
$$

Denote $g_n = g(t_n)$, $h_n = h(t_n)$ and let $\underline u_n(t,x)$  be the unique solution of the following problem
\begin{equation}\label{single-tn}
\left\{
\begin{aligned}
& \underline u_t=d\int_{g_n}^{h_n}J(x-y)\underline u(t,y)dy
-d\underline u(t,x)+f(\underline u),&  & t>t_n,~x\in  [g_n,h_n],\\
& \underline u(t_n,x)= u(t_n,x),& &  x\in  [g_n,h_n].
\end{aligned}
\right.
\end{equation}
By the maximum principle Theorem 2.1  we have
\begin{equation}\label{pf-thm-tn>}
u(t,x)\ge\underline u_n(t,x)~\text{ in }~[t_n,+\infty)\times[g_n,h_n].
\end{equation}
Since
$
\lambda_p(\mathcal{L}^0_{[g_n, h_n]}) > 0$, by
 Proposition \ref{prop-single}, problem (\ref{single-tn}) admits a unique positive steady state $\underline u_n(x)$ and
\begin{equation}\label{pf-thm-limit}
\lim_{t\rightarrow+\infty} \underline u_n (t,x)=\underline u_n (x)~\text{ uniformly in } \ [g_n,h_n].
\end{equation}

By Proposition \ref{prop-single},
\[
\lim_{n\to\infty}  \underline u_n(x)  =1 \mbox{ locally uniformly in } x\in\mathbb R.
\]
It follows from this fact,  (\ref{pf-thm-tn>}) and (\ref{pf-thm-limit}) that
\begin{equation}\label{pf-thm-liminf}
\liminf_{t\rightarrow+\infty}u(t,x)\ge 1~\text{ locally
uniformly in }~\mathbb{R}.
\end{equation}

To complete the proof, it remains to prove that
\begin{equation}\label{pf-thm-limsup}
\limsup_{t\rightarrow+\infty}u(t,x)\le 1~\text{ locally
uniformly in }~\mathbb{R}.
\end{equation}
Let $\hat u(t)$ be the unique solution of the ODE problem
\[
\hat u'=f(\hat u),\; \hat u(0)=\|u_0\|_\infty.
\]
By the maximum principle we have
$u(t,x)\le\hat u(t)$ for $t>0$ and $x\in[g(t),h(t)]$. Since $\hat u(t)\to 1$ as $t\to\infty$, \eqref{pf-thm-limsup} follows immediately.
\end{proof}

Theorem \ref{thm2} clearly follows directly from  Lemmas \ref{thm-vanishing} and \ref{thm-spreading}.

\subsection{Proof of Theorem \ref{thm3}}
Next we look for criteria guaranteeing spreading or vanishing for \eqref{101}. From Lemma \ref{dicho2} we see that if
\begin{equation}\label{d-big}
d\in (0, f'(0)],
\end{equation}
then $\lambda_p(\mathcal{L}^0_{(\ell_1,\ell_2)}) > 0$ for any finite interval $(\ell_1,\ell_2)$. Combining this with Lemma \ref{thm-vanishing} and Theorem \ref{thm2}, we
immediately obtain the following conclusion:

 \begin{lemma}\label{f'(0)>d}
  When \eqref{d-big} holds,  spreading always happens for \eqref{101}.
\end{lemma}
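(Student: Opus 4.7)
The plan is to argue by contradiction using the spreading-vanishing dichotomy from Theorem \ref{thm2}, combined with the characterization of the principal eigenvalue when $f'(0)\geq d$ supplied by Lemma \ref{dicho2}. Concretely, I would assume that vanishing occurs for the solution $(u,g,h)$ of \eqref{101} and derive an impossible inequality on $\lambda_p$.

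First, suppose spreading does not happen. Then by Theorem \ref{thm2}, vanishing must occur, so the free boundaries satisfy $h_\infty-g_\infty<+\infty$. Since $g(0)=-h_0<h_0=h(0)$ and $h$ is increasing while $g$ is decreasing, we have
\begin{equation*}
h_\infty-g_\infty\geq 2h_0>0.
\end{equation*}
Applying Lemma \ref{thm-vanishing} then gives
\begin{equation*}
\lambda_p\big(\mathcal{L}^0_{(g_\infty,h_\infty)}\big)\leq 0.
\end{equation*}

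Next, I would use the translation invariance $\lambda_p(\mathcal{L}^0_{(l_1,l_2)})=\lambda_p(\mathcal{L}^0_{(0,l_2-l_1)})$ recorded before Proposition \ref{c-l} to rewrite the eigenvalue above as $\lambda_p(\mathcal{L}^0_{(-L,L)})$ with $L=(h_\infty-g_\infty)/2\geq h_0>0$. Under the hypothesis $d\in(0,f'(0)]$, Lemma \ref{dicho2} asserts that $l_*=0$, so $\lambda_p(\mathcal{L}^0_{(-l,l)})>0$ for every $l>0$. In particular,
\begin{equation*}
\lambda_p\big(\mathcal{L}^0_{(g_\infty,h_\infty)}\big)=\lambda_p\big(\mathcal{L}^0_{(-L,L)}\big)>0,
\end{equation*}
contradicting the previous inequality. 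Hence vanishing is impossible and spreading must occur.

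This argument is essentially a bookkeeping step, so I do not expect any real obstacle; the only thing to be careful about is to justify $h_\infty-g_\infty>0$ (which is immediate from the initial datum and the monotonicity of $g,h$) so that we may legitimately apply the strict inequality $\lambda_p(\mathcal{L}^0_{(-L,L)})>0$ from Lemma \ref{dicho2}. All of the machinery—the dichotomy, the vanishing criterion, and the sign characterization of $\lambda_p$—has already been established in the excerpt, so the proof reduces to a one-paragraph contradiction.
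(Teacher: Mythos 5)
Your proof is correct and follows exactly the same route as the paper: Lemma \ref{dicho2} gives $\lambda_p(\mathcal{L}^0_{(\ell_1,\ell_2)})>0$ for every finite interval when $d\in(0,f'(0)]$, which contradicts the necessary condition $\lambda_p(\mathcal{L}^0_{(g_\infty,h_\infty)})\leq 0$ from Lemma \ref{thm-vanishing} if vanishing occurred, so by the dichotomy of Theorem \ref{thm2} spreading must happen. Your extra remark that $h_\infty-g_\infty\geq 2h_0>0$ is a sensible piece of bookkeeping, though not strictly needed since $(g_\infty,h_\infty)$ is automatically a nondegenerate finite interval.
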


We next consider the case
\begin{equation}\label{d-large}
d>f'(0).
\end{equation}
In this case, by Lemma \ref{dicho2}, there exists $\ell^*>0$ such that
\[
\lambda_p(\mathcal{L}_{I})=0 \mbox{ if } |I|=\ell^*,\; \lambda_p(\mathcal{L}_I)<0 \mbox{ if } |I|<\ell^*,\; \lambda_p(\mathcal{L}_I)>0 \mbox{ if } |I|>\ell^*,
\]
where $I$ stands for a finite open interval in $\mathbb R$, and $|I|$ denotes its length.

\begin{lemma}\label{thm-mu-small} Suppose that \eqref{d-large} holds and $\ell^*$ is defined above.
If $h_0\geq \ell^*/2$ then spreading always happens for \eqref{101}. If $h_0<\ell^*/2$,  then there
exists $\underline\mu>0$ such that vanishing happens for \eqref{101}  if
$0<\mu\le\underline\mu$.
\end{lemma}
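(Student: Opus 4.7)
The plan is to treat the two alternatives separately, using the dichotomy from Theorem~\ref{thm2} throughout.

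For $h_0 \geq \ell^*/2$, I would argue by contradiction via the dichotomy. The free-boundary formulas in \eqref{101'}, combined with the strong maximum principle (Theorem~\ref{lemma2.1}(ii) applied to $u$) and $J(0)>0$, force $h(t)$ to be strictly increasing and $g(t)$ strictly decreasing on $(0,\infty)$. Fixing any $t_0 > 0$ then gives $h(t_0) > h_0 \geq \ell^*/2$ and $g(t_0) < -h_0 \leq -\ell^*/2$, so if vanishing were to occur one would have $h_\infty - g_\infty \geq h(t_0) - g(t_0) > \ell^*$. By Proposition~\ref{c-l} together with the definition of $\ell^*$ this yields $\lambda_p(\mathcal{L}^0_{(g_\infty, h_\infty)}) > 0$, contradicting Lemma~\ref{thm-vanishing}. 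Hence spreading must occur.

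For the more delicate case $h_0 < \ell^*/2$, my goal is to construct an explicit upper solution $(\bar u, \bar g, \bar h)$ to \eqref{101'} whose domain stays strictly inside $[-\ell_1/2, \ell_1/2]$ for some $\ell_1 < \ell^*$, and whose profile decays exponentially; the conclusion then follows from the comparison principle Theorem~\ref{thm-CP} and Theorem~\ref{thm2}. Concretely, I would pick $\ell_1 \in (2h_0, \ell^*)$ and $\epsilon \in (0, \ell_1/2 - h_0)$, so that $\lambda := \lambda_p(\mathcal{L}^0_{(-\ell_1/2, \ell_1/2)}) < 0$ by the definition of $\ell^*$, and let $\phi > 0$ be the associated principal eigenfunction on $[-\ell_1/2,\ell_1/2]$. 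Set
\begin{align*}
\bar h(t) &= \tfrac{\ell_1}{2} - \bigl(\tfrac{\ell_1}{2} - h_0 - \epsilon\bigr) e^{-\beta t}, \qquad \bar g(t) = -\bar h(t), \\
\bar u(t,x) &= M\,e^{-\beta t}\phi(x) \quad \text{for } x\in [\bar g(t),\bar h(t)],
\end{align*}
with $\beta := -\lambda/2 > 0$ and $M \geq \|u_0\|_\infty / \min_{[-h_0,h_0]}\phi$ chosen so that $M\phi \geq u_0$ on $[-h_0,h_0]$.

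The verification splits into three checks. For the differential inequality, using $[\bar g(t),\bar h(t)] \subset [-\ell_1/2, \ell_1/2]$, the KPP bound $f(\bar u) \leq f'(0)\bar u$ and the eigenvalue identity for $\phi$ give
\begin{align*}
d\!\int_{\bar g(t)}^{\bar h(t)} J(x-y)\bar u(t,y)\,dy - d\bar u + f(\bar u) \leq M e^{-\beta t}\lambda\phi(x) \leq -\beta M e^{-\beta t}\phi(x) = \bar u_t.
\end{align*}
For the free-boundary inequalities, I would estimate
\begin{align*}
\mu\!\int_{\bar g(t)}^{\bar h(t)}\!\!\int_{\bar h(t)}^{+\infty}\! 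J(y-x)\bar u(t,x)\,dy\,dx \leq C\mu M e^{-\beta t}, \qquad C := \int_{-\ell_1/2}^{\ell_1/2}\!\phi(x)\,dx,
\end{align*}
while $\bar h'(t) = \beta(\tfrac{\ell_1}{2} - h_0 - \epsilon)e^{-\beta t}$. Matching the common factor $e^{-\beta t}$ produces the threshold $\underline\mu := \beta(\tfrac{\ell_1}{2} - h_0 - \epsilon)/(CM)$, and the inequality for $\bar g$ is analogous. For $\mu \in (0,\underline\mu]$, Theorem~\ref{thm-CP} then gives $h(t) < \bar h(t) \leq \ell_1/2$ and $g(t) > \bar g(t) \geq -\ell_1/2$ for all $t > 0$, so $h_\infty - g_\infty \leq \ell_1 < \infty$ and Theorem~\ref{thm2} forces vanishing. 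The main obstacle I anticipate is the careful coupling of the ansatz: the exponent $\beta$ must simultaneously dominate $|\lambda|$ (to close the PDE inequality) and be shared between $\bar u$ and $\bar h'$ (so that the smallness of $\mu$ alone, independent of $t$, closes the free-boundary estimate), while the support of $\bar u$ must remain inside the interval where $\phi$ and its eigenvalue identity are available.
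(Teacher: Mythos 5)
Your proof is correct, and both halves follow the same skeleton as the paper. The $h_0\geq \ell^*/2$ argument is identical: strict monotonicity of the fronts plus Lemma~\ref{thm-vanishing} and the characterization of $\ell^*$ give the contradiction.

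For $h_0<\ell^*/2$, your construction is a cleaner variant of the paper's. The paper first solves the auxiliary fixed-domain problem \eqref{single-h1} on $[-h_1,h_1]$ to obtain $\hat w$, bounds $\hat w(t,x)\leq C_1 e^{\lambda_1 t/4}\phi_1(x)$ by a second comparison, and then takes $\hat h(t)=h_0+2\mu h_1 C_1\int_0^t e^{\lambda_1 s/4}\,ds$ so that the free-boundary ODE inequality holds automatically and the smallness of $\mu$ is used only to keep $\hat h(\infty)\leq h_1$. You skip the auxiliary problem entirely by taking the explicit profile $\bar u=Me^{-\beta t}\phi$ and verifying the differential inequality directly from the eigenvalue identity and ${\bf (f_{KPP})}$; and you reverse the roles, choosing $\bar h$ independent of $\mu$ (an exponential relaxation to $\ell_1/2$, so $\bar h<\ell_1/2$ is automatic) and spending the smallness of $\mu$ on the boundary ODE inequality $\bar h'\geq \mu\int\int J\bar u$. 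Your choice $\bar h(0)=h_0+\epsilon>h_0$, $\bar g(0)=-h_0-\epsilon<-h_0$ also matches the strict inequalities demanded by Theorem~\ref{thm-CP} more literally than the paper's $\hat h(0)=h_0$. Both approaches hinge on the same two ingredients, namely the negativity of the principal eigenvalue on an interval of length $<\ell^*$ (Lemma~\ref{dicho2}) and the comparison principle, so I would not call the route genuinely different, but your version avoids the intermediate fixed-domain solution and the extra comparison step, which is a genuine simplification.
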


\begin{proof} If $h_0\geq \ell^*/2$ and vanishing happens, then $(g_\infty, h_\infty)$ is a finite interval with length strictly bigger than $2h_0\geq \ell^*$.
Therefore $\lambda_p(\mathcal{L}_{(g_\infty, h_\infty)})>0$, contradicting the conclusion in Lemma \ref{thm-vanishing}. Thus when $h_0\geq \ell^*/2$,
spreading always happens for \eqref{101}.

We now consider the case $ h_0<\ell^*/2$. We fix   $h_1\in (h_0, \ell^*/2)$ and consider the
following problem
\begin{equation}\label{single-h1}
\left\{
\begin{aligned}
&w_t(t,x)=d\int_{-h_1}^{h_1}J(x-y)w(t,y)dy-dw +f(w),
& &t>0,~x\in [- h_1, h_1],\\
&w(0,x)=u_0 (x), & &  x\in [- h_0, h_0], \\
& w(0,x ) = 0,   & &  x\in [- h_1, -h_0) \cup ( h_0, h_1]
\end{aligned}
\right.
\end{equation}
and  denote its unique solution by $\hat w(t,x)$.
The choice of $h_1$ guarantees that
$$
\lambda_1 := \lambda_p(\mathcal{L}_{(-h_1, h_1)}) < 0.
$$
Let $\phi_1>0$ be the corresponding normalized eigenfunction of $\lambda_1$, namely $\|\phi_1\|_\infty=1$ and
\[
\mathcal{L}_{(-h_1, h_1)}[\,\phi_1](x)=\lambda_1\phi_1(x) \mbox{ for } x\in [-h_1, h_1].
\]

By ${\bf (f_{KPP})}$, 
\begin{eqnarray*}
\hat w_t(t,x) &= & d\int_{-h_1}^{h_1}J(x-y)\hat w(t,y)dy-d\hat w +f(\hat w)\\
        & \leq & d\int_{-h_1}^{h_1}J(x-y)\hat w(t,y)dy-d\hat w +f'(0) \hat w.
\end{eqnarray*}
On the other hand, for $C_1>0$ and $w_1 = C_1 e^{\lambda_1 t/4} \phi _{1}$  it is easy to check that
\begin{eqnarray*}
&&    d\int_{-h_1}^{h_1}J(x-y)w_1(t,y)dy  -  d w_1  + f'(0) w_1 - w_{1t}(t,x)\\
        & = & C_1 e^{\lambda_1 t/4 }\left \{  d\int_{-h_1}^{h_1}J(x-y)  \phi _1 (y)dy-d  \phi _1 +f'(0)  \phi _1  - {\lambda_1\over 4} \phi _1 \right\}\\
        & = &{3\lambda_1\over 4}C_1 e^{\lambda_1 t/4 } \phi _1<0.
\end{eqnarray*}
 Choose
$C_1>0$ large such that $C_1\phi _1> u_0$ in $[-h_1, h_1]$.  Then we can apply the maximum principle Theorem 2.1 to $w_1-\hat w$ to deduce
\begin{equation}\label{pf-thm-w-decay}
\hat w (t,x)\le w_1 (t,x) = C_1 e^{\lambda_1 t/4} \phi _1 \leq C_1 e^{\lambda_1 t/4} ~\text{ for   }
~t>0~\text{and}~x\in[-h_1, h_1].
\end{equation}

Now define
\begin{equation*}
\hat h(t)=h_0+ 2\mu  h_1  C_1 \int_0^t  e^{\lambda_1 s/4}ds~\text{ and }~\hat g(t)=-\hat
h(t)~\text{ for }~t\ge0,
\end{equation*}
We claim that  $(\hat w, \hat h, \hat g)$ is an upper solution of \eqref{101}.

Firstly, we compute that for any $t>0$,
$$
\hat h(t)=h_0 - 2 \mu h_1 C_1  \frac{4}{\lambda_1 }\left( 1-e^{\lambda_1 t/4} \right) <   h_0 - 2 \mu h_1  C_1  \frac{4}{\lambda_1 }  \leq   h_1
$$
provided that
$$
0< \mu\leq \underline{\mu}:=  \frac{-\lambda_1 (h_1- h_0) }{8h_1  C_1}.
$$
Similarly, $\hat  g(t) > -h_1 $ for any $t>0$. Thus by (\ref{single-h1}) we have
$$
\hat w_t(t,x) \geq d\int_{\hat  g(t)}^{\hat  h(t)}J(x-y)\hat w(t,y)dy-d \hat w +f(\hat w)
 \ \ \  \textrm{for}\  t>0,~x\in [\hat  g(t),\hat  h(t)].
$$
Secondly, due to (\ref{pf-thm-w-decay}),  it is easy to check that
\begin{eqnarray*}
\int_{\hat g(t)}^{\hat h(t)}\int_{\hat h(t)}^{+\infty}J(y-x)\hat w(t,x) dydx  <  2 h_1 C_1 e^{\lambda_1 t/4}.
\end{eqnarray*}
Thus
$$
\hat h' (t)= 2  \mu h_1 C_1    e^{\lambda_1 t/4}  >  \mu \int_{\hat g(t)}^{\hat h(t)}\int_{\hat h(t)}^{+\infty}J(y-x)\hat w(t,x) dydx.
$$
Similarly, one has
$$
\hat g' (t) <-  \mu \int_{\hat g(t)}^{\hat h(t)}\int_{-\infty}^{\hat g(t)}  J(y-x)\hat w(t,x) dydx.
$$

Now it is clear that $(\hat w, \hat h, \hat g)$ is an upper solution of (\ref{101}).
Hence, by the comparison principle Theorem 2.3, we have
$$
u(t,x ) \leq \hat w (t,x),\ g(t) \geq \hat g(t)\ \textrm{and}\  h(t)\leq \hat h(t) \ \ \textrm{for} \ t>0,\ x\in [g(t), h(t)].
$$
It follows that
$$
h_\infty-g_\infty\le\lim_{t\rightarrow+\infty}\left(\hat h(t) -\hat g(t)\right)\leq 2 h_1 < + \infty.
$$
This completes the proof.
\end{proof}

\begin{theorem}\label{thm-mu-large} Suppose that \eqref{d-large} holds and $h_0<\ell^*/2$.
 Then there exists  $\bar\mu > 0$ such
that spreading happens to \eqref{101} if $\mu > \bar\mu$.
\end{theorem}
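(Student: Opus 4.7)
The plan is a proof by contradiction. Suppose vanishing occurs for some $\mu$. By Lemma \ref{thm-vanishing} combined with Lemma \ref{dicho2} and Proposition \ref{c-l}, the limit interval $(g_\infty,h_\infty)$ must satisfy $h_\infty-g_\infty\le\ell^*$; in particular both $h(t)$ and $-g(t)$ stay below $\ell^*$ for every $t\ge 0$. I will show that this hypothesis forces $\mu$ to lie below an explicit threshold $\bar\mu>0$, so that spreading must occur whenever $\mu>\bar\mu$.

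The backbone of the argument is a $\mu$-independent lower bound on $u$ obtained by comparison with a fixed-domain problem. Let $\underline v(t,x)$ denote the unique solution of \eqref{l-0} with $l=h_0$ and initial data $u_0$, extended by zero to $\R\setminus[-h_0,h_0]$. Because $g(t)\le -h_0\le h_0\le h(t)$ and $u\ge 0$, applying the maximum principle (Theorem \ref{lemma2.1}) to $u-\underline v$ on the fixed rectangle $[0,T]\times[-h_0,h_0]$ will give $u\ge\underline v$ there, uniformly in $\mu$. Combining Lemma \ref{Lemma202} with Theorem \ref{lemma2.1}(ii) produces $\eta>0$ and $\epsilon\in(0,h_0)$ for which $u(t,x)\ge\eta$ whenever $t\in[1/2,1]$ and $x\in[-h_0+\epsilon,h_0-\epsilon]$, again uniformly in $\mu$.

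Next I choose $\epsilon_0,\delta_0>0$ with $J(z)\ge\delta_0$ for $|z|\le\epsilon_0$. While $h(t)$ lies in $[h_0,h_0+\epsilon_0/2]$, the region $R(t):=\{(x,y):\; h(t)-\epsilon_0/2\le x\le h_0-\epsilon,\; h(t)\le y\le h(t)+\epsilon_0/2\}$ is contained in the strip where the lower bound on $u$ applies and satisfies $|y-x|\le\epsilon_0$, so
\[
h'(t)\;\ge\;\mu\iint_{R(t)}J(y-x)u(t,x)\,dy\,dx\;\ge\;c_0\mu
\]
for an explicit constant $c_0>0$ independent of $\mu$. Hence $h(t)$ reaches $h_0+\epsilon_0/4$ in time $O(1/\mu)$. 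I then restart the comparison argument on the enlarged interval $[-h_0,h_0+\epsilon_0/4]$ with initial datum $u(t_1,\cdot)$ to obtain a fresh $\mu$-independent lower bound, and push $h$ through another increment in time $O(1/\mu)+O(1)$. Iterating finitely many times (a number depending only on $\ell^*,h_0,\epsilon_0$), and running the symmetric scheme for $-g(t)$, I obtain a finite time $T$ at which $h(T)-g(T)>\ell^*$, contradicting the bound $h(t)-g(t)\le\ell^*$ coming from the vanishing hypothesis. Taking $\bar\mu$ larger than the threshold required at each of the finitely many iteration steps completes the proof.

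The main obstacle is the iterative bootstrapping of the lower bound on $u$. After each boundary advance one must re-establish a quantitative, $\mu$-independent lower bound on $u$ inside the newly enlarged interval; this uses the strict positivity supplied by Theorem \ref{lemma2.1}(ii), the Lipschitz property of $f$, and a fresh comparison with an auxiliary fixed-domain problem whose initial datum is the current $u(t_k,\cdot)$, with care that the quantitative constants do not degenerate as the interval grows. A conceptually cleaner alternative would be to pass to the limit $\mu\to\infty$, in which \eqref{101} formally reduces to the Cauchy problem \eqref{cau3} that spreads by Theorem A; rigorously justifying that passage, however, requires uniform estimates beyond the comparison principle used here.
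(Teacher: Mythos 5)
Your iterative front-pushing scheme is a genuinely different route from the paper's. The paper argues in one shot: assume vanishing occurs for \emph{every} $\mu$; by the comparison principle, $h_\mu$ and $-g_\mu$ are increasing in $\mu$, so the limit intervals $(g_{\mu,\infty},h_{\mu,\infty})$ are nested and all of length $\le\ell^*$ (by Lemma \ref{thm-vanishing}), hence $H_\infty:=\lim_{\mu\to\infty}h_{\mu,\infty}<\infty$ exists. Fixing $\mu_1,t_1$ so that $h_{\mu_1}(t)>H_\infty-\epsilon_0/4$ for $t\ge t_1$, and integrating $h_\mu'=\mu\iint J\,u_\mu$ over $[t_1,\infty)$, one solves for $\mu$: the numerator $h_{\mu,\infty}-h_\mu(t_1)\le\ell^*$, while the denominator is bounded below for all $\mu\ge\mu_1$ by a fixed positive constant built from $u_{\mu_1}$ on $[t_1,t_1+1]$ (using $u_\mu\ge u_{\mu_1}$ together with $h_\mu(\tau)<H_\infty<h_{\mu_1}(\tau)+\epsilon_0/4$). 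This forces $\mu$ to be bounded, a contradiction. No iteration is needed, and no interior lower bound on $u$ has to be repeatedly re-established.

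Your approach would, if completed, produce an explicit $\bar\mu$, which is appealing, but the bootstrap you flag is a genuine gap and carries an extra subtlety: the fixed-domain comparison function $\underline{v}$ on $(-h_0,h_0)$ tends to $0$ as $t\to\infty$, since $\lambda_p(\mathcal{L}^0_{(-h_0,h_0)})<0$ when $2h_0<\ell^*$, so $u\ge\underline{v}$ does \emph{not} furnish a persistent lower bound; your window $[1/2,1]$ only covers the first step. In later steps you cannot seed a fixed-domain comparison with the $\mu$-dependent trace $u(t_k,\cdot)$. Instead you must restart the fixed-domain problem on the enlarged interval from the \emph{previous} $\mu$-independent bound $\eta_{k-1}\chi_{I_{k-1}}$ (extended by zero), wait a unit of time, and invoke the strong maximum principle (Theorem \ref{lemma2.1}(ii)) on the fresh auxiliary solution to get $\eta_k>0$. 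To keep all constants $\mu$-free you should also fix the restart times (say $t_k=k$) rather than letting them depend on when $h$ crosses a given level. Because the number of steps is at most of order $\ell^*/\epsilon_0$, the iteration closes, so your scheme can be made rigorous; it is, however, markedly heavier than the paper's one-shot argument.
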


\begin{proof}
Suppose that  for any $\mu >0$,   $h_\infty-g_\infty<+\infty$. We will derive a contradiction.

First of all, notice that by Lemma \ref{thm-vanishing}, we have $\lambda_p(\mathcal{L}_{(g_\infty, h_\infty)})\leq 0.$ This indicates that $   h_\infty-g_\infty \leq \ell^* $. To stress the dependence on $\mu$,  let $(u_{\mu}, g_{\mu}, h_{\mu})$ denote the solution of  (\ref{101}).
By the comparison principle Thorem 2.3, it is easily seen that $u_{\mu},  -g_{\mu},  h_{\mu}$  are increasing in $\mu>0$. Also denote
$$
h_{\mu, \infty} : = \lim_{t\rightarrow +\infty}  h_{\mu}(t), \ \ g_{\mu, \infty} : = \lim_{t\rightarrow +\infty}  g_{\mu}(t).
$$
Obviously, both $h_{\mu, \infty} $ and $- g_{\mu, \infty}$ are increasing in $\mu$. Denote
$$
H_{\infty} := \lim_{\mu \rightarrow +\infty}h_{\mu, \infty},\ \ G_{\infty} := \lim_{\mu \rightarrow +\infty}g_{\mu, \infty}.
$$

Recall   that since $J(0)>0$, there exist $\epsilon_0>0$ and $\delta_0>0$  such that $J(x)> \delta_0$ if $|x| < \epsilon_0$. Then there exist $\mu_1$, $t_1$ such that for $\mu\geq \mu_1$, $t\geq t_1$, we have $h_{\mu} (t)> H_{\infty}-\epsilon_0/4$.  It follows that
\begin{eqnarray*}
\mu &=& \left( \int_{t_1}^{+\infty} \int_{g_{\mu}(\tau)}^{h_{\mu}(\tau)}\int_{h_{\mu}(\tau)}^{+\infty}
J(y-x)u_{\mu}(\tau,x)dydxd\tau \right)^{-1} \left[h_{\mu, \infty} - h_{\mu}(t_1) \right]\\
  &\leq &\left( \int_{t_1}^{t_1+1} \int_{g_{\mu_1}(\tau)}^{h_{\mu_1}(\tau)}\int_{h_{\mu_1}(\tau)+\epsilon_0/4}^{+\infty}
J(y-x)u_{\mu_1}(\tau,x)dydxd\tau \right)^{-1} \ell^*\\
&\leq &\left(\delta_0 \int_{t_1}^{t_1+1} \int_{h_{\mu_1}(\tau) - \epsilon_0/2}^{h_{\mu_1}(\tau)}\int_{h_{\mu_1}(\tau)+\epsilon_0/4}^{h_{\mu_1}(\tau)+\epsilon_0/2}
 u_{\mu_1}(\tau,x)dydxd\tau \right)^{-1} \ell^*\\
   &=  &   \left({1\over 4} \delta_0\epsilon_0\int_{t_1}^{t_1+1} \int_{h_{\mu_1}(\tau) - \epsilon_0/2}^{h_{\mu_1}(\tau)}
  u_{\mu_1}(\tau,x) dxd\tau \right)^{-1} \ell^*< +\infty,
\end{eqnarray*}
which clearly is a contradiction.
\end{proof}

We can now deduce a sharp criteria in terms of $\mu$ for the spreading-vanishing dichotomy.

\begin{lemma}\label{thm-mu-critical} Suppose that \eqref{d-large} holds and $h_0<\ell^*/2$.
 Then there
exists $\mu^*\in (0, \infty)$ such that  vanishing   happens for \eqref{101} if $0<\mu\le\mu^*$
and  spreading  happens for \eqref{101} if $\mu>\mu^*$.
\end{lemma}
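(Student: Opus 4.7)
The plan is to define
\[
\mu^* := \sup\{\mu > 0 : \text{vanishing happens for \eqref{101}}\},
\]
and verify that this $\mu^*$ gives the claimed dichotomy. By Lemma~\ref{thm-mu-small} the set on the right contains $\underline\mu>0$, and by Theorem~\ref{thm-mu-large} it is bounded above by $\bar\mu$, so $\mu^*\in[\underline\mu,\bar\mu]\subset(0,\infty)$.

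First I would establish monotone dependence on $\mu$: for $0<\mu_1<\mu_2$ the triple $(u_{\mu_1},g_{\mu_1},h_{\mu_1})$ is a lower solution of \eqref{101} with coefficient $\mu_2$, so the comparison principle Theorem~\ref{thm-CP} gives $u_{\mu_1}\le u_{\mu_2}$, $g_{\mu_1}\ge g_{\mu_2}$, $h_{\mu_1}\le h_{\mu_2}$ on their common domain. Consequently the set of $\mu>0$ for which vanishing occurs is a lower segment of $(0,\infty)$, so vanishing holds for every $\mu<\mu^*$ while spreading holds for every $\mu>\mu^*$.

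It remains to verify vanishing at $\mu=\mu^*$ itself, and this is where the main work lies. Suppose, for contradiction, that spreading occurs at $\mu^*$. By Lemma~\ref{lemma-same} and Theorem~\ref{thm2}, $h_{\mu^*,\infty}=+\infty=-g_{\mu^*,\infty}$, so I can pick $T>0$ with $h_{\mu^*}(T)-g_{\mu^*}(T)>\ell^*$. I would then establish the left-continuity $h_{\mu_n}(T)\to h_{\mu^*}(T)$, $g_{\mu_n}(T)\to g_{\mu^*}(T)$ along any sequence $\mu_n\uparrow\mu^*$: monotonicity in $\mu$ combined with the uniform bound $0\le u_{\mu_n}\le M_0$ from Lemma~\ref{Lemma202} and the uniform Lipschitz bounds on $g_{\mu_n}'$, $h_{\mu_n}'$ inherent in \eqref{101} yield, via Arzel\`a--Ascoli, a limit triple $(\bar u,\bar g,\bar h)$ on $[0,T]$; routine passage to the limit in the integral formulation of \eqref{101} then shows $(\bar u,\bar g,\bar h)$ solves \eqref{101} at $\mu=\mu^*$, and the uniqueness in Theorem~\ref{Thm22} forces $(\bar u,\bar g,\bar h)=(u_{\mu^*},g_{\mu^*},h_{\mu^*})$. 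Consequently, for all sufficiently large $n$,
\[
h_{\mu_n,\infty}-g_{\mu_n,\infty}\;\ge\;h_{\mu_n}(T)-g_{\mu_n}(T)\;>\;\ell^*.
\]
But $\mu_n<\mu^*$ means vanishing at $\mu_n$, so Lemma~\ref{thm-vanishing} together with the definition of $\ell^*$ and Proposition~\ref{c-l} forces $h_{\mu_n,\infty}-g_{\mu_n,\infty}\le\ell^*$, a contradiction. Thus vanishing must also hold at $\mu^*$.

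The main technical obstacle is the left-continuity step. Monotonicity in $\mu$ by itself yields only $\bar h\le h_{\mu^*}$ and $\bar g\ge g_{\mu^*}$, so the equality that drives the contradiction comes from identifying $(\bar u,\bar g,\bar h)$ as a solution of the full free-boundary problem at $\mu=\mu^*$ and invoking uniqueness. This is the same continuous-dependence-on-parameter argument used in Step~2 of the proof of Theorem~\ref{Thm22}, now applied to the coupling constant $\mu$ rather than to the initial data.
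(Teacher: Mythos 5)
Your proposal is correct and follows essentially the same route as the paper: define $\mu^*$ as the supremum of the vanishing set, use comparison to show monotone dependence on $\mu$ (so the vanishing set is an interval), and then rule out spreading at $\mu^*$ itself by a continuity-in-$\mu$ argument at a fixed time $T$ where $h_{\mu^*}(T)-g_{\mu^*}(T)>\ell^*$, contradicting Lemma~\ref{thm-vanishing} via the definition of $\ell^*$. The only difference is that you spell out the continuity of $(g_\mu(T),h_\mu(T))$ in $\mu$ via Arzel\`a--Ascoli together with uniqueness, whereas the paper simply asserts it; your added detail is welcome and sound, given the uniform bound $0\le u_\mu\le M_0$ and the resulting uniform Lipschitz bound on the fronts from the free-boundary ODEs.
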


\begin{proof}
Define
$$
\Sigma=\left\{\mu:~\mu>0~\text{such that}~h_\infty -g_\infty  < + \infty\right\}.
$$

 By Lemmas \ref{thm-mu-small} and \ref{thm-mu-large} we see that
  $0< \sup \ \Sigma < +\infty$.
Again we let $(u_{\mu}, g_{\mu}, h_{\mu})$ denote the solution of  (\ref{101}), and set $h_{\mu,\infty}:=\lim_{t\rightarrow+\infty}h_{\mu}(t)$, $g_{\mu,\infty}:=\lim_{t\rightarrow+\infty}g_{\mu}(t)$, and
denote $ \mu^*  =\sup  \Sigma$.

As before $u_{\mu},  -g_{\mu},  h_{\mu}$  are increasing in $\mu>0$. This immediately gives that if $\mu_1 \in \Sigma$, then $\mu  \in \Sigma$ for any $\mu<\mu_1$ and if $\mu_1 \not\in \Sigma$, then $\mu  \not\in \Sigma$ for any $\mu > \mu_1$. Hence it follows that
\begin{equation}\label{pf-thm-mu-separate}
(0, \mu^*) \subseteq \Sigma,\ \ ( \mu^*, +\infty) \cap \Sigma=\emptyset.
\end{equation}

To complete the proof, it remains to show  that $\mu^* \in  \Sigma$. Suppose that $\mu^* \not\in  \Sigma$. Then $h_{\mu^*,\infty}=  -g_{\mu^*,\infty} =+\infty$. Thus there exists $T>0$ such that $-g_{\mu^*}(t)> \ell^*,  h_{\mu^*}(t) > \ell^*$ for $t\geq T$. Hence  there exists $\epsilon >0$ such that for 
$\mu \in ( \mu^*-\epsilon, \mu^*+\epsilon)$,
$-g_{ \mu}(T)> \ell^*/2  ,  h_{ \mu}(T) > \ell^*/2  $, which implies $\mu\not\in\Sigma$. This clearly contradicts (\ref{pf-thm-mu-separate}).
Therefore $ \mu^* \in  \Sigma$.
\end{proof}

	\section{Semi-wave solutions} 

We want to determine the spreading speed of the nonlocal free boundary problem
\begin{equation}
\left\{
\begin{aligned}
&u_t=d\int_{g(t)}^{h(t)}J(x-y)u(t,y)dy-du+f(u),
& &t>0,~x\in(g(t),h(t)),\\
&u(t,g(t))=u(t,h(t))=0,& &t>0,\\
&h'(t)=\mu\int_{g(t)}^{h(t)}\int_{h(t)}^{+\infty}
J(y-x)u(t,x)dydx,& &t>0,\\
&g'(t)=-\mu\int_{g(t)}^{h(t)}\int_{-\infty}^{g(t)}
J(y-x)u(t,x)dydx,& &t>0,\\
&u(0,x)=u_0(x),~h(0)=-g(0)=h_0,& &x\in[-h_0,h_0],
\end{aligned}
\right.
\label{e1}
\end{equation}
where $d, \mu, h_0$ are given positive constants. The initial
function $u_0(x)$ satisfies \eqref{102}.
 The kernel function
$J: \mathbb{R}\rightarrow\mathbb{R}$ satisfies the basic condition 
\begin{description}
\item[(J)] $J\in C(\R)\cap L^\infty(\R)$, $J\geq 0$, $J(0)>0,~\int_{\mathbb{R}}J(x)dx=1$.
\end{description}
The growth term $f: \mathbb{R}^+
\rightarrow\mathbb{R}$  satisfies the KPP condition
\begin{description}
\item[${\bf (f_{KPP})}$] $\begin{cases}f\in C^1, \ f(0)=f(1)=0, \ f'(0)>0>f'(1),\\
 \mbox{$f(u)/u$ is non-increasing in $(0,\infty)$.}\end{cases}$
\end{description}

 For a non-symmetric $J$ satisfying ${\bf (J)}$, the following two quantities determined by $J$ and $f'(0)$ alone play an important role:
\begin{align*}
	\displaystyle	c_*^- = \sup_{\nu<0} \frac{\displaystyle d\int_{\mathbb{R}} J(x)e^{\nu x}\,dx - d + f'(0)}{\nu}, \quad
		c_*^+ = \inf_{\nu>0} \frac{\displaystyle d\int_{\mathbb{R}} J(x)e^{\nu x}\,dx - d + f'(0)}{\nu},
	\end{align*}
	
	 It can be shown that $c_*^-$ is achieved by some $\nu<0$ when it is finite, and a parallel conclusion holds for $c_*^+$.
	 It is easily checked that $c_*^-$ is finite if and only if $J$ satisfies additionally the following {\color{red}thin-tail} condition at $x=-\infty$,\smallskip

${\bf (J_{thin}^-):}$ There exists $\lambda>0$ such that $\displaystyle \int_{0}^{+\infty}J(-x)e^{\lambda x}\,dx<+\infty$.
\medskip
	
	\noindent Similarly,  $c_*^+$ is finite if and only if $J$ satisfies\smallskip
	
	${\bf (J_{thin}^+):}$ There exists $\lambda>0$ such that $\displaystyle \int_{0}^{+\infty}J(x)e^{\lambda x}\,dx<+\infty$.

\medskip

If we define
\begin{equation}\label{J-thin-not}\begin{cases}
c_*^-=-\infty \mbox{ when ${\bf (J_{thin}^-)}$ does not hold},\\[3mm]
 c_*^+=+\infty \mbox{ when ${\bf (J_{thin}^+)}$ does not hold},
 \end{cases}
\end{equation}
then  the propagation dynamics of the corresponding Cauchy problem of \eqref{e1},
\begin{equation}
\label{cau}
	\left\{
	\begin{array}{ll}
		U_t = \displaystyle d \int_{\mathbb{R}} J(x-y) U(t,y) \, dy - d U(t,x) + f(U), & t > 0, \; x \in \mathbb{R}, \\
		U(0, x) =U_0(x)
	\end{array}
	\right.
\end{equation}
has the properties described in the following result:

\noindent
{\bf Theorem A.}(\cite{du25}) {\it
Suppose that ${\bf(J)}$ and ${\bf (f_{KPP})}$ hold. Then for any initial function $U_0(x)$ which is continuous and nonnegative with non-empty compact support, the unique solution $U(t,x)$ of \eqref{cau} satisfies
\begin{align*}
	 \lim_{t\to \infty} U(t,x)=\begin{cases} 1 \mbox{ uniformly for } x\in [a_1t, b_1t] \mbox{ provided that } [a_1,b_1]\subset (c_*^-, c_*^+),\\[2mm]
	 0 \mbox{ uniformly for $x\leq a_2t$ provided that $c_*^->-\infty$ and $a_2<c_*^-$},\\[3mm]
	 0 \mbox{ uniformly for $x\geq b_2t$ provided that $c_*^+<\infty$ and $b_2>c_*^+$}.
	 \end{cases}
	\end{align*}
}

Following \cite{aw}, the conclusions in Theorem A can be interpreted as indicating a leftward spreading speed of $c_*^-$ and rightward spreading speed of $c_*^+$ for \eqref{cau}.
The following  result of Yagisita \cite{yagisita}  (see also Theorem 1.5 in \cite{coville2}) on traveling waves  provides further meanings for $c_*^-$ and $c_*^+$.  \smallskip

\noindent
{\bf Theorem B.} (\cite{yagisita}) {\it
Suppose that ${\bf(J)}$ and ${\bf (f_{KPP})}$ are satisfied. Then the following conclusions hold.
\begin{itemize}
	\item[{\rm (i)}] The \underline{rightward} traveling wave problem
	\begin{equation}
		\label{e6}
		\begin{cases}
			\displaystyle d\int_{\mathbb{R}} J(x-y)\phi(y)\,dy - d\phi(x) + c\phi'(x) + f(\phi(x)) = 0, & x \in \mathbb{R}, \\[3mm]
			\phi(-\infty) = 1, \quad \phi(+\infty) = 0
		\end{cases}
	\end{equation}
	has a solution pair $(c,\phi) \in \mathbb R\times L^{\infty}(\mathbb{R})$ with $\phi$ nonincreasing if and only if $c_*^+<\infty$. Moreover, in such a case, for every $c\geq c_*^+$,  \eqref{e6} has a solution $\phi \in C^1(\mathbb{R})$ that is strictly decreasing, and  \eqref{e6} has no such solution for $c<c_*^+$.

	\item[{\rm (ii)}]  The \underline{leftward} traveling wave problem
	\begin{equation}
		\label{e7}
		\left\{
		\begin{array}{ll}
			\displaystyle d\int_{\mathbb{R}}J(x-y)\psi(y)\,dy-d\psi(x)-c\psi'(x)+f(\psi(x))=0, & x\in\mathbb{R}, \\
			\psi(-\infty)=0, \ \ \psi(+\infty)=1,
		\end{array}
		\right.
	\end{equation}
	has a solution pair $(c,\psi)\in \mathbb{R}\times L^{\infty}(\mathbb{R})$ with $\psi$ nondecreasing if and only if $c_*^->-\infty$. Moreover, in such a case, for each $c\geq -c_*^-$,  \eqref{e7} has a solution $\psi \in C^1(\mathbb{R})$ that is strictly increasing, and  \eqref{e7} has no such solution for $c<-c_*^-$.
	\end{itemize}
}
\medskip

{\bf Remark:} Problem \eqref{cau}  is the limiting problem of \eqref{e1} when $\mu\to\infty$.
\medskip

The propagation dynamics of \eqref{e1}  depends crucially on the associated semi-wave solutions, which are pairs $(c,\phi)\in (0,+\infty)\times C^1((-\infty,0])$ and $(\tilde c,\psi) \in (0,+\infty)\times C^1([0,\infty))$, determined by the following equations, respectively:

{\small
\begin{equation}
	\label{e2}
	\left\{
	\begin{array}{ll}
		\displaystyle d\int_{-\infty}^{0}J(x-y)\phi(y)\,dy-d\phi(x){\color{red}+c\, }\phi'(x)+f(\phi(x))=0, & {\color{red}-\infty<x<0,} \smallskip\\
		\phi(-\infty)=1,\ \phi(0)=0,
	\end{array}
	\right.
\end{equation}\vspace{-0.5cm}
\begin{equation}
	\label{e3}
	\displaystyle c=\mu \int_{-\infty}^{0}\int_{0}^{+\infty}J(y-x)\phi(x)\,dydx,
\end{equation}\vspace{-0.5cm}
\begin{equation}
	\label{e4}
	\left\{
	\begin{array}{ll}
		\displaystyle d\int_{0}^{+\infty}J(x-y)\psi(y)\,dy-d\psi(x){\color{red}-\tilde c\, }\psi'(x)+f(\psi(x))=0, & {\color{red}0<x<+\infty,} \medskip\\
		\psi(0)=0,\ \psi(+\infty)=1.
	\end{array}
	\right.
\end{equation}\vspace{-0.5cm}
\begin{equation}
	\label{e5}
	\tilde c=\mu \int_{0}^{+\infty}\int_{-\infty}^{0}J(y-x)\psi(x)\,dydx.
\end{equation}
}

Note that for $\Phi(t,x):=\phi(x-ct)$ and $\Psi(t,x)=\psi(x+\tilde ct)$, \eqref{e2} and \eqref{e4} imply
{\small \[\begin{cases}
\displaystyle \Phi_t=d\int_{-\infty}^{ct}J(x-y)\Phi(t,y)dy-d\Phi+f(\Phi),\ \Phi(t, ct)=0 \ &\mbox{ for } {\color{red}x<ct,}\ t>0,\\[3mm]
\displaystyle\Psi_t=d\int_{-\tilde ct}^\infty J(x-y)\Psi(t,y)dy-d\Psi +f(\Psi),\ \Psi(t, -\tilde c t)=0\ & \mbox{ for } {\color{red}x>-\tilde ct,}\ t>0.
\end{cases}
\]}

We will call $\phi^c$ a {\bf rightward semi-wave} of \eqref{cau} with speed $c$ if $(c, \phi^c)$ solves \eqref{e2}, and call $\psi^{\tilde c}$ a {\bf leftward semi-wave} of \eqref{cau} with speed $\tilde c$ if $(\tilde c, \psi^{\tilde c})$ solves \eqref{e4}.
 \smallskip

 Whether such semi-wave solutions can satisfy additionally \eqref{e3} and \eqref{e5} depends on the following extra properties of $J(x)$, apart from ${\bf (J)}$,
\begin{itemize}
\item [${\bf (J_1^+):}$] $\displaystyle \int_{-\infty}^{0}\int_{0}^{+\infty}J(y-x)\,dydx<+\infty$, i.e., $\displaystyle\int_{0}^{+\infty}xJ(x)\,dx<+\infty$,
\item [${\bf (J_1^-):}$] $\displaystyle \int_{0}^{+\infty}\int_{-\infty}^{0}J(y-x)\,dydx<+\infty$, i.e., $\displaystyle\int_{0}^{+\infty}xJ(-x)\,dx<+\infty$.
\end{itemize}

We are going to prove the following result.

\begin{theorem}\label{th1.2}
	Suppose that ${\bf(J)}$ and ${\bf(f_{KPP})}$ are satisfied. Then the following conclusions hold:
	
	\begin{itemize}
			\item[{\bf ($a^+$)}] Problem \eqref{e2} admits a nonnegative solution $\phi \in C^1((-\infty, 0])$ with $c>0$ if and only if $c_*^+ \in (0, +\infty]$ and $c <c_*^+$. Moreover, in such a case, \eqref{e2}  has a unique solution $\phi=\phi^c$, it is $C^1$ and  $(\phi^c)'(x) < 0$ for $x \in (-\infty, 0]$.
\item[{\bf ($b^+$)}] Suppose $c_*^+ \in (0, +\infty]$ and $\phi^c$ is the unique solution of \eqref{e2} with $c\in (0, c_*^+)$. Then there exists a unique $c_0\in (0, c_*^+)$ such that  $(c,\phi)=(c_0, \phi^{c_0})$ solves \eqref{e3}  if and only if ${\bf(J_1^+)}$ holds.
	
\item[{\bf ($a^-$)}] Problem \eqref{e4} admits a nonnegative solution $\psi \in C^1([0, +\infty))$ with $\tilde c>0$ if and only if $c_*^- \in [-\infty, 0)$ and $\tilde c < -c_*^-$. Moreover, in such a case, \eqref{e4} has a  unique solution $\psi=\psi^{\tilde c}$, it is $C^1$ and  $(\psi^{\tilde c})'(x) > 0$ for $x \in [0, +\infty)$.
			
			\item[{\bf ($b^-$)}] Suppose $c_*^- \in [-\infty, 0)$, and $\psi^{\tilde c}$ is the unique solution of \eqref{e4} with $\tilde c\in (0, -c_*^-)$. Then there exists a unique $\tilde c_0\in (0, -c_*^-)$ such that  $(c,\psi)=(\tilde c_0, \psi^{\tilde c_0})$ solves \eqref{e5}  if and only if ${\bf(J_1^-)}$ holds.
	\end{itemize}
	
\end{theorem}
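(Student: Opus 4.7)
The plan is to treat $(a^+)$ and $(b^+)$ in detail, noting that $(a^-)$ and $(b^-)$ follow by the obvious reflection symmetry $x \mapsto -x$, $J(x) \mapsto J(-x)$, which swaps $c_*^+$ with $-c_*^-$ and $(J_1^+)$ with $(J_1^-)$. Throughout, I will lean heavily on the principal eigenvalue theory in Proposition \ref{c-l}, the stationary solution result in Proposition \ref{prop-single}, and the spreading/vanishing characterization of $c_*^\pm$ from Theorem A.

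For $(a^+)$, I will construct the semi-wave $\phi^c$ via truncation. For $L>0$ large and $c \in (0, c_*^+)$, consider the auxiliary elliptic problem
\begin{equation*}
d\int_{-L}^{0} J(x-y)\phi(y)\,dy - d\phi + c\phi' + f(\phi) = 0,\quad x\in(-L,0),\ \phi(-L)=\theta_L,\ \phi(0)=0,
\end{equation*}
where $\theta_L$ is the unique positive constant stationary state, which exists because $\lambda_p(\mathcal L^c_{(-L,0)})>0$ for all large $L$ by Proposition \ref{c-l} together with the assumption $c<c_*^+$. I would obtain $\phi_L$ by monotone iteration between the sub-solution (a small multiple of the principal eigenfunction of $\mathcal L^c_{(-L,0)}$) and the super-solution $\theta_L$, using the KPP concavity-type property $f(u)/u$ non-increasing to guarantee uniqueness on the bounded domain. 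Passing $L\to\infty$, standard monotonicity and compactness give a limit profile $\phi^c \in C^1((-\infty,0])$, and the boundary behavior $\phi^c(-\infty)=1$ will follow from a lower bound produced from the stationary solution on large intervals (Proposition \ref{prop-single}) combined with the fact that $\lim_{L\to\infty}\lambda_p(\mathcal L^c_{(-L,0)})>0$ precisely when $c<c_*^+$. Uniqueness and strict monotonicity $(\phi^c)'<0$ will be proved by a sliding argument: given another solution $\tilde\phi$, shift $\phi^c(\cdot-s)$ down to the right until first contact with $\tilde\phi$, then invoke the strong maximum principle (Theorem \ref{lemma2.1} (ii)) to force coincidence. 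The non-existence for $c\geq c_*^+$ follows from Theorem A: if a semi-wave existed, then $\Phi(t,x):=\phi(x-ct)\mathbf{1}_{x\leq ct}$ would be a (generalized) sub-solution of the Cauchy problem \eqref{cau} starting from compactly supported data, but Theorem A forces the Cauchy solution to vanish along any ray $x=\beta t$ with $\beta>c_*^+$, contradicting $\phi(-\infty)=1$; the borderline $c=c_*^+$ case will be handled by an exponential barrier from the characteristic equation $d\int J(z)e^{-\nu z}dz - d + f'(0) = c\nu$.

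For $(b^+)$, assume $c_*^+\in(0,\infty]$, and for $c\in(0,c_*^+)$ set
\begin{equation*}
\mathcal F(c) := \mu\int_{-\infty}^{0}\int_{0}^{+\infty} J(y-x)\phi^c(x)\,dy\,dx,
\end{equation*}
so \eqref{e3} reduces to solving the scalar equation $\mathcal F(c)=c$. A Fubini computation with $z=y-x$ gives
\begin{equation*}
\int_{-\infty}^{0}\int_{0}^{+\infty} J(y-x)\,dy\,dx = \int_{0}^{+\infty} zJ(z)\,dz,
\end{equation*}
so $(J_1^+)$ is equivalent to finiteness of this double integral. If $(J_1^+)$ fails, since $\phi^c(-\infty)=1$ there exists $R>0$ with $\phi^c\geq \tfrac12$ on $(-\infty,-R]$, forcing $\mathcal F(c)=+\infty$, and hence \eqref{e3} has no solution. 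Assuming now $(J_1^+)$ holds, I would establish: (i) continuity of $c\mapsto\phi^c$ in the locally uniform topology, inherited from the truncation construction and uniqueness; (ii) strict monotonicity $c_1<c_2 \Rightarrow \phi^{c_1}>\phi^{c_2}$ on $(-\infty,0)$, proved by the same sliding/strong maximum principle argument as in uniqueness, exploiting that the convective term $c\phi'<0$ is strictly larger (in absolute value) for larger $c$. Then $\mathcal F$ is continuous and strictly decreasing on $(0,c_*^+)$ by dominated convergence (the majorant $\mu\int\int J(y-x)dy\,dx$ is finite by $(J_1^+)$). Finally I need the endpoint behavior: $\mathcal F(0^+)>0$ because $\phi^0$ is the positive solution of the stationary problem (limit as $c\to 0^+$, guaranteed because $\lambda_p(\mathcal L^0_{(-L,0)})>0$ for large $L$ by Proposition \ref{c-l}), and $\mathcal F(c)\to 0$ as $c\to c_*^{+-}$, proved by showing $\phi^c\to 0$ locally uniformly as $c\uparrow c_*^+$ (if not, a limit profile would give a semi-wave at $c=c_*^+$, contradicting $(a^+)$). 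Combined with the linear RHS $c$, the intermediate value theorem plus strict monotonicity then yields the unique $c_0\in(0,c_*^+)$ with $\mathcal F(c_0)=c_0$.

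The hardest step, I expect, will be pinning down the behavior of $\phi^c$ as $c\uparrow c_*^+$. The truncation construction loses quantitative control there because $\lim_{L\to\infty}\lambda_p(\mathcal L^c_{(-L,0)})\to 0$, so I anticipate needing an extra exponential barrier argument based on the characteristic equation defining $c_*^+$ to force $\phi^c$ to decay. A secondary difficulty is the uniqueness/monotonicity in $c$ of $\phi^c$: the sliding argument must be executed with the asymmetric convective term $c\phi'$, where one direction of sliding uses that $\phi^{c_1}$ acts as a super-solution for the $c_2$-equation when $c_1<c_2$ (thanks to $\phi'<0$). With those two pieces in hand, parts $(a^-)$ and $(b^-)$ are obtained by applying the entire argument to the reflected problem, since the reflection $\phi(x)\mapsto\psi(-x)$ converts \eqref{e2}--\eqref{e3} into \eqref{e4}--\eqref{e5} with $c_*^+$ replaced by $-c_*^-$ and $(J_1^+)$ by $(J_1^-)$.
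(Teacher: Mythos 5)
Your treatment of $(b^+)$ --- reducing \eqref{e3} to the scalar equation $\mathcal F(c)=c$, showing $\mathcal F$ is continuous and strictly decreasing with the right endpoint behavior, and invoking the intermediate value theorem --- is essentially the paper's argument (Theorem \ref{thm2.12}), including the Fubini identity reducing $\mathbf{(J_1^+)}$ to finiteness of $\int_0^\infty yJ(y)\,dy$ and the limit $\phi^c\to 0$ as $c\uparrow c_*^+$. Likewise, the reflection reduction of $(a^-),(b^-)$ to $(a^+),(b^+)$ is exactly what the paper does.

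For $(a^+)$, however, your route is genuinely different from the paper's and contains gaps. The paper does not truncate in space: it perturbs the boundary value, setting $\phi\equiv\delta>0$ on $[0,\infty)$ and integrating the first-order equation backward from $x=0$ via a monotone fixed-point iteration (Lemma \ref{lemma2.2}), then sends $\delta\to 0$; this produces, for each $c>0$, either a monotone semi-wave or a monotone traveling wave but never both (Theorem \ref{lemma2.4}), and the transition speed $C_*$ is identified with $c_*^+$ through Yagisita's Theorem B rather than a spreading-speed barrier. Your truncation runs into three concrete problems. First, the semi-wave equation is first order in $\phi'$: once $\phi$ is known on $[-L,0]$ the derivative is determined pointwise, so the solution is fixed by the single condition $\phi(0)=0$ and one cannot additionally impose $\phi(-L)=\theta_L$; the two-point problem you set up is overdetermined. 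Second, your sub-solution is a small multiple of the principal eigenfunction, which works only if $\lambda_p(\mathcal L^c_{(-L,0)})>0$; by Proposition \ref{c-l} this requires $c\in(c_*^-,c_*^+)$, and for a kernel skewed to the right one can have $c_*^-\ge 0$, in which case your construction misses $(0,c_*^-]$ even though $(a^+)$ asserts existence there --- the paper's iteration needs no spectral positivity and works for every $c>0$. Third, your non-existence argument for $c\ge c_*^+$ applies Theorem A to $\Phi(0,x)=\phi(x)\mathbf{1}_{\{x\le0\}}$, which has non-compact support (it tends to $1$ as $x\to-\infty$), whereas Theorem A is stated for compactly supported $U_0$; to close this you would need an extra super-solution step, for instance the exponential barrier $\min\{1,\,Ce^{-\nu^*(x-c_*^+t)}\}$ with $\nu^*$ the minimizer defining $c_*^+$, to control solutions with half-line data along rays $x=bt$, $b>c_*^+$.
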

 Note that the existence of a solution to \eqref{e2} requires \(c_*^+>0\). Similarly, the existence of a solution to \eqref{e4} requires \(c_*^-<0\).  \smallskip

 The unique speed $c=c_0$ in $(b^+)$ will determine the asymptotic speed of $h(t)$, and the corresponding $\phi^{c_0}$ will be called the rightward semi-wave of \eqref{e1}. Similarly,  the unique speed $\tilde c=\tilde c_0$ in $(b^-)$ will determine the asymptotic speed of $g(t)$, and the corresponding $\psi^{\tilde c}$ will be called the leftward semi-wave of \eqref{e1}.

\subsection{A maximum principle and its first application}

\begin{lemma}[Lemma 2.5 in \cite{du21}]\label{max}
	Assume that ${\bf(J)}$ holds and $w\in C(\mathbb{R})\cap C^1(\mathbb{R}\setminus \{0\})$ satisfies
	\begin{equation*}
		\left\{
		\begin{array}{ll}
			\displaystyle d\int_{-\infty}^{0}J(x-y)w(y)\,dy-dw(x)+a(x)w'(x)+b(x)w(x)\leq 0, & x<0, \\
			w(x)\geq 0,& x\geq 0,
		\end{array}
		\right.
	\end{equation*}
	with $a,b\in L^{\infty}_{loc}(\mathbb{R})$. If $w(x)\geq 0$ and $w(x)\not\equiv 0$ in $(-\infty, 0)$, then $w(x)>0$ for $x<0$.
\end{lemma}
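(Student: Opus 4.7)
The plan is to proceed by contradiction and show that the zero set of $w$ in $(-\infty,0)$ is both open and closed relative to $(-\infty,0)$; connectedness then forces $w\equiv 0$ on $(-\infty,0)$, contradicting the standing hypothesis.

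Suppose there exists $x_0 \in (-\infty,0)$ with $w(x_0)=0$. Because $w\geq 0$ on all of $\mathbb{R}$ by assumption, the point $x_0$ is a global minimum of $w$, and since $x_0$ lies in the open set $(-\infty,0)$ where $w$ is $C^1$, it follows that $w'(x_0)=0$. Substituting these vanishing values into the differential-integral inequality at $x=x_0$ kills the $-dw$, $a w'$, and $bw$ terms and leaves only
\[
d\int_{-\infty}^{0} J(x_0-y)\,w(y)\,dy \;\leq\; 0.
\]
Since $J\geq 0$ and $w\geq 0$ on $(-\infty,0)$, the integrand is nonnegative, so it must vanish almost everywhere on $(-\infty,0)$. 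Because $J$ is continuous with $J(0)>0$, I can choose $\delta>0$ so small that $J(z)>0$ for $|z|<\delta$ and $x_0+\delta<0$. The identity $J(x_0-y)w(y)=0$ a.e.\ in $y<0$ then forces $w(y)=0$ for a.e.\ $y\in(x_0-\delta,x_0+\delta)$, and by continuity of $w$ we obtain $w\equiv 0$ on this entire interval.

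Hence the set $Z:=\{x\in(-\infty,0):w(x)=0\}$ is open in $(-\infty,0)$, and it is obviously closed in $(-\infty,0)$ by continuity of $w$. Since $(-\infty,0)$ is connected and $x_0\in Z$, we conclude $Z=(-\infty,0)$, i.e., $w\equiv 0$ on $(-\infty,0)$, contradicting the hypothesis $w\not\equiv 0$ there. Therefore $w(x)>0$ for every $x<0$. There is no real obstacle in this argument: the coefficients $a$ and $b$ drop out automatically at any candidate zero because both $w$ and $w'$ vanish there, so their $L^\infty_{\rm loc}$ regularity plays no active role beyond making sense of the inequality, and the boundary behavior at $x=0$ is irrelevant because we only propagate zeros through interior points of $(-\infty,0)$. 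The one mild technical point is ensuring $\delta$ is small enough that $(x_0-\delta,x_0+\delta)\subset(-\infty,0)$, which is possible precisely because $x_0<0$.
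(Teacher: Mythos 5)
Your proof is correct and follows essentially the same route as the paper's: at an interior zero $x_0<0$ of $w$, observe $w'(x_0)=0$ (since $x_0$ is an interior global minimum), plug into the inequality to isolate $d\int_{-\infty}^0 J(x_0-y)w(y)\,dy\leq 0$, use $J(0)>0$ and nonnegativity to force $w\equiv 0$ near $x_0$, and conclude by an open-and-closed (connectedness) argument. You have simply spelled out the intermediate steps (the choice of $\delta$, the global-minimum observation, the a.e.-to-everywhere upgrade via continuity) more explicitly than the paper does.
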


\begin{proof}
Suppose that there exists $x_0<0$ such that $w(x_0) =0$. Then $w'(x_0)=0$ and it follows from the differential-integral inequality satisfied by $w$ that  at $x=x_0$,
$$
\displaystyle  d \int_{-\infty}^0 J (x_0-y) w(t,y)dy     \leq 0,
$$
which indicates that $w(y )=0$ when $y$ is close to $ x_0$, due to $J(0)>0$. This implies that $w(x) \equiv 0$ when $x<0$, since $\{ x<0 \ |\ w(x)=0\} $ is now both open and closed in $(-\infty, 0)$.
\end{proof}

This maximum principle will be used frequently. A first application is the following result.

\begin{lemma}\label{l2.3}
	Suppose that ${\bf (J)}$ and ${\bf (f_{KPP})}$ are satisfied.
	\begin{itemize}
		\item[{\rm (i)}] Assume $\phi=\phi^c$ is a nonnegative solution of \eqref{e2} with speed $c>0$. Then $\phi(x)>0$ for $x<0$ and $\phi'(x)<0$ for $x\le 0$.
		\item[{\rm (ii)}] Assume $\psi=\psi^{\tilde c}$ is a nonnegative solution of \eqref{e4} with speed $\tilde c>0$. Then $\psi(x)>0$ for $x>0$ and  $\psi'(x)>0$ for $x\ge 0$.
	\end{itemize}

\end{lemma}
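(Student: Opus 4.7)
I extend $\phi$ by $0$ on $[0,\infty)$ (continuity at $0$ holds since $\phi(0)=0$). By ${\bf (f_{KPP})}$, the function $b(x):=f(\phi(x))/\phi(x)$ (set to $f'(0)$ wherever $\phi(x)=0$) is continuous and locally bounded, so \eqref{e2} reads
\[
d\!\int_{-\infty}^{0}\!J(x-y)\phi(y)\,dy-d\phi(x)+c\phi'(x)+b(x)\phi(x)=0,\quad x<0,
\]
with $\phi\geq 0$ on $\mathbb R$ and $\phi\not\equiv 0$ on $(-\infty,0)$ (because $\phi(-\infty)=1$). Lemma 4.1 then gives $\phi>0$ on $(-\infty,0)$. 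Evaluating \eqref{e2} at $x=0$ and using $\phi(0)=f(\phi(0))=0$ yields $c\phi'(0)=-d\!\int_{-\infty}^{0}\!J(-y)\phi(y)\,dy$; since $\phi>0$ on $(-\infty,0)$ and $J(0)>0$ with $J$ continuous, the integrand is strictly positive on a nontrivial neighbourhood of $y=0$, so $\phi'(0)<0$.

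\textbf{Part (i), $\phi'<0$ on $(-\infty,0)$.} Via the substitution $z=x-y$, I rewrite $\int_{-\infty}^{0}J(x-y)\phi(y)\,dy=\int_x^\infty J(z)\phi(x-z)\,dz$; since $\phi(0)=0$ kills the boundary contribution at $z=x$ under differentiation, this convolution is $C^1$ in $x$ with derivative $\int_{-\infty}^{0}J(x-y)\phi'(y)\,dy$. Hence $\phi\in C^2((-\infty,0))$ and $w:=-\phi'$ satisfies
\[
d\!\int_{-\infty}^{0}\!J(x-y)w(y)\,dy-dw(x)+cw'(x)+f'(\phi(x))w(x)=0,\quad x<0,
\]
with $w(0)>0$ by the previous step and $w(x)\to 0$ as $x\to-\infty$ (from $\phi\to 1$ and $f(1)=0$ in \eqref{e2}). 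Extending $w$ by $w(0)$ to $[0,\infty)$, Lemma 4.1 delivers $w>0$ on $(-\infty,0)$ once $w\geq 0$ there is established. To get $w\geq 0$, I first secure the a priori bound $\phi\leq 1$ on $(-\infty,0)$: if $M:=\sup\phi>1$, then $M$ is attained at some interior $x_0$ (using $\phi(-\infty)=1$ and $\phi(0)=0$), where $\phi'(x_0)=0$ and $f(M)\leq 0$ (from KPP) force $\int J(x_0-\cdot)\phi\geq M$ in \eqref{e2}, which together with $\phi\leq M$ propagates $\phi\equiv M$ along the support of $J(x_0-\cdot)$ and eventually contradicts $\phi(0)=0$. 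I then argue by contradiction: if the infimum of $w$ is negative, it is attained at some $x_m\in(-\infty,0)$ (not at the endpoints, by signs of $w$); at $x_m$, the minimum property together with $J(0)>0$ forces $\int J(x_m-y)w(y)\,dy>w(x_m)\!\int J(x_m-y)\,dy$ (unless $w$ is locally constant near $x_m$, in which case the equality can be propagated to a contradiction), and the equation at $x_m$ (where $w'(x_m)=0$) yields $f'(\phi(x_m))\geq d\big(1-\int_{-\infty}^{0}\!J(x_m-y)\,dy\big)>0$; combining $\phi\leq 1$ with $f'(1)<0$ excludes $x_m$ arbitrarily far to the left, while the strict quantitative version of the above inequality rules out bounded $x_m$, a contradiction. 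Lemma 4.1 then upgrades $w\geq 0$ (nontrivial since $w(0)>0$) to $w>0$ on $(-\infty,0]$.

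\textbf{Part (ii) and main obstacle.} Part (ii) follows by the reflection $\tilde\phi(x):=\psi(-x)$ together with $\tilde J(z):=J(-z)$ (which still satisfies ${\bf (J)}$): this transforms \eqref{e4} with speed $\tilde c>0$ into an equation of the form \eqref{e2} for $\tilde\phi$ with kernel $\tilde J$ and speed $\tilde c$, so applying part (i) to $\tilde\phi$ translates into $\psi>0$ on $(0,\infty)$ and $\psi'>0$ on $[0,\infty)$. The main obstacle is the $w\geq 0$ step in part (i): the nonlocal operator combined with the possible positivity of $f'(\phi)$ in the bulk obstructs a naive pointwise minimum argument, and one must exploit both the strict positivity from $J(0)>0$ and the asymptotics $\phi\to 1$ together with $f'(1)<0$ to simultaneously rule out candidate minima at finite and at infinite distance.
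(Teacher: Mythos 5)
Your positivity argument (Lemma \ref{max} applied to $\phi$ extended by $0$ on $[0,\infty)$), your computation of $\phi'(0^-)$ from the equation, and your reflection argument for part (ii) are all correct and match the paper. The genuine gap is in the step where you try to establish $w:=-\phi'\geq 0$ directly. You set up the linearized equation for $w$, assume the infimum of $w$ is negative and attained at an interior $x_m$, and derive $f'(\phi(x_m))\geq d\bigl(1-\int_{-\infty}^0 J(x_m-y)\,dy\bigr)>0$. But this is not contradictory: $\phi(x_m)$ can take any value in $(0,1)$ for $x_m$ bounded away from $0$ and $-\infty$, and for a generic KPP nonlinearity $f'(\phi(x_m))>0$ is perfectly possible (e.g.\ whenever $\phi(x_m)$ is small, $f'(\phi(x_m))\approx f'(0)>0$). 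You acknowledge this as the ``main obstacle'' and invoke an unspecified ``strict quantitative version'' to rule out bounded $x_m$, but no such argument is supplied and I do not see one that works. The a priori bound $\phi\le 1$ helps only at $-\infty$ (where $f'(1)<0$); it does nothing for the bulk.

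The paper circumvents this entirely by a \emph{sliding argument}: for fixed $\delta>0$ it defines
$k_*:=\inf\{k\ge 1:\ k\phi(x-\delta)\ge\phi(x)\ \text{for all}\ x\le 0\}$,
which is finite because $\phi(-\infty)=1$, $\phi>0$ on $(-\infty,0)$, and $\phi(0)=0$, and then shows $k_*=1$ by applying Lemma \ref{max} to $w(x):=k_*\phi(x-\delta)-\phi(x)$: if $k_*>1$, then $w\ge 0$, $w(0)>0$, $w(-\infty)=k_*-1>0$, and the minimality of $k_*$ forces a zero of $w$ at some interior point, which the maximum principle excludes. This gives $\phi(x-\delta)\ge\phi(x)$, i.e.\ $\phi$ is nonincreasing, hence $w=-\phi'\ge 0$ \emph{without any pointwise argument}. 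Only then is the equation differentiated and Lemma \ref{max} applied to $w$ to upgrade $w\ge 0$ to $w>0$ on $(-\infty,0)$, with $w(0)>0$ handled separately as you did. The lesson is that the translation-invariance of the operator and the KPP structure make the sliding comparison a much cleaner route to monotonicity than trying to localize at an interior minimum of $\phi'$; you should replace the middle part of your proof with this sliding step.
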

\begin{proof} We only prove (i), since the proof of (ii) is similar.
	Since $\phi\geq 0$ and $\phi(-\infty)=1$, by Lemma \ref{max}  we have $\phi(x)>0$ for $x<0$.
	
	For fixed $\delta>0$, define $K=\{k\geq 1: k\phi(x-\delta)\geq \phi(x)\,\,\text{for}\, x\leq 0\}$. It follows from $\phi(-\infty)=1$, $\phi(x)>0$ for $x<0$  and $\phi(0)=0$ that $K\neq\emptyset$. Thus $k_*=\inf K\geq 1$ is well-defined. 
		
	We claim that $k_*=1$ (which implies that $\phi(x)$ is decreasing in $(-\infty, 0]$ due to the arbtrariness of $\delta>0$). Otherwise, suppose that $k_*>1$. Then $w(x):=k_*\phi(x-\delta)-\phi(x)\geq 0$, and since $w(0)=k_*\phi(-\delta)>0$ and $\lim\limits_{x\to-\infty}w(x)=k_*-1>0$, there is $x_0\in (-\infty,0)$ such that $w(x_0)=0$. From the equation satisfied by $\phi(x-\delta)$ and ${\bf (f_{KPP})}$, we have, for $x<0$,
	\begin{align*}
		0=&\ d\int_{-\infty}^{\delta}J(x-y)k_*\phi(y-\delta)\,dy-dk_*\phi(x-\delta)+ck_*\phi'(x-\delta)+k_*f(\phi(x-\delta))\\
		\geq &\ d\int_{-\infty}^{\delta}J(x-y)k_*\phi(y-\delta)\,dy-dk_*\phi(x-\delta)+ck_*\phi'(x-\delta)+f(k_*\phi(x-\delta))\\
		\geq &\ d\int_{-\infty}^{0}J(x-y)k_*\phi(y-\delta)\,dy-dk_*\phi(x-\delta)+ck_*\phi'(x-\delta)+f(k_*\phi(x-\delta)),
	\end{align*}
	and it follows that
	$$d\int_{-\infty}^{0}J(x-y)w(y)\,dy-dw(x)+cw'(x)+b(x)w(x)\leq 0,$$
	where
	$$b(x)=\left\{
	\begin{array}{ll}
		\frac{f(k_*\phi(x-\delta))-f(\phi(x))}{k_*\phi(x-\delta)-\phi(x)}, & \text{if}\, k_*\phi(x-\delta)-\phi(x)\neq 0, \\
		0, &\text{otherwise}.
	\end{array}
	\right.$$
	Note that $w(x)\not\equiv 0$ since $k_*>1$. Then by Lemma \ref{max}, $w(x)>0$ for $x<0$, a contradiction with $w(x_0)=0$. We have thus proved the claim $k^*=1$. So $\phi(x)$ is decreasing in $x\in (-\infty,0]$.
	
	It remains to show $\phi'(x)<0$ for $x\leq 0$.
	 From \eqref{e2}, we get
	\begin{equation}\label{derivative}
	\begin{aligned}	c\phi'(x)&=d\phi(x)-d\int_{-\infty}^{0}J(x-y)\phi(y)\,dy-f(\phi(x))\\
	&=d\phi(x)-d\int^{\infty}_{x}J(z)\phi(x-z)\,dz-f(\phi(x)) \mbox{ for } x<0.
	\end{aligned}
	\end{equation}
	Taking the derivative with respect to $x$ on both sides by $\phi\in C^1$, we have $$c\phi''(x)=d\phi'(x)-d\int_{-\infty}^{0}J(x-y)\phi'(y)\,dy-f'(\phi(x))\phi'(x),$$
	where we have used
	\[\begin{aligned}
	\frac{d}{dx}\int_x^\infty J(z)\phi(x-z)dz
	&=-J(x)\phi(0)+\int_x^{\infty} J(z)\phi'(x-z)dz\\
	&=\int_{-\infty}^{0}J(x-y)\phi'(y)\,dy.
	\end{aligned}
	\]
	Thus $w(x):=-\phi'(x)\geq 0$ satisfies
	$$d\int_{-\infty}^{0}J(x-y)w(y)\,dy-dw(x)+cw'(x)+f'(\phi(x))w(x)=0.$$
	Since $\phi(-\infty)=1$ and $\phi(0)=0$, we have $\phi'(x)\not\equiv 0$, that is, $w(x)\not\equiv 0$. By ${\bf (f_{KPP})}$ and Lemma \ref{max}, $w(x)=-\phi'(x)> 0$ for $x<0$. If $w(0)=0$, that is, $\phi'(0)=0$, it follows from \eqref{derivative} and $\phi(0)=0$ that
	$$0=-d\int_{-\infty}^{0}J(-y)\phi(y)\,dy<0,$$
	a contradiction.
	The proof is complete.
\end{proof}

\subsection{A perturbed semi-wave problem}

For  $\delta>0, \ c> {0}$, we consider the auxiliary problem 
\begin{equation}\label{2.1}
\begin{cases}
\dd d\int_{-\yy}^{\yy}{J} (x-y) \phi(y) {\rm d}y-d\phi+c\phi'(x)+f(\phi(x))=0,&
-\yy<x<0,\\
\phi(-\yy)=1,\ \ \phi(x)=\delta,&0\leq x<\yy.
\end{cases}
\end{equation}
If ${\delta}={0}$ then \eqref{2.1} is reduced to the semi-wave problem \eqref{e2}; therefore \eqref{2.1} can be viewed as a perturbed semi-wave problem. As we will see below, the semi-wave solutions and traveling wave solutions of \eqref{cau} can be obtained as the limit of the solution of \eqref{2.1} when $\delta\to{0}$, subject to suitable translations in $x$.

Define 
\begin{align}\label{2.7b}
\td\sigma(v):=f(v)+cM v-d v\ \ \ \ {\rm for}\ v\geq 0,
\end{align}
where $M>0$ is a constant.   Then the first equation in \eqref{2.1} is equivalent to
\begin{align}\label{2.2}
-c(e^{-Mx}\phi)'=e^{Mx}\lf[d\int_{-\yy}^{\yy} {J}(x-y)\phi(y) {\rm d}y+\td\sigma(\phi(x)) \rr].
\end{align}
Since $f$ is $C^1$,  we could choose  $M$ large enough such that $\td\sigma(v)$ is increasing for $v\in [0, 2]$, namely 
\[
\td\sigma(v)\geq \td\sigma(u) \mbox{ if } u, v\in [0, 2] \mbox{ and } v\geq u.
\]

\begin{lemma}\label{lemma2.2}
Suppose ${\bf (J)}$ and ${\bf (f_{KPP})}$ hold. Let  $\delta\in (0,1)$. Then   the problem \eqref{2.1} has a solution $\phi(x)$ which is nonincreasing in $x$, and can be obtained by an iteration process to be specified in the proof. 
\end{lemma}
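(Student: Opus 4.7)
The strategy is monotone iteration from the upper bound $\phi_0\equiv 1$, using the integrating factor $e^{-Mx}$ to turn \eqref{2.1} into a fixed-point equation on which both monotonicity in $n$ and monotonicity in $x$ can be controlled. Fix $M>0$ large enough that $\td\sigma$ is nondecreasing on $[0,2]$ and $cM>d$. Multiplying the first equation of \eqref{2.1} by $e^{-Mx}/c$ and integrating from $x$ to $0$ using $\phi(0)=\delta$ yields the equivalent fixed-point form
\[
\phi(x)=\delta e^{Mx}+\frac{e^{Mx}}{c}\int_x^0 e^{-Ms}\Bigl[d\int_\R J(s-y)\phi(y)\,dy+\td\sigma(\phi(s))\Bigr]\,ds\;=:\;T[\phi](x)
\]
on $(-\infty,0]$, together with $\phi\equiv\delta$ on $[0,\infty)$. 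Set $\phi_0(x):=1$ for $x\le 0$ and $\delta$ for $x>0$, then iterate $\phi_{n+1}:=T[\phi_n]$ (extended by $\delta$ on $[0,\infty)$).

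Two monotonicity properties of $\{\phi_n\}$ must be verified. For monotonicity in $n$: the operator $T$ is order-preserving on $[\delta,1]$-valued functions because $\td\sigma$ is nondecreasing and convolution with $J$ preserves order. Since the integrand in $T[\phi_0]$ is bounded above by $cM\,e^{-Ms}$ (using $\td\sigma(1)=cM-d$ and $\int_\R J=1$), one gets $\phi_1(x)\le 1-(1-\delta)e^{Mx}<\phi_0(x)$, hence $\phi_{n+1}\le\phi_n$ by induction; likewise $T[\phi]\ge\delta$ whenever $\phi\ge\delta$ (use $\td\sigma(\delta)\ge 0$ and $(J\ast\phi)(s)\ge\delta$), so $\phi_n\ge\delta$ for all $n$. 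For monotonicity in $x$, I claim each $\phi_n$ is nonincreasing on $\R$: the base $n=0$ is trivial, and inductively, if $\phi_n$ is nonincreasing then so is $F_n(s):=d(J\ast\phi_n)(s)+\td\sigma(\phi_n(s))$, and integration by parts in the integral formula produces the identity
\[
F_n(x)-cM\phi_{n+1}(x)=e^{Mx}\Bigl[F_n(0-)-cM\delta-\int_x^0 e^{-Ms}\,dF_n(s)\Bigr].
\]
The bracket is strictly positive (for $n\ge 1$ this follows from $F_n(0-)-cM\delta=d((J\ast\phi_n)(0)-\delta)+f(\delta)\ge f(\delta)>0$ and $dF_n\le 0$; for $n=0$ the analogous estimate uses $cM>d$). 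Differentiating the integral formula gives $c\phi_{n+1}'(x)=cM\phi_{n+1}(x)-F_n(x)$, so the positivity of the bracket forces $\phi_{n+1}'(x)<0$ on $(-\infty,0)$. Combined with $\phi_{n+1}\equiv\delta$ on $[0,\infty)$, this shows $\phi_{n+1}$ is nonincreasing on $\R$.

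By monotone convergence, $\phi_n\downarrow\phi$ pointwise with $\phi\in[\delta,1]$ nonincreasing. Bounded convergence applied to $T$ yields $\phi=T[\phi]$, so $\phi\in C^1((-\infty,0])$ and $\phi$ satisfies the first equation of \eqref{2.1} classically with $\phi(0)=\delta$. Monotonicity and boundedness force $A:=\lim_{x\to-\infty}\phi(x)\in[\delta,1]$ to exist; passing to the limit in $c\phi'(x)=d\phi(x)-f(\phi(x))-d(J\ast\phi)(x)$ (with $(J\ast\phi)(x)\to A$ by dominated convergence) gives $c\phi'(x)\to -f(A)$, and boundedness of $\phi$ then forces $f(A)=0$. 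Since $A\ge\delta>0$ and $({\bf f_{KPP}})$ rules out roots of $f$ in $(0,1)$, we conclude $A=1$, completing the proof.

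\noindent\textbf{Main obstacle.} The delicate step is preservation of monotonicity in $x$ by the iteration; an alternative sliding argument on the limit (adapting Lemma \ref{l2.3} to the perturbed boundary value $\delta>0$ in place of $0$) is feasible but more cumbersome, whereas the integration-by-parts identity above yields strict monotonicity of every iterate directly and cleanly.
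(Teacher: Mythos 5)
Your argument is, at its core, the paper's: the same integrating-factor reformulation of \eqref{2.1} via $e^{-Mx}$, the same monotone iteration scheme for the fixed-point operator $P$, the same integration-by-parts device to propagate monotonicity in $x$ from $\phi_n$ to $\phi_{n+1}$, and the same ODE limiting argument to conclude $\phi(-\infty)=1$. The one substantive departure is the direction of iteration: you start from $\phi_0\equiv 1$ and iterate downward, arriving at the maximal fixed point of $P$ in the order interval $[\delta,1]$, whereas the paper starts from $\Gamma_0\equiv\delta$ and iterates upward to the minimal one. For the existence claim of this lemma the distinction is immaterial, and your computations are sound (I checked the integration-by-parts identity and the lower bound $F_n(0^-)-cM\delta>0$ in both the $n=0$ and $n\geq 1$ cases). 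However, the iteration is not merely a tool here — its starting point is referenced by the statement (``an iteration process to be specified in the proof'') and is used substantively in Lemma~\ref{lemma2.3}: the proof of the second inequality in \eqref{2.7} rests on the base case $\phi^{c_1}_\epsilon\ge\delta\equiv\phi^2_0$, which fails if the iteration begins at $1$ (one cannot assert $\phi^{c_1}_\epsilon\ge 1$). So your version proves Lemma~\ref{lemma2.2} correctly, but the paper's choice of starting from below is tailored to the comparison argument in Lemma~\ref{lemma2.3} and would need replacing if your iteration were adopted.
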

\begin{proof}
Let 
\begin{align*}
\Omega:=\{\Gamma\in C(\R): 0\leq \Gamma(x)\leq 1 \mbox{ for all } x\in\R\}. 
\end{align*}
 Define an operator $P: \Omega\to  C(\R)$ by 
\begin{equation*}
P[\Gamma](x)=
\begin{cases}
\dd e^{Mx}\delta+\frac{e^{Mx}}{c}\int_{x}^{0}e^{-M\xi}\lf[d \int_{-\yy}^{\yy} {J}(\xi-y)\Gamma(y) {\rm d}y+\td\sigma(\Gamma(\xi)) \rr]{\rm d}\xi ,&x<0,\\
\delta,&x\geq  0.
\end{cases}
\end{equation*}
Using \eqref{2.2} we easily see that \eqref{2.1} is equivalent to 
\begin{equation}\label{fpt}
\begin{cases}
\phi(x)=P[\phi](x) \mbox{ for } x\in \R,\\
\phi(-\yy)=1.
\end{cases}
\end{equation}
We next solve \eqref{fpt} in three steps.

{\bf Step 1} We  show that $P$ has a fixed point in $\Omega$. 

Firstly we prove that $P[\delta](x)\geq \delta$ with  $\delta$ regarded as a constant function.
By the definition of $P$, we have $P[\delta](x)=\delta$ for $x\geq 0$. For $x<0$,   
\begin{align*}
P[\delta](x)=\ &e^{Mx}\delta+\frac{e^{Mx}}{c}\int_{x}^{0}e^{-M\xi}\lf[d\delta+\td\sigma(\delta) \rr]{\rm d}\xi\\
=\ &e^{Mx}\delta+\frac{e^{Mx}}{c}\int_{x}^{0}e^{-M\xi}\lf[cM\delta+ f(\delta) \rr]{\rm d}\xi\\
>\  & e^{Mx}\delta+\frac{e^{Mx}}{c}\int_{x}^{0}e^{-M\xi}cM\delta {\rm d}\xi\\
=\ & e^{Mx}\delta- e^{Mx}\delta+\delta=\delta 
\end{align*}
since $f(\delta)> {0}$  by ${\bf (f_{KPP})}$.

Secondly we show $P[1](x)\leq 1$.
Since $\delta>0$ is small,  $P[1](x)=\delta<1$ for $x\geq 0$.  For $x<0$, we have
\begin{align*}
P[1](x)=\ &e^{Mx}\delta+\frac{e^{Mx}}{c}\int_{x}^{0}e^{-M\xi}\lf[d+\td\sigma(1) \rr]{\rm d}\xi\\
=\ & e^{Mx}\delta+\frac{e^{Mx}}{c}\int_{x}^{0}e^{-M\xi}cM{\rm d}\xi
=e^{Mx}\delta- e^{Mx}+1<1. 
\end{align*}

Next we define inductively 
\begin{align*}
\Gamma_0(x):=\delta,\ \Gamma_{n+1}(x):=P[\Gamma_n](x)=P^n[\Gamma_0](x)\ \mbox{ for} \ \ n=0,1,2,\cdots,\ x\in \R.
\end{align*}
Then 
\begin{align*}
\Gamma_0\leq\Gamma_n\leq \Gamma_{n+1}\leq 1
\end{align*}
due to the monotonicity of $P$ which is a simple consequence of the fact that  $\td \sigma(v)$ is increasing in $v\in [0, 1]$.

Define 
\begin{align*}
\widehat \Gamma(x):=\lim_{n\to\yy }\Gamma_n(x)\in [0, 1].
\end{align*}
It is clear that  $\widehat \Gamma(x)=\delta$ for $x\geq 0$. Making use of the Lebesgue dominated convergence theorem and $\Gamma_{n+1}(x)=P[\Gamma_n](x)$, for $x<0$ we deduce 
\begin{align*}
\widehat \Gamma(x)=P[\widehat \Gamma](x),
\end{align*} 
which also implies that  $\widehat \Gamma'(x)$ exists and is continuous for $x<0$. Hence  $\widehat \Gamma$  is a fixed point of $P$ in $\Omega$.

{\bf Step 2}.  We show that $\widehat \Gamma'(x)\leq {0}$ for $x< 0$.

It suffices to prove that $\Gamma_n'(x)\leq {0}$ for $x<0$ and each $n=0,1,2,\cdots$, since this would imply each $\Gamma_n$ is nonincreasing and hence $\widehat\Gamma(x)$ is nonincreasing for $x<0$.  

It is clear that $\Gamma_0(x)=\delta$ is nonincreasing.  Assume $\Gamma_n'(x)\leq {0}$ for $x< 0$. We show that  $\Gamma_{n+1}'(x)\leq {0}$ for $x<0$. 

  By the definition,  for $x<0$,
 \begin{align*}
 \Gamma_{n+1}(x)=e^{Mx}\delta+\frac{e^{Mx}}{c}\int_{x}^{0}e^{-M\xi}g_n(\xi){\rm d}\xi, 
 \end{align*}
 where
 \begin{align*}
g_n(\xi)= g(\xi; \Gamma_{n}):=&\ d\int_{-\yy}^{\yy} {J}(\xi-y)\Gamma_{n}(y) {\rm d}y+\td\sigma(\Gamma_{n}(\xi))\\
 =&\  d\int_{-\yy}^{\yy} {J}(-y)\Gamma_{n}(y+\xi) {\rm d}y+\td\sigma(\Gamma_{n}(\xi)).
 \end{align*}
Let us note that $\Gamma_{n}'(z)\leq {0}$ for $z\neq 0$. It follows that  
$g_n(\xi)$ is differentiable for all $\xi\in \R$, and  $g_n'(\xi)\leq {0}$ for $\xi\in \R$. Moreover, 
\begin{align*}
 g_n(0)=g(0;\Gamma_{n})
\geq  g(0;\Gamma_{0})=d \delta+\td\sigma(\delta)=cM\delta+f(\delta)\geq cM\delta,
\end{align*}
since $\Gamma_{n}\geq \Gamma_{0}=\delta$, $F(\delta)> {0}$, and  $g(0;\Gamma_{n})$ is nondecreasing with respect to $\Gamma_{n}$. Therefore, for $x<0$,
\begin{align*}
 (\Gamma_{n+1})'(x)=&\ \delta Me^{Mx}+M\frac{e^{Mx}}{c}\int_{x}^{0}e^{-M\xi}g_n(\xi){\rm d}\xi-\frac{1}{c}g_n(x)\\
 =&\ \delta Me^{Mx}+M\frac{e^{Mx}}{c}\lf[\frac{-e^{-M\xi}}{M}g_n(\xi)\big |_{x}^{0}+ \int_{x}^{0}e^{-M\xi}g_n'(\xi){\rm d}\xi\rr]-\frac{1}{c}g_n(x)\\
 \leq&\ \delta Me^{Mx} +M\frac{e^{Mx}}{c}\lf[\frac{-g_n(0)}{M}+\frac{e^{-Mx}}{M}g_n(x)\rr]-\frac{1}{c}g_n(x)\\
 =&\ \delta Me^{Mx} -\frac{g_n(0)e^{Mx}}{c}\leq\delta Me^{Mx} -\delta Me^{Mx}=0.
\end{align*} 
By the principle of mathematical induction, we have  $\Gamma_n'(x)\leq {0}$ for $x< 0$ and all $n\geq 1$.

{\bf Step 3}.  We verify $\widehat \Gamma(-\yy)=1$. 

By step 2,  $\lim_{x\to-\yy}\widehat \Gamma(x)=K$ exists, and  ${0}\leq K\leq 1$.   We claim that 
\begin{align}\label{2.4}
\lim_{x\to-\yy}\int_{-\yy}^{\yy}{J}(x-y) \widehat\Gamma(y){\rm d} y=K.
\end{align}
Indeed, since $\widehat \Gamma$ is nonincreasing and $\lim_{x\to-\yy}\widehat \Gamma(x)=K$, we have
\begin{align*}
&\int_{-\yy}^{\yy}{J}(x-y)\widehat\Gamma(y){\rm d} y=\int_{-\yy}^{\yy}{J}(-y)\widehat\Gamma(y+x){\rm d} y\\
\geq & \int_{-\yy}^{-x/2}{J}(-y)\widehat\Gamma(y+x){\rm d} y\geq \widehat\Gamma(x/2)\int_{-\yy}^{-x/2}{J}(-y){\rm d} y \to K
\end{align*}
as $x\to -\yy$,  and on the other hand
\begin{align*}
\int_{-\yy}^{\yy}{J}(x-y)\widehat\Gamma(y){\rm d} y=&\int_{-\yy}^{\yy}{J}(-y)\widehat\Gamma(y+x){\rm d} y
\leq \int_{-\yy}^{\yy}{J}(-y) K{\rm d} y= K.
\end{align*}
Hence \eqref{2.4} holds. 

If $K\not=1$, then
by ${\bf({f_{KPP}})}$, we have  $f(K)\neq {0}$.
Note that  $\widehat\Gamma$ satisfies
\begin{align*}
d\int_{-\yy}^{\yy}{J}(x-y) \widehat\Gamma(y){\rm d} y-d\widehat\Gamma+c\widehat\Gamma'(x)+f(\widehat\Gamma(x))={0},\ \ \ \ 
-\yy<x<0.
\end{align*}
Letting $x\to -\yy$ and making use of \eqref{2.4}, we deduce 
\begin{align*}
\lim_{x\to-\yy}c\widehat\Gamma'(x)=-\lim_{x\to-\yy} f(\widehat\Gamma(x))=-f(K)\neq { 0},
\end{align*}
which contradicts  the fact that $\hat \Gamma$ is nonincreasing and  bounded. Thus, $\widehat \Gamma(-\yy)=1$. 

Combining Steps 1-3, we see that \eqref{2.1} admits a nonincreasing solution $\widehat \Gamma$, which is the limit of $\Gamma_n$ obtained from an iteration process.
\end{proof}

The following result describes the monotonic dependence on $c$ and $\delta$ of the solution $\phi$ to \eqref{2.1} obtained in the above lemma. To stress these dependences, we will write $\phi=\phi_\delta^c$.

\begin{lemma}\label{lemma2.3}
Suppose ${\bf (J)}$ and ${\bf (f_{KPP})}$ hold.  Let $\phi_\epsilon^c$ be the solution of \eqref{2.1}  obtained through the iteration process  in Lemma \ref{lemma2.2}, with $c>0$ and $\delta=\epsilon$. Then  
\begin{equation}\label{2.7}
\begin{cases}
\phi_{\epsilon_1}^c\leq\phi_{\epsilon_2}^c\ \  &{\rm if\ } 0< \epsilon_1\leq \epsilon_2\ll 1,\\
\phi_{\epsilon}^{c_1}\geq \phi_{\epsilon}^{ c_2}\ \  &{\rm if\ } 0<c_1\leq  c_2.
\end{cases}
\end{equation}
\end{lemma}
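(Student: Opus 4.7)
Both inequalities will be obtained by comparing the monotone iterations $\Gamma_{n+1}=P_\delta^c[\Gamma_n]$ used to construct $\phi_\epsilon^c$ in Lemma \ref{lemma2.2}. The relevant properties of $P=P_\delta^c$ are:
(i) $\Gamma\mapsto P_\delta^c[\Gamma]$ is order preserving on functions with values in $[0,2]$, because $J\ge 0$ and $M$ is chosen so that $\td\sigma(v)=f(v)+cMv-dv$ is nondecreasing on $[0,2]$;
(ii) for fixed $c$, $\delta\mapsto P_\delta^c[\Gamma]$ is pointwise nondecreasing, which is immediate from the defining formula since $e^{Mx}\ge 0$.

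For the first inequality, fix $c$ (hence a common $M$) and let $0<\epsilon_1\le\epsilon_2\ll 1$. Start both iterations from the constants $\Gamma_0^{(i)}=\epsilon_i$. Combining (i) and (ii) by induction on $n$,
\[
\Gamma_{n+1}^{(1)}=P_{\epsilon_1}^c[\Gamma_n^{(1)}]\le P_{\epsilon_2}^c[\Gamma_n^{(1)}]\le P_{\epsilon_2}^c[\Gamma_n^{(2)}]=\Gamma_{n+1}^{(2)},
\]
and passing $n\to\infty$ yields $\phi_{\epsilon_1}^c\le\phi_{\epsilon_2}^c$.

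For the second inequality, fix $\epsilon$ and $0<c_1<c_2$. Choose $M$ large enough so that $\td\sigma_{c_1}$ (and hence $\td\sigma_{c_2}$) is nondecreasing on $[0,2]$; with this common $M$, both $\phi_\epsilon^{c_1}$ and $\phi_\epsilon^{c_2}$ arise as limits of iterations of $P_\epsilon^{c_1}$ and $P_\epsilon^{c_2}$. The key new ingredient is a supersolution observation: since $\phi_\epsilon^{c_1}$ is nonincreasing by Lemma \ref{lemma2.2}, inserting $\phi:=\phi_\epsilon^{c_1}$ into the left-hand side of \eqref{2.1} with speed $c_2$ produces
\[
d\int_{-\infty}^{\infty}J(x-y)\phi(y)\,dy-d\phi(x)+c_2\phi'(x)+f(\phi(x))=(c_2-c_1)\phi'(x)\le 0
\]
on $(-\infty,0)$. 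Carrying out exactly the integration that defines $P_\epsilon^{c_2}$ (namely, writing the above as $-c_2(e^{-Mx}\phi)'\ge e^{-Mx}[d\int J\phi+\td\sigma_{c_2}(\phi)]$, integrating from $x$ to $0$, and using $\phi(0)=\epsilon$) converts the inequality into $P_\epsilon^{c_2}[\phi_\epsilon^{c_1}]\le\phi_\epsilon^{c_1}$ on $(-\infty,0]$, with equality on $[0,\infty)$. Run two iterations of $P_\epsilon^{c_2}$: the one from Lemma \ref{lemma2.2} starting from $\Gamma_0=\epsilon$, which increases to $\phi_\epsilon^{c_2}$; and a second starting from $\tilde\Gamma_0=\phi_\epsilon^{c_1}$, which by the supersolution property together with (i) gives a decreasing sequence $\tilde\Gamma_n\downarrow\tilde\Gamma_\infty\ge 0$. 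Since $\epsilon\le\phi_\epsilon^{c_1}$, property (i) forces $\Gamma_n\le\tilde\Gamma_n$ for every $n$, so
\[
\phi_\epsilon^{c_2}=\lim_n\Gamma_n\le\tilde\Gamma_\infty\le\tilde\Gamma_0=\phi_\epsilon^{c_1}.
\]

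\textbf{Main obstacle.} The only step requiring care beyond the routine iteration-comparison machinery is the derivation of $P_\epsilon^{c_2}[\phi_\epsilon^{c_1}]\le\phi_\epsilon^{c_1}$: one must track signs carefully during the integration and ensure that a single $M$ can be used in both $P_\epsilon^{c_1}$ and $P_\epsilon^{c_2}$, which is immediate once $M$ is taken to exceed $\big(d+\sup_{v\in[0,2]}|f'(v)|\big)/c_1$. Everything else reduces to inductive comparisons that rely only on the monotonicity of $P$ in its argument and in $\delta$ already established in the proof of Lemma \ref{lemma2.2}.
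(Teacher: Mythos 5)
Your proof is correct and follows essentially the same strategy as the paper's: for the first inequality you compare the two monotone iterations using the joint monotonicity of $P$ in $\delta$ and in $\phi$, exactly as in the text; for the second inequality you use the key supersolution observation that $(c_2-c_1)(\phi_\epsilon^{c_1})'\le 0$ yields $P_\epsilon^{c_2}[\phi_\epsilon^{c_1}]\le\phi_\epsilon^{c_1}$, and then close via an iteration comparison starting from $\Gamma_0=\epsilon\le\phi_\epsilon^{c_1}$. The only difference is cosmetic: the paper compares the increasing sequence $\phi_n^2$ directly against the fixed function $\phi_\epsilon^{c_1}$ by a single induction, whereas you introduce an auxiliary decreasing sequence $\tilde\Gamma_n$ starting from $\phi_\epsilon^{c_1}$ — this adds nothing but is harmless, and your observation that one $M$ can serve both speeds is correct (though not strictly needed, since the inequality $\phi_\epsilon^{c_1}\ge P^2[\phi_\epsilon^{c_1}]$ only requires the $M$ chosen for $c_2$).
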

\begin{proof}
To verify the first inequality in \eqref{2.7} for fixed $c>0$, we adopt the definition of  $P$ and $\phi_n$  in Lemma \ref{lemma2.2}, but
in order to distinguish them between  $\delta=\epsilon_1$ and $\delta=\epsilon_2$,   we write  $P=P_{i}$
and  $\phi_{n}=\phi_{i,n}$ for $\delta=\epsilon_i$, $i=1,2$. Thus we have
\begin{align*}
\phi_{\epsilon_i}^c(x)=\lim_{n\to \yy} \phi_{i, n}(x).
\end{align*}
Since  $P[\phi](x)$ is nondecreasing with respect to $\delta$ and $\phi$, respectively, we have  
\begin{align*}
\phi_{1, n+1}(x)=P_{1}[\phi_{1,n}](x)\leq P_{1}[\phi_{2, n}](x)\leq P_{2}[\phi_{2, n}](x)=\phi_{2, n+1}(x)
\end{align*}
provided that 
\begin{align*}
\phi_{1, n}(x)\leq\phi_{2, n}(x).
\end{align*}
Since $\phi_{1, 0}(x)\equiv \epsilon_1\leq \epsilon_2 \equiv \phi_{2,0}(x)$, the above conclusion combined with the induction method gives $\phi_{1, n}(x)\leq\phi_{2, n}(x)$ for all $n=0,1,2,\cdots$, which implies $\phi_{\epsilon_1}^c(x)\leq \phi_{\epsilon_2}^c(x)$, as desired.

We now show the second inequality in \eqref{2.7} for fixed $\delta=\epsilon$.  To stress the reliance on $c_i$, we use the notions $P^{i}$ and $\phi_{n}^i$, respectively, for $P$ and $\phi$ when $c=c_i$, $i=1,2$.   
From Lemma \ref{lemma2.2},  we have for $i=1,2$,
\begin{align*}
\phi_\epsilon^{c_i}(x)=\lim_{n\to\yy} \phi_{n}^i(x)=\lim_{n\to\yy}  P^{i}[\phi_{n}^i](x).
\end{align*}
Due to $c_1\leq c_2$ and \eqref{2.1}, we have
\begin{align*}
&d\int_{-\yy}^{\yy} {J}(x-y) \phi_\epsilon^{c_1}(y)dy-d\phi_\epsilon^{c_1}+c_2(\phi_\epsilon^{c_1})'(x)+f(\phi_\epsilon^{c_1}(x))\\
\leq  &\ d\int_{-\yy}^{\yy} {J}(x-y)\phi_\epsilon^{ c_1}(y)dy-d\phi_\epsilon^{ c_1}+c_1(\phi_\epsilon^{ c_1})'(x)+f(\phi_\epsilon^{ c_1}(x))=0,
\end{align*}
which implies that 
\begin{align*}
\phi_\epsilon^{c_1}(x)\geq P^{2}[\phi_\epsilon^{c_1}](x).
\end{align*}
Since $P[\phi](x)$ is increasing with respect to $\phi$, it follows that
\begin{align*}
\phi_\epsilon^{c_1}(x)\geq P^{2}[\phi_\epsilon^{c_1}](x)\geq P^{2}[\phi_{n}^2](x)=\phi_{n+1}^2(x)
\end{align*} 
provided that
\begin{align*}
\phi_\epsilon^{c_1}(x)\geq \phi_{n}^2(x).
\end{align*}
Recall that $\phi_\epsilon^{c_1}(x)\geq  \delta\equiv \phi_{0}^2(x)$. By induction, we obtain that $\phi_\epsilon^{c_1}(x)\geq \phi_{n}^2(x)$ for all $n=0,1,2,\cdots$, and so $\phi_\epsilon^{ c_1}(x)\geq  \phi_\epsilon^{c_2}(x)$.
\end{proof}

\subsection{A dichotomy between semi-waves and traveling waves}

\begin{theorem}\label{lemma2.4}
Suppose $\mathbf{(J)}$ and $\mathbf{(f_{KPP})}$   hold. Then for each $c>0$, \eqref{cau}  has either a  monotone  semi-wave solution with speed $c$  or a monotone traveling wave solution with speed $c$, but not both. Moreover, one of the following holds:
\begin{itemize}
	\item[{\rm (i)}]  For  every $c>0$, \eqref{cau} has  a  monotone semi-wave  solution with speed $c$. 
		\item[{\rm (ii)}] For  every $c>0$, \eqref{cau} has  a  monotone traveling wave  solution with speed $c$.
		\item[{\rm (iii)}] There exists $C_*\in (0,\yy)$ such that  \eqref{cau} has   a  monotone semi-wave  solution with speed $c$ for  every $c\in(0, C_*)$, and has a monotone traveling wave solution with speed $c$ for every $c\geq  C_*$. 
\end{itemize}
\end{theorem}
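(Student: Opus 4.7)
The plan is to exploit the perturbed semi-waves $\phi_\delta^c$ of Lemma \ref{lemma2.2}: for fixed $c>0$, take $\delta\downarrow 0$ and, if necessary, translate the profiles so that a fixed intermediate level is attained at the origin; the limit will be either a semi-wave or a traveling wave of speed $c$. Fix $c>0$ and a reference value $\theta\in(0,1)$. Since $\phi_\delta^c$ is nonincreasing with $\phi_\delta^c(-\infty)=1$ and $\phi_\delta^c(0)=\delta<\theta$ (for $\delta<\theta$), there is a unique $x_\delta^c\in(-\infty,0)$ with $\phi_\delta^c(x_\delta^c)=\theta$. By Lemma \ref{lemma2.3}, $\delta\mapsto\phi_\delta^c$ is nondecreasing, so $\phi^c:=\lim_{\delta\downarrow 0}\phi_\delta^c$ exists by monotone convergence. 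Dominated convergence in the integral equation, together with the Lipschitz form of the equation in $x$, gives $\phi^c\in C^1((-\infty,0))$ solving the differential-integral equation of \eqref{e2} on $(-\infty,0)$, with $\phi^c(0)=0$ and $\phi^c$ nonincreasing.

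Two cases then arise. If $\liminf_{\delta\downarrow 0} x_\delta^c>-\infty$, pick a subsequence with $x_\delta^c\to x_0<0$ (necessarily $x_0<0$, because $(\phi_\delta^c)'$ is locally uniformly bounded by the equation, so $x_\delta^c$ cannot approach $0$). Equicontinuity of $\{\phi_\delta^c\}$ on compacts of $(-\infty,0)$ combined with the pointwise convergence $\phi_\delta^c\to\phi^c$ gives $\phi^c(x_0)=\theta>0$; letting $x\to-\infty$ in the equation and invoking $\mathbf{(f_{KPP})}$ (to identify the limit value with a zero of $f$ in $[\theta,1]$) forces $\phi^c(-\infty)=1$, so $\phi^c$ is a monotone semi-wave of speed $c$. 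Otherwise $x_\delta^c\to-\infty$; set $\tilde\phi_\delta^c(x):=\phi_\delta^c(x+x_\delta^c)$, a nonincreasing profile with $\tilde\phi_\delta^c(0)=\theta$, equal to $\delta$ on $[-x_\delta^c,\infty)$, and solving the full-line integral equation on $(-\infty,-x_\delta^c)$. By Helly's selection theorem and local $C^1$ bounds, a subsequence converges locally uniformly to a nonincreasing $\tilde\phi\in C^1(\mathbb{R})$ with $\tilde\phi(0)=\theta$, satisfying the equation in \eqref{e6} on all of $\mathbb{R}$. The same limiting argument at $\pm\infty$ then gives $\tilde\phi(-\infty)=1$ and $\tilde\phi(+\infty)=0$ (the latter because $\tilde\phi_\delta^c\equiv\delta\to 0$ on the right of $-x_\delta^c$), so $\tilde\phi$ is a monotone traveling wave of speed $c$.

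For the trichotomy, Lemma \ref{lemma2.3} also gives that $c\mapsto\phi_\delta^c$ is nonincreasing, whence $c\mapsto x_\delta^c$ is nonincreasing. Consequently, if Case A holds at $c_0$, then $x_\delta^c\ge x_\delta^{c_0}$ for every $c\le c_0$ and Case A persists; similarly Case B propagates upward in $c$. The set of $c$ in Case A is therefore an initial interval, yielding the three possibilities (i)--(iii); a further limiting argument ($c_n\downarrow C_*$ with traveling waves normalized by $\tilde\phi^{c_n}(0)=\tfrac12$, then extract a limit) places the boundary value $C_*$ on the traveling-wave side. The main obstacle is the ``not both'' clause. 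The intended attack is a sliding comparison analogous to the argument in the proof of Lemma \ref{l2.3}: if $\phi$ is a semi-wave and $\Phi$ a monotone traveling wave at the same speed $c$, extend $\phi$ by zero to the right to obtain $\tilde\phi$, which is a sub-solution of the traveling-wave equation on $\mathbb{R}$; define $k_*:=\inf\{k\ge 1:k\Phi(\cdot+\tau)\ge\tilde\phi\text{ on }\mathbb{R}\text{ for some }\tau\}$, locate a first touching point, and apply the maximum principle Lemma \ref{max} together with $\mathbf{(f_{KPP})}$ to derive a contradiction. The strict bound $\phi'(0)<0$ from Lemma \ref{l2.3}(i) is essential for controlling the contact at $x=0$, where $\tilde\phi$ is only continuous and the comparison argument would otherwise break down.
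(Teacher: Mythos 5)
Your construction of the dichotomy for each $c$ (perturbed semi-waves $\phi_\delta^c$, monotone in $\delta$ and in $c$ by Lemma \ref{lemma2.3}, tracking a level-$\theta$ crossing $x_\delta^c$ and separating the cases ``$x_\delta^c$ stays bounded'' versus ``$x_\delta^c\to-\infty$'') is essentially the paper's argument, modulo the cosmetic change that the paper always translates so that $\tilde\phi_n^c(0)=1/2$, while you only translate in the traveling-wave case; both versions work. Your treatment of the trichotomy (monotonicity of $c\mapsto x_\delta^c$, hence the semi-wave set is an initial interval, plus the limit $c_n\searrow C_*$ of normalized traveling waves to settle the boundary speed) is again exactly the paper's route.

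The place where you depart from the paper is the ``not both'' clause, and this is where your sketch has a genuine gap. You propose to extend the semi-wave $\phi$ by zero to a subsolution $\tilde\phi$ and then slide simultaneously in two parameters, setting $k_*:=\inf\{k\ge 1:\exists\tau,\ k\Phi(\cdot+\tau)\ge\tilde\phi\}$. Because the infimum is over a two-parameter family, it is not clear that $k_*$ is attained, nor that a finite touching point is produced. Indeed, if $k_*=1$ the barrier $w=k_*\Phi(\cdot+\tau)-\tilde\phi$ has $w(-\infty)=0$, so the infimum of $w$ may only be approached as $x\to-\infty$ (and as $k\searrow1$ the associated $\tau_k$ may escape to $-\infty$), in which case no interior contact point exists and Lemma \ref{max} cannot be invoked. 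You also need to worry about $k\Phi$ exceeding $1$; here \textbf{(f$_{\text{KPP}}$)} does give $f(k\Phi)\le kf(\Phi)$ for $k>1$, so $k\Phi$ is a supersolution, but this does not repair the compactness issue. The paper sidesteps both problems by a one-parameter slide: it fixes $k\in(0,1)$, scales the \emph{semi-wave} down to $\tilde\phi=k\phi$ (a subsolution), and slides only the translate $\psi^\beta$ of the traveling wave. Then $w^\beta(-\infty)=1-k>0$ and $w^\beta(0)=\psi^\beta(0)>0$ automatically, so when $\inf_{x\le0}w^{\beta^*}=0$ the zero must occur at an interior point $x^0\in(-\infty,0)$, where Lemma \ref{max} applies cleanly. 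I'd recommend replacing your $k_*$ device with this fixed-$k$, one-parameter slide; the strict bound $\phi'(0^-)<0$ from Lemma \ref{l2.3}, which you rightly flag as essential, is then only needed to guarantee $\tilde\phi(0)=0$ and plays no further role.
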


The following result will be needed to  prove Theorem \ref{lemma2.4}.

\begin{lemma}\label{lem2.9}
Suppose $\mathbf{(J)}$ and $\mathbf{(f_{KPP})}$  hold. Then for each $c>0$, \eqref{cau}  has either a  monotone  semi-wave solution with speed $c$  or a monotone traveling wave solution with speed $c$, but not both.
\end{lemma}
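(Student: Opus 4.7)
The plan is to realize both alternatives as limits of the perturbed semi-waves of Lemma \ref{lemma2.2} as $\delta\to 0^+$, and then to rule out coexistence by a sliding argument based on the maximum principle Lemma \ref{max}. Fix $c>0$. For each $\delta\in(0,1/2)$ let $\phi_\delta^c$ be the nonincreasing solution of \eqref{2.1} from Lemma \ref{lemma2.2}; by Lemma \ref{lemma2.3}, $\delta\mapsto\phi_\delta^c$ is monotone nondecreasing. Fix the level $\alpha=1/2$ and pick $x_\delta\in(-\infty,0)$ with $\phi_\delta^c(x_\delta)=\alpha$, which exists by the intermediate-value theorem since $\phi_\delta^c(-\infty)=1$ and $\phi_\delta^c(0)=\delta<\alpha$. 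Passing to a subsequence, suppose $x_\delta\to x_*\in[-\infty,0]$.

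In the case $x_*>-\infty$, I would set $\phi^c(x):=\lim_{\delta\to 0^+}\phi_\delta^c(x)$, which is nonincreasing and exists pointwise by the monotonicity in $\delta$. Standard dominated-convergence arguments, combined with $C^1$ estimates obtained by solving the integro-differential equation for $(\phi_\delta^c)'$ in terms of its other terms, show that $\phi^c\in C^1((-\infty,0))$ and satisfies the first equation of \eqref{e2}. The boundary condition $\phi^c(0)=0$ follows from $\phi_\delta^c(0)=\delta\to 0$, and the left-continuity $\phi^c(0^-)=0$ follows by re-running the level-set argument with smaller values of $\alpha$. Letting $x\to-\infty$ in the equation forces $f(L)=0$ for $L:=\phi^c(-\infty)$; since $L\geq\phi^c(x_*)=\alpha>0$, we have $L=1$, and $\phi^c$ is a monotone semi-wave of speed $c$. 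In the case $x_*=-\infty$, I would instead consider the translates $\tilde\phi_\delta^c(x):=\phi_\delta^c(x+x_\delta)$, defined on the expanding domains $(-\infty,-x_\delta]$ with $-x_\delta\to+\infty$; each $\tilde\phi_\delta^c$ is nonincreasing, has $\tilde\phi_\delta^c(0)=\alpha$, and satisfies the same equation. A diagonal extraction yields a pointwise limit $\tilde\phi^c\colon\mathbb{R}\to[0,1]$ solving the traveling wave equation in \eqref{e6}, and the same limit-at-infinity argument combined with monotonicity gives $\tilde\phi^c(-\infty)=1$ and $\tilde\phi^c(+\infty)=0$, so that $\tilde\phi^c$ is a monotone traveling wave of speed $c$.

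To preclude coexistence, suppose both a monotone semi-wave $\phi_s$ (with $\phi_s(0)=0$) and a monotone traveling wave $\phi_t$ exist at speed $c$. Extend $\phi_s$ by $0$ to $[0,\infty)$ and set $\phi_t^a(x):=\phi_t(x-a)$, so that $\phi_t^a\geq\phi_s$ on $(-\infty,0]$ for all sufficiently large $a$, while for $a\to-\infty$ the comparison fails at any fixed $x_0<0$ with $\phi_s(x_0)>0$ (since $\phi_t^a(x_0)\to 0$). Hence $a^*:=\inf\{a\in\mathbb{R}:\phi_t^a\geq\phi_s\text{ on }(-\infty,0]\}$ is finite, and at $a=a^*$ the comparison still holds with touching at some $x_0\in[-\infty,0]$. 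If $x_0\in(-\infty,0)$, then $w:=\phi_t^{a^*}-\phi_s$ is nonnegative on $\mathbb{R}$, satisfies a linear inequality of the type in Lemma \ref{max} (with a bounded coefficient produced by the mean-value theorem applied to $f$, after splitting the full-line integral into a piece over $(-\infty,0)$ and a nonnegative remainder supported in $[0,\infty)$), and is not identically zero on $(-\infty,0)$ since $w(0)=\phi_t(-a^*)>0$ by continuity; Lemma \ref{max} then forces $w>0$ on $(-\infty,0)$, contradicting $w(x_0)=0$.

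The main obstacle is the remaining possibility that the touching occurs only at $x_0=-\infty$: both $\phi_t^{a^*}(x)$ and $\phi_s(x)$ tend to $1$ as $x\to-\infty$, so $w$ could vanish only asymptotically. Ruling this out requires comparing the exponential decay rates of $1-\phi_t$ and $1-\phi_s$ at $-\infty$ and verifying that, at $a=a^*$, $1-\phi_t(\cdot-a^*)$ decays at least as fast as $1-\phi_s$, so that the critical configuration really produces an interior touching point where Lemma \ref{max} applies. This tail analysis is the delicate ingredient; once it is in place, the remainder of the proof is essentially routine given Lemmas \ref{max}, \ref{lemma2.2}, and \ref{lemma2.3}.
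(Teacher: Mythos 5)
Your construction of the dichotomy (either a semi-wave or a traveling wave exists at speed $c$) follows essentially the same route as the paper: perturbed semi-waves $\phi_\delta^c$ from Lemma \ref{lemma2.2}, tracking the level set $\{\phi_\delta^c = 1/2\}$, and passing to a limit along a subsequence, distinguishing the cases $x_*>-\infty$ and $x_*=-\infty$. The only cosmetic difference is that the paper always translates by $x_n^c$ before taking the limit (so the limit has value $1/2$ at $0$ and one then looks at whether the free boundary $x^c:=-\lim x_n^c$ is finite or infinite), whereas you translate only in the traveling-wave case; this is an inessential choice.

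However, in the exclusion of coexistence there is a genuine gap, and you have in fact diagnosed it yourself. Sliding $\phi_t(\cdot-a)$ against the raw semi-wave $\phi_s$ does not work directly because $w^a(x)=\phi_t(x-a)-\phi_s(x)$ satisfies $w^a(-\infty)=1-1=0$ for every $a$; thus when $a$ reaches the critical value $a^*$ the infimum $0$ may be "attained" only as $x\to-\infty$, and Lemma \ref{max} cannot be invoked because there is no interior touching point. You propose to repair this by a tail analysis comparing the exponential decay rates of $1-\phi_t$ and $1-\phi_s$ at $-\infty$, but you do not carry it out, and it is not routine (in particular, when the kernel has a heavy tail at $-\infty$, the rates of approach to $1$ may be algebraic rather than exponential, and need not be ordered in a way favourable to the argument). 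The paper avoids this issue entirely by a simpler device: it compares $\psi(\cdot+\beta)$ not with $\phi_s$ but with $k\phi_s$ for a fixed $k\in(0,1)$. Since $f(u)/u$ is nonincreasing, $k\phi_s$ is still a subsolution, and now
$w^\beta(-\infty)=\psi(-\infty)-k\phi_s(-\infty)=1-k>0$
is bounded away from $0$ uniformly in $\beta$, while $w^\beta(0)=\psi(\beta)>0$. Hence at the first sliding parameter $\beta^*$ where $\inf_{x\le 0}w^{\beta^*}(x)=0$, the infimum must be attained at a genuine point $x^0\in(-\infty,0)$, and Lemma \ref{max} then gives the contradiction. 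You should replace your tail analysis plan by this scaling trick; as written, your proof of "not both" is incomplete.
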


\begin{proof}
	Let $\phi_{n}^c$ be the solution of \eqref{2.1} defined in Lemma \ref{lemma2.2} with ${\delta}=\epsilon_n$,
	$\epsilon_n\searrow 0$ as $n\to\infty$.  Then 
	\begin{align*}
	 x_{n}^c:=\max \lf\{ x:\phi_{n}^ c(x)=1/2\rr\}
	\end{align*}
	is well defined, and 
	\begin{align*}
	\phi_{n}^c(x_{n}^c)=1/2,\ \ \ \phi_{n}^c(x)<1/2\ \ \ \ {\rm for}\ x>x_{n}^c.
	\end{align*}
	Moreover, making use of Lemma \ref{lemma2.3}, we have
	\begin{equation}\label{2.8}
	\begin{cases}
	 0>x_{n}^{c}\geq   x_{m}^{c}\ \ & {\rm if}\ n\leq m,\\
	 0>x_{n}^{c_1}\geq  x_{n}^{c_2}\ \ & {\rm if}\ c_1\leq c_2.
	 \end{cases}
	\end{equation}
	Define
	\begin{align*}
	\wtd \phi_{n}^{ c}(x):= \phi_{n}^ {c}(x+x_{n}^{c}), \ \ \ \ x\in \R.
	\end{align*}
	Then $\wtd \phi_{n}^{ c}$ satisfies, for $x<-x_{n}^{c}$,
	\begin{align}\label{2.9}
		d \int_{-\yy}^{\yy}{J} (x-y) \wtd \phi_{n}^{ c}(y) {\rm d}y-d\wtd \phi_{n}^{ c}(x)+c(\wtd \phi_{n}^{ c})'(x)+f(\wtd \phi_{n}^{ c}(x))=0,
	\end{align}
	and for $x\geq -x_{n}^{c}$,
	$
	\wtd \phi_{n}^{ c}(x)=\epsilon_n$. Moreover,
	\[
	\td \phi_{n}^{ c}(0)=1/2.
	\]
	   Since $x_{n}^{c}$ is nonincreasing in $n$, 
	 \[
	 x^c:=-\lim_{n\to\infty} x_n^c\in (0,\infty]
	 \]
	  always exists, and there are  two possible cases
	\begin{itemize}
		\item Case 1. $x^c=\infty$
		\item Case 2.  $x^c\in (0,\yy)$.
	\end{itemize}
	
	Clearly, for fixed $c>0$, $\wtd \phi_{n}^{ c}(x)$ and, by the equation subsequently $(\wtd \phi_{n}^{ c})'(x)$ (for $x\neq -x_{n}^{c}$), are uniformly bounded in $n$. Then by the Arzela-Ascoli theorem and a standard argument involving a diagonal process of choosing subsequences, we see that
	$\{\wtd \phi_{n}^{ c}\}_{n\geq 1}$
has	a subsequence, still denoted by itself for simplicity of notation, which converges to 
	some $\wtd \phi^{c}\in C(\R)$  locally uniformly in $\R$. Moreover, $\wtd \phi^{c}(x)$ is nonincreasing in $x$ with $\td \phi^{c}(0)=1/2$ . 
	
	If Case 1 happens, we easily see that  $\wtd \phi^{c}$ satisfies 
	\begin{align}\label{2.10}
	d \int_{-\yy}^{\yy}{J} (x-y) \wtd \phi^{c}(y) {\rm d}y-d\wtd \phi^{c}(x)+c(\wtd \phi^{ c})'(x)+f(\wtd \phi^{c}(x))=0 \mbox{ for } x\in \R.
	\end{align}
In fact, from \eqref{2.9}, for $x\in\R$ and all large $n$ satisfying $x<-x_n^c$, we have
\begin{align*}
c\wtd \phi_{n}^{ c}(x)-c\wtd \phi_{n}^{ c}(0)=&-d \int_{0}^{x} \lf[\int_{-\yy}^{\yy}{J} (\xi -y) \wtd \phi_{n}^{ c}(y) {\rm d}y-d\wtd \phi_{n}^{ c}(\xi)+f(\wtd \phi_{n}^{ c}(\xi))\rr] {\rm d}\xi.
\end{align*}
It then follows from the dominated convergence theorem that, for $x\in\R$, 
\begin{align*}
c\wtd \phi^{c}(x)-c\wtd \phi^{c}(0)=&-d \int_{0}^{x} \lf[\int_{-\yy}^{\yy}{J} (\xi -y) \wtd \phi^{ c}(y) {\rm d}y-d\wtd \phi^{c}(\xi)+f(\wtd \phi^{c}(\xi))\rr] {\rm d}\xi,
\end{align*}
and  \eqref{2.10} thus follows by differentiating this equation.  Due to the monotonicity and boundedness of $\wtd \phi^{c}(x)$,  the  arguments in step 3 of the proof of Lemma \ref{lemma2.2} can be repeated to give 
\begin{align*}
	\lim_{x\to -\yy}\lf[d \int_{-\yy}^{\yy}{J} (x-y) \wtd \phi^{c}(y) {\rm d}y-d\wtd \phi^{c}(x)\rr]=0,
\end{align*}
and so 
\begin{align*}
	\lim_{x\to -\yy} [c (\wtd \phi_{ c})'(x)+f(\wtd \phi^{c}(x))]=0.
\end{align*}
Denote $K:=\lim_{x\to-\yy}\wtd \phi^{c}(x)\in \R_+$. Then we must have 
\begin{align*}
f(K)=\lim_{x\to -\yy} f(\wtd \phi^{c}(x))=-\lim_{x\to -\yy} c( \wtd \phi_{ c})'(x).
\end{align*}
This is possible only if $f(K)=0$. By  $\mathbf{(f_{KPP})}$ either $K={0}$ or $K=1$.  Since $\wtd \phi^{c}(x)$ is nonincreasing in $x$ with $\td \phi^{c}(0)=1/2>0$, we have $K>{0}$ and hence we must have $K=1$.  An analogous analysis can be applied to show  $\lim_{x\to\yy}\wtd \phi^{c}(x)={0}$. Therefore, $\wtd \phi^{c}(x)$ is a monotone traveling  wave of \eqref{cau} with speed $c$.

	If Case 2 happens, analogously for fixed  $x<x^c$,
\begin{align*}
c\wtd \phi^{c}(x)-c\wtd \phi^{c}(0)=&-d \int_{0}^{x} \lf[\int_{-\yy}^{\yy}{J} (\xi -y) \wtd \phi^{ c}(y) {\rm d}y-d\wtd \phi^{c}(\xi)+f(\wtd \phi^{c}(\xi))\rr] {\rm d}\xi,
\end{align*}
and $\wtd \phi^{c}(x)=0$ for $x\geq x^c$,  which yields 
\begin{equation*}
\begin{cases}
\dd	d \int_{-\yy}^{x^c}{J} (x-y) \wtd \phi^{c}(y) {\rm d}y-d\wtd \phi^{c}(x)+c(\wtd \phi^{ c})'(x)+f(\wtd \phi^{c}(x))={ 0} \mbox{ for} \ \ x<x^c,\\
	\wtd \phi^{c}(x^c)={ 0}.
\end{cases}
\end{equation*}
Let $\phi^{c}(x):=\wtd\phi^{c}(x+x^c)$ for $x\leq 0$, then $\phi^{c}(x)$ satisfies
\begin{equation*}
\begin{cases}
\dd d \int_{-\yy}^{0}{J} (x-y)\phi^{c}(y) {\rm d}y-d\phi^{c}(x)+c(\phi^{ c})'(x)+f(\phi^{c}(x))={ 0} \mbox{ for } \ x<0,\\
\wtd \phi^{c}(0)={0}.
\end{cases}
\end{equation*}
Moreover, as in Case 1, we can show $\lim_{x\to-\yy}\phi^{c}(x)=1$. Therefore, $ \phi^{c}(x)$ is a monotone semi-wave solution of \eqref{cau} with speed $c$.

We have thus proved that for any $c>0$,  \eqref{cau} has either a monotone traveling wave solution with speed $c$ or a monotone semi-wave solution with speed $c$. 
We show next that for any given $c>0$, \eqref{cau} cannot have both.

  Suppose, on the contrary, there is $c_0>0$ such that  \eqref{cau} admits a monotone traveling wave solution $\psi$ with speed $c_0$ and also a monotone semi-wave solution $\phi$ with speed $c_0$. We are going to drive a contradiction. 
  
  Let $\wtd \phi(x):=k \phi(x)$ for some fixed $k\in (0,1)$. Then by $\mathbf{(f_{KPP})}$, $\wtd \phi$ satisfies
	\begin{equation*}
	\begin{cases}
\dd	d \int_{-\yy}^{\yy}{J} (x-y) \wtd \phi(y) {\rm d}y-d \wtd \phi(x)+c \wtd \phi(x)+f(k \phi(x))\geq { 0},&x<0,\\
	\wtd \phi(-\yy)=k,\ 	\wtd \phi(x)={ 0},& x\geq 0.
	\end{cases}
	\end{equation*}
For $\beta\in \R$, define 
\begin{align*}
\psi^\beta(x):=\psi (x+\beta),\ \ \ w^\beta(x):=\psi^\beta(x)-\wtd \phi(x),  \ \ \ x\in \R.
\end{align*}	
For fixed $x\leq 0$,
\[
w^\beta(x)\geq \psi(\beta)-k \phi(x)\geq \psi(\beta)-k\to 1-k>0 \mbox{ as } \beta\to-\infty.
\]
Therefore there exists $\bar\beta\ll -1$ independent of $x$ such that
\[
w^\beta(x)>0 \mbox{ for } x\leq 0,\;\beta\leq\bar\beta.
\]
On the other hand, 
\[
w^\beta(-1)=\psi(\beta-1)-k\phi(-1)\to-k\phi(-1)<0 \mbox{ as } \beta\to\infty.
\]
Therefore we can find $\beta^*\in \R$ such that
\[
h(\beta):=\inf_{x\leq 0}w^\beta(x)> 0 \mbox{ for }  \beta< \beta^*,\; h(\beta^*)=0.
\]
Clearly $w^{\beta^*}(-\infty)=1-k>0$ and $w^{\beta^*}(0)=\psi^{\beta^*}(0)>0$. Therefore due to the continuity of $w^{\beta^*}(x)$ there exists $x^0\in (-\infty, 0)$ such that
$w^{\beta^*}(x^0)=0$. We can thus conclude that
\[
w^\beta(x)\geq 0 \mbox{ for } x\leq 0,\ \beta\leq \beta^*,\; \mbox{ and } w^{\beta^*}(x^0)=0.
\]
In particular,
\begin{equation}\label{2.11}
w^{\beta^*}(x)\geq {0} \mbox{ for } x\leq 0,\; \ \ w^{\beta^*}(x^0)=0.
\end{equation}
 
 By the definition of $\psi$ and $\phi$, we see that $w^{\beta^*}$ satisfies
	\begin{equation*}
\begin{cases}
\dd d \int_{-\yy}^{\yy}{J} (x-y)w^{\beta^*}(y) {\rm d}y-dw^{\beta^*}(x)
+c {w^{\beta^*}}'(x)\\
\ \ \ \ \ \ \ \ \ \ \ \ \ \ \ \ \ \ \ \ \ \ \ \ \ \ \ \ \ \ \ \ \ \ \ \ \ \ \ \ +f( \psi^{\beta^*}(x))-f(k \phi(x))\leq 0,&x<{0},\\
w^{\beta^*}(-\infty)=1-k>0,\; w^{\beta^*}(x)\geq 0,& x\in \R.
\end{cases}
\end{equation*}
We have
\begin{align*}
f( \psi^{\beta^*}(x))-f(k \phi(x))= C(x)w^{\beta^*}(x)
\end{align*}
with 
\[
C(x):=\int_0^1f'(k\phi(x)+tw^{\beta^*}(x))dt.
\]
 This allows us to use Lemma \ref{max} to conclude that $w^{\beta^*}(x)> {0}$ for $x<0$, which contradicts the second part of \eqref{2.11}. 
This completes the proof.
\end{proof}

\medskip

\noindent
{\bf Proof of Theorem \ref{lemma2.4}:}

From \eqref{2.8} we see that $x^c$ is nondecreasing in $c$ and hence there are three possible cases:
\begin{itemize}
	\item[{\rm (1)}]  For any $c>0$, $x^c<\yy$.
	\item[{\rm (2)}] For any $c>0$, $x^c=\yy$.
	\item[{\rm (3)}] There is $C_*>0$ such that  $x^c<\yy$  for any $c\in (0, C_*)$, and $x^c=\yy$  for any $c>C_*$.
	
\end{itemize}
 
 From the proof of Lemma \ref{lem2.9}, we know that in case (1), \eqref{cau} has a monotone semi-wave with speed $c$ for any $c>0$; in case (2), it has a monotone  traveling wave with speed $c$ for for every $c>0$; in case (3), for each $c\in (0, C_*)$ there is a monotone semi-wave solution with speed $c$, and 
  for each $c>C_*$, there is a traveling wave with speed $c$. Therefore to complete the proof  it suffices to show that in case (3) ,  \eqref{cau} has a monotone traveling wave solution with speed $c=C_*$.

Let $\psi^c$ be a monotone traveling wave solution  of \eqref{cau} with speed $c>C_*$. By a suitable translation we may assume $\psi^c(0)=1/2$. Since $\psi^c$ is uniformly bounded,  by the equation satisfied by $\psi^c$ we see that $(\psi^c)'$ is also uniformly bounded in $c$ for $c>C_*$.  Then by the Arzela-Ascoli theorem and a standard argument involving a diagonal process of choosing subsequences, 
for any sequence $c_n\searrow C_*$, $\{\psi^{c_n}\}_{n=1}^{\yy}$ has a subsequence, still denoted by itself, which converges to some
 $\psi\in C(\R)$  locally uniformly in $\R$ as $n\to\infty$. Similar to the proof of Lemma \ref{lem2.9}, we can check at once that $\psi$ satisfies
\begin{equation*}
\begin{cases}
\dd d\int_{-\yy}^{\yy}{J} (x-y)\psi(y) {\rm d}y-d\psi(x)+C_*\psi'(x)+f(\psi(x))={ 0},&
x\in \R,\\
\psi(0)=1/2.
\end{cases}
\end{equation*}
Making use of  the monotonicity of $\psi(x)$ inherited from $\psi_n^c(x)$, we can use the method in Step 3 of the proof of Lemma \ref{lemma2.2} to show that 
\begin{align*}
\psi(-\yy)=1,\ \ \psi(\yy)={ 0},
\end{align*}
which implies that $\psi$ is a monotone traveling wave solution of \eqref{cau} with speed $c=C_*$.
 The proof  is now completed.
\qed

\medskip

{\bf Remark:} In view of Theorem B, we see that case (1) of Theorem \ref{lemma2.4} happens if and only if $c_*^+=\infty$; case (2) happens if and only if $c_*^+\leq 0$; and case (3) happens if and only if $c_*^+\in (0, \infty)$, and in such a case, $C^*=c_*^+$.

\subsection{Uniqueness and strict monotonicity of semi-wave solutions to \eqref{cau}}
\begin{theorem}\label{lemma2.12}
Suppose that $\mathbf{(J)}$ and $\mathbf{(f_{KPP})}$ hold. Then for any $c>0$, \eqref{cau}  has  at most one monotone semi-wave solution $\phi=\phi^c$ with speed $c$, and when exists, $\phi^c(x)$ is strictly decreasing in $x$ for $x\in(-\infty, 0]$. Moreover, if $\phi^{c_1}$ and $\phi^{c_2}$ both exist and $0<c_1<c_2$, then $\phi^{c_1}(x)> \phi^{c_2}(x)$ for fixed $x<0$.
\end{theorem}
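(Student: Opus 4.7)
The strict monotonicity of $\phi^c$ in $x$ for fixed $c$ is already contained in Lemma~\ref{l2.3}(i), which gives $(\phi^c)'(x)<0$ for every $x\in(-\infty,0]$. So the substantive new content of the theorem is (A) uniqueness of the monotone semi-wave for each $c>0$, and (B) the strict ordering $\phi^{c_1}(x)>\phi^{c_2}(x)$ for $x<0$ whenever $0<c_1<c_2$. My plan is to handle both by a scaling (``sliding in $k$'') argument in the spirit of the proof of Lemma~\ref{l2.3}(i), combined with the KPP inequality $f(ku)\leq kf(u)$ for $k\geq 1,\, u\geq 0$ and the maximum principle Lemma~\ref{max}.

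For uniqueness, let $\phi_1,\phi_2$ be two monotone semi-wave solutions with the same speed $c$, and set
\[
k^*:=\inf\{k>0:\; k\phi_2(x)\geq \phi_1(x)\text{ for all }x\leq 0\}.
\]
I first verify $k^*<\infty$: the ratio $\phi_1/\phi_2$ has the finite limit $\phi_1'(0)/\phi_2'(0)$ as $x\to 0^-$ by Lemma~\ref{l2.3}(i), tends to $1$ as $x\to-\infty$, and is continuous and positive on every compact subinterval. Suppose for contradiction $k^*>1$. Then $w:=k^*\phi_2-\phi_1\geq 0$ on $(-\infty,0]$; extending $w\equiv 0$ for $x\geq 0$, subtracting the equations for $k^*\phi_2$ and $\phi_1$ while invoking $f(k^*\phi_2)\leq k^*f(\phi_2)$ yields
\[
d\int_{-\infty}^{0}J(x-y)w(y)\,dy-dw(x)+cw'(x)+\tilde c(x)w(x)\leq 0,\quad x<0,
\]
with $\tilde c\in L^\infty_{loc}$ coming from the mean value theorem. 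Since $w(-\infty)=k^*-1>0$, we have $w\not\equiv 0$ on $(-\infty,0)$, so Lemma~\ref{max} gives $w>0$ on $(-\infty,0)$. By the definition of $k^*$ there exist $k_n\nearrow k^*$ and $y_n\in(-\infty,0]$ with $k_n\phi_2(y_n)<\phi_1(y_n)$; passing to a convergent subsequence, $y_n\to y_0\in[-\infty,0]$. The cases $y_0\in(-\infty,0)$ and $y_0=-\infty$ are excluded by $w>0$ on $(-\infty,0)$ and by $k^*>1$ respectively, so $y_0=0$. Expanding $\phi_i(x)=\phi_i'(0)x+o(x)$ near $0$ yields $k^*|\phi_2'(0)|\leq |\phi_1'(0)|$, while $w\geq 0$ with $w(0)=0$ gives the reverse inequality; hence $w'(0^-)=0$. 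Letting $x\to 0^-$ in the displayed inequality reduces it to $d\int_{-\infty}^{0}J(-y)w(y)\,dy\leq 0$, contradicting strict positivity of this integral (from $J(0)>0$ and $w>0$ on $(-\infty,0)$). Therefore $k^*\leq 1$, i.e., $\phi_1\leq \phi_2$; by symmetry $\phi_1=\phi_2$.

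For the strict monotonicity in $c$, I apply the same scheme with
$k^*:=\inf\{k>0:\; k\phi^{c_1}(x)\geq \phi^{c_2}(x)\text{ for all }x\leq 0\}$.
The beneficial new feature is that $w:=k^*\phi^{c_1}-\phi^{c_2}$ now satisfies a \emph{strictly} negative right-hand side,
\[
d\int_{-\infty}^{0}J(x-y)w(y)\,dy-dw+c_1 w'+\tilde c(x)w\leq (c_2-c_1)(\phi^{c_2})'(x)<0,
\]
because $(\phi^{c_2})'<0$ by Lemma~\ref{l2.3}(i). The previous contradiction analysis then yields $k^*\leq 1$, hence $\phi^{c_1}\geq \phi^{c_2}$. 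To upgrade to strict ordering on $(-\infty,0)$, apply Lemma~\ref{max} directly to $w:=\phi^{c_1}-\phi^{c_2}\geq 0$, which satisfies the same strictly negative differential-integral inequality (now with $k^*=1$); $w\not\equiv 0$ since $\phi^{c_1}\equiv\phi^{c_2}$ would force $(c_2-c_1)(\phi^{c_1})'\equiv 0$, impossible.

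The delicate point in executing this plan is the boundary-touching scenario $y_0=0$: Lemma~\ref{max} alone only gives strict positivity on the open interval $(-\infty,0)$, not at the endpoint. The resolution is the simultaneous use of the infimum characterization of $k^*$ (which forces one inequality between the one-sided slopes of $\phi_1$ and $\phi_2$ at $0$) and the sign condition $w\geq 0$, $w(0)=0$ (which forces the reverse), producing the equality $w'(0^-)=0$ and then a contradiction via the strictly positive kernel integral arising from $J(0)>0$.
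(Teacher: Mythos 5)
Your proposal is correct, and for the uniqueness part it is in the same family as the paper's argument: both carry out a sliding argument via the infimum of scaling constants ($\rho^*$ in the paper, your $k^*$), both exploit the one-sided derivative $\phi'(0^-)<0$ from Lemma~\ref{l2.3}, and both invoke Lemma~\ref{max}. The paper organizes the contradiction into two cases depending on whether $V:=\rho^*\phi_1-\phi_2$ vanishes at an interior point or not; in Case~1 it computes $V'(0^-)<0$ directly from the integral representation of the slope and contradicts the limit of the ratio $\phi_2(x_n)/\phi_1(x_n)$, while Case~2 is handled via Lemma~\ref{max}. You instead apply Lemma~\ref{max} up front to obtain $w>0$ on $(-\infty,0)$, which eliminates the interior case and forces $y_0=0$, and you derive $w'(0^-)=0$ from the sandwich between the infimum characterization of $k^*$ and $w\geq 0$, then obtain the contradiction by passing to the limit $x\to 0^-$ in the differential--integral inequality. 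These are the same essential ideas in a different order, and your organization is a bit more streamlined because a single strictly-positive kernel integral kills the boundary case.

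Where you genuinely depart from the paper is in the proof of $\phi^{c_1}\geq\phi^{c_2}$ for $c_1<c_2$. The paper obtains this inequality from the construction: by Lemma~\ref{lemma2.3} the perturbed solutions $\phi_\epsilon^{c}$ are monotone in $c$, and tracing the limiting procedure in the proof of Lemma~\ref{lem2.9} (without the translation) carries that ordering over to $\phi^c$. You instead prove the ordering intrinsically, running the same $k^*$ sliding argument directly on the two semi-wave equations, with the bonus that the term $(c_2-c_1)(\phi^{c_2})'$ supplies a strictly negative right-hand side so the boundary case $y_0=0$ is ruled out even more cleanly than in the uniqueness argument. This version is more self-contained: it works for any two monotone semi-waves with the given speeds, regardless of how they were produced, whereas the paper's step implicitly leans on the particular limiting construction. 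Both are logically correct (the paper's is sound because uniqueness, proved first, identifies the abstract semi-wave with the constructed one), but your route is more modular. The final upgrade to strict inequality via Lemma~\ref{max} applied to $W=\phi^{c_1}-\phi^{c_2}$, with $W\not\equiv 0$ forced by $(c_2-c_1)(\phi^{c_1})'\not\equiv 0$, matches the paper exactly.
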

\begin{proof}  Assume that $\phi_1$ and $\phi_2$ are  monotone semi-wave solutions of  \eqref{cau}
with speed $c>0$. We want to show that $\phi_{1}\equiv \phi_{2}$. 

{\bf Claim 1}.   $\phi_k'(0^-)<0$ for $k=1,2$.

From the equation satisfied by $\phi_k$, we deduce, for $k=1,2$, 
\begin{equation}\label{2.18}
\begin{aligned}
&{\phi_k}'(0^-)=\lim_{x\to 0^-}\frac{{\phi_k}(x)}{x} \\
=&\lim_{x\to 0^-}\frac{1}{cx}\int_{0}^{x}\lf[-d\int_{-\yy}^{0} {J} (z-y) \phi_k(y) {\rm d}y+d\phi_k(z)-f(\phi_k(z))\rr]\rd z\\
=&-\frac{1}{c}d\int_{-\yy}^{0}{J} (-y) \phi_k(y) {\rm d}y<0.
\end{aligned}
\end{equation}

With the help of Claim 1, we are ready to define 
\begin{align*}
\rho^*:= \inf\{\rho\geq 1: \rho\phi_1(x)\geq \phi_2(x)\ {\rm for}\ x\leq 0\}.
\end{align*}
Since  $\phi_k(-\yy)=1$ for $k=1,2$,  $\frac{\phi_2(x)}{\phi_1(x)}$ is uniformly  bounded for  $x$ in a small left neighbourhood of 0 by Claim 1,  we see that
$\rho^*\in [1,\yy)$ is well-defined, and $\rho^*\phi_1(x)\geq \phi_2(x)\ {\rm for}\ x\leq 0$. 

{\bf Claim 2:} $\rho^*=1$. 

Otherwise $\rho^*>1$
  and from the definition of $\rho^*$ we can find  a sequence $x_n\in (-\infty, 0)$ such that
  \begin{equation}\label{=p}
  \lim_{n\to\infty}\frac{\phi_2(x_n)}{\phi_1(x_n)}=\rho^*>1.
  \end{equation}
  From $\phi_k(-\yy)=1$ for $k=1,2$ we see that $\{x_n\}$ must be a bounded sequence, and hence by passing to a subsequence, we may assume that $x_n\to x_*\in (-\infty, 0]$ as $n\to\infty$. Define 
\begin{align*}
V(x):=\rho^*\phi_1(x)-\phi_2(x).
\end{align*}
Clearly $V(x)\geq 0$ for $x\leq 0$. Our discussion below is organised according to the following two possibilities:
\begin{itemize}
	\item Case 1. $V(x)> 0$ for all $x< 0$.
	\item Case 2.  There exists $x_0<0$ such that $V(x_0)=0$.
\end{itemize}
In Case 1, 
 from \eqref{2.18} we obtain
\[
V'(0^-)=-\frac{1}{c}d\int_{-\yy}^{0}{J} (0-y) V(y) {\rm d}y<0.
\]

Let us examine the sequence $\{x_n\}$ in \eqref{=p}. We have $x_n\to x_*\in (-\infty, 0]$. If $x_*<0$ then we deduce
$V(x_*)=0$ which is a contradiction to $V(x)> 0$ for $x<0$. Therefore we must have $x_*=0$ and so $x_n\to 0$ as $n\to\infty$. It then follows that
\[
 \lim_{n\to\infty}\frac{\phi_2(x_n)}{\phi_1(x_n)}=\frac{\phi_2'(0^-)}{\phi_1'(0^-)}<\rho^*,
\]
due to $V'(0^-)<0$ and  $(\phi_k)'(0^-)<0$ for $k=1,2$. Thus we always arrive at a contradiction to \eqref{=p} in Case 1.

In Case 2, from the assumption $\mathbf{(f_{KPP})}$, we see that
\[
W(x):=\phi_1(x)-(\rho^*)^{-1}\phi_2(x)=(\rho^*)^{-1}V(x)
\]
 satisfies, for $x\leq 0$,
\begin{align*}
0=&\ d\int_{-\yy}^{0}{J} (x-y) W(y) {\rm d}y-d W(x)+cW'(x)+f(\phi_1(x))-(\rho^*)^{-1}f(\phi_2(x))\\
\geq  &\ d\int_{-\yy}^{0}{J} (x-y) W(y) {\rm d}y-d W(x)+cW'(x)+f(\phi_1(x))-f((\rho^*)^{-1}\phi_2(x))\\
= &\ d \int_{-\yy}^{0}{J} (x-y) W(y) {\rm d}y-dW(x)+cW'(x)+ b(x)W(x),
\end{align*}
where $b(x)$ is a bounded function.  In view of   $W(x)\geq {0}$ for $x\leq 0$, and $W(-\infty)>0$,  we can apply Lemma \ref{max} to  conclude that
\[
W(x)>0 \mbox{ for } x<0.
\]
This is  a contradiction to $W(x_0)=(\rho^*)^{-1}V(x_0)=0$.

 We have thus proved $\rho^*=1$, and so
  $\phi_1(x)\geq \phi_2(x)\ {\rm for}\ x\leq 0$. By swapping $\phi_1(x)$ with $ \phi_2(x)$ we also have $\phi_2(x)\geq \phi_1(x)\ {\rm for}\ x\leq 0$. This completes our proof for uniqueness of the semi-wave solution.

\medskip

Next we prove the strict monotonicity properties stated in the theorem. Let $\phi^c$ be a monotone semi-wave solution of \eqref{cau} with speed $c>0$. The strict monotonicity of $\phi^c(x)$ with respect to $x\leq 0$ clearly follows directly from Lemma \ref{l2.3}.
We show next that for fixed $x<0$,  $\phi^{c}(x)$ is strictly decreasing with respect to $c>0$, namely, $\phi^{c_1}(x)>\phi^{c_2}(x)$ for $c_2>c_1>0$. Denote $W(x):=\phi^{c_1}(x)-\phi^{c_2}(x)$. By Lemma \ref{lemma2.3} and the proof of Lemma \ref{lem2.9} without shifting $\phi_{n}^{c}$,  we see that  $W(x)\geq 0$ for $x\leq 0$. By $\mathbf{(f_{KPP})}$,  
\begin{align*}
f(\phi^{c_1}(x))-f(\phi^{c_2}(x))=E(x)W(x)
\end{align*}
where  $E(x)$ is a bounded function. This, combined with $c_1(\phi^{c_1})'(x)-c_2(\phi^{c_2})'(x)> c_1 W'(x)$, allows us to apply Lemma \ref{max} to conclude that $W(x)>0$ for $x<0$.
\end{proof}

\subsection{Semi-wave solution with the desired speed}

\begin{theorem}\label{thm2.12}
Suppose that $\mathbf{(J),\; (f_{KPP})}$ hold, $c_*^+\in (0, \infty]$ and $\phi^c(x)$ is the unique monotone semi-wave solution of \eqref{cau} with speed $c\in (0, c^+_*)$. Then
\begin{equation}\label{c-C*}
\lim_{c\nearrow c^+_*}\phi^c(x)=0 \mbox{ locally uniformly in } (-\infty, 0].
\end{equation}
Moreover,  \eqref{e2} and \eqref{e3} have a solution pair $(c,\phi)$  with  $\phi(x)$ monotone
		if and only if $\mathbf{(J^+_1)}$ holds. 
		And when $\mathbf{(J^+_1)}$ holds,  there exists a unique $c_0\in (0, c^+_*)$ such that $(c,\phi)=(c_0,\phi^{c_0})$ solves
		 \eqref{e2} and \eqref{e3}. 
\end{theorem}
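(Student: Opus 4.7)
The plan is to recast the compatibility condition \eqref{e3} as a one-dimensional fixed-point problem $c = G(c)$ on $(0, c_*^+)$, where
\[
G(c) := \mu\int_{-\infty}^{0}\int_{0}^{+\infty} J(y-x)\phi^c(x)\,dy\,dx = \mu\int_{-\infty}^{0}\phi^c(x)K(x)\,dx,\qquad K(x) := \int_{-x}^{\infty}J(z)\,dz.
\]
By Fubini, $\int_{-\infty}^{0}K(x)\,dx = \int_{0}^{\infty}zJ(z)\,dz$, so $\mathbf{(J_1^+)}$ is equivalent to $K\in L^1((-\infty,0))$. I will show that $G$ is continuous and strictly decreasing on $(0,c_*^+)$, determine its endpoint behavior, and apply the intermediate value theorem to $H(c) := G(c)-c$ to extract the unique $c_0$.

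First I would prove \eqref{c-C*}. By Theorem \ref{lemma2.12}, $c\mapsto\phi^c(x)$ is strictly decreasing for each fixed $x<0$, so the pointwise limit $\phi^\ast(x):=\lim_{c\nearrow c_*^+}\phi^c(x)\in[0,1]$ exists, is nonincreasing in $x$, and satisfies $\phi^\ast(0)=0$. Suppose for contradiction $\phi^\ast(x_0)>0$ for some $x_0<0$. If $c_*^+<\infty$, dominated convergence allows passing to the limit in \eqref{e2}, so $\phi^\ast$ would solve the semi-wave equation with speed $c_*^+$; repeating Step 3 of Lemma \ref{lemma2.2} then forces $\phi^\ast(-\infty)=1$, producing a monotone semi-wave at $c=C_*=c_*^+$ that contradicts Theorem \ref{lemma2.4}(iii). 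If $c_*^+=\infty$, the equation \eqref{e2} yields the uniform estimate $|(\phi^c)'(x)|\le C/c$ on $(-\infty,0]$ (with $C$ depending only on $d$ and $\|f\|_{L^\infty([0,1])}$), so $\phi^c(x_0)=\phi^c(x_0)-\phi^c(0)=O(1/c)\to 0$ as $c\to\infty$, contradicting $\phi^c(x_0)\ge\phi^\ast(x_0)>0$. Hence $\phi^\ast\equiv 0$, and Dini's theorem upgrades the monotone pointwise convergence to local uniform convergence on $(-\infty,0]$.

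Next I would establish the analytic properties of $G$. Strict monotonicity is immediate from $\phi^{c_1}(x)>\phi^{c_2}(x)$ for $c_1<c_2$ (Theorem \ref{lemma2.12}) together with $K(x)>0$ on a left neighborhood of $0$ (from $J(0)>0$). For continuity at any $c^\ast\in(0,c_*^+)$, the monotonicity and boundedness of $\phi^c$ in $c$ produce a pointwise limit along any sequence $c_n\to c^\ast$; passing to the limit in \eqref{e2} via the derivative bound plus dominated convergence, and then invoking uniqueness from Theorem \ref{lemma2.12}, identifies this limit with $\phi^{c^\ast}$, after which dominated convergence with dominator $\phi^{c'}K$ (any $c'\in(0,c^\ast)$) yields $G(c_n)\to G(c^\ast)$. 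For the endpoints, $\lim_{c\nearrow c_*^+}G(c)=0$ follows from \eqref{c-C*} and dominated convergence (with the same dominator), while $G(0^+)\ge \mu\int_{-1}^{0}\phi^{c_1}(x)K(x)\,dx>0$ for any fixed $c_1\in(0,c_*^+)$.

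Finally I would conclude. If $\mathbf{(J_1^+)}$ fails, picking $x^c\le 0$ with $\phi^c(x^c)=1/2$ (it exists by monotonicity and the boundary values), I get $G(c)\ge\tfrac{\mu}{2}\int_{-\infty}^{x^c}K(x)\,dx=+\infty$, since $\int_{-\infty}^{0}K=+\infty$ and $\int_{x^c}^{0}K$ is finite; consequently $c_0=G(c_0)$ has no finite solution, giving the necessity of $\mathbf{(J_1^+)}$. Under $\mathbf{(J_1^+)}$, $G$ is bounded above by $\mu\int K<\infty$, so $H(c)=G(c)-c$ is continuous and strictly decreasing on $(0,c_*^+)$ with $\lim_{c\searrow 0}H(c)=G(0^+)>0$ and $\lim_{c\nearrow c_*^+}H(c)=-c_*^+<0$; the intermediate value theorem produces a unique $c_0\in(0,c_*^+)$ with $G(c_0)=c_0$, i.e.\ $(c_0,\phi^{c_0})$ solves \eqref{e2}--\eqref{e3}. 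The main obstacle I anticipate is the proof of \eqref{c-C*} in the case $c_*^+=\infty$, where passing to the limit in the equation is unavailable and the contradiction must instead be extracted from the derivative estimate $|(\phi^c)'|\le C/c$.
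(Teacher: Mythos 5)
Your proposal is correct and follows essentially the same route as the paper's proof: prove \eqref{c-C*} by splitting on whether $c_*^+$ is finite (passing to the limit in \eqref{e2} and invoking the nonexistence of a semi-wave at $c=c_*^+$ via Theorem~\ref{lemma2.4}) or infinite (the $O(1/c)$ decay, which you obtain from the derivative bound and the paper obtains from the equivalent integral representation); then set up the scalar fixed-point equation $c=M(c)$, with $M(c)$ continuous and strictly decreasing by Theorem~\ref{lemma2.12}, and run the intermediate-value argument. Your necessity argument for $\mathbf{(J_1^+)}$ is the contrapositive of the paper's and uses the same monotonicity bound $\phi^{c}(x)\ge \phi^{c}(-1)$ (or $\ge 1/2$) on a left half-line.

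One small point deserves a sentence you have implicitly skipped: in the continuity step, before you can invoke the uniqueness theorem to identify the locally uniform limit $\phi:=\lim_{n}\phi^{c_n}$ with $\phi^{c^*}$, you must verify that $\phi$ is indeed a semi-wave with speed $c^*$, in particular that $\phi(-\infty)=1$. This is not automatic from pointwise convergence since the limit could in principle lose mass at $-\infty$; the paper handles it by noting that for $\hat c\in(c^*,c_*^+)$ fixed one has $\phi^{c_n}\ge\phi^{\hat c}$ for all large $n$ (by the monotonicity in $c$), so $\phi\ge\phi^{\hat c}$ and hence $\phi(-\infty)=1$. With that one line added, your argument is complete.
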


\begin{proof}  We first prove \eqref{c-C*}.
Since $\phi^c(x)$ is decreasing  with respect to $c$, $\phi(x):=\lim_{c\nearrow c^+_*}\phi^c(x)$ is well-defined, and $\phi(x)\in[0, 1]$ for $x\leq 0$. Moreover, by the uniform boundedness of $(\phi^c)'(x)$ obtained from the equation it satisfies, the convergence of $\phi^c(x)$ to $\phi(x)$ is locally uniform in $(-\infty, 0]$. If $c^+_*=\infty$, then from
\[
\phi^c(x)=\frac{1}{c}\int_{0}^{x}\lf[-d\int_{-\yy}^{0}{J} (z-y) \phi^c(y) {\rm d}y+d\phi^c(z)-f(\phi^c(z))\rr]\rd z
\]
we immediately obtain $\phi(x)\equiv 0$. If $c^+_*<\infty$ then
$\phi$ satisfies
	\begin{equation*}
	\begin{cases}
	\dd d\int_{-\yy}^{0}{J} (x-y) \phi(y) {\rm d}y-d \phi(x)+c^+_*\phi'(x)+f(\phi(x))={0},&
	x<  0,\\
	\phi(0)={0}.
	\end{cases}
	\end{equation*}
	 Note that $\phi(x)$ is nonincreasing since $\phi^c(x)$ is.   As in  Step 3 of the proof of Lemma \ref{lemma2.2},  we can show that $\phi(-\yy)=1$ or ${0}$. By Theorem \ref{lemma2.4},  the Cauchy problem \eqref{cau} admits no monotone semi-wave solution for $c=c^+_*$, and hence necessarily $\phi(-\yy)={0}$. Thus we also have $\phi\equiv {0}$, and \eqref{c-C*} is proved.
	 
	 Next we show that if $\mathbf{(J^+_1)}$ holds, then \eqref{e2}-\eqref{e3} have a unique solution pair $(c_0,\phi^{c_0})$.  It suffices to prove that 
	 \begin{align*}
	 P(c):=c-M(c), \mbox{ with } M(c):= \mu\int_{-\yy}^{0}\int_{0}^{\yy}J(y-x)\phi^{c}(x) {\rm d}y{\rm d}x,
	 \end{align*}
	 has a unique root in $(0, c^+_*)$.
 Let us observe that when $\mathbf{(J^+_1)}$ holds, $M(c)$ is well-defined and strictly decreasing in $c$ by  Theorem \ref{lemma2.12}. Indeed,
 an elementary calculation yields
 \[
 \int_{-\yy}^{0}\int_{0}^{\yy}J(y-x){\rm d}y{\rm d}x=\int_0^\infty\int_0^\infty J(x+y)dydx=\int_0^\infty J(y)ydy,
 \]
 which implies that $M(c)$ is well-defined.
 
 Using the uniqueness of $\phi^c$, we can apply a similar convergence argument as used above to prove \eqref{c-C*} to show that $\phi^{c_n}\to \phi^{c}$ as $c_n\to c\in(0, c^+_*)$, which yields the continuity of $\phi^c(x)$ in $c\in (0,c^+_*)$ uniformly for $x$ over any bounded interval of $(-\infty, 0]$. Note that we can easily see that $\phi(x):=\lim_{c_n\to c}\phi^{c_n}(x)$ satisfies $\phi(-\infty)=1$ by comparing $\phi^{c_n}$ to some $\phi^{\hat c}$ with $\hat c\in (c, c^+_*)$ and using the monotonicity of $\phi^c$ in $c$.
 
Hence $P(c)$ is increasing and continuous in $c$. For  $c\in(0,c^+_*/2)$ close to 0, we  have $P(c)\leq  c-M(c^+_*/2)<0$, and for all $c$ close to $c^+_*$, $M(c)$ is small and hence $P(c)>0$. Thus there is a unique $c_0\in (0,c^+_*)$ such that $P(c_0)=0$.  

Finally we verify that $\mathbf{(J^+_1)}$ holds if   \eqref{e2}-\eqref{e3} have a solution pair $(c_0, \phi^{c_0})$.  Since
\begin{align*}
c_0=\mu\int_{-\yy}^{0}\int_{0}^{\yy}J(y-x)\phi^{c_0}(x) {\rm d}y{\rm d}x,
\end{align*}
 we have 
\begin{align*}
\int_{-\yy}^{0}\int_{0}^{\yy}J(y-x)\phi^{c_0}(x) {\rm d}y{\rm d}x<\yy.
\end{align*}
By Theorem \ref{lemma2.12}, $\phi^{c_0}(x)$ is decreasing in $x$. Hence, 
\begin{align*}
\int_{-\yy}^{0}\int_{0}^{\yy}J(y-x)\phi^{c_0}(x) {\rm d}y{\rm d}x\geq \phi^{c_0}(-1) \int_{-\yy}^{-1}\int_{0}^{\yy}J(y-x) {\rm d}y{\rm d}x,
\end{align*}
and so
\begin{align*}
\int_{-\yy}^{0}\int_{0}^{\yy}J(y-x) {\rm d}y{\rm d}x=&\int_{-1}^{0}\int_{0}^{\yy}J(y-x) {\rm d}y{\rm d}x+\int_{-\yy}^{-1}\int_{0}^{\yy}J(y-x) {\rm d}y{\rm d}x\\
\leq &\ 1+\int_{-\yy}^{-1}\int_{0}^{\yy}J(y-x) {\rm d}y{\rm d}x<\yy.
\end{align*}
Therefore,  $\mathbf{(J^+_1)}$ holds.
\end{proof}

\medskip

Theorem \ref{th1.2} parts $(a^+)$ and $(b^+)$ clearly follow directly from Theorems \ref{lemma2.4}, \ref{lemma2.12} and \ref{thm2.12}. 
The proof of parts $(a^-)$ and $(b^-)$ is parallel; these conclusions also follow from $(a^+)$ and $(b^+)$ by considering \eqref{e1} with $J(x)$ replaced by $J(-x)$.
\medskip

{\bf Remarks:} In the symmetric case $J(x)=J(-x)$, Theorem \ref{th1.2} was first proved in \cite{du21}. These results have been extended to rather general cooperative systems in \cite{dn22}, and much of our arguments here follow \cite{dn22} instead of \cite{du21}.

\section{Spreading speed}

We are going to determine the spreading speed of the nonlocal free boundary problem \eqref{e1}.
 For a non-symmetric $J$ satisfying ${\bf (J)}$, the following two quantities determined by $J$ and $f'(0)$ alone play an important role:
\begin{align*}
	\displaystyle	c_*^- = \sup_{\nu<0} \frac{\displaystyle d\int_{\mathbb{R}} J(x)e^{\nu x}\,dx - d + f'(0)}{\nu}, \quad
		c_*^+ = \inf_{\nu>0} \frac{\displaystyle d\int_{\mathbb{R}} J(x)e^{\nu x}\,dx - d + f'(0)}{\nu},
	\end{align*}
	
	 It can be shown that $c_*^-$ is achieved by some $\nu<0$ when it is finite, and a parallel conclusion holds for $c_*^+$.
	 It is easily checked that $c_*^-$ is finite if and only if $J$ satisfies additionally the following {\color{red}thin-tail} condition at $x=-\infty$,\smallskip

${\bf (J_{thin}^-):}$ There exists $\lambda>0$ such that $\displaystyle \int_{0}^{+\infty}J(-x)e^{\lambda x}\,dx<+\infty$.
\medskip
	
	\noindent Similarly,  $c_*^+$ is finite if and only if $J$ satisfies\smallskip
	
	${\bf (J_{thin}^+):}$ There exists $\lambda>0$ such that $\displaystyle \int_{0}^{+\infty}J(x)e^{\lambda x}\,dx<+\infty$.

\medskip

We define
\begin{equation}\label{J-thin-not}\begin{cases}
c_*^-=-\infty \mbox{ when ${\bf (J_{thin}^-)}$ does not hold},\\[3mm]
 c_*^+=+\infty \mbox{ when ${\bf (J_{thin}^+)}$ does not hold}.
 \end{cases}
\end{equation}

We say $J(x)$ is {\bf weakly non-symmetric} if
\begin{equation}\label{weak}
-\infty\leq c_*^-<0<c_*^+\leq \infty.
\end{equation}

\begin{theorem}[Spreading speed] \label{th2}
	Suppose that ${\bf(J)}$ and ${\bf(f_{KPP})}$ are satisfied, and \eqref{weak} holds. Let $(u, g, h)$ be the unique solution of \eqref{e1}, and assume that spreading occurs. Then the following conclusions are valid:
	\begin{itemize}
		\item[\rm (i)] The spreading speed of the right front $h(t)$ is given by
		\[
		\lim_{t \to \infty} \frac{h(t)}{t} = \begin{cases} c_0 & \mbox{if ${\bf(J_1^+)}$ holds},\\[2mm]
		+\infty &\mbox{if ${\bf(J_1^+)}$ does not hold},\end{cases}
		\]	
		 where $(c_0, \phi^{c_0})$ is the solution of \eqref{e2}-\eqref{e3}.
		\item[\rm (ii)] The spreading speed of the left front $g(t)$ is given by
		\[
		\lim_{t \to \infty} \frac{g(t)}{t} = \begin{cases}-\tilde c_0 & \mbox{if ${\bf(J_1^-)}$ holds},\\[2mm]
		-\infty &\mbox{if ${\bf(J_1^-)}$ does not hold},\end{cases}
		\]	
		 where $(\tilde c_0, \psi^{\tilde c_0})$ is the solution of \eqref{e4}-\eqref{e5}.
	\item[\rm (iii)] Define $c_0 = \infty$ if ${\bf(J_1^+)}$ does not hold and $\tilde c_0 = \infty$ if ${\bf(J_1^-)}$ does not hold. Then for any constants $a$ and $b$  satisfying $-\tilde c_0<a<b<c_0$, we have
		\begin{align*}
			\lim_{t\to\infty}\sup_{[at,bt]} |u(t,x)-1|=0.
		\end{align*}
	\end{itemize}
\end{theorem}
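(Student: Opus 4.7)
The plan is to prove parts (i) and (ii), with (ii) following from (i) by the substitution $J(x)\mapsto J(-x)$ in \eqref{e1} (which interchanges $c_0\leftrightarrow\tilde c_0$ and $\mathbf{(J_1^+)}\leftrightarrow\mathbf{(J_1^-)}$), and then deduce (iii) from (i) and (ii). It suffices to establish (i), which I split into $\liminf h(t)/t\ge c_0$ and $\limsup h(t)/t\le c_0$; note $c_0=+\infty$ when $\mathbf{(J_1^+)}$ fails, since $\mathbf{(J_{thin}^+)}$ implies $\mathbf{(J_1^+)}$ and so $c_*^+=\infty$ in that case.

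For the lower bound I fix $c\in(0,c_0)$ and use the semi-wave $\phi^c$ from Theorem \ref{th1.2}$(a^+)$; by the strict monotonicity of $c\mapsto\phi^c$ in Theorem \ref{lemma2.12}, $\mu\int\!\!\int J\phi^c>c$, leaving room for a small scaling factor. Since spreading occurs, Proposition \ref{prop-single} applied on suitable subintervals of $[g(t),h(t)]$ combined with Lemma \ref{thm-spreading} yields $T_0,x_0,L_0$ and $\eta\in(0,1)$ such that $u(T_0,x)\ge(1-\eta)\phi^c(x-x_0)$ on $[-L_0,x_0]$ and $c<(1-\eta)\mu\int\!\!\int J\phi^c$. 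Setting $\underline h(t):=c(t-T_0)+x_0$, a constant $\underline g(t)\equiv\underline g_0>g(T_0)$, and defining $\underline u(t,x)$ as a truncation of $(1-\eta)\phi^c(x-\underline h(t))$ that vanishes at $x=\underline g_0$, the subsolution PDE inequality uses $\mathbf{(f_{KPP})}$ in the form $f((1-\eta)v)\ge(1-\eta)f(v)$ combined with the smallness of $\int_{-\infty}^{\underline g_0-\underline h(t)}J\phi^c$ for $x_0$ large; the right-boundary speed inequality $\underline h'(t)\le\mu\int\!\!\int J\underline u$ holds by the choice of $\eta$. The lower-solution version of Theorem \ref{thm-CP} gives $h(t)\ge\underline h(t)$, hence $\liminf h(t)/t\ge c$; letting $c\nearrow c_0$ closes the lower bound.

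For the upper bound when $\mathbf{(J_1^+)}$ holds I set $\bar h(t):=h_0+c_0t+K$ for large $K$ and extend $\bar u(t,x):=\phi^{c_0}(x-\bar h(t))$ to every $x\le\bar h(t)$ rather than restricting to an interval $[\bar g(t),\bar h(t)]$. Using \eqref{e2}, direct calculation shows that on $D:=\{(t,x):t\ge0,\;g(t)\le x\le\min(h(t),\bar h(t))\}$ the function $\bar u$ is a supersolution of the PDE satisfied by $u$: the extra term equals $d\int_{-\infty}^{g(t)-\bar h(t)}J\phi^{c_0}\ge0$. I then prove $h(t)<\bar h(t)$ for all $t>0$ by a first-touching argument: if $t^*>0$ were the smallest $t$ with $h(t)=\bar h(t)$, then on $\{(t,x):0\le t\le t^*,\,g(t)\le x\le h(t)\}$ the maximum principle Theorem \ref{lemma2.1} applies to $w:=\bar u-u$ (noting $w(0,\cdot)\ge0$ for $K$ large, and $w\ge0$ on both lateral boundaries since $u$ vanishes there while $\bar u\ge0$), yielding $w\ge0$, and part (ii) of Theorem \ref{lemma2.1} upgrades this to $w>0$ in the interior. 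Then
\[
h'(t^*)=\mu\!\int_{g(t^*)}^{h(t^*)}\!\int_{h(t^*)}^{\infty}\!J(y-x)\,u(t^*,x)\,dy\,dx<\mu\!\int_{-\infty}^{\bar h(t^*)}\!\int_{\bar h(t^*)}^{\infty}\!J(y-x)\,\bar u(t^*,x)\,dy\,dx=c_0=\bar h'(t^*),
\]
with strict inequality thanks to $w>0$ and $J(0)>0$; this contradicts $h'(t^*)\ge\bar h'(t^*)$ forced by the first-touching assumption. Therefore $h(t)<\bar h(t)$, giving $\limsup h(t)/t\le c_0$. This construction bypasses any two-boundary comparison and so requires no hypothesis on the behavior of $J$ at $-\infty$.

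Finally, (iii) follows from (i) and (ii) by a rescaling argument. For any sequences $t_n\to\infty$ and $x_n\in[at_n,bt_n]$ with $[a,b]\subset(-\tilde c_0,c_0)$, put $v_n(s,y):=u(t_n+s,x_n+y)$; by (i) and (ii) the translated domains $[g(t_n+s)-x_n,h(t_n+s)-x_n]$ invade all of $\R$ uniformly on compact $s$-intervals. Standard equicontinuity estimates for the nonlocal equation together with the $L^\infty$ bound $0\le u\le M_0$ produce a subsequential locally uniform limit $v_\infty$ on $\R\times\R$ solving the Cauchy problem \eqref{cau}. A lower bound on $v_\infty$, obtained by comparing with the stationary solution $V_\ell$ of Proposition \ref{prop-single} on arbitrarily large fixed intervals (justified by applying the lower-bound construction of the previous paragraph at shifted centers near $(t_n,x_n)$), shows $v_\infty\not\equiv0$; Theorem A then forces $v_\infty\equiv1$, so $u(t_n,x_n)=v_n(0,0)\to1$, completing (iii). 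The main technical obstacles are the boundary truncation in the subsolution construction for the lower bound and the uniform-in-$n$ lower bound needed to ensure $v_\infty\not\equiv0$ in the rescaling step.
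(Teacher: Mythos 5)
Your upper-bound argument is essentially the paper's Lemma~\ref{l4.2} (a one-sided comparison against a translated semi-wave), with one fixable defect: since \eqref{102} places no upper bound on $\|u_0\|_\infty$ while $\phi^{c_0}<1$ everywhere, $\bar u(0,\cdot)\ge u_0$ cannot be arranged merely by taking $K$ large; the paper repairs this by using $(1+\epsilon)$ times a semi-wave and starting the comparison at a later time $T$ chosen so that $u(T,\cdot)\le 1+\epsilon/2$.

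The lower bound contains a genuine gap, and it sits exactly where you flagged a ``technical obstacle.'' Truncating $(1-\eta)\phi^c(x-\underline h(t))$ so that it vanishes at a fixed $\underline g_0$ does not yield a subsolution of the PDE on $(\underline g_0,\underline h(t))$. After using the semi-wave equation for $\phi^c$, the error to be absorbed at a point $x$ (even ignoring the additional deficit created by the cutoff itself) equals $d(1-\eta)\int_{-\infty}^{\underline g_0-\underline h(t)}J(x-\underline h(t)-w)\phi^c(w)\,dw$, whereas the only available slack is the $\mathbf{(f_{KPP})}$ gain $f((1-\eta)\phi^c)-(1-\eta)f(\phi^c)$, which near $\underline g_0$ (where $\phi^c(\cdot-\underline h(t))\approx 1$) is of order $\eta$. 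For $x$ bounded away from $\underline g_0$ this error is indeed small when $x_0$ is large, as you observe; but for $x$ near $\underline g_0$ the substitution $s=x-\underline h(t)-w$ turns it into $\int_{x-\underline g_0}^{\infty}J(s)\phi^c(x-\underline h(t)-s)\,ds$, which tends to $\int_0^\infty J(s)\,ds$, an order-one constant independent of $x_0$, $\eta$, and the shape of the cutoff. Since $\eta$ must shrink to $0$ in order to let $c\nearrow c_0$, the subsolution inequality fails in the transition layer. The paper avoids this by building the lower solution $\underline u=(1-\epsilon)[\phi(x-\underline h)+\psi(x-\underline g)-1]$ from \emph{both} the rightward and the leftward semi-waves, so that near the left boundary $\underline u$ is governed by the leftward semi-wave equation and the error there is $\int_{\underline h-\underline g}^{\infty}J\psi$, which does vanish as $\underline h-\underline g\to\infty$; because this two-sided construction requires $(c_0,\phi^{c_0})$ and $(\tilde c_0,\psi^{\tilde c_0})$ to solve \eqref{e3} and \eqref{e5}, Lemma~\ref{l4.3} is stated for compactly supported $J$, and the general case is handled by the monotone approximation $J_n\nearrow J$ together with Lemma~\ref{uniqu}. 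Your part (iii) inherits the same gap, since the uniform lower bound needed to rule out $v_\infty\equiv 0$ in the rescaling step is drawn from the broken one-sided construction; the paper reads (iii) directly off the two-sided lower solution combined with the elementary ODE bound showing $u(t,\cdot)$ is eventually $\le 1+\epsilon$.
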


\subsection{Comparison principles revisited}

The following variations of the comparison principle in Section 1 will be used to prove Theorem \ref{th2}. Their proofs use similar techniques.

\begin{lemma}[Comparison principle 2]\label{cp3}
  Assume that conditions ${\bf (J)}$ and ${\bf (f)}$ hold, $u_0$ satisfies \eqref{102} and $(u,g,h)$ is the unique positive solution of problem \eqref{e1}. For $T\in(0,+\infty)$, suppose that $\bar{g}\in C([0,T])$, $\bar{u}(t,x)$ and $\bar{u}_t(t,x)$ are continuous for $t\in [0,T],\ x\in[\bar{g}(t),h(t)]$ and satisfy
  $\bar g(t)<h(t)$ and
\begin{equation*}
\left\{
  \begin{array}{ll}
 \displaystyle   \bar{u}_t\geq d\int_{\bar{g}(t)}^{h(t)}J(x-y)\bar{u}(t,y)\,dy-d\bar{u}+f(\bar{u}), & 0<t\leq T, x\in(\bar{g}(t),h(t)), \\
    \bar{u}(t,x)\geq 0,\  & 0<t\leq T, \ x\in\{\bar g(t), h(t)\},\\
 \displaystyle   \bar{g}'(t)\leq-\mu\int_{\bar{g}(t)}^{h(t)}\int_{-\infty}^{\bar{g}(t)}J(y-x)\bar{u}(t,x)\,dy dx, & 0<t\leq T, \\
 \displaystyle   \bar{u}(0,x)\geq u(0,x),\ \bar{g}(0)\leq g_0, & x\in[g_0,h_0].
  \end{array}
\right.
\end{equation*}
Then
$$u(t,x)\leq \bar{u}(t,x)\,\text{and}\,\, g(t)\geq \bar{g}(t)\,\,\text{for}\,\, 0<t\leq T\,\text{and}\,\,x\in [g(t), h(t)].$$
\end{lemma}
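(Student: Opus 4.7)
The plan is to mimic the structure of the proof of Theorem \ref{thm-CP}, exploiting the simplification that here the right boundary $h$ is shared by the two triples (so only $g$ must be compared with $\bar g$). I would first reduce to the strict initial inequality $\bar g(0)<g_0$ by a perturbation argument (replacing $\bar u$ with $(1+\epsilon)\bar u$ on a slightly enlarged domain and taking $\epsilon\to 0$), and then prove the main claim: $g(t)>\bar g(t)$ for all $t\in (0,T]$.

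Argue by contradiction. Let $t_1:=\inf\{t\in (0,T]: g(t)\le\bar g(t)\}$; by continuity and the strict initial inequality, $t_1>0$, $g(t_1)=\bar g(t_1)$, and $g(t)>\bar g(t)$ on $[0,t_1)$. On $\Omega_{t_1}:=\{(t,x): 0<t\le t_1,\ g(t)<x<h(t)\}$, set $w:=\bar u-u$. Using $\bar g(t)\le g(t)$, $\bar u\ge 0$ (which follows by applying Theorem \ref{lemma2.1} to $\bar u$ on $[\bar g(t),h(t)]$), and the Lipschitz property of $f$, one obtains
\begin{equation*}
w_t\ge d\int_{g(t)}^{h(t)}J(x-y)w(t,y)\,dy-dw+C(t,x)w
\end{equation*}
for some $C\in L^\infty$. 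The boundary values satisfy $w(0,x)\ge 0$, $w(t,g(t))=\bar u(t,g(t))\ge 0$, and $w(t,h(t))=\bar u(t,h(t))\ge 0$. Theorem \ref{lemma2.1}(i) then yields $w\ge 0$ on $\Omega_{t_1}$. Moreover, for any $t_0\in(0,t_1)$, $g(t_0)$ lies in the open interval $(\bar g(t_0),h(t_0))$, so the strong form of Theorem \ref{lemma2.1}(ii) applied to $\bar u$ gives $\bar u(t_0,g(t_0))>0$; hence $w(t_0,g(t_0))>0$. Applying Theorem \ref{lemma2.1}(ii) once more with initial time $t_0$ shows $w>0$ in the interior for $t>t_0$, and in particular $\bar u(t_1,x)>u(t_1,x)$ for $x$ in a neighborhood of $g(t_1)$.

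To close, since $g>\bar g$ on $[0,t_1)$ and $g(t_1)=\bar g(t_1)$, one has $g'(t_1)\le\bar g'(t_1)$ (using that $g\in C^1$ and $\bar g$ is differentiable a.e.\ with the supplied inequality). On the other hand, using $\bar g(t_1)=g(t_1)$, the strict inequality $\bar u(t_1,\cdot)>u(t_1,\cdot)$ just established, and $J(0)>0$ (so $J>0$ on a neighborhood of $0$, giving positive weight for $x$ slightly above $g(t_1)$ and $y$ slightly below), we obtain
\begin{equation*}
\bar g'(t_1)\le -\mu\int_{g(t_1)}^{h(t_1)}\int_{-\infty}^{g(t_1)}J(y-x)\bar u(t_1,x)\,dy\,dx<-\mu\int_{g(t_1)}^{h(t_1)}\int_{-\infty}^{g(t_1)}J(y-x)u(t_1,x)\,dy\,dx=g'(t_1),
\end{equation*}
contradicting $g'(t_1)\le\bar g'(t_1)$. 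Hence $g(t)>\bar g(t)$ for all $t\in (0,T]$, and then the comparison $w\ge 0$ on all of $\Omega_T$ follows by the same argument with $t_1$ replaced by $T$.

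The main obstacle is the last strict inequality: it requires propagating positivity of $w$ from the boundary $x=g(t)$ into the interior via the strong maximum principle, and this propagation can only be initiated once $\bar g(t_0)<g(t_0)$ for some $t_0<t_1$, which is why reducing to the strict initial condition $\bar g(0)<g_0$ via perturbation at the outset is essential. The rest is a routine adaptation of the proof of Theorem \ref{thm-CP}.
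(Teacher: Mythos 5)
Your proposal mirrors the proof of Theorem \ref{thm-CP}, which is indeed the intended technique (the paper gives no separate proof of this lemma and says only that ``proofs use similar techniques''). The central structure---locating the first crossing time $t_1$, applying Theorem \ref{lemma2.1} to $w=\bar u-u$ on $\Omega_{t_1}$, and deriving a contradiction from the free-boundary ODE at $t_1$ using $J(0)>0$---is the correct and expected argument.

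There is, however, a genuine gap in the opening perturbation step, which you yourself correctly flag as essential. Replacing $\bar u$ by $(1+\epsilon)\bar u$ \emph{does} preserve the PDE inequality (by $\mathbf{(f_{KPP})}$: $f((1+\epsilon)\bar u)\le (1+\epsilon)f(\bar u)$), but it does \emph{not} preserve the free-boundary inequality: the right-hand side
\[
-\mu\int_{\bar g(t)}^{h(t)}\int_{-\infty}^{\bar g(t)}J(y-x)\,(1+\epsilon)\bar u(t,x)\,dy\,dx
\]
is \emph{more negative} than $-\mu\int\int J\bar u$, so $\bar g'\le -\mu\int\int J\bar u$ does not imply the corresponding inequality for $(1+\epsilon)\bar u$; you would need slack in the hypotheses that the lemma does not supply. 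Similarly, ``enlarging the domain'' by replacing $\bar g$ with $\bar g-\delta$ and extending $\bar u$ by zero on $(\bar g-\delta,\bar g)$ breaks the PDE inequality there: for such $x$ the left side is $\partial_t(0)=0$ while the right side $d\int_{\bar g-\delta}^{h}J(x-y)\bar u(t,y)\,dy>0$ in general. So neither half of the proposed perturbation works as stated.

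A workable alternative is to handle the non-strict initial data directly by a sub-case analysis. Set $t_1:=\inf\{t\in(0,T]:\,g(t)<\bar g(t)\}$ and note $\bar g\le g$ on $[0,t_1]$ with $\bar g(t_1)=g(t_1)$. If some $t_0\in[0,t_1)$ has $\bar g(t_0)<g(t_0)$ strictly, your argument applies verbatim from $t_0$ onward (since $g(t_0)$ is then interior to $\bar u$'s domain, giving $w(t_0,g(t_0))=\bar u(t_0,g(t_0))>0$). If instead $\bar g\equiv g$ on $[0,t_1]$, the contradiction at $t_1$ reads $0\le \bar g'(t_1)-g'(t_1)\le -\mu\int_g^h\int_{-\infty}^g J\,w$, which forces $w(t_1,\cdot)=0$ on a neighbourhood of $g(t_1)$; combined with Theorem \ref{lemma2.1}(ii) (if $w(0,\cdot)\not\equiv0$) this contradicts $w\ge0,\,\not\equiv0$, while if $w(0,\cdot)\equiv0$ one needs to argue further that $w\equiv0$ on $[0,t_1]$ and restart. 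So the non-strict endpoint case is genuinely delicate; either the lemma should be read as implicitly assuming strict initial data (which is how it is used later in the paper, cf.\ the proof of Lemma \ref{l4.2}), or a carefully justified perturbation or sub-case argument must be supplied, and your proposed perturbation, as written, does not do the job.
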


\begin{lemma}[Comparison principle 3]\label{cp4}
  Assume that conditions ${\bf (J)}$ and ${\bf (f)}$ hold, $u_0$ satisfies \eqref{102} and $(u,g,h)$ is the unique positive solution of problem \eqref{e1}. For $T\in(0,+\infty)$, suppose that $\underline{g},\underline{h}\in C([0,T])$, {\color{red}$g(t)\leq \underline{g}(t)<\underline h(t)$}, $\underline{u}(t,x)$ and $\underline{u}_t(t,x)$ are continuous for $t\in[0,T], \ x\in[\underline{g}(t),\underline{h}(t)]$ and satisfy
\begin{equation*}
\left\{
  \begin{array}{ll}
\displaystyle    \underline{u}_t\leq d\int_{\underline{g}(t)}^{\underline{h}(t)}J(x-y)\underline{u}(t,y)\,dy-d\underline{u}+f(\underline{u}),\ \underline u\geq 0, & 0<t\leq T,\ x\in(\underline{g}(t),\underline{h}(t)), \\
    \underline{u}(t,\underline{g}(t))=\underline{u}(t,\underline{h}(t))= 0, & 0<t\leq T, \\
 \displaystyle   \underline{h}'(t)\leq \mu\int_{\underline{g}(t)}^{\underline{h}(t)}\int_{\underline h(t)}^{+\infty}J(y-x)\underline{u}(t,x)\,dy dx, & 0<t\leq T, \\
    u(0,x)\geq \underline{u}(0,x), \ h(0)\geq \underline{h}(0), & x\in[\underline{g}(0),\underline{h}(0)].
  \end{array}
\right.
\end{equation*}
Then
$$u(t,x)\geq \underline{u}(t,x)\,\text{and}\,\, h(t)\geq \underline{h}(t)\,\,\text{for}\,\, 0<t\leq T\,\text{and}\,\,x\in[\underline{g}(t),\underline{h}(t)].$$
\end{lemma}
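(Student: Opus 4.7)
\medskip

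\textbf{Proof proposal.} The plan is to argue by contradiction and mirror closely the structure of the proof of Theorem \ref{thm-CP}, but with the roles of the ``upper'' and ``lower'' triples reversed and with the left free boundary inactive (only the constraint $g(t)\le \underline g(t)$ is imposed). First I would suppose $h(t^*)<\underline h(t^*)$ for some $t^*\in(0,T]$ and set
\[
t_1:=\inf\{t\in[0,T]:\ h(t)<\underline h(t)\}\in[0,t^*).
\]
By continuity $h(t_1)=\underline h(t_1)$, and $h(t)\ge \underline h(t)$ on $[0,t_1]$.

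Next, on $\Omega_{t_1}:=\{(t,x):0<t\le t_1,\ \underline g(t)<x<\underline h(t)\}$, which is contained in $\{(t,x):g(t)<x<h(t)\}$ by the hypothesis $g\le \underline g$ and the bound $\underline h\le h$ on $[0,t_1]$, I would set $w:=u-\underline u$. Subtracting the two differential inequalities and writing $f(u)-f(\underline u)=C(t,x)w$ with $C\in L^\infty$, one obtains
\[
w_t\ge d\int_{\underline g(t)}^{\underline h(t)}J(x-y)w(t,y)\,dy-dw+C(t,x)w+R(t,x),
\]
where $R(t,x):=d\int_{[g(t),h(t)]\setminus[\underline g(t),\underline h(t)]}J(x-y)u(t,y)\,dy\ge 0$. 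Since $u(t,\underline g(t))\ge 0=\underline u(t,\underline g(t))$, $u(t,\underline h(t))\ge 0=\underline u(t,\underline h(t))$, and $w(0,\cdot)\ge 0$, the maximum principle Theorem \ref{lemma2.1}(i) yields $w\ge 0$ on $\overline{\Omega_{t_1}}$. Assuming $w\not\equiv 0$ initially (the degenerate case $w\equiv 0$ on $[\underline g(0),\underline h(0)]$ is trivial), the strong version Theorem \ref{lemma2.1}(ii) upgrades this to $w(t_1,x)>0$ for $x\in(\underline g(t_1),\underline h(t_1))$.

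The contradiction comes from comparing $h'(t_1)$ and $\underline h'(t_1)$. From the definition of $t_1$ we have $h'(t_1)\le \underline h'(t_1)$ in a one-sided sense. On the other hand, substituting $h(t_1)=\underline h(t_1)$ into the integral formulas gives
\begin{align*}
h'(t_1)-\underline h'(t_1)
&\ge \mu\int_{\underline g(t_1)}^{\underline h(t_1)}\int_{h(t_1)}^{+\infty}J(y-x)\,w(t_1,x)\,dy\,dx\\
&\quad +\mu\int_{g(t_1)}^{\underline g(t_1)}\int_{h(t_1)}^{+\infty}J(y-x)\,u(t_1,x)\,dy\,dx,
\end{align*}
both integrals nonnegative. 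Using $J(0)>0$, continuity of $J$, and the strict positivity $w(t_1,x)>0$ for $x$ close to (but not equal to) $\underline h(t_1)=h(t_1)$, the first double integral is strictly positive, producing the contradiction $h'(t_1)>\underline h'(t_1)$.

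With $h\ge \underline h$ on $[0,T]$ in hand, a second application of Theorem \ref{lemma2.1} on the full region $\{0\le t\le T,\ \underline g(t)\le x\le \underline h(t)\}$ yields $u\ge \underline u$ there.

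\medskip

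\textbf{Main obstacle.} The delicate step is the boundary case $t_1=0$, when $h(0)=\underline h(0)$ is allowed and the above time-slicing degenerates. My remedy would be a perturbation: for small $\epsilon>0$ replace $\underline h$ by $\underline h_\epsilon:=\underline h-\epsilon$ (and shrink the $x$-domain of $\underline u$ correspondingly by reflecting near the right boundary so the inequalities and the condition $\underline u(t,\underline h_\epsilon(t))=0$ are preserved up to $O(\epsilon)$ errors absorbable into the Lipschitz constant of $f$), apply the argument above to get $h>\underline h_\epsilon$, then let $\epsilon\to 0$. A secondary technical point is justifying $h'(t_1)\le \underline h'(t_1)$ rigorously when only $C^1$ regularity of $h,\underline h$ is assumed; this is routine from the one-sided limit since $h-\underline h$ vanishes at $t_1$ and is nonpositive immediately to the right.
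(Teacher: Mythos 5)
Your argument follows the same route as the paper's proof of Theorem~\ref{thm-CP}: define the first crossing time $t_1$ of $h$ and $\underline h$, compare $w=u-\underline u$ on $\Omega_{t_1}$ via Theorem~\ref{lemma2.1}, then contradict $h'(t_1)\le \underline h'(t_1)$ using the strict positivity $J(0)>0$ in the free boundary formula. The decomposition $d\int_{g}^{h}Ju=d\int_{\underline g}^{\underline h}Ju+R$ with $R\ge 0$, and the observation that $[\underline g(t),\underline h(t)]\subset[g(t),h(t)]$ on $[0,t_1]$, are exactly the right moves. Both the one-sided derivative inequality $h'(t_1)\le\underline h'(t_1)$ and the concluding application of the maximum principle on the full domain are correct, and match what the paper (which leaves the proof implicit) clearly intends.

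The weak point, which you correctly flag, is the degenerate configuration $t_1=0$ (so $h(0)=\underline h(0)$). But your two proposed fixes are not adequate. First, dismissing the case $w(0,\cdot)\equiv 0$ as ``trivial'' is not justified: when $\underline g(0)=-h_0$, $\underline h(0)=h_0$, $\underline u(0,\cdot)=u_0$, the strong maximum principle Theorem~\ref{lemma2.1}(ii) gives nothing because its hypothesis $\phi(0,\cdot)\not\equiv 0$ fails, so you do not get $w(t_1,\cdot)>0$ and the strict inequality $h'(t_1)>\underline h'(t_1)$ is lost; the contradiction collapses to the equality $h'(t_1)=\underline h'(t_1)$. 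Second, the perturbation $\underline h_\epsilon:=\underline h-\epsilon$ breaks the boundary requirement $\underline u(t,\underline h_\epsilon(t))=0$ (one has $\underline u(t,\underline h(t)-\epsilon)>0$ in general), and ``reflecting near the right boundary'' does not repair this — a reflection produces a function that is nonzero, not vanishing, at the shifted boundary, and any cutoff making $\underline u$ vanish there will in general destroy the differential inequality, not merely perturb it by an $O(\epsilon)$ term absorbable into the Lipschitz constant. A sound route in the degenerate case is instead to observe that $w(t_1,\cdot)\equiv 0$ forces, via the source term $R$ and the strong maximum principle applied backward in time, that $u\equiv\underline u$, $g\equiv\underline g$, $h\equiv\underline h$ on $[0,t_1]$; one then argues that near $t_1$ the inequality $\underline h'\le \mu\iint J\underline u$, together with the still-valid comparison of $u$ and $\underline u$ on the common interval just past $t_1$, rules out $\underline h$ overtaking $h$, or alternatively one perturbs the \emph{actual} solution (e.g.\ via a strictly larger initial interval or initial datum, invoking Theorem~\ref{thm-CP} to get strict domination) rather than deforming the lower solution.
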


 {\bf Remark:} In the above lemmas  the assumption $u_t$ being continuous can be relaxed.
If, for each $(t,x)$, both the one-sided partial derivatives $u_t(t+0,x)$ and $u_t(t-0,x)$ exist, and the differential inequalities hold when $u_t$ is replaced by both one-sided partial derivatives, then the conclusions remain valid (see Remark 2.4 in \cite{dn22} for symmetric kernels, but the observation there still holds when the symmetry requirement for the kernel functions is dropped).

\subsection{Bounds from above}

\begin{lemma}\label{l4.2} Suppose that ${\bf(J)}$ and ${\bf(f_{KPP})}$ are satisfied, and \eqref{weak} holds. Let $(u, g,h)$ be the unique solution of \eqref{e1}.
If ${\bf (J_1^+)}$ is satisfied, then $\limsup\limits_{t\to\infty}\frac{h(t)}{t}\leq c_0$. If ${\bf (J_1^-)}$ is satisfied, then $\limsup\limits_{t\to\infty}\frac{-g(t)}{t}\leq \tilde c_0$.
\end{lemma}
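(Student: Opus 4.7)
The plan is to bound $h(t)$ from above by constructing an upper solution of \eqref{e1} from the rightward semi-wave $(c_0,\phi^{c_0})$ provided by Theorem \ref{th1.2}, and applying the right-hand analogue of Lemma \ref{cp3} (obtainable by the same technique as Theorem \ref{thm-CP} with the roles of the two fronts interchanged).

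First, I would use the ODE bound $u(t,x)\le \hat u(t)$, where $\hat u'=f(\hat u)$, $\hat u(0)=\|u_0\|_\infty$; since $\hat u(t)\to 1$ by ${\bf (f_{KPP})}$, for any small $\epsilon>0$ there is $T_0=T_0(\epsilon)$ such that $u(T_0,x)\le 1+\epsilon$ on $[g(T_0),h(T_0)]$. Next, fix a speed $c=(1+2\epsilon)c_0$ (with $\epsilon$ small enough so that $c<c_*^+$) and a multiplier $K=1+2\epsilon$. Writing $M(c):=\mu\int_{-\infty}^{0}\int_{0}^{+\infty}J(y-x)\phi^c(x)\,dy\,dx$, Theorem \ref{lemma2.12} gives the strict monotonicity of $\phi^c$ in $c$, hence $M$ is strictly decreasing in $c$; combined with $M(c_0)=c_0$ from \eqref{e3}, this yields $M(c)<c_0$, so $KM(c)\le Kc_0=c$. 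Choose $L>h(T_0)$ large enough that $K\phi^c(x-L)\ge 1+\epsilon$ on $[g(T_0),h(T_0)]$; this is possible because $\phi^c(-\infty)=1$ and $K>1$. Set
\[
\bar h(t):=c(t-T_0)+L,\qquad \bar u(t,x):=K\phi^c(x-\bar h(t))\ \text{for}\ t\ge T_0,\ x\le\bar h(t).
\]

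The verification that $(\bar u,g,\bar h)$ is an upper solution on $t\ge T_0$ has three ingredients. Using \eqref{e2} one computes
\[
\bar u_t - d\int_{-\infty}^{\bar h(t)}J(x-y)\bar u(t,y)\,dy + d\bar u - f(\bar u) = Kf(\phi^c) - f(K\phi^c)\ge 0,
\]
by the KPP property $f(Ku)\le Kf(u)$ for $K\ge 1$; restricting the integral from $(-\infty,\bar h(t))$ to $(g(t),\bar h(t))$ only increases the left-hand side since $\bar u\ge 0$, so the required differential inequality holds on $(g(t),\bar h(t))$. From \eqref{e3} and the change of variables $w=y-\bar h(t)$, $z=x-\bar h(t)$,
\[
\mu\int_{g(t)}^{\bar h(t)}\!\int_{\bar h(t)}^{+\infty}\!\!J(y-x)\bar u(t,x)\,dy\,dx\le KM(c)\le c=\bar h'(t).
\]
The boundary values $\bar u(t,\bar h(t))=K\phi^c(0)=0$ and $\bar u(t,g(t))\ge 0$ are immediate, and by the choice of $L$, $\bar u(T_0,\cdot)\ge u(T_0,\cdot)$ on $[g(T_0),h(T_0)]$.

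The right-hand analogue of Lemma \ref{cp3} then gives $h(t)\le \bar h(t)=c(t-T_0)+L$ for all $t\ge T_0$, so $\limsup_{t\to\infty}h(t)/t\le c=(1+2\epsilon)c_0$; letting $\epsilon\to 0^+$ yields $\limsup h(t)/t\le c_0$. The argument for $-g(t)$ is parallel, replacing the rightward semi-wave by the leftward semi-wave $(\tilde c_0,\psi^{\tilde c_0})$ from Theorem \ref{th1.2}. The principal obstacle is that one needs a multiplier $K>1$ to dominate initial data whose $L^\infty$-norm may exceed $1$, which in turn forces the upper-solution speed to exceed $c_0$; this is resolved by the strict monotonicity of $M$ at $c_0$ and the $\epsilon\to 0$ limit.
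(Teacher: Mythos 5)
Your argument is correct and follows essentially the same strategy as the paper: bound $u$ initially by an ODE comparison, build an upper solution from a semi-wave multiplied by a constant slightly bigger than $1$, and conclude via the one-sided comparison principle. The only notable variation is the mechanism for creating slack in the free-boundary inequality: the paper keeps the semi-wave $\psi^{\tilde c_0}$ fixed and translates it at the slightly faster speed $\tilde c_0+\delta$, so the needed strict inequality comes directly from $(1+\epsilon)\tilde c_0<\tilde c_0+\delta$, whereas you use the semi-wave $\phi^c$ at the faster speed $c=(1+2\epsilon)c_0$ and translate at speed $c$, invoking the strict monotonicity of $c\mapsto\phi^c$ from Theorem \ref{lemma2.12} to get $M(c)<c_0$. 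Both are valid; the paper's version is marginally more economical since it needs nothing about semi-waves at speeds other than $c_0$ (respectively $\tilde c_0$).
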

\begin{proof}

Since the proofs for the estimates of $h(t)/t$ and $g(t)/t$ are similar, we only present  the proof for $g(t)/t$.

For any $\epsilon>0$, define $\delta:=2\epsilon \tilde c_0$ and
\begin{align*}
&\bar{g}(t):=-(\tilde c_0+\delta)t-L,\quad t\geq 0,\\
&\bar{u}(t,x):=(1+\epsilon)\psi(x-\bar{g}(t)),\quad x\in [\bar g,+\infty),\ t\geq 0
\end{align*}
 where  $(\tilde c_0,\psi)$ satisfies \eqref{e4}-\eqref{e5}, and  $L>0$ is a constant to be determined.

A simple comparison to the ODE problem $v'=f(v)$ with $v(0)=\|u_0\|_{\infty}$
shows that $u(t,x)\leq v(t)$ and hence
$\limsup_{t\to\infty}u(t,x)\leq 1$ uniformly in $x\in [g(t),h(t)]$. Hence there exists $T>0$ large such that
\[
\mbox{$u(T+t,x)\leq 1+{\epsilon}/{2}$ for $t\geq 0$ and $x\in [g(T+t),h(T+t)].$}
\]
Since $\psi(\infty)=1$, we can choose $L>0$ large such that $-\bar{g}(0)=L>-2g(T)$ and $\psi(\frac{L}{2})>\frac{1+\frac{\epsilon}{2}}{1+\epsilon}$.
Hence
\begin{align*}
\bar{u}(0,x)=(1+\epsilon)\psi(x+L)\geq (1+\epsilon)\psi(\frac{L}{2})>1+\frac{\epsilon}{2}\geq u(T,x)\ \mbox{ for } x\in [g(T),h(T)].
\end{align*}

Moreover, for $t\geq 0$  we have
\begin{align*}
\mu\int_{\bar{g}(t)}^{h(t+T)}\int_{-\infty}^{\bar{g}(t)}J(y-x)\bar{u}(t,x)\,dydx &\leq \mu\int_{\bar{g}(t)}^{+\infty}\int_{-\infty}^{\bar{g}(t)}J(y-x)\bar{u}(t,x)\,dydx\\
&= \mu(1+\epsilon)\int_{0}^{+\infty}\int_{-\infty}^{0}J(y-x)\psi(x)\,dydx\\
&= (1+\epsilon)\tilde c_0<\tilde c_0+\delta=-\bar{g}'(t).
 \end{align*}
 Using the equation satisfied by $\psi$, we deduce, for $t\geq 0$ and $x\in [\bar{g}(t),h(t+T)]$,
\begin{align*}
 \bar{u}_t &=(1+\epsilon)(\tilde c_0+\delta)\psi'(x-\bar{g}(t))>(1+\epsilon)\tilde c_0\psi'(x-\bar{g}(t))\\
&=(1+\epsilon)\left[d\int_{0}^{+\infty}J(x-\bar{g}(t)-y)\psi(y)\,dy-d\psi(x-\bar{g}(t))+f(\psi(x-\bar{g}(t)))\right]\\
&=d\int_{\bar{g}(t)}^{+\infty}J(x-y)\bar{u}(t,y)\,dy-d\bar{u}(t,x)+(1+\epsilon)f(\psi(x-\bar{g}(t)))\\
&\geq d\int_{\bar{g}(t)}^{h(t+T)}J(x-y)\bar{u}(t,y)\,dy-d\bar{u}(t,x)+f(\bar{u}(t,x)),
 \end{align*}
where the last inequality follows from ${\bf (f_{KPP})}$.

We may now use the comparison principle (Lemma \ref{cp3}) to conclude that $\bar{g}(t)\leq g(t+T)$ and $u(t+T,x)\leq\bar{u}(t,x)$ for $t>0$, $x\in [g(t+T),h(t+T)]$. Hence
\begin{align*}
\limsup\limits_{t\to\infty}\frac{-g(t)}{t}\leq \limsup\limits_{t\to\infty}\frac{-\bar{g}(t-T)}{t}=\tilde c_0+\delta=\tilde c_0(1+2\epsilon).
\end{align*}
 Letting $\epsilon\to 0$, we obtain the desired conclusion.
\end{proof}

\subsection{Bounds from below for compact kernels}

We first treat the case of compactly supported kernels. For the general case we will use compactly supported kernels to approximate a general kernel.

\begin{lemma}\label{l4.3}
Assume that ${\bf (f_{KPP})}$ holds, $J$ satisfies ${\bf (J)}$ and has compact support, and so ${\bf (J_1^+)}$ and ${\bf (J_1^-)}$ are satisfied automatically; then $\liminf\limits_{t\to\infty}\frac{-g(t)}{t}\geq \tilde c_0$, $\liminf\limits_{t\to\infty}\frac{h(t)}{t}\geq c_0$.
\end{lemma}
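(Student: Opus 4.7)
The plan is to construct a lower solution $(\underline u,\underline g,\underline h)$ of \eqref{e1} whose right front $\underline h(t)$ advances at a speed arbitrarily close to $c_0$, then invoke Comparison Principle~3 (Lemma~\ref{cp4}) to conclude $h(t)\ge \underline h(t)$. The estimate on $-g(t)$ will come from the symmetric construction using $\psi^{\tilde c}$ and Lemma~\ref{cp3}. Since spreading is assumed, Theorem~\ref{thm4.3} gives $u(t,x)\to 1$ locally uniformly in $\mathbb R$ as $t\to\infty$, providing the initialization data.

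Given small $\epsilon\in(0,1)$, I would set $c:=(1-\epsilon)c_0$. By the strict decrease of $\phi^c$ in $c$ (Theorem~\ref{lemma2.12}) together with the identity $c_0=\mu\int_{-\infty}^0\!\int_0^\infty J(y-x)\phi^{c_0}(x)\,dy\,dx$ from \eqref{e3}, we have $\phi^c>\phi^{c_0}$ on $(-\infty,0)$, and therefore
\[
(1-\epsilon)\mu\int_{-\infty}^0\!\int_0^\infty J(y-x)\phi^c(x)\,dy\,dx \;>\; (1-\epsilon)c_0 \;=\; c.
\]
Picking $R>0$ with $\operatorname{supp} J\subset[-R,R]$ and $L\gg R$, I exploit spreading to fix $T$ so large that $u(T,\cdot)\ge 1-\epsilon/2$ on $[-M,M]$ with $M\gg L$, $h(T)>M$ and $g(T)<-M$. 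The lower solution is then defined for $t\ge T$ by
\[
\underline h(t):=c(t-T)+h_{*},\quad \underline g(t):=\underline h(t)-L,\quad \underline u(t,x):=(1-\epsilon)\phi^c(x-\underline h(t))\,\chi(x-\underline g(t)),
\]
with $h_*$ chosen so that $\underline h(T)\le h(T)$ and $\underline g(T)\ge g(T)$, and $\chi\colon\mathbb R\to[0,1]$ a smooth cut-off with $\chi\equiv 0$ on $(-\infty,0]$ and $\chi\equiv 1$ on $[2R,\infty)$. By construction $\underline u$ vanishes at both moving boundaries and $\underline g(t)\ge g(t)$ for $t\ge T$, since $g$ is decreasing.

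Three items then need checking. On the ``pure'' region $\{x\ge \underline g(t)+3R\}$, the compact support of $J$ makes the convolution $\int J(x-y)\underline u\,dy$ depend only on values of $\underline u$ with $\chi\equiv 1$, so the PDE inequality reduces, via the semi-wave identity for $\phi^c$, to the sub-homogeneity bound $(1-\epsilon)f(\phi^c)\le f((1-\epsilon)\phi^c)$ supplied by $\mathbf{(f_{KPP})}$. The boundary-speed inequality $\underline h'(t)=c\le\mu\int_{\underline g}^{\underline h}\!\int_{\underline h}^\infty J(y-x)\underline u\,dx\,dy$ is precisely the strict inequality from Step~2, since the relevant $x$-integration is effectively over $[\underline h-R,\underline h]$, a subset of the pure region. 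The initial ordering $u(T,\cdot)\ge\underline u(T,\cdot)$ on $[\underline g(T),\underline h(T)]\subset[-M,M]$ is immediate from $\underline u(T,\cdot)\le 1-\epsilon<1-\epsilon/2\le u(T,\cdot)$.

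The main obstacle will be verifying the PDE inequality in the cut-off strip near $\underline g(t)$, where the transport term $\underline u_t$ acquires an extra contribution $-(1-\epsilon)c\,\phi^c\chi'$ that must be absorbed by the reaction-convolution side. My argument will use that for $L$ large the factor $\phi^c(x-\underline h)$ is uniformly close to $1$ throughout this strip, while the convolution pulls in values from the pure region just to the right where $\underline u\approx 1-\epsilon$; choosing $\chi$ with $|\chi'|$ small (spreading the transition over a wide interval, permissible since $L$ is free to be taken large) will render the extra transport term negligible against the dominant positive contribution $d\int J\underline u$. Once these three inequalities are all in hand, Lemma~\ref{cp4} yields $h(t)\ge\underline h(t)=c(t-T)+h_{*}$, so $\liminf_{t\to\infty} h(t)/t\ge c=(1-\epsilon)c_0$; sending $\epsilon\to 0$ finishes the first claim, and the symmetric argument using $\psi^{\tilde c}$ and Lemma~\ref{cp3} finishes the second.
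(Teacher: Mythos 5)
Your approach differs genuinely from the paper's. The paper takes $\underline u(t,x):=(1-\epsilon)[\phi(x-\underline h(t))+\psi(x-\underline g(t))-1]$, with both fronts moving at speeds $(1-2\epsilon)c_0$ and $-(1-2\epsilon)\tilde c_0$, and applies the two-sided comparison principle once; you instead fix one front near the other (at distance $L$), use a single semi-wave $\phi^c$ with $c=(1-\epsilon)c_0$ multiplied by a cut-off $\chi$, and invoke the one-sided Lemma~\ref{cp4}, then repeat for the left front. Your front-speed inequality via the strict monotonicity $\phi^c>\phi^{c_0}$ (Theorem~\ref{lemma2.12}) is a nice observation and works; the initial comparison and the ``pure region'' verification are also sound. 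So the skeleton is a valid alternative to the paper's sum-of-semi-waves construction.

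However, the analysis of the cut-off strip is not correct as written, and this is where the genuine gap lies. First, the sign of the extra term is backwards: since $\underline g'(t)=c>0$, $\phi^c>0$, and $\chi'\ge 0$, the contribution $-(1-\epsilon)c\,\phi^c\chi'$ to $\underline u_t$ is \emph{nonpositive}, so it only \emph{decreases} $\underline u_t$ and helps the subsolution inequality $\underline u_t\le\text{RHS}$; nothing needs to be ``absorbed.'' Second, for $x$ within $R$ of $\underline g(t)$, the convolution with compactly-supported $J$ reaches only $y\in[x-R,x+R]\subset[\underline g(t),\underline g(t)+2R]$, i.e.\ entirely inside the transition strip — it does \emph{not} ``pull in values from the pure region where $\underline u\approx 1-\epsilon$.'' Third, the actual difficulty is on the right side of the inequality: one needs $d\!\int J\underline u-d\underline u+f(\underline u)\ge\underline u_t$, and $d\!\int J\underline u-d\underline u$ can be negative wherever $\chi$ is locally concave. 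No bounded $\chi$ rising from $0$ to $1$ can be globally convex, so near the upper plateau of the transition the convolution dips below $\underline u$ and one must use that $\underline u\approx 1-\epsilon<1$ there, hence $f(\underline u)\approx f(1-\epsilon)>0$, to absorb the deficit. Your sketch never invokes the reaction term $f$, and it also contradicts itself about the transition width (first $[0,2R]$, then ``a wide interval''). To make your route rigorous you should choose the transition of $\chi$ over a wide interval $[0,W]$ with $W\gg R$ (and $L\gg W$ so that $\phi^c\approx 1$ throughout the strip), split the strip into the convex part $\eta\in[0,W-R]$ where $\int J\underline u\ge\underline u$ comes for free, and the neighbourhood of $\eta=W$ where the deficit $d[\int J\underline u-\underline u]=O(R^2/W)$ is dominated by $f(1-\epsilon)$ once $W$ is large. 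The paper's construction avoids all of this because $\phi+\psi-1$ is exactly a semi-wave near each end and nearly $1$ in the middle, with no ad hoc gluing.
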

\begin{proof}
We follow the approach used to prove Lemma 3.2 of \cite{du21}.  Let $(c_0,\phi)$ and $(\tilde c_0,\psi)$ be the unique solution pair for \eqref{e2}-\eqref{e3} and \eqref{e4}-\eqref{e5}, respectively. Since $f'(1)<0$, there is a small $\delta_0>0$ such that $f'(u)<0$ for $u\in [1-\delta_0,1]$.
For $\epsilon\in (0,\delta_0]$, define
\begin{align*}
&\underline{h}(t):=(1-2\epsilon)c_0t+L,\ \underline{g}(t):=-(1-2\epsilon)\tilde c_0t-L,\\
&\underline{u}(t,x):=(1-\epsilon)\left[\phi (x-\underline{h}(t))+\psi(x-\underline{g}(t))-1\right].
\end{align*}


We claim that
$$\underline{h}'(t)\leq\mu\int_{\underline{g}(t)}^{\underline{h}(t)}\int_{\underline{h}(t)}^{+\infty}J(y-x)\underline{u}(t,x)\,dydx\,\,\text{for}\, t>0.$$
In fact, by  \eqref{e3}, we have
\begin{align*}
&\mu\int_{\underline{g}(t)}^{\underline{h}(t)}\int_{\underline{h}(t)}^{+\infty}J(y-x)\underline{u}(t,x)\,dydx\\
=&\  \mu(1-\epsilon)\int_{\underline{g}(t)-\underline{h}(t)}^{0}\int_{0}^{+\infty}J(y-x)\left[\phi (x)+\psi(x+\underline{h}(t)-\underline{g}(t))-1\right]\,dydx\\
=&\ (1-\epsilon)c_0-\mu(1-\epsilon)\int_{-\infty}^{\underline{g}(t)-\underline{h}(t)}\int_{0}^{+\infty}J(y-x)\phi (x)\,dydx\\
&-\mu(1-\epsilon)\int_{\underline{g}(t)-\underline{h}(t)}^{0}\int_{0}^{+\infty}J(y-x)\left[1-\psi(x+\underline{h}(t)-\underline{g}(t))\right]\,dydx.
 \end{align*}
 By ${\bf (J_1^+)}$, for all large $L>0$, we have
\begin{align*}
0\leq &\ \mu(1-\epsilon)\int_{-\infty}^{\underline{g}(t)-\underline{h}(t)}\int_{0}^{+\infty}J(y-x)\phi (x)\,dydx\\
\leq &\ \mu(1-\epsilon)\int_{-\infty}^{-2L}\int_{0}^{+\infty}J(y-x)\,dydx\\
<&\ \frac{1}{4}\epsilon c_0.
 \end{align*}
Since $\psi(x)$ is increasing, $\psi(\infty)=1$ and ${\bf (J_1^+)}$ holds, we deduce for all large $L$ and all $t\geq 0$,
\begin{align*}
0\leq &\  \mu(1-\epsilon)\int_{\underline{g}(t)-\underline{h}(t)}^{0}\int_{0}^{+\infty}J(y-x)\left[1-\psi(x+\underline{h}(t)-\underline{g}(t))\right]\,dydx\\
\leq &\  \mu(1-\epsilon)\int_{\underline{g}(t)-\underline{h}(t)}^{\frac{1}{2}(\underline{g}(t)-\underline{h}(t))}\int_{0}^{+\infty}J(y-x)\,dydx\\
& + \mu(1-\epsilon)\int_{\frac{1}{2}(\underline{g}(t)-\underline{h}(t))}^{0}\int_{0}^{+\infty}J(y-x)\left[1-\psi(x+\underline{h}(t)-\underline{g}(t))\right]\,dydx\\
 <&\ \frac{1}{4}\epsilon c_0+\mu(1-\epsilon)[1-\psi(L)]\int_{\frac{1}{2}(\underline{g}(t)-\underline{h}(t))}^{0}\int_{0}^{+\infty}J(y-x)\,dydx\\
<&\  \frac{1}{2}\epsilon c_0.
\end{align*}
Therefore, for  all large $L>0$,
\[
\mu\int_{\underline{g}(t)}^{\underline{h}(t)}\int_{\underline{h}(t)}^{+\infty}J(y-x)\underline{u}(t,x)\,dydx\geq (1-\epsilon)c_0-\epsilon c_0=h'(t)
\mbox{ for } t>0.
\]

 Similarly, we can show, for all large $L>0$,
 \[
 \mbox{$\underline{g}'(t)\geq \displaystyle -\mu\int_{\underline{g}(t)}^{\underline{h}(t)}\int_{-\infty}^{\underline{g}(t)}J(y-x)\underline{u}(t,x)\,dy dx$ for $t>0$.}
 \]

In the following, we verify
$$\underline{u}_t\leq d\int_{\underline{g}(t)}^{\underline{h}(t)}J(x-y)\underline{u}(t,y)\,dy-d\underline{u}(t,x)+f(\underline{u})\,\,\text{for}\,\,  t>0,\ x\in(g(t),h(t)).$$
Let us  extend $f(u)$ by defining $f(u)=f'(0)u$ for $u<0$. Since $f(1)=0$ and $f'(u)<0$ for $u\in [1-\epsilon,1]$,  we can choose $\tilde{\epsilon}>0$ small enough such that
\begin{equation}\label{e4.1}
2(1-\epsilon)f(1-\frac{\tilde{\epsilon}}{2})<f(1-\epsilon)\,\,\text{and}\,\, f'(u)<0\,\,\text{for}\, u\in [(1-\epsilon)(1-\tilde{\epsilon}),1]
\end{equation}
Fix sufficiently large $M>0$ such that $\phi (-M)>1-\frac{\tilde{\epsilon}}{2}$ and $\psi(M)>1-\frac{\tilde{\epsilon}}{2}$;
then
\begin{equation}\label{e4.2}
 \phi (x-\underline{h}(t)),\,\psi(x-\underline{g}(t))\in (1-\frac{\tilde{\epsilon}}{2},1)\,\,\text{for}\, x\in [\underline{g}(t)+M,\underline{h}(t)-M].
\end{equation}
It follows from the properties of $\phi$ and $\psi$ that
$$\inf_{x\in [-M,0]}|\phi'(x)|\geq \epsilon_0>0\,\, \text{and}\,\, \inf_{x\in [0,M]}\psi'(x)\geq \epsilon_0>0,$$
and so
\begin{equation}\label{e4.3}\begin{cases}
\psi'(x-\underline{g}(t))\geq \epsilon_0\,\, &\text{for}\ x\in [\underline{g}(t), \underline{g}(t)+M];\\[2mm]
 \phi'(x-\underline{h}(t))\leq -\epsilon_0\,\,&\text{for}\ x\in [\underline{h}(t)-M,\underline{h}(t)].
\end{cases}
\end{equation}

By  \eqref{e2}, we have
\begin{align*}
\underline{u}_t=&-(1-\epsilon)(1-2\epsilon)c_0 \phi'(x-\underline{h}(t))+(1-\epsilon)(1-2\epsilon)\tilde c_0\psi'(x-\underline{g}(t))\\
=&\ (1-\epsilon)2\epsilon c_0\phi'(x-\underline{h}(t))-(1-\epsilon)2\epsilon \tilde c_0\psi'(x-\underline{g}(t))\\
&+(1-\epsilon)\left[d\int_{-\infty}^{0}J(x-\underline{h}(t)-y)\phi(y)\,dy-d\phi(x-\underline{h}(t))+f(\phi(x-\underline{h}(t)))\right]\\
&+(1-\epsilon)\left[d\int_{0}^{+\infty}J(x-\underline{g}(t)-y)\psi(y)\,dy-d\psi(x-\underline{g}(t))+f(\psi(x-\underline{g}(t)))\right]\\
=&\ (1-\epsilon)2\epsilon\left[ c_0 \phi '(x-\underline{h}(t))-\tilde c_0\psi'(x-\underline{g}(t))\right]\\
&+d\int_{\underline{g}(t)}^{\underline{h}(t)}J(x-y)\underline{u}(t,y)\,dy-d\underline{u}(t,x)\\
&+(1-\epsilon)d\left[\int_{-\infty}^{\underline{g}(t)}J(x-y)[\phi (y-\underline{h}(t))-1]\,dy+\int_{\underline{h}(t)}^{+\infty}J(x-y)[\psi(y-\underline{g}(t))-1]\,dy\right]\\
&+(1-\epsilon)\left[f(\phi(x-\underline{h}(t)))+f(\psi(x-\underline{g}(t)))\right]\\
\leq &\ (1-\epsilon)2\epsilon \left[c_0 \phi'(x-\underline{h}(t))-\tilde c_0 \psi'(x-\underline{g}(t))\right]\\
&+d\int_{\underline{g}(t)}^{\underline{h}(t)}J(x-y)\underline{u}(t,y)\,dy-d\underline{u}(t,x)+(1-\epsilon)\left[f(\phi (x-\underline{h}(t)))+f(\psi(x-\underline{g}(t)))\right]\\
=&\  d\int_{\underline{g}(t)}^{\underline{h}(t)}J(x-y)\underline{u}(t,y)\,dy-d\underline{u}(t,x)+f(\underline{u}(t,x))+\delta(t,x),
\end{align*}
where
\begin{align*}
\delta(t,x):=& (1-\epsilon)2\epsilon \left[ c_0 \phi '(x-\underline{h}(t))-\tilde c_0\psi'(x-\underline{g}(t))\right]\\
&+(1-\epsilon)\left[f(\phi(x-\underline{h}(t)))+f(\psi(x-\underline{g}(t)))\right]-f(\underline{u}(t,x)).
\end{align*}
To verify the desired inequality, it suffices to show that $\delta(t,x)\leq 0$ for $x\in [\underline{g}(t),\underline{h}(t)]$, $t\geq 0$.

Define
\[
\mbox{$M_0:=\max\limits_{u\in [0,1]}|f'(u)|$, $\hat{\epsilon}:= 2\epsilon \min\{c_0, \tilde c_0\}\frac{\epsilon_0}{2M_0}$.}
\]

For $x\in [\underline{h}(t)-M,\underline{h}(t)]$, choose large $L>0$ such that
$$0>\psi(x-\underline{g}(t))-1\geq \psi(\underline{h}(t)-\underline{g}(t)-M)-1\geq \psi(2L-M)-1\geq -\hat{\epsilon}.$$
Then
$$f(\underline{u}(t,x))\geq f((1-\epsilon)\phi(x-\underline{h}(t)))-M_0(1-\epsilon)\hat{\epsilon},$$
$$f(\psi(x-\underline{g}(t)))=f(\psi(x-\underline{g}(t)))-f(1)\leq M_0\hat{\epsilon}.$$
It now follows from \eqref{e4.3} and ${\bf (f_{KPP})}$ that
\begin{align*}
\delta(t,x)\leq & -(1-\epsilon)2\epsilon c_0\epsilon_0+(1-\epsilon)\left[f(\phi(x-\underline{h}(t)))+M_0\hat{\epsilon}\right]\\
&-f((1-\epsilon)\phi(x-\underline{h}(t)))+M_0(1-\epsilon)\hat{\epsilon}\\
\leq &-(1-\epsilon)2\epsilon c_0\epsilon_0+2M_0(1-\epsilon)\hat{\epsilon}\leq 0.
\end{align*}

For $x\in [\underline{g}(t),\underline{g}(t)+M]$, choose large $L>0$ such that
$$0>\phi(x-\underline{h}(t))-1\geq \phi(\underline{g}(t)-\underline{h}(t)+M)-1\geq \phi(-2L+M)-1\geq -\hat{\epsilon}.$$
Then
$$f(\underline{u}(t,x))\geq f((1-\epsilon)\psi(x-\underline{g}(t)))-M_0(1-\epsilon)\hat{\epsilon},$$
$$f(\phi(x-\underline{h}(t)))=f(\phi(x-\underline{h}(t)))-f(1)\leq M_0\hat{\epsilon}.$$
Therefore by \eqref{e4.3} and ${\bf (f_{KPP})}$ we have
\begin{align*}
\delta(t,x)\leq & -(1-\epsilon)2\epsilon \tilde c_0\epsilon_0+(1-\epsilon)\left[f(\psi(x-\underline{g}(t)))+M_0\hat{\epsilon}\right]\\
&-f((1-\epsilon)\psi(x-\underline{g}(t)))+M_0(1-\epsilon)\hat{\epsilon}\\
\leq &-(1-\epsilon)2\epsilon \tilde c_0\epsilon_0+2M_0(1-\epsilon)\hat{\epsilon}\leq 0.
\end{align*}

For $x\in [\underline{g}(t)+M,\underline{h}(t)-M]$ and $t\geq 0$, by \eqref{e4.2},
$$\underline{u}(t,x)\in [(1-\epsilon)(1-\tilde{\epsilon}),1-\epsilon].$$
Using \eqref{e4.1} and \eqref{e4.2}, we have, for such $x$ and $t\geq 0$,
\[
\delta(t,x)<(1-\epsilon)[f(1-\frac{\tilde{\epsilon}}{2})+f(1-\frac{\tilde{\epsilon}}{2})]-f(1-\epsilon)<0.
\]

Since spreading happens and $\lim\limits_{t\to\infty}u(t,x)=1$ locally uniformly in $x\in\mathbb{R}$, there exists $T>0$ large enough such that
$$g(T)<-L=-\underline{g}(0),\  h(T)>L=\underline{h}(0),\  u(T,x)>1-\epsilon>\underline{u}(0,x)\,\,\text{for}\, x\in [-L,L].$$
By the comparison principle (Theorem 2.3 in Lecture 2), we obtain
$$g(t+T)\leq \underline{g}(t), h(t+T)\geq \underline{h}(t), u(t+T,x)\geq\underline{u}(t,x)\,\,\text{for}\,t>0,\ x\in[\underline{g}(t),\underline{h}(t)].$$
Hence,
\begin{align*}
&\liminf_{t\to\infty}\frac{-g(t)}{t}\geq \lim_{t\to\infty}\frac{-\underline{g}(t-T)}{t}=(1-2\epsilon)\tilde c_0,\\
&\liminf_{t\to\infty}\frac{h(t)}{t}\geq \lim_{t\to\infty}\frac{\underline{h}(t-T)}{t}= (1-2\epsilon)c_0.
\end{align*}
Letting $\epsilon\to 0$, we obtain the desired conclusions.  The proof is complete.
\end{proof}

\subsection{Convergence of semi-wave speeds}
Let \( J \) satisfy condition \({\bf (J)}\).
Assume a sequence of continuous kernel functions with compact support, denoted by \(\{J_n\}\), satisfies, for every \(n \geq 1\) and \(x \in \mathbb{R}\),
		\begin{equation}\label{Jn}
		0\leq J_n(x) \leq J_{n+1}(x) \leq J(x), \ J_n(0)>0,\quad \lim_{n \to \infty} J_n = J \quad \text{in  } L^\infty_{loc}(\mathbb{R}).
		\end{equation}
Then for each $J_n$, we may consider the semi-wave problems  \eqref{e2}-\eqref{e3} and \eqref{e4}-\eqref{e5} with $J$ replaced by $J_n$.
We note that $J_n$ satisfies ${\bf (J)}$ except that we only have $0<\int_{\mathbb R} J_n(x)dx\leq 1$.

It is easy to show that  as $n\to\infty$,
\begin{align}\label{c*(n)}
\begin{cases}	\displaystyle	c_*^-(n): = \sup_{\nu<0} \frac{d\int_{\mathbb{R}} J_n(x)e^{\nu x}\,dx - d + f'(0)}{\nu}\to c_*^-, \\
		\displaystyle c_*^+(n): = \inf_{\nu>0} \frac{d\int_{\mathbb{R}} J_n(x)e^{\nu x}\,dx - d + f'(0)}{\nu} \to c_*^+.
		\end{cases}
	\end{align}
Therefore, when $c_*^+>0$ we have $c_*^+(n)>0$ for all large $n$. Moreover, checking the proof of Theorem 4.1 in Section 4, it is easily seen  that for every such $n$, \eqref{e2}-\eqref{e3}
with $J$ replaced by $J_n$ has a unique pair of semi-wave solution $(c_n, \phi_n^{c_n})$ with $c_n\in (0, c_*^+(n))$.

Similarly, when $c_*^-<0$ then for every large $n$, \eqref{e4}-\eqref{e5}
with $J$ replaced by $J_n$ has a unique pair of semi-wave solution $(\tilde c_n, \psi_n^{\tilde c_n})$ with $\tilde c_n\in (0, -c_*^-(n))$.
\medskip

We have the following result on $\{c_n\}$ and $\{\tilde c_n\}$.

\begin{lemma}\label{uniqu}
	Let \( J \)  and $\{J_n\}$ be given as above.
	
	{\rm (i)} If $c_*^+>0$ and $(c_n, \phi_n^{c_n})$ is the semi-wave solution of \eqref{e2}-\eqref{e3} with $J$ replaced by $J_n$, which exists for every large $n$, and let $(c_0, \phi^{c_0})$ be the unique semi-wave solution of \eqref{e2}-\eqref{e3} when ${\bf (J_1^+)}$ holds, then $c_n\leq c_{n+1}$ and
	\[
	\lim_{n\to\infty} c_n =\begin{cases} c_0 \mbox{ if ${\bf (J_1^+)}$ holds,}\\[2mm]
	\infty \mbox{ if ${\bf (J_1^+)}$ does not hold.}
	\end{cases}\]
	
	{\rm (ii)} Similarly, if $c_*^-<0$ and $(\tilde c_n, \psi_n^{\tilde c_n})$ is the semi-wave solution of \eqref{e4}-\eqref{e5} with $J$ replaced by $J_n$, which exists for every large $n$, and let $(\tilde c_0, \psi^{\tilde c_0})$ be the unique semi-wave solution of \eqref{e4}-\eqref{e5} when ${\bf (J_1^-)}$ holds, then $\tilde c_n\leq \tilde c_{n+1}$ and
	\[
	\lim_{n\to\infty} \tilde c_n =\begin{cases} \tilde c_0 \mbox{ if ${\bf (J_1^-)}$ holds,}\\[2mm]
	\infty \mbox{ if ${\bf (J_1^-)}$ does not hold.}
	\end{cases}\]
\end{lemma}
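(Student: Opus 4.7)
The plan is to prove (i); part (ii) follows from an identical argument applied to the leftward semi-wave problem, or equivalently from (i) applied to the reflected kernel $\check{J}(x) := J(-x)$ and the reflected approximating sequence. I will proceed in three steps.

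First, I would establish the monotonicity $c_n \leq c_{n+1}$ by a sub/supersolution comparison. Since $J_n \leq J_{n+1}$ and $\phi_n^{c_n} \geq 0$, the semi-wave $\phi_n^{c_n}$ satisfies
\begin{align*}
d\int_{-\infty}^0 J_{n+1}(x-y)\phi_n^{c_n}(y)\,dy - d\phi_n^{c_n}(x) + c_n (\phi_n^{c_n})'(x) + f(\phi_n^{c_n}(x)) \geq 0
\end{align*}
on $(-\infty,0)$, with $\phi_n^{c_n}(0)=0$ and $\phi_n^{c_n}(-\infty)=1$. Since $c_n < c_*^+(n) \leq c_*^+(n+1)$ by \eqref{c*(n)}, the genuine semi-wave $\phi_{n+1}^{c_n}$ exists; by the sliding technique used in Theorem \ref{lemma2.12} (take $\rho^* = \inf\{\rho \geq 1 : \rho\phi_{n+1}^{c_n} \geq \phi_n^{c_n}\}$, use the KPP property $f(\rho u) \leq \rho f(u)$ so that $\rho\phi_{n+1}^{c_n}$ is a supersolution, and apply Lemma \ref{max} to rule out $\rho^* > 1$), I obtain $\phi_{n+1}^{c_n} \geq \phi_n^{c_n}$ on $(-\infty,0]$. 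Integrating yields
\begin{align*}
M_{n+1}(c_n) := \mu\int_{-\infty}^0\!\!\int_0^\infty\! J_{n+1}(y-x)\phi_{n+1}^{c_n}(x)\,dy\,dx \geq \mu\int_{-\infty}^0\!\!\int_0^\infty\! J_n(y-x)\phi_n^{c_n}(x)\,dy\,dx = c_n,
\end{align*}
and since $M_{n+1}$ is continuous and strictly decreasing in $c$ with unique fixed point $c_{n+1}$ (established in the proof of Theorem \ref{thm2.12}), $c_{n+1} \geq c_n$. The same comparison applied with $J_n \leq J$ yields $c_n \leq c_0$ whenever $(J_1^+)$ holds.

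Second, I would extract a subsequential limit. Set $c_\infty := \lim_{n\to\infty} c_n \in [c_1,\infty]$. Each $\phi_n^{c_n}$ is nonincreasing on $(-\infty,0]$ with values in $[0,1]$ and equals $0$ at $x=0$, so Helly's selection theorem yields a subsequence converging pointwise to a nonincreasing $\phi^*:(-\infty,0]\to[0,1]$ with $\phi^*(0)=0$. Assuming $c_\infty < \infty$, the semi-wave equation and $c_n \geq c_1 > 0$ bound $(\phi_n^{c_n})'$ uniformly on compact subsets of $(-\infty,0)$, and dominated convergence (using $J_n \to J$ in $L^\infty_{\mathrm{loc}}$ and $\phi_n^{c_n} \leq 1$) lets me pass to the limit to obtain $\phi^* \in C^1((-\infty,0])$ solving \eqref{e2} at speed $c_\infty$ with $\phi^*(0)=0$. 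Repeating the argument in Step 3 of Lemma \ref{lemma2.2} forces $\phi^*(-\infty) \in \{0,1\}$.

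Finally, I would conclude in each case. Under $(J_1^+)$, I already have $c_\infty \leq c_0 < \infty$. If $\phi^*(-\infty) = 1$, uniqueness (Theorem \ref{lemma2.12}) gives $\phi^* = \phi^{c_\infty}$, and dominated convergence (dominated by $J$, integrable over $\{(x,y):x<0<y\}$ thanks to $(J_1^+)$) yields $c_\infty = M(c_\infty)$, whence $c_\infty = c_0$ by uniqueness of the fixed point of $M$. If $\phi^*(-\infty) = 0$, monotonicity forces $\phi^* \equiv 0$ and dominated convergence gives $c_n = M_n(c_n) \to 0$, contradicting $c_n \geq c_1 > 0$. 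If $(J_1^+)$ fails, I argue by contradiction: assuming $c_\infty < \infty$, the same dichotomy together with the contradiction above yields $\phi^*(-\infty) = 1$, and Fatou's lemma gives
\begin{align*}
c_\infty = \liminf_{n\to\infty} \mu\!\int_{-\infty}^0\!\!\int_0^\infty\! J_n(y-x)\phi_n^{c_n}(x)\,dy\,dx \geq \mu\!\int_{-\infty}^0\!\!\int_0^\infty\! J(y-x)\phi^*(x)\,dy\,dx;
\end{align*}
since $\phi^* \geq 1/2$ on some $(-\infty,-M]$, the right side dominates $\tfrac{1}{2}\int_M^\infty zJ(z)\,dz = \infty$ by failure of $(J_1^+)$, a contradiction. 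The main technical hurdle is identifying the limit $\phi^*$ as a genuine semi-wave (i.e., showing $\phi^*(-\infty)=1$ in the non-degenerate case) while only having $J_n \to J$ locally; the reason the argument goes through is that the sandwich $J_n \leq J$ gives uniform tail control on $\phi_n^{c_n}$ via the monotonicity-in-$c$ comparison of Step 1, letting me trap $\phi^*$ against a known semi-wave $\phi^{c}$ for any $c < c_\infty$ once $(J_1^+)$ is in force, or alternatively, rule out the degenerate alternative directly by the lower bound $c_n \geq c_1 > 0$ as above.
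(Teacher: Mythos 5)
The paper's own ``proof'' is just a citation of Lemma 4.2 and Proposition 4.1 of \cite{duni22sima}, so your self-contained argument is a genuine addition; its framework (the $J_n\le J_{n+1}$ sliding comparison to get $c_n\le c_{n+1}$, Helly together with the ODE form of the semi-wave equation to extract a limit $\phi^*$, and the fixed-point characterization $c=M_n(c)$) is presumably what that reference does. Your Step~1, Step~2, and the case ${\bf (J_1^+)}$~holds are sound.

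There is, however, a gap in the case ${\bf (J_1^+)}$ fails. You want to rule out $\phi^*(-\infty)=0$ (i.e. $\phi^*\equiv 0$) by invoking ``the contradiction above,'' namely that dominated convergence gives $c_n=M_n(c_n)\to 0$. But the only dominating function available for $\mu\int_{-\infty}^0\int_0^\infty J_n(y-x)\phi_n^{c_n}(x)\,dy\,dx$ is $J(y-x)$, and this is integrable over $\{x<0<y\}$ \emph{if and only if} ${\bf (J_1^+)}$ holds --- exactly the hypothesis you have dropped. Without ${\bf (J_1^+)}$ the tail of $J$ can collect mass from $x\to-\infty$ (where $\phi_n^{c_n}\approx 1$) and keep $c_n$ bounded away from $0$ even though $\phi_n^{c_n}\to 0$ on compact sets, so the degenerate alternative is not excluded by this route. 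The fix is the trapping you allude to, but taken in the opposite direction from what you wrote: one needs $c>c_\infty$, not $c<c_\infty$. Pick any $c\in(c_\infty, c_*^+)$ (nonempty: $c_*^+=\infty$ when ${\bf (J_1^+)}$ fails since then ${\bf (J_{thin}^+)}$ also fails, while $c_\infty\le c_0<c_*^+$ when ${\bf (J_1^+)}$ holds), then choose $N$ with $c<c_*^+(N)$. By the strict $c$-monotonicity of the $J_n$-semi-wave (Theorem \ref{lemma2.12} applied with kernel $J_n$) together with the $J_m\le J_n$ sliding comparison from your Step~1, for all large $n$ one has $\phi_n^{c_n}>\phi_n^c\ge \phi_N^c$. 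Passing to the limit, $\phi^*\ge\phi_N^c>0$ on $(-\infty,0)$, which forces $\phi^*(-\infty)=1$ in every case and lets your Fatou step close the contradiction.
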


\begin{proof}
	 These follow from \cite{duni22sima}, where only symmetric kernels are considered, but the proof of the conclusions used here does need the symmetry of the kernel functions. 
	 	
	For part (i), by Lemma 4.2 in \cite{duni22sima} we have $c_n\leq c_{n+1}$. The conclusion on $\lim_{n\to\infty} c_n$ follows from 
	 Proposition 4.1 in \cite{duni22sima}. The conclusions in part (ii) follow from part (i) by considering the kernel function $J(-x)$.
\end{proof}

\subsection{Completion of the proof of Theorem \ref{th2}}

	Choose a sequence of continuous kernel functions \( J_n(x) \) with compact support such that \eqref{Jn} holds.
		By \eqref{c*(n)} we know that $-\infty<c_*^-(n)<0<c_*^+(n)<\infty$ for all large $n$. By passing to a subsequence we may assume that this holds for all $n\geq 1$. We now consider \eqref{e1} with
	 \( J \) replaced by \( J_n \). If we define
	 \[
	 \tilde J_n:=J_n/\|J_n\|_{L^1(\mathbb R)},\  d_n:=d \|J_n\|_{L^1(\mathbb R)},\  f_n(u):=f(u)+(d_n-d)u, \mu_n:=\mu \|J_n\|_{L^1(\mathbb R)},
	 \]
	 then clearly \eqref{e1} with $J$ replaced by $J_n$ is the same as \eqref{e1} with $(J, f, d,\mu)$ replaced by $(\tilde J_n, f_n, d_n, \mu_n)$.
	 It is easily checked that $\tilde J_n$ satisfies ${\bf (J)}$ and $f_n$ satisfies ${\bf (f_{KPP})}$ except that $f(1)=0$ is replaced by  $f_n(1_n)=0$ for some unique $1_n$ close to 1 with $1_n\to 1$ as $n\to\infty$. Therefore for each $n\geq 1$ this new problem
	 has a unique solution \( (u_n, g_n, h_n) \). Since by assumption spreading happens to \eqref{e1}, it is easy to show that for all large $n$, spreading also happens to \( (u_n, g_n, h_n) \).
	
	 Analogously the corresponding semi-wave problems \eqref{e2}-\eqref{e3} and \eqref{e4}-\eqref{e5} with \( J \) replaced by \( J_n \) have unique solution pairs  \( (c_0^n, \phi^{c_0^n}) \) and \( (\tilde c_0^n, \psi^{\tilde c_0^n}) \), respectively. Moreover, we can apply Lemma \ref{l4.3} to conclude that
	 \begin{align*}
		\liminf_{t \to \infty} \frac{h_n(t)}{t} \geq c_0^n, \quad \liminf_{t \to \infty} \frac{-g_n(t)}{t} \geq \tilde c_0^n.
	\end{align*}
	Furthermore, by the comparison principle, we have $h(t) \geq h_{n+1}(t) \geq h_n(t)$,  $g(t) \leq g_{n+1}(t) \leq g_n(t)$ and
 \begin{equation}\label{ine}
 u_n(t,x)\leq u_{n+1}(t,x)\leq u(t,x)
 \end{equation}
 for all  $t > 0$ and   $n \geq 1$.	Hence, for every large \( n \),
	\begin{align*}
		\liminf_{t \to \infty} \frac{h(t)}{t} \geq c_0^n, \quad \liminf_{t \to \infty} \frac{-g(t)}{t} \geq \tilde c_0^n.
	\end{align*}
	 We may now apply Lemma \ref{uniqu} to  conclude that
	\begin{align*}
		\lim_{n \to \infty} c_0^n = \begin{cases} \infty \quad & \text{if } {\bf (J_1^+)} \text{ does not hold}, \\
		 c_0 \quad &\text{if } {\bf (J_1^+)} \text{ holds}.
		 \end{cases}
	\end{align*}
	It follows that
	\begin{align*}
		\liminf_{t \to \infty} \frac{h(t)}{t} \geq \begin{cases} \infty \quad & \text{if } {\bf (J_1^+)} \text{ does not hold}, \\
		 c_0 \quad &\text{if } {\bf (J_1^+)} \text{ holds}.
		 \end{cases}
	\end{align*}
	Combining this with  Lemma \ref{l4.2}, we obtain
	\begin{align*}
		\lim_{t \to \infty} \frac{h(t)}{t} = \begin{cases} \infty \quad & \text{if } {\bf (J_1^+)} \text{ does not hold}, \\
		 c_0 \quad &\text{if } {\bf (J_1^+)} \text{ holds}.
		 \end{cases}
	\end{align*}
	
	Similarly we can show
	\[
		\lim_{t \to \infty} \frac{g(t)}{t} = \begin{cases}-\tilde c_0 & \mbox{if ${\bf(J_1^-)}$ holds},\\
		-\infty &\mbox{if ${\bf(J_1^-)}$ does not hold}.\end{cases}
		\]

Finally we consider the limit of the density function $u(t,x)$ as $t\to\infty$. If both ${\bf (J_1^+)}$ and ${\bf (J_1^-)}$ hold, then by the definition of $\underline{u}$ in the proof of Lemma \ref{l4.3},
for any small $\epsilon>0$,
\[\mbox{$\liminf\limits_{t\to\infty}\min\limits_{(at,bt)}\underline{u}(t,x)\geq 1-\epsilon$, provided that $-\tilde c_0<a<b<c_0$.}
\]
Consequently, $\liminf\limits_{t\to\infty}\min\limits_{(at,bt)}u(t,x)\geq 1-\epsilon$.
Since $\epsilon$ can be arbitrarily small and $\limsup\limits_{t\to\infty}u(t,x)\leq 1$ uniformly in $x\in [g(t), h(t)]$, it follows that
$$\lim\limits_{t\to\infty}\max\limits_{(at,bt)}|u(t,x)-1|=0.$$

If neither ${\bf (J_1^+)}$ nor ${\bf (J_1^-)}$ holds, then we choose a sequence of compactly supported kernels $\{J_n\}$ as at the beginning of this proof,
so that the above conclusions for $(u,g,h)$ applies to the corresponding solution $(u_n, g_n, h_n)$ for each large $n$, and therefore for every small $\delta>0$
\[
 \lim\limits_{t\to\infty}\max\limits_{((-\tilde c_0^n+\delta)t,(c_0^n-\delta)t)}|u_n(t,x)-1_n|=0.
\]
Since $c_0^n\to\infty$, $\tilde c_0^n\to\infty$  as $n\to\infty$ and $u(t,x)\geq u_n(t,x)$ for $t>0,\ x\in [g_n(t), h_n(t)]$, it follows that
$$\liminf\limits_{t\to\infty}\min\limits_{(at,bt)}u(t,x)\geq \liminf\limits_{t\to\infty}\min\limits_{(at,bt)}u_n(t,x)=1_n$$
for any $a<b$.

Since  $1_n\to 1$ as $n\to\infty$ and $\limsup\limits_{t\to\infty}u(t,x)\leq 1$ uniformly in $x$, we thus obtain
$\lim\limits_{t\to\infty}\max\limits_{(at,bt)}|u(t,x)-1|=0$ for such $a$ and $b$.

The remaining cases can be similarly proved, and  we omit the details.
\qed

\medskip

{\bf Remarks:} In the symmetric case $J(x)=J(-x)$, Theorem \ref{th2} was first proved in \cite{du21}. These results have been extended to rather general cooperative systems in \cite{dn22}.

	\section{ Precise rate of acceleration}

We  determine the acceleration rate of the nonlocal free boundary problem \eqref{e1}.
 We  will prove the following result, which is taken from \cite{dn24}.

\begin{theorem}\label{thm1} Suppose that ${\bf(J)}$ and ${\bf(f_{KPP})}$ are satisfied, and {\color{red}$J$ is symmetric}: $J(x)=J(-x)$. Let $(u, g, h)$ be the unique solution of \eqref{e1}, and assume that spreading occurs.
Then the following conclusions hold:

\begin{itemize}\item[(i)]
  If 
\[
\lim_{|x|\to\infty}J(x)|x|^\alpha=\lambda\in (0,\infty)\ {\rm for\ some }\ \alpha\in (1,2],
\]
then
\[
\begin{cases}\dd\lim_{t\to\infty}\frac{h(t)}{t\ln t}=\lim_{t\to\infty}\frac{-g (t)}{t\ln t}=\mu\lambda ,& \mbox{ when } \alpha=2,\\[2.8mm]
\dd\lim_{t\to\infty}\frac{h(t)}{ t^{1/(\alpha-1)}}= \lim_{t\to\infty}\frac{-g (t)}{ t^{1/(\alpha-1)}}=\left[\frac{2^{2-\alpha}}{2-\alpha}\mu\lambda\right]^{1/(\alpha-1)}, & \mbox{ when }  \alpha\in (1,2), \end{cases}
\]
and for any small $\epsilon>0$,
\[
\lim_{t\to\infty}u(t,x)=1 \mbox{ uniformly for } x\in [(1-\epsilon)g(t), (1-\epsilon)h(t)].
\]

 \item[(ii)] If 
\[
\lim_{|x|\to\infty}J(x)|x| (\ln |x|)^\beta=\lambda\in (0,\infty)\ {\rm for\ some }\ \beta\in (1,\yy),
\]
then 
\begin{align*}
\dd  \lim_{t\to\yy}\frac{\ln h(t)}{t^{1/\beta}}= \lim_{t\to\yy}\frac{\ln [-g (t)]}{t^{1/\beta}}= \lf( \frac{2\beta\mu \lambda}{\beta-1}\rr)^{1/\beta},
\end{align*}
namely,
\[-g (t), h(t)=\exp\Big\{\Big[\Big(\frac{2\beta \mu\lambda }{\beta-1}\Big)^{1/\beta}+o(1)\Big]t^{1/\beta}\Big\} \mbox{ as } t\to\infty.
\]
Moreover, for any small $\epsilon>0$,
\[
\lim_{t\to\infty}u(t,x)=1 \mbox{ unifromly for } |x|\leq \exp\big[(1-\epsilon)\big(\frac{2\beta \mu\lambda }{\beta-1}\big)^{1/\beta}t^{1/\beta}\big]
\]
\end{itemize}
\end{theorem}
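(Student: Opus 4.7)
The plan is to prove matching upper and lower asymptotic bounds on $h(t)$ and $-g(t)$, together with the uniform interior convergence $u\to 1$. I proceed in three stages, with a bootstrap between them.

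\medskip

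\textbf{Stage 1 (upper bound).} Comparison with the ODE $\hat u'=f(\hat u)$ gives $u(t,x)\le 1+\epsilon$ for all large $t$ and $x\in[g(t),h(t)]$. Substituting $w=h(t)-x$ in the identity for $h'(t)$ and using $J(z)=J(-z)$, I get
\[
h'(t)\le \mu(1+\epsilon)\Phi(L(t)),\qquad -g'(t)\le \mu(1+\epsilon)\Phi(L(t)),
\]
where $L(t):=h(t)-g(t)$ and $\Phi(L):=\int_0^L\!\int_w^\infty J(z)\,dz\,dw$. An integration-by-parts identity $\Phi(L)=L\int_L^\infty J(z)\,dz+\int_0^L zJ(z)\,dz$ combined with the tail hypothesis yields $\Phi(L)\sim\lambda L^{2-\alpha}/[(\alpha-1)(2-\alpha)]$ for $\alpha\in(1,2)$, $\Phi(L)\sim\lambda\ln L$ for $\alpha=2$, and $\Phi(L)\sim \lambda L/[(\beta-1)(\ln L)^{\beta-1}]$ in case (ii). The sum inequality $L'(t)\le 2\mu(1+\epsilon)\Phi(L(t))$ is separable, so integration pins down $L(t)$ at leading order; integrating $h'(t)\le\mu(1+\epsilon)\Phi(L(t))$ directly then gives $h(t)\le L(t)/2+O(1)$ (since $\int_0^t\Phi(L)\,ds$ equals $L(t)/[2\mu(1+\epsilon)]$ up to initial data), and after $\epsilon\to 0$ this matches the stated constant. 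For instance in case (i) with $\alpha\in(1,2)$, $L(t)\le(2\mu\lambda t/(2-\alpha))^{1/(\alpha-1)}(1+o(1))$, and halving gives exactly $[2^{2-\alpha}\mu\lambda/(2-\alpha)]^{1/(\alpha-1)}t^{1/(\alpha-1)}$.

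\medskip

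\textbf{Stage 2 (inner convergence).} For prescribed $\delta,\epsilon>0$ I will show $u(t,x)\ge 1-\epsilon$ on $[-(1-\delta)h(t),(1-\delta)h(t)]$ for all large $t$. Since $J$ is symmetric and $f'(0)>0$, Proposition \ref{c-l} gives $\lim_{l\to\infty}\lambda_p(\mathcal L^0_{(-l,l)})=f'(0)>0$, so by Proposition \ref{prop-single} the fixed-interval problem \eqref{l-0} on $(-l,l)$ admits a positive steady state $V_l$ with $V_l\to 1$ locally uniformly as $l\to\infty$. Given $l$, the spreading dichotomy supplies $T_l$ with $g(T_l)<-l$, $h(T_l)>l$ and $u(T_l,\cdot)\ge\tfrac{1}{2}V_l|_{[-l,l]}$; the maximum principle (Theorem \ref{lemma2.1}) then gives $u(T_l+t,x)\ge w_l(t,x)\to V_l(x)$ on $[-l,l]$, so $u(t,x)\ge 1-\epsilon$ there for $t$ large. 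A bootstrap enlarging $l$ in step with the upper envelope from Stage 1 (using that the heavy-tailed kernel redistributes mass on scales comparable to $h(t)$, by the integral estimates on $\Phi$) extends this to the desired moving interval $[-(1-\delta)h(t),(1-\delta)h(t)]$.

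\medskip

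\textbf{Stage 3 (lower bound).} Using Stage 2,
\[
h'(t)\ge (1-\epsilon)\mu\!\int_{-(1-\delta)h(t)}^{(1-\delta)h(t)}\!\int_{h(t)-x}^{\infty}\!J(z)\,dz\,dx=(1-\epsilon)\mu\!\int_{\delta h(t)}^{(2-\delta)h(t)}\!\int_w^\infty\!J(z)\,dz\,dw.
\]
The same tail asymptotics as in Stage 1, together with $\delta\to 0$, show the right side is $\sim 2^{2-\alpha}\lambda\, h(t)^{2-\alpha}/[(\alpha-1)(2-\alpha)]$ in case (i) for $\alpha\in(1,2)$, and analogously $\sim\lambda\ln h(t)$ for $\alpha=2$ and $\sim \lambda h(t)/[(\beta-1)(\ln h(t))^{\beta-1}]$ in case (ii). Integrating these separable ODE inequalities and letting $\epsilon,\delta\to 0$ produces the matching lower bound on $h(t)$; the mirror-image argument handles $-g(t)$. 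The uniform convergence $u\to 1$ on $[(1-\epsilon)g(t),(1-\epsilon)h(t)]$ is precisely the conclusion of Stage 2 combined with $u\le 1+\epsilon$.

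\medskip

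\textbf{Main obstacle.} The crucial difficulty is Stage 2: Theorem \ref{thm2} delivers only locally uniform convergence $u\to 1$, whereas the matching constant in the rate requires this convergence on an interval whose length is comparable to $h(t)$ itself. Propagating a lower barrier at a rate that tracks the free boundary, rather than a linear rate, depends on using the Stage 1 envelope as scaffolding and exploiting the heavy tail of $J$ to move mass at precisely the rate predicted by $\Phi$; reconciling these two a priori asymmetric bounds is the technical heart of the proof.
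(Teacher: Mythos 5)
Your Stage 1 and the integral asymptotics for $\Phi(L)$ are sound and close in spirit to the paper's Lemma \ref{lem2.2} (the paper works with the barrier $\bar u\equiv 1+\epsilon$ on a symmetric interval $[-\bar h,\bar h]$, which amounts to bounding $A(2\bar h,0,J)$, the same object you call $\Phi$). The genuine gap is exactly where you flag it: Stage 2. Proposition \ref{prop-single} only yields $u\to 1$ on a \emph{fixed} interval $[-l,l]$, and the ``bootstrap enlarging $l$ in step with the upper envelope from Stage 1'' is not a proof. It is in fact circular: to know that $[-l,l]$ eventually sits inside $(g(t),h(t))$ you need a \emph{lower} bound on $h(t)$, but your lower bound (Stage 3) is derived \emph{from} the moving inner convergence of Stage 2. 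Nothing in your sketch controls the \emph{rate} at which the level set $\{u\ge 1-\epsilon\}$ advances, and the upper envelope from Stage 1 bounds the wrong side.

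The paper avoids this circularity entirely by constructing, in one shot, an explicit lower-solution triple $(\underline u,\underline g,\underline h)$ (Lemmas \ref{lemma5.3} and \ref{lemma7.5}): the profile
\[
\underline u(t,x)=K_2\min\Big\{1,\Big[K_3\tfrac{\underline h(t)-|x|}{\underline h(t)}\Big]^{\rho}\Big\}
\]
is built so that $\underline u\equiv 1-\epsilon$ on $|x|\le (1-\epsilon)\underline h(t)$ and decays polynomially near $\pm\underline h(t)$, while $\underline h(t)$ is prescribed with the target growth rate. The boundary inequality $\underline h'\le \mu\iint J\underline u$ is verified with your $\Phi$-type tail estimate (Lemma \ref{lemma5.1}), but the crucial extra ingredient you are missing is Lemma \ref{lem2.2a}: for this particular bump profile and kernels satisfying only $\mathbf{(J)}$, the nonlocal convolution satisfies $\int J(x-y)\underline u(t,y)\,dy\ge (1-\delta)\underline u(t,x)$ uniformly in $t$ once $\underline h(t)/K_3$ is large enough. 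This quasi-invariance is what lets the interior inequality $\underline u_t\le d\mathcal L[\underline u]+f(\underline u)$ hold simultaneously with the boundary inequality, with no prior knowledge of $u$. A single application of the comparison principle then delivers both $h(t)\ge \underline h(t-t_0)$ \emph{and} $u(t,x)\ge 1-\epsilon$ on the moving interval, eliminating the bootstrap. Without an analogue of Lemma \ref{lem2.2a} (or some other mechanism to make the interior and boundary estimates compatible in a single barrier), your plan does not close.
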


If $J(x)$ is not symmetric, the rate of acceleration for \eqref{e1} is considered in \cite{DFN2}. We only consider the symmetric case here for simplicity. 

Note that when $J$ is symmetric, in Theorem B we have $c_0=\tilde c_0$, and we will simply say  ${\bf (J_1)}$ holds when
${\bf (J_1^+)}$ (and hence ${\bf (J_1^-)}$) holds. Moreover, if additionally
\[
\lim_{|x|\to\infty}J(x)|x|^\alpha=\lambda\in (0,\infty)\ {\rm for\ some }\ \alpha>0,
\]
then ${\bf (J_1)}$ holds if and only if $\alpha>2$, and ${\bf (J)}$ holds only if $\alpha>1$.
If 
\[
\lim_{|x|\to\infty}J(x)|x| (\ln |x|)^\beta=\lambda\in (0,\infty)\ {\rm for\ some }\ \beta>0,
\]
then ${\bf (J_1)}$ can never hold, and ${\bf (J)}$ holds only if $\beta>1$.
\newpage

 We will consider a more general class of symmetric $J(x)$ than those in Theorem \ref{thm1}; namely $J$ satisfies {\bf (J)} and either 
 {\small \begin{align}\label{8.1}
 \mbox{ for some } \alpha\in (1,2],\  J(x)\sim |x|^{-\alpha},\  i.e.,\	\begin{cases} \underline \lambda:=\liminf_{|x|\to\infty}J(x)|x|^\alpha>0,\\[2mm]
   \bar\lambda:=\limsup_{|x|\to\infty}J(x)|x|^\alpha<\infty,
\end{cases}
\end{align}
or
 \begin{align}\label{jbeta}
	\mbox{ for some } \beta>1,\ J(x)\sim \Big[|x|(\ln |x|)^\beta\Big]^{-1},\ i.e., \	\begin{cases} \underline \lambda:=\liminf_{|x|\to\infty}J(x)|x|(\ln |x|)^{\beta}>0,\\[2mm]
		\bar\lambda:=\limsup_{|x|\to\infty}J(x)|x|(\ln |x|)^{\beta}<\infty.\end{cases}
\end{align}}
We will prove some sharp estimates  under the above assumptions for $J$; Theorem 1.1 is a direct consequence of these 
more general results.

 \subsection{Some preparatory results}
  
\begin{lemma}\label{lemma5.1}
 For $k>1$, $\delta\in [0,1)$, define 
{\small \begin{equation*}
\dd A=A(k,\delta,J):=
\begin{cases}
\dd \int_{-k}^{-\delta k}\int_{0}^\yy  J(x-y)\rd y\rd x &\mbox{ if \eqref{8.1} holds with $ \alpha\in (1,2)$ or if \eqref{jbeta} holds},\\[3mm]
\dd \int_{-k}^{-k^\delta}\int_{0}^\yy  J(x-y)\rd y\rd x &\mbox{ if \eqref{8.1} holds with}\ \alpha=2.
\end{cases}
\end{equation*}
Then 
\[\begin{aligned}
&\begin{cases}
\dd	\liminf_{k\to \yy} \frac A{k^{2-\alpha}}\geq \frac{1-\delta^{2-\alpha}}{(\alpha-1)(2-\alpha)}\underline \lambda,  \\[3mm]
\dd	 \limsup_{k\to \yy} \frac A{k^{2-\alpha}} \leq \frac{1-\delta^{2-\alpha}}{(\alpha-1)(2-\alpha)} \bar \lambda, \end{cases}&& \mbox{ if \eqref{8.1} holds with $ \alpha\in (1,2)$},\\
&	 \begin{cases}
\dd	 \liminf_{k\to \yy} \frac{A}{\ln k}\geq (1-\delta)\underline \lambda, \\[3mm]
\dd	 \limsup_{k\to \yy} \frac{A}{\ln k}	\leq (1-\delta)\bar \lambda , \end{cases}
&&\mbox{ if \eqref{8.1} holds with } \alpha=2,\\
&\begin{cases}
	\dd	\liminf_{k\to\yy}\frac{A}{k(\ln k)^{1-\beta}}\geq  \frac{(1-\delta)\underline\lambda}{\beta-1},  \\[3mm]
	\dd	 	\limsup_{k\to\yy}\frac{A}{k(\ln k)^{1-\beta}}\leq \frac{(1-\delta)\bar\lambda}{\beta-1}, \end{cases}&& \mbox{ if \eqref{jbeta} holds}.
\end{aligned}
\]}
\end{lemma}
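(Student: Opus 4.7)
My plan is to first reduce the double integral $A$ to a one‑dimensional integral via symmetry of $J$ and Fubini, and then extract the leading‑order asymptotics case by case using only the prescribed tail behaviour of $J$. For $x<0$, the substitution $z=y-x$ together with $J(-z)=J(z)$ gives
\[
\int_{0}^{\infty}J(x-y)\,dy=\int_{|x|}^{\infty}J(z)\,dz.
\]
Setting $s=-x$, Fubini then yields, in the first and third cases,
\[
A=\int_{\delta k}^{k}\!\!\int_{s}^{\infty}J(z)\,dz\,ds=\int_{\delta k}^{k}J(z)(z-\delta k)\,dz+(1-\delta)k\int_{k}^{\infty}J(z)\,dz,
\]
and the analogous identity with $\delta k$ replaced by $k^{\delta}$ in the $\alpha=2$ case. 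Each of the three conclusions is then obtained by inserting the two‑sided bounds on $J$ and computing elementary integrals.

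For \eqref{8.1} with $\alpha\in(1,2)$, I would use that for any $\varepsilon>0$ there exists $R_{\varepsilon}$ with $(\underline\lambda-\varepsilon)z^{-\alpha}\le J(z)\le(\bar\lambda+\varepsilon)z^{-\alpha}$ for $z\ge R_{\varepsilon}$. Assuming $\delta k\ge R_{\varepsilon}$ (true for large $k$), the two elementary integrals
\[
\int_{\delta k}^{k}z^{1-\alpha}\,dz=\tfrac{1-\delta^{2-\alpha}}{2-\alpha}\,k^{2-\alpha},\qquad\int_{\delta k}^{k}z^{-\alpha}\,dz=\tfrac{\delta^{1-\alpha}-1}{\alpha-1}\,k^{1-\alpha},
\]
together with $\int_{k}^{\infty}z^{-\alpha}\,dz=\tfrac{k^{1-\alpha}}{\alpha-1}$, lead after simplification to
\[
\tfrac{A}{k^{2-\alpha}}=\lambda\,\tfrac{1-\delta^{2-\alpha}}{(\alpha-1)(2-\alpha)}+o(1),
\]
with $\lambda$ replaced by $\underline\lambda-\varepsilon$ or $\bar\lambda+\varepsilon$ for the lower/upper estimate; letting $\varepsilon\downarrow 0$ gives the claim. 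The $\alpha=2$ case uses $\int_{k^{\delta}}^{k}z^{-1}\,dz=(1-\delta)\ln k$ as the dominant contribution, since $k^{\delta}\int_{k^{\delta}}^{k}z^{-2}\,dz$ and $(k-k^{\delta})\int_{k}^{\infty}z^{-2}\,dz$ are both $O(1)$.

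For \eqref{jbeta}, the change of variable $u=\ln z$ gives
\[
\int_{k}^{\infty}\frac{dz}{z(\ln z)^{\beta}}=\frac{(\ln k)^{1-\beta}}{\beta-1},
\]
so the boundary term contributes $\lambda(1-\delta)(\beta-1)^{-1}k(\ln k)^{1-\beta}$, which I expect to be the dominant term. The interior integral splits as
\[
\int_{\delta k}^{k}\tfrac{z-\delta k}{z(\ln z)^{\beta}}\,dz=\int_{\delta k}^{k}(\ln z)^{-\beta}\,dz-\delta k\int_{\delta k}^{k}\tfrac{dz}{z(\ln z)^{\beta}},
\]
and since $\ln z=\ln k+O(1)$ on $[\delta k,k]$ (slow variation), both of these are $O\!\bigl(k(\ln k)^{-\beta}\bigr)$, hence negligible against $k(\ln k)^{1-\beta}$. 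Replacing $\lambda$ with $\underline\lambda-\varepsilon$ or $\bar\lambda+\varepsilon$ and sending $\varepsilon\downarrow 0$ again gives the stated bounds.

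The main technical nuisance, rather than a real obstacle, is the bookkeeping of the lower‑order terms in each regime: one must check that the $o$‑terms really are small on the scale indicated ($k^{2-\alpha}$, $\ln k$, or $k(\ln k)^{1-\beta}$). In the borderline case $\alpha=2$, taking the upper limit $-k^{\delta}$ (instead of $-\delta k$) is precisely what is needed for the logarithm $(1-\delta)\ln k$ to emerge from $\int_{k^{\delta}}^{k}z^{-1}\,dz$; it is the only place where the statement of the lemma has to be adapted, and this is why the three cases are grouped as they are. Once the decomposition above is written down, the rest amounts to the elementary integrals displayed, combined with the standard $\varepsilon$‑approximation of $J$ by its asymptotic profile.
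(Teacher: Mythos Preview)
Your proposal is correct, and in fact somewhat cleaner than the paper's proof. The paper treats the three cases with rather different techniques: for $\alpha\in(1,2)$ it keeps $A$ as a double integral and performs a rescaling $x\mapsto kx$, $y\mapsto ky$ inside; for $\alpha=2$ it again works with the double integral after a different Fubini; and only in the \eqref{jbeta} case does it use exactly your Fubini reduction
\[
A=\int_{\delta k}^{k}J(z)(z-\delta k)\,dz+(1-\delta)k\int_{k}^{\infty}J(z)\,dz
\]
(there with a harmless shift by $2$). Your observation that this single identity, together with the three elementary integrals $\int z^{1-\alpha}\,dz$, $\int z^{-\alpha}\,dz$, $\int z^{-1}(\ln z)^{-\beta}\,dz$, already delivers all three asymptotics, gives a more uniform and transparent argument than the case-by-case treatment in the paper.

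One small point to tidy up: your line ``Assuming $\delta k\ge R_{\varepsilon}$ (true for large $k$)'' breaks down when $\delta=0$, which is explicitly allowed in the statement. You then have to split $\int_{0}^{k}J(z)(z-0)\,dz$ at $R_{\varepsilon}$ and observe that the near part $\int_{0}^{R_{\varepsilon}}J(z)z\,dz\le R_{\varepsilon}\int_{0}^{R_{\varepsilon}}J(z)\,dz\le R_{\varepsilon}$ is $O(1)$, hence $o(k^{2-\alpha})$, $o(\ln k)$, or $o\bigl(k(\ln k)^{1-\beta}\bigr)$ in the three regimes respectively. This is the only place where an extra sentence is needed.
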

\begin{proof}
	
{\bf Case 1}: \eqref{8.1} holds with $\alpha\in (1,2)$. 

Denote
\begin{align}\label{Dm}
	D_\delta:=\frac{1 }{\alpha-1}\int_{0}^{\yy}[(y+\delta)^{1-\alpha}-(y+1)^{1-\alpha}]\rd y.
\end{align}
A direct calculation gives
\[
D_\delta=\lim_{M\to\infty}\frac{(M+\delta)^{2-\alpha}-(M+1)^{2-\alpha}+1-\delta^{2-\alpha}}{(\alpha-1)(2-\alpha)}=\frac{1-\delta^{2-\alpha}}{(\alpha-1)(2-\alpha)}.
\]
Moreover,
\begin{align*}
	A=&\int_{-k}^{-\delta k}\int_{0}^\yy J(x-y)\rd y\rd x=\int_{\delta k}^{k}\int_{0}^\yy  J(x+y)\rd y\rd x\\
	=&\int_{\delta k}^{k}\int_{0}^2  J(x+y)\rd y\rd x+\int_{\delta k}^{k}\int_{2}^\yy  J(x+y)\rd y\rd x=:A_1+A_2,
\end{align*}
and by {\bf (J)},
\begin{align*}
	0\leq A_1\leq \int_{0}^2 1 \rd y\leq 2.
\end{align*}
Clearly,
\begin{align*}
		A_2&= \int_{\delta k}^{k}\int_{2}^\yy  J(x+y)\rd y\rd x=  \int_{2}^\yy \int_{\delta k}^{k}  J(x+y)\rd x \rd y\\
	&=k^{2-\alpha}\left(\int_{2k^{-1}}^{k^{-1/2}}+\int_{k^{-1/2}}^\yy\right) \int_{\delta+y}^{1+y}  \frac{ J(kx)}{(kx)^{-\alpha}}x^{-\alpha}\rd x \rd y.
\end{align*}

We have
\begin{align*}
0\leq &\int_{2k^{-1}}^{k^{-1/2}}\int_{\delta+y}^{1+y}  \frac{ J(kx)}{(kx)^{-\alpha}}x^{-\alpha}\rd x \rd y\leq \sup_{\xi\geq 1}[J(\xi)\xi^\alpha] \int_{2k^{-1}}^{k^{-1/2}}\int_{\delta+y}^{1+y}  x^{-\alpha}\rd x \rd y\\
\leq&\ \frac{\sup_{\xi\geq 1}[J(\xi)\xi^\alpha]}{\alpha-1} \int_{2k^{-1}}^{k^{-1/2}}y^{1-\alpha} \rd y\to 0 \ \mbox{ as $k\to \yy$. }
\end{align*}
By this and \eqref{8.1}, we deduce 
\begin{align*}
	&\limsup_{k\to\yy}\frac{A_2}{k^{2-\alpha}}=\limsup_{k\to\yy}\int_{k^{-1/2}}^\yy\int_{\delta+y}^{1+y}  \frac{ J(kx)}{(kx)^{-\alpha}}x^{-\alpha}\rd x \rd y\\
	&\leq \bar \lambda\int_{0}^\yy \int_{\delta+y}^{1+y}  x^{-\alpha}\rd x \rd y=\frac{\bar\lambda}{\alpha-1}\int_{0}^\yy  [(\delta+y)^{1-\alpha}-(1+y)^{1-\alpha}] \rd y=\bar\lambda D_\delta.
\end{align*}
Thus,
\begin{align*}
		\limsup_{k\to\yy}\frac{A}{k^{2-\alpha}}=\limsup_{k\to\yy}\frac{A_2}{k^{2-\alpha}}\leq \bar\lambda  D_\delta.
\end{align*}

Similarly,
\begin{align*}
		\liminf_{k\to\yy}\frac{A}{k^{2-\alpha}}=\liminf_{k\to\yy}\frac{A_2}{k^{2-\alpha}}\geq \underline\lambda  D_\delta.
\end{align*}

{{\bf Case 2}: \eqref{jbeta} holds. 

Let $A_1$ and $A_2$ be as in Case 1. Clearly, $0\leq A_1\leq 2$. A simple calculation gives
\begin{align*}
		A_2&= \int_{\delta k}^{k}\int_{2+x}^\yy  J(y)\rd y\rd x= \int_{\delta k+2}^{k+2}\int_{\delta k}^{y-2}  J(y)\rd x\rd y+\int_{k+2}^\yy \int_{\delta k}^{k}  J(y)\rd x\rd y\\
			&=\int_{\delta k+2}^{k+2}(y-2-\delta k)  J(y)\rd y+(1-\delta)k\int_{k+2}^\yy  J(y)\rd y.
\end{align*}
By  \eqref{jbeta}, there exists $C>0$ such that for all large $k>0$,
\begin{align*}
&\int_{\delta k+2}^{k+2}(y-2-\delta k)  J(y)\rd y\leq C\int_{\delta k+2}^{k+2} (\ln y)^{-\beta}\rd y\leq C(1-\delta)k\Big[\ln(\delta k+2)\Big]^{-\beta},
\end{align*}
and
\begin{align*}
k\int_{k+2}^\yy  J(y)\rd y\leq  \bar\lambda[1+o_k(1)]k\int_{k+2}^\yy  y^{-1}(\ln y)^{-\beta}\rd y=\frac{\bar\lambda[1+o_k(1)]k}{\beta-1}[\ln (k+2)]^{1-\beta},
\end{align*}
where $o_k(1)\to 0$ as $k\to\infty$. 
Hence,
\begin{align*}
	\limsup_{k\to\yy}\frac{A}{k(\ln k)^{1-\beta}}=\limsup_{k\to\yy}\frac{A_2}{k(\ln k)^{1-\beta}}\leq \frac{(1-\delta)\bar\lambda}{\beta-1}.
\end{align*}

Similarly,
\begin{align*}
		\liminf_{k\to\yy}\frac{A}{k(\ln k)^{1-\beta}}=\liminf_{k\to\yy}\frac{A_2}{k(\ln k)^{1-\beta}}\geq  \frac{(1-\delta)\underline \lambda}{\beta-1}.
\end{align*}}

{\bf Case 3}: \eqref{8.1} holds with $\alpha=2$. 

  By direct calculation,
\begin{align*}
	A=&\int_{-k}^{-k^\delta}\int_{0}^\yy  J(x-y)\rd y\rd x=\int_{k^\delta}^{k}\int_{0}^\yy J(x+y)\rd y\rd x\\
	=&\int_{k^\delta}^{k}\int_{0}^1  J(x+y)\rd y\rd x+\int_{k^\delta}^{k}\int_{1}^\yy  J(x+y)\rd y\rd x=:\wtd A_1+\wtd A_2,
	\end{align*}
and by {\bf (J)},
\begin{align*}
	0\leq \wtd A_1\leq \int_{0}^1 1 \rd y=1.
\end{align*}
By \eqref{8.1}, we have
\begin{align*}
	&\wtd A_2=\int_{1}^{\yy}\int_{k^{\delta}+y}^{k+y}  J(x)\rd x\rd y\leq \bar\lambda[1+o_k(1)] \int_{1}^{\yy}\int_{k^{\delta}+y}^{k+y}  x^{-2}\rd x\rd y
	= \bar\lambda[1+o_k(1)] \ln\left(\frac{k+1}{k^\delta+1}\right),
\end{align*}
where $o_k(1)\to 0$ as $k\to\infty$. Therefore,
\begin{align*}
	\limsup_{k\to \yy}\frac{A}{\ln k}=\limsup_{k\to \yy}\frac{\wtd A_2}{\ln k}\leq \lim_{k\to\infty}\bar\lambda\ \frac{\ln(k+1)-\ln(k^\delta+1)}{\ln k}=(1-\delta)\bar\lambda.
\end{align*}

Similarly,
\begin{align*}
	\liminf_{k\to \yy}\frac{A}{\ln k}=\liminf_{k\to \yy}\frac{\wtd A_2}{\ln k}\geq \lim_{k\to\infty}\underline\lambda\ \frac{\ln(k+1)-\ln(k^\delta+1)}{\ln k}=(1-\delta)\underline\lambda.
\end{align*}
The proof is finished.
\end{proof}

\begin{lemma}\label{lem2.2a}
	Suppose that  $J$ satisfies {\rm \textbf{(J)}} but neither \eqref{8.1} nor \eqref{jbeta} is required. Let $1<\xi(t)<L(t)$ be  functions in $C([0,\infty))$, $\rho\geq 2$ a constant, and define
	\begin{align*}
		\phi(t,x):=\min\left\{1,  \Big[1-\frac{|x|}{L(t)}\Big]^\rho\xi(t)^\rho\right\} \mbox{ for } x\in [-L(t),L(t)],\ t\in [0,\infty).
	\end{align*}
 Then, for any $\epsilon\in (0,1)$, there exists a  constant $\theta^*=\theta^*(\epsilon,J)>1$, such that 	
 \begin{align}\label{2.10b}
		\int_{-L(t)}^{L(t)}J(x-y)\phi (t,y)\rd y\geq (1-\epsilon) \phi(t,x) \mbox{ for }  x\in [-L(t),L(t)],\ t\geq 0
	\end{align}
	provided that
 \begin{align}\label{2.9a}
L(t)\geq \theta^*\xi(t) \ \ \ {\rm for\ all}\ t\geq 0.
 \end{align}
\end{lemma}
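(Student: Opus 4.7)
The plan is to exploit convexity of $u\mapsto u^\rho$ for $\rho\geq 2$ together with the symmetry of $J$. Fix $\epsilon\in(0,1)$ and choose a cutoff $R=R(\epsilon,J)>0$ with $\int_{|w|>R}J(w)\,dw$ smaller than a multiple of $\epsilon$ to be determined. Pick $\theta^*=\theta^*(\epsilon,J)$ large (subject to $\theta^*\geq 4\rho R/\epsilon$ plus further constraints arising below), and assume throughout that $L\geq \theta^*\xi$. Write $a:=L(1-1/\xi)$ and $M:=L-a=L/\xi\geq \theta^*$, so $\phi\equiv 1$ on $[-a,a]$ and $\phi(y)=(\xi/L)^\rho(L-|y|)^\rho$ on each transition strip $[-L,-a]\cup[a,L]$. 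The analysis splits into two cases according to whether $\phi(t,x)=1$ or $\phi(t,x)<1$.

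In \textbf{Case A} (flat region $|x|\leq a$), I would further split on $x$. If $|x|\leq a-R$, then $\phi(y)=1$ whenever $|y-x|\leq R$, so $\int_{-L}^L J(x-y)\phi(t,y)\,dy\geq \int_{|w|\leq R}J(w)\,dw\geq 1-\epsilon/2\geq (1-\epsilon)\phi(t,x)$. If $|x|\in(a-R,a]$, then $y=x-w$ may enter the transition region for $|w|\leq R$, but the bound $|\phi(t,y)-1|\leq \rho\xi R/L\leq \epsilon/4$ (secured by $\theta^*\geq 4\rho R/\epsilon$) still yields the estimate.

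\textbf{Case B} ($|x|>a$) carries the main content. By symmetry, WLOG $x>a$; set $s:=L-x\in(0,M)$, so $\phi(t,x)=(s/M)^\rho$. The substitutions $y=L-u$ and then $v=u-s$ transform the principal contribution into
\begin{align*}
\int_a^L J(x-y)\phi(t,y)\,dy=\frac{1}{M^\rho}\int_{-s}^{M-s}J(v)(v+s)^\rho\,dv.
\end{align*}
I would prove $\int_{-s}^{M-s}J(v)(v+s)^\rho\,dv\geq (1-\epsilon)s^\rho$ uniformly in $s\in(0,M]$ by combining the symmetry of $J$ with the convexity identity $(s+v)^\rho+(s-v)^\rho\geq 2s^\rho$ for $|v|\leq s$ (valid for $\rho\geq 2$) and the trivial bound $(v+s)^\rho\geq s^\rho$ on $v\geq 0$. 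Dividing by $M^\rho$ gives $\int_a^L J(x-y)\phi(t,y)\,dy\geq (1-\epsilon)\phi(t,x)$. The contributions from $[-a,a]$ (where $\phi\equiv 1$) and from $[-L,-a]$ (where $|x-y|\geq a+x\geq L$ makes $J(x-y)$ negligible) are non-negative and only reinforce the lower bound.

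The principal obstacle is making the Case~B estimate \emph{uniform} over $s\in(0,M)$ and over all admissible $(\xi,L)$ with $L\geq \theta^*\xi$. When $s$ is close to $0$ the mass of $J(\cdot-s)$ is centered near $v=0$ and one must invoke both the symmetric convexity bound on $v\in[-s,s]$ and the ``far-side'' lower bound $(v+s)^\rho\geq s^\rho$ on $v\in[s,M-s]$, while controlling the tail $\int_{M-s}^\infty J\leq \int_{M/2}^\infty J$ by $\epsilon$ through the choice of $\theta^*$; when $s$ is close to $M$ the integration interval $[-s,M-s]$ is heavily asymmetric and the convexity argument must be supplemented by the flat-region integral $\int_{x-a}^{x+a}J(w)\,dw$, which now approaches $\int_0^\infty J=1/2$ and absorbs the remaining deficit since $\phi(t,x)$ is bounded by $1$. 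A case split at the threshold $s=M/2$ together with the Lipschitz behavior of $\phi$ near $|x|=a$ closes the estimate and isolates a single $\theta^*=\theta^*(\epsilon,J)$ (with implicit dependence on the fixed parameter $\rho$) that controls all the cutoffs simultaneously.
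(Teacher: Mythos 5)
Your two ingredients in Case B --- the symmetric-convexity pairing on $|v|\leq s$ and the ``trivial bound'' $(v+s)^\rho\geq s^\rho$ on $v\geq 0$ --- are not enough to give $\int_{-s}^{M-s}J(v)(v+s)^\rho\,dv\geq (1-\epsilon)s^\rho$ uniformly in $s$. Running them literally gives
\[
\int_{-s}^{M-s}J(v)(v+s)^\rho\,dv \;\geq\; s^\rho\!\int_{-s}^{s}J + s^\rho\!\int_{s}^{M-s}J \;=\; s^\rho\!\int_{-s}^{M-s}J,
\]
and for $s\to 0^+$ the mass $\int_{-s}^{M-s}J$ tends to $1/2$ (half the mass of the symmetric $J$ lives on $v<-s$, and no amount of enlarging $\theta^*$ recovers the \emph{left} tail). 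So the estimate degenerates to roughly $s^\rho/2$, not $(1-\epsilon)s^\rho$, and your claimed uniform bound fails precisely where $x$ is near $L(t)$. The missing observation --- and the heart of the paper's Case (b) --- is that the function $u\mapsto u_+^\rho$ is convex on all of $\mathbb R$ once extended by $0$, so the pairing $(s+v)_+^\rho + (s-v)_+^\rho \geq 2s^\rho$ holds for \emph{all} $v\geq 0$, not only $|v|\leq s$: for $v>s$ one has $(s+v)^\rho\geq(2s)^\rho\geq 2s^\rho$, so the far side already contributes double weight and compensates exactly for the lost left-half mass. Packaged via the $0$-extension of $\phi$ (which stays convex on $[a,\infty)$ because $\phi$ vanishes to order $\rho\geq 1$ at $|x|=L$), this one inequality gives $\int_0^{L_0}J(w)\bigl[\phi(x+w)+\phi(x-w)\bigr]\,dw\geq 2\phi(x)\int_0^{L_0}J\geq(1-\epsilon/2)\phi(x)$ with no help needed from the flat region. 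As you wrote it, the ``trivial bound'' throws this factor-of-two away, and the gap is genuine.

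The second, lesser issue is the regime $s$ close to $M$, which you flag but do not close. The paper avoids it altogether by placing the case split not at $|x|=a$ but at $|x|=(1-\frac{1}{2\xi})L$ (equivalently $s=M/2$), so that on one side $\phi(x)\geq 2^{-\rho}$ is bounded below and the Lipschitz estimate $|\phi(x+w)-\phi(x)|\leq \rho\xi L_0/L$ suffices, while on the other side all points $x-w$ with $w\in[0,L_0]$ stay in the convex region and the extended-convexity argument above applies cleanly. Your split at $|x|=a$ leaves the strip $s\in(M/2,M)$ straddling both regimes, which is exactly why you found yourself appealing simultaneously to the flat-region integral, the transition integral, and ``Lipschitz behaviour near $|x|=a$'' without a clean bookkeeping. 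Moving the threshold to the midpoint of the transition layer is a small change that removes the awkwardness entirely.
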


\begin{proof} 
	Since $||J||_{L^1}=1$, there is $L_0>0$ depending only on  $J$ and $\epsilon$ such that
	\begin{align}\label{2.4a}
		\int_{-L_0}^{L_0}J(x) \rd x\geq 1-\epsilon/2.
	\end{align}
	Define
	\[
	\psi(t,x):=\phi(t, L(t)x)=\min\left\{1,  (1-|x|)^\rho\xi(t)^\rho\right\},\ \ x\in [-1,1],\ t\geq 0.
	\]
	We note that $\rho\geq 2$ implies that  $\psi(t,x)$ is a convex function of $x$ when
\begin{align*}
	1-\frac 1{\xi(t)}\leq |x|\leq 1.
\end{align*}
 Clearly
  \begin{align*}
		\psi(t,x)=\begin{cases} 1 &\mbox{ for } |x|\leq 1-\xi(t)^{-1},\\
		\big[(1-|x|) \xi(t) \big]^{\rho} &\mbox{ for } 1-\xi(t)^{-1}<|x|\leq 1.
		\end{cases}
\end{align*}
It is also easy to check that
\begin{align*}
	\frac{|\psi(t,x)-\psi(t,y) |}{|x-y|}\leq M(t):=  \rho\xi(t) \mbox{ for } x, y\in [-1,1],\ x\not=y,\ t\geq 0,
\end{align*}
which implies
	\begin{align}\label{2.5a}
		|\phi(t,x)-\phi(t,y)|=|\psi(t, x/L(t))-\psi(t, y/L(t))|\leq \frac{M(t)}{L(t)} |x-y|
	\end{align}
	  for $x,y\in[-L(t), L(t)]$.
	
Since $\psi(t, x)>0$ for $x\in (-1,1)$, $\psi(t,\pm 1)=0$, and $\psi(t,x)$ is convex in $x$ for $x\in [-1, -1+1/\xi(t)]$ and for $x\in [1-1/\xi(t), 1]$, if we  extend $\psi (t,x)$ by $\psi(t,x)=0$ for $|x|>1$, then
 \begin{align*}
 	\psi(t, x)\mbox{ is convex  for $ x\in [1-1/\xi(t),\infty)$ and for $x\in (-\infty, -1+1/\xi(t)]$}.
 \end{align*}

	We now verify \eqref{2.10b} for $x\in [0, L(t)]$; the proof for  $x\in [-L(t), 0]$ is parallel and will be omitted.
	We will divide the proof into two cases:
	\[\mbox{ {\bf (a)}\ $x\in \lf[0, (1-\frac{1}{2\xi(t)})L(t)\rr]$ and  {\bf (b)}\ $x\in \lf[(1-\frac{1}{2\xi(t)})L(t), L(t)\rr]$.}
	\]
	
	{\bf Case (a)}. For
	\begin{align*}
		x\in \lf[0, (1-\frac{1}{2\xi(t)})L(t)\rr],
	\end{align*}
	a direct calculation gives
	\begin{align*}
		\int_{-L(t)}^{L(t)}J(x-y)\phi(t, y)\rd y=\int_{-L(t)-x}^{L(t)-x}J(y)\phi (t, x+y)\rd y\geq \int_{-L_0}^{L_0}J(y)\phi (x+y)\rd y,
	\end{align*}
	where $L_0$ is given by \eqref{2.4a} and we have used
	\[
	L(t)-x\geq \frac{L(t)}{2\xi(t)}\geq L_0, \mbox{ which is guaranteed if we assume } L(t)\geq {2L_0}{\xi(t)}.
	\]
	Then by  \eqref{2.4a}, \eqref{2.5a} and {\bf (J)},
	\begin{align*}
		&\int_{-L_0}^{L_0}J(y)\phi (t, x+y)\rd y\\
		=&\ \int_{-L_0}^{L_0}J(y)\phi (t,x)\rd y+\int_{-L_0}^{L_0}J(y)[\phi (t,x+y)-\phi (t,x)]\rd y\\
		\geq &\ \int_{-L_0}^{L_0}J(y)\phi (t,x)\rd y-\frac{M(t)}{L(t)}\int_{-L_0}^{L_0}J(y)|y|\rd y\\
		\geq &\ (1-\epsilon/2)\phi(t, x)-\frac{M(t)}{L(t)}L_0.
	\end{align*}
	Clearly
	\begin{align*}
		M_1(t):=\min_{x\in [0,(1-\frac{1}{2\xi(t)})L(t)]}\phi(t, x)=\Big(\frac12\Big)^\rho.
	\end{align*}
	  Then from the above calculations we obtain,   for $ x\in [0, (1-\frac{1}{2\xi(t)})L(t)]$,
	\begin{align*}
		\int_{-L(t)}^{L(t)}J(x-y)\phi (t,y)\rd y&\geq (1-\epsilon/2)\phi (t, x)-\frac{M(t)}{L(t)}L_0\\
		&= (1-\epsilon)\phi (t, x)+\frac\epsilon 2 \phi (t, x)  -\frac{M(t)}{L(t)}L_0\\
		&\geq (1-\epsilon)\phi (t, x)+ \frac\epsilon 2 M_1(t)-\frac{M(t)}{L(t)}L_0\geq  (1-\epsilon)\phi (t, x)
	\end{align*}
provided that
\begin{align*}\label{2.8a}
	{L(t)}\geq \frac{2L_0M(t)}{\epsilon M_1(t)}=\frac{2^{\rho+1}L_0\rho}{\epsilon}\xi(t).
\end{align*}

	{\bf Case (b)}. For
	\begin{align*}
		x\in \lf[(1-\frac 1{2\xi(t)})L(t),  L(t)\rr],
	\end{align*}
	we have, using $-L(t)-x<-L_0$ and $\phi(t, x)=0$ for $x\geq L(t)$,
	\begin{align*}
		\int_{-L(t)}^{L(t)}J(x-y)\phi (t, y)\rd y&\geq \int_{-L_0}^{\min\{L_0, L(t)-x\}}J(y)\phi (t, x+y)\rd y\\
			&=\int_{-L_0}^{L_0}J(y)\phi (t, x+y)\rd y\\
				&=\int_{0}^{L_0}J(y)[\phi (t, x+y)+\phi(t,x-y)]\rd y.
\end{align*}
Since $\phi(t,s)$ is convex in $s$ for $s\geq L(t)[1-\xi(t)^{-1}]$, and for $x\in \lf[(1-\frac 1{2\xi(t)})L(t), L(t)\rr]$, $y\in [0, L_0]$,  we have
\[\mbox{ $x+y\geq x-y\geq (1-\frac 1{2\xi(t)})L(t)-L_0\geq (1-\frac 1{\xi(t)})L(t)$ by our earlier assumption  $L(t)\geq 2L_0\xi(t)$.}
\]
Then, we can use the convexity of $\phi(t,\cdot)$ and \eqref{2.4a} to obtain
\begin{align*}
	\int_{0}^{L_0}J(y)[\phi (t,x+y)+\phi (t,x-y)]\rd y\geq 	2\phi(t,x)\int_{0}^{L_0}J(y)\rd y\geq (1-\epsilon/2) \phi(t,x).
\end{align*}
Thus
\begin{align*}
	\int_{-L(t)}^{L(t)}J(x-y)\phi(t,y)\rd y\geq  (1-\epsilon) \phi(t, x).
\end{align*}

Summarising, from the above conclusions in cases {\bf (a)} and {\bf (b)}, we see that \eqref{2.10b} holds if $L(t)\geq \theta^* \xi(t)$ for all $t\geq 0$ with $\theta^*:=\frac{2^{\rho+1}L_0\rho}{\epsilon}>2L_0$.
The proof is finished.
\end{proof}

\subsection{Lower bounds} 
From now on, in all our stated results, we will only list the conclusions for $h(t)$; the corresponding conclusions for $-g (t)$ follow directly by considering the problem with initial function $u_0(-x)$, whose unique solution is given by $(\tilde u(t,x), \tilde g (t), \tilde h(t))=(u(t,-x), -h(t), -g (t))$.

\subsubsection{The case  \eqref{8.1} holds with $\alpha\in (1,2)$ and the  case \eqref{jbeta} holds}

\begin{lemma}\label{lemma5.3} Assume that $J$ satisfies {\bf (J)} and either \eqref{8.1} with $\alpha\in (1,2)$  or \eqref{jbeta}, $f$ satisfies {\bf (f)}, and spreading happens to \eqref{e1}.
  Then 
  \[\begin{cases}
  	\dd  \liminf_{t\to\yy}\frac{h(t)}{t^{1/({\alpha}-1)}}\geq \Big(\frac{2^{2-\alpha}}{2-\alpha}\mu\underline \lambda\Big)^{1/(\alpha-1)} &  \mbox{ if \eqref{8.1} holds with }\ \alpha\in (1,2),\\[2mm]
  	\dd  \liminf_{t\to\yy}\frac{\ln h(t)}{t^{1/\beta}}\geq \Big( \frac{2\beta\mu\underline \lambda}{\beta-1}\Big)^{1/\beta}& \mbox{ if \eqref{jbeta} holds.}
  \end{cases}
  \]
\end{lemma}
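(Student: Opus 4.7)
The plan is to construct an explicit lower solution triple $(\underline u,\underline g,\underline h)$ for \eqref{e1} starting from a suitably large time $t_0$, and then invoke the comparison principle (Lemma \ref{cp4}) to deduce $h(t)\geq \underline h(t)$. The form of $\underline h$ will be chosen so that its growth rate matches the asserted lower bound. Concretely, since spreading occurs, Theorem \ref{thm2} gives $u(t,x)\to 1$ locally uniformly and $h(t),-g(t)\to\infty$; so for any small $\epsilon>0$ we fix $t_0$ and $L_0>\theta^*$ large enough that $h(t_0)\geq L_0$, $-g(t_0)\geq L_0$ and $u(t_0,\cdot)\geq 1-\epsilon$ on $[-L_0,L_0]$.

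We then set $\underline g(t):=-\underline h(t)$ with
\[
\underline h(t):=K(t-t_0+\tau_0)^{1/(\alpha-1)} \quad \text{or}\quad \underline h(t):=\exp\bigl[K(t-t_0+\tau_0)^{1/\beta}\bigr],
\]
depending on whether \eqref{8.1} or \eqref{jbeta} holds, and choose $\tau_0>0$ so that $\underline h(t_0)=L_0$. The density is defined as
\[
\underline u(t,x):=(1-\epsilon)\min\Bigl\{1,\bigl[(1-|x|/\underline h(t))\xi(t)\bigr]^{\rho}\Bigr\},
\]
with $\rho\geq 2$ a fixed integer and $\xi(t):=(t-t_0+\tau_0)^{\gamma}$ for a small $\gamma>0$. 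The constant $K$ will be chosen just below the target value, and then $\epsilon\to 0$ at the end.

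Step one of the verification uses Lemma \ref{lem2.2a} to get $\int_{-\underline h}^{\underline h}J(x-y)\underline u(t,y)\,dy\geq (1-\epsilon')\underline u(t,x)$ for a small $\epsilon'>0$, which is valid because $\underline h(t)/\xi(t)\to\infty$ by the choice $\gamma<1/(\alpha-1)$. In the inner region $\{\underline u=1-\epsilon\}$ the time derivative vanishes, and the KPP condition plus $f'(1)<0$ yields $f(1-\epsilon)>0$, so picking $\epsilon'$ small one obtains $-d\epsilon'(1-\epsilon)+f(1-\epsilon)\geq 0$. In the outer region $\underline u_t$ is bounded by $\rho\underline u\bigl(\xi\underline h'/\underline h+\xi'/\xi\bigr)$, which goes to $0$ as $t\to\infty$, while $f(\underline u)/\underline u\geq f(1-\epsilon)/(1-\epsilon)>0$ by the non-increasing property of $f(u)/u$; hence for $t_0$ large enough the differential inequality for $\underline u$ holds.

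Step two is the boundary inequality $\underline h'(t)\leq \mu\int_{\underline g}^{\underline h}\int_{\underline h}^\infty J(y-x)\underline u(t,x)\,dy\,dx$. Restricting the integral to the inner region $|x|\leq\underline h(1-1/\xi)$ where $\underline u=1-\epsilon$ and using symmetry to rewrite the remaining double integral in the form $A(\underline h(2-1/\xi),1/[\xi(2-1/\xi)],J)$, Lemma \ref{lemma5.1} gives the asymptotics $A\sim \underline\lambda(2\underline h)^{2-\alpha}/[(\alpha-1)(2-\alpha)]$ or $A\sim 2\underline\lambda\,\underline h(\ln\underline h)^{1-\beta}/(\beta-1)$, in the two respective cases. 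A direct computation of $\underline h'$ then reduces the required inequality to $K^{\alpha-1}\leq (1-\epsilon)^{2}\,2^{2-\alpha}\mu\underline\lambda/(2-\alpha)$, respectively $K^{\beta}\leq (1-\epsilon)\,2\beta\mu\underline\lambda/(\beta-1)$. Thus we may pick $K$ arbitrarily close to $[2^{2-\alpha}\mu\underline\lambda/(2-\alpha)]^{1/(\alpha-1)}$, resp.\ $[2\beta\mu\underline\lambda/(\beta-1)]^{1/\beta}$. Applying Lemma \ref{cp4} yields $h(t)\geq \underline h(t)$ for $t\geq t_0$, and letting $\epsilon\to 0$ gives the claimed $\liminf$ bounds.

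The main obstacle is the subsolution inequality for $\underline u$ at the interface between the inner ($\underline u\equiv 1-\epsilon$) and outer ($\underline u=(1-\epsilon)[(1-|x|/\underline h)\xi]^{\rho}$) regions, where one has to balance three sources of error: the dispersal loss $-d\epsilon'\underline u$ produced by Lemma \ref{lem2.2a}, the time-derivative $\underline u_t$ generated by the moving support, and the KPP gain $f(\underline u)$. Carefully choosing $\rho\geq 2$, $\gamma$ small and $\epsilon'\ll f(1-\epsilon)/d$ (which is possible since $f'(1)<0$) makes all the inequalities work for $t\geq t_0$ with $t_0$ sufficiently large, while keeping $K$ as close as we wish to the optimal constant.
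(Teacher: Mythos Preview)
Your overall strategy---building a symmetric lower solution $(\underline u,\underline g,\underline h)$ with $\underline u$ a plateau capped by a power profile near the moving boundary---is exactly the paper's, and your Step~two (the free-boundary inequality via Lemma~\ref{lemma5.1}) is essentially correct. However, there is a genuine gap in Step~one.

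The claimed bound $\underline u_t\le \rho\,\underline u\bigl(\xi\,\underline h'/\underline h+\xi'/\xi\bigr)$ in the outer region is false. Writing $s:=1-|x|/\underline h\in(0,1/\xi]$, a direct computation gives
\[
\underline u_t=\rho\,\underline u\Bigl[\frac{1-s}{s}\cdot\frac{\underline h'}{\underline h}+\frac{\xi'}{\xi}\Bigr],
\]
and the factor $(1-s)/s$ is \emph{not} bounded by $\xi$; it ranges over $[\xi-1,\infty)$ and blows up as $|x|\to\underline h(t)$. In particular, at points with $\underline h-|x|$ of order~$1$ one has $\underline u_t/\underline u\sim \underline h'\to\infty$ (for $\alpha\in(1,2)$), so no inequality of the form $\underline u_t\le c\,\underline u$ can hold uniformly in the outer region, and the lower bound $\int J(x-y)\underline u\,dy\ge(1-\epsilon')\underline u$ from Lemma~\ref{lem2.2a} is useless there because $\underline u$ itself vanishes.

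The missing idea is that near the free boundary one must use the \emph{integral term itself}, not its lower bound in terms of $\underline u$. For $x$ close to $\underline h(t)$ the kernel $J(x-\cdot)$ still sees the plateau region $\{\underline u=1-\epsilon\}$, which yields $\int_{-\underline h}^{\underline h}J(x-y)\underline u(t,y)\,dy\ge C_2\,\underline h^{1-\alpha}$ (respectively $C_2(\ln\underline h)^{-\beta}$) uniformly for $|x|\in[(1-\epsilon)\underline h,\underline h]$, a quantity that does not vanish at the boundary. One then splits the outer region into a thin layer $|x|\ge \underline h-C_\epsilon(t)$, where this integral lower bound dominates the absolute size of $\underline u_t$ (controlled via the small factor $s^{\rho-1}$), and an intermediate zone $|x|\le\underline h-C_\epsilon(t)$, where $s$ is bounded below and hence $\underline u_t\le C_1\underline u$ does hold. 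This two-zone argument is precisely what the paper carries out in its Step~2. A minor secondary point: you invoke Lemma~\ref{cp4}, which requires $g(t)\le\underline g(t)$ a priori; since $\underline g=-\underline h$, this is circular. Use the two-sided comparison principle (Theorem~\ref{thm-CP}) instead, verifying both free-boundary inequalities via the symmetry of $J$ and $\underline u$.
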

\begin{proof}
	We construct a suitable lower solution to \eqref{e1}, which will lead to the desired estimate by the comparison principle.	
		
	Let $\rho> 2$ be a large  constant to be determined. 	For any given small $\epsilon>0$, define for $t\geq0$,
	\begin{equation*}
		\begin{cases}
			\underline{h}(t):=(K_1t+\theta)^{\frac{1}{\alpha-1}},~~\underline{g}(t):=-\underline{h}(t) &\mbox{ if \eqref{8.1} holds with }\alpha\in (1,2),\\[2mm]
			\underline{h}(t):=e^{K_1(t+\theta)^{1/\beta}},~~\underline{g}(t):=-\underline{h}(t) &\mbox{ if \eqref{jbeta} holds},
		\end{cases}
	\end{equation*}
and
\begin{align*}
		\underline{u}(t,x):=K_2	\min\lf\{1,	\Big[K_3\dd \frac{\underline h(t)-|x|}{\underline h(t)}\Big]^{\rho}\rr\} &\mbox{ for } t\geq 0,\ |x|\leq \underline{h}(t),
\end{align*}
where
\begin{equation*}
K_1:=	\begin{cases}
\dd (1-\epsilon)^2(2-\epsilon)^{2-\alpha} D_{\epsilon/(2-\epsilon)}(\alpha-1)\mu\underline \lambda&\mbox{ if \eqref{8.1} holds with } \alpha\in (1,2),\\[2mm]
\dd \Big[(1-\epsilon)^4\frac{2\beta\mu\underline \lambda}{\beta-1}\Big]^{1/\beta}&\mbox{ if \eqref{jbeta} holds},\\
	\end{cases}
\end{equation*}
\begin{align*}
 K_2:=1-\epsilon,\ \  K_3:=1/\epsilon, \ \ \theta\gg 1 \mbox{ and $D_{\epsilon/(2-\epsilon)}$ is given according to \eqref{Dm}. }
\end{align*}

It is easily seen that 
\[
\mbox{\color{red}$\underline u(t,x)\equiv K_2=1-\epsilon$ for $|x|\leq (1-\epsilon)\underline h(t)$}.
\]

Moreover, $\underline u$ is continuous, and $\underline u_t$ exists and is continuous except when  $|x|=(1-\epsilon)\underline h(t)$, where $\underline u_t$ has a jumping discontinuity.	In what follows, we check that $(\underline u, \underline g, \underline h)$ defined above forms a lower solution to \eqref{e1}. 	We will do this in three steps.
	
	{\bf Step 1}.	We  prove the inequality
	\begin{align}\label{5.6}
		\underline{h}'(t)\leq \mu \int_{-\underline{h}(t)}^{\underline{h}(t)}\int^{+\infty}_{\underline{h}(t)}J(y-x)\underline u(t,x)\rd y \rd x,
	\end{align}
	which immediately gives
	\begin{align*}
		\underline{g}'(t)\geq-\mu \int_{-\underline{h}(t)}^{\underline{h}(t)}\int_{-\infty}^{-\underline{h}(t)}J(y-x)\underline u(t,x)\rd y \rd x,
	\end{align*}
	due to $\underline u(t,x)=\underline u(t,-x)$ and $J(x)=J(-x)$.

		Using the definition of $\underline u$, we have
	\begin{align*}
		\mu \int_{-\underline h}^{\underline h} \int_{\underline h}^{+\yy} J(y-x) \underline u(t,x)\rd y\rd x&\geq  (1-\epsilon)\mu  \int_{-(1-\epsilon)\underline h}^{(1-\epsilon)\underline h} \int_{\underline h}^{+\yy} J(y-x) \rd y\rd x\\
		&= (1-\epsilon)\mu  \int_{-(2-\epsilon)\underline h}^{-\epsilon \underline h} \int_{0}^{+\yy} J(y-x) \rd y\rd x.
	\end{align*}
	Using  Lemma \ref{lemma5.1},  we obtain for large $\underline h$ (guaranteed by $\theta\gg 1$),
	\begin{align*}
	  &\dd \int_{-(2-\epsilon)\underline h}^{-\epsilon \underline h} \int_{0}^{+\yy} J(y-x) \rd y\rd x\geq  (1-\epsilon)\underline \lambda D_{\epsilon/(2-\epsilon)}[(2-\epsilon)\underline h]^{2-\alpha}\ \ \mbox{ if \eqref{8.1} holds with }\ \alpha\in (1,2),
	\end{align*}
and
\begin{align*}
	&\dd  \int_{-(2-\epsilon)\underline h}^{-\epsilon \underline h} \int_{0}^{+\yy} J(y-x) \rd y\rd x\geq  (1-\epsilon) \frac{(1-\frac{\epsilon}{2-\epsilon})\underline \lambda}{\beta-1}(2-\epsilon) \underline h \big[\ln (2-\epsilon) \underline h\big]^{1-\beta}\\
	&=(1-\epsilon)^2\frac{2\underline \lambda}{\beta-1} \underline h \big[\ln (2-\epsilon) \underline h\big]^{1-\beta}\geq
	(1-\epsilon)^3\frac{2\underline \lambda}{\beta-1} \underline h (\ln \underline h)^{1-\beta} \ \mbox{ if \eqref{jbeta} holds}.
\end{align*}
	Therefore, by the definition of $K_1$, when \eqref{8.1} holds with $\alpha\in (1,2)$, we have
	\begin{align*}
		&{\mu} \int_{-\underline h(t)}^{\underline h(t)} \int_{\underline h(t)}^{+\yy}  J(y-x) \underline u(t,x)\rd y\rd x\\
		&\geq  (1-\epsilon)^2\mu\underline \lambda D_{\epsilon/(2-\epsilon)}[(2-\epsilon)\underline h(t)]^{2-\alpha}\\
		&= (1-\epsilon)^2\mu\underline \lambda D_{\epsilon/(2-\epsilon)}(2-\epsilon)^{2-\alpha}(K_1t+\theta)^{(2-\alpha)/({\alpha}-1)}\\
		&=  \frac{K_1}{{\alpha}-1} (K_1t+\theta)^{(2-\alpha)/({\alpha}-1)}=\underline h'(t);
	\end{align*}
and when \eqref{jbeta} holds, we have
	\begin{align*}
	&{\mu} \int_{-\underline h(t)}^{\underline h(t)} \int_{\underline h(t)}^{+\yy}  J(y-x) \underline u(t,x)\rd y\rd x\\
	&\geq\mu (1-\epsilon)^4\frac{2\underline \lambda}{\beta-1} \underline h (\ln \underline h)^{1-\beta}\\
	&= \frac{K_1^\beta}{\beta}\underline h (\ln \underline h)^{1-\beta}=\underline h'(t).
\end{align*}
		This proves  \eqref{5.6}.

	{\bf Step 2}.	We prove the following inequality for $t>0$ and $|x|\in [0, \underline h(t)]\setminus\{(1-\epsilon)\underline h(t)\}$,
	\begin{align}\label{5.7}
	\underline u_t\leq  &\ d \int_{-\underline h}^{\underline h}  {J}(x-y) \underline u(t,y)\rd y -d\underline u+f(\underline u).
\end{align}

From the definition of $\underline u$, we see that
\begin{align*}
		\underline{u}_t=0 \ \mbox{ for } |x|< (1-\epsilon)\underline h(t),
\end{align*}
and for $ (1-\epsilon)\underline h(t)<|x|<\underline h(t)$, if \eqref{8.1} holds with $\alpha\in (1,2)$, then
	\begin{align}\label{5.8}
		\underline{u}_t=K_2K_3^{\rho}\rho\lf(\frac{\underline h-|x|}{\underline h}\rr)^{\rho-1}\frac{\underline{h}'|x|}{\underline{h}^2}=\frac{K_1K_2K_3^{\rho}\rho}{\alpha-1}\lf(\frac{\underline h-|x|}{\underline h}\rr)^{\rho-1}\frac{|x|}{\underline{h}}\underline{h}^{1-\alpha},
	\end{align}
	where we have used $\underline h'=\frac{K_1}{\alpha-1}\underline h^{2-\alpha}$;
 and if \eqref{jbeta} holds, then
\begin{align*}
		\underline{u}_t=K_2K_3^{\rho}\rho\lf(\frac{\underline h-|x|}{\underline h}\rr)^{\rho-1}\frac{\underline{h}'|x|}{\underline{h}^2}=\frac{K_1^\beta K_2K_3^{\rho}\rho}{\beta}\lf(\frac{\underline h-|x|}{\underline h}\rr)^{\rho-1}\frac{|x|}{\underline{h}}(\ln \underline h)^{1-\beta},
\end{align*}
	where we have utilized $\underline h'=\frac{K_1^\beta}{\beta}\underline h (\ln \underline h)^{1-\beta}$.

	{\bf Claim}. There is $C_1=C_1(\epsilon)>0$ such that for $x\in [-\underline h(t),\underline h(t)]$ and $t\geq 0$,
\begin{align*}
	d \int_{-\underline h(t)}^{\underline h(t)}  {J}(x-y) \underline u(t,y)\rd y -d \underline u+f(\underline u)
	\geq  C_1\lf[\int_{-\underline h(t)}^{\underline h(t)}  {J}(x-y) \underline u(t,y)\rd y+\underline u\rr].
\end{align*}

The definition of $\underline u$ indicates   $0\leq \underline u(t,x)\leq K_2=1-\epsilon<1$. By the properties of $f$, there exists  $ \wtd C_1:= \wtd C_1(\epsilon)\in (0, d)$ such that 
\begin{align*}
	f(s)\geq  \wtd C_1 s\ \mbox{ for }\ s\in [0,K_2]. 
\end{align*}
Using Lemma \ref{lem2.2a} with
\[
(L(t),\phi(t,x),\xi(t))=(\underline h(t), \underline u(t,x)/K_2,K_3), 
\]
 for any given small $\delta>0$, we can find large $h_*=h_*(\delta,\epsilon)$ such that for $\underline h\geq h_*$ and $|x|\leq \underline h$,
\begin{align*}
	&\int^{\underline h}_{-\underline h}J(x-y)\underline{u}(t,y)dy\geq (1-\delta)\underline u(t,x).
\end{align*}
Hence, due to $d>\wtd C_1$, 
\begin{align*}
		&d \int_{-\underline h}^{\underline h}  {J}(x-y) \underline u(t,y)\rd y -d \underline u(t,x)+f(\underline u(t,x))\\
		&\geq d\int_{-\underline h}^{\underline h}  {J}(x-y) \underline u(t,y)\rd y +(\wtd C_1-d) \underline u(t,x)\\
		&{\geq  \frac{\wtd C_1}3 \int_{-\underline h}^{\underline h}  {J}(x-y) \underline u(t,y)\rd y+(d-\frac{\wtd C_1}3)(1-\delta)\underline u(t,x)+(\wtd C_1-d) \underline u(t,x)}\\
		&\geq {\frac{\wtd C_1}3}\lf[\int_{-\underline h(t)}^{\underline h(t)}  {J}(x-y) \underline u(t,y)\rd y+\underline u(t,x)\rr],
\end{align*}
 provided that $\delta=\delta(\epsilon)>0$ is sufficiently small. Thus the claim holds with $C_1=\wtd C_1/3$.

 To verify \eqref{5.7}, it remains to prove
\begin{align}\label{5.9}
	\underline u_t\leq C_1\lf[\int_{-\underline h}^{\underline h}  {J}(x-y) \underline u(t,y)\rd y+\underline u\rr]\ \mbox{ for } \ |x|\in [0, \underline h(t)]\setminus\{(1-\epsilon)\underline h(t)\}.
\end{align} 
Since $\underline u(x,t)\equiv 1-\epsilon$  for $|x|<(1-\epsilon)\underline h(t)$, \eqref{5.9} holds trivially   for such $x$. Hence we only need to consider  the case of $(1-\epsilon)\underline h(t)<|x|<\underline h(t)$.  

	Since  $\theta\gg 1$ and $0<\epsilon\ll 1$,  for $x\in [7\underline h(t)/8,\underline h(t)]\supset [(1-\epsilon)\underline h(t), \underline h(t)]$, we have
\begin{align*}\label{Jac}
&\int_{-\underline h}^{\underline h}  {J}(x-y) \underline u(t,y)\rd y\geq \int_{-7\underline h/8}^{7\underline h/8}  {J}(x-y) \underline u(t,y)\rd y\geq K_2\int_{-7\underline h/8}^{7\underline h/8}  {J}(x-y) \rd y\nonumber\\
=&\ (1-\epsilon)\int_{-7\underline h/8-x}^{7\underline h/8-x}  {J}(y) \rd y\geq (1-\epsilon)\int_{-\underline h/4}^{-\underline h/8}   {J}(y) \rd y{\color{red}
=(1-\epsilon)\int^{\underline h/4}_{\underline h/8}   {J}(y) \rd y.}
\end{align*}
Hence, when \eqref{8.1} holds with $\alpha\in (1,2)$, we obtain
\begin{align*}
	&\int_{-\underline h}^{\underline h}  {J}(x-y) \underline u(t,y)\rd y\geq  \frac{\underline \lambda}2\int_{\underline h/8}^{\underline h/4} y^{-\alpha}\rd y\nonumber=\ \frac{(8^{\alpha-1}-4^{\alpha-1})\underline \lambda}{2(\alpha-1)}\underline{h}^{1-\alpha}=:C_2\underline{h}^{1-\alpha},
\end{align*}
and when \eqref{jbeta} holds, we have
\begin{align*}
	&\int_{-\underline h}^{\underline h}  {J}(x-y) \underline u(t,y)\rd y\geq  \frac{\underline \lambda}2\int_{\underline h/8}^{\underline h/4} y^{-1} (\ln y)^{-\beta}\rd y\nonumber>{\frac{\underline \lambda\,\underline h}{16} y^{-1} (\ln y)^{-\beta}|_{y=\underline h/4}\geq}  \ \frac{\underline \lambda}{4}(\ln \underline{h})^{-\beta}=:\wtd C_2(\ln \underline{h})^{-\beta}.
\end{align*}
Similar estimates hold for   $x\in [-\underline h(t),-7\underline h(t)/8]$. 

Now, if \eqref{8.1} holds with  $\alpha\in (1,2)$, then for   $|x|\in [(1-C_\epsilon)\underline h(t), \underline h(t)]$ with
\[C_\epsilon:=\Big[\frac{C_1C_2(\alpha-1)}{K_1K_2\rho}\epsilon^{\rho}\Big]^{1/(\rho-1)}, 
\]
 we have
\begin{align*}
\underline	u_t-C_1	\int_{-\underline h}^{\underline h}  {J}(x-y) \underline u(t,y)\rd y&\leq \frac{K_1K_2K_3^{\rho}\rho}{\alpha-1}\lf(\frac{\underline h-|x|}{\underline h}\rr)^{\rho-1}\underline{h}^{1-\alpha}-C_1C_2\underline{h}^{1-\alpha}\\
&\leq \Big[\frac{K_1K_2K_3^{\rho}\rho}{\alpha-1}C_\epsilon^{\rho-1}-C_1C_2\Big]\underline{h}^{1-\alpha}= 0,
\end{align*}
and for  $(1-\epsilon)\underline h(t)<|x|\leq (1-C_\epsilon)\underline h(t)$,   using the definition of $\underline u$, we obtain
\begin{align*}
	\underline	u_t-C_1 \underline	u=&\lf[\frac{K_1\rho}{\alpha-1}\lf(\frac{\underline h-|x|}{\underline h}\rr)^{-1}\frac{|x|}{\underline{h}}\underline{h}^{1-\alpha}-C_1\rr]\underline u\\
	\leq&\lf[\frac{K_1\rho}{C_\epsilon(\alpha-1)} \underline{h}^{1-\alpha}-C_1\rr]\underline u\leq 0
\end{align*}
since $\theta\gg 1$  and $\underline h(t)\geq \theta^{1/(\alpha-1)}$, $1-\alpha<0$. We have thus proved \eqref{5.9}. 
	
We next deal with the case that \eqref{jbeta} holds. If  $|x|$  satisfies 
	\begin{align*}
		\underline h(t)\geq |x|\geq  \Big[1-\frac{\wtd C_\epsilon}{(\ln \underline h(t))^{1/(\rho-1)}}\Big]\underline h(t),
	\end{align*}
	with
	\[
	\wtd C_\epsilon:= \lf[\frac{C_1\wtd C_2\beta}{K_1^\beta K_2K_3^{\rho}\rho}\rr]^{1/(\rho-1)}= \lf[\frac{C_1\wtd C_2\beta\epsilon^\rho}{K_1^\beta K_2\rho}\rr]^{1/(\rho-1)},
	\]
	then $|x|\in [7\underline h(t)/8,\underline h(t)]$ and
	\begin{align*}
		\underline	u_t-C_1	\int_{-\underline h}^{\underline h}  {J}(x-y) \underline u(t,y)\rd y&\leq \frac{K_1^\beta K_2K_3^{\rho}\rho}{\beta}\lf(\frac{\underline h-|x|}{\underline h}\rr)^{\rho-1}(\ln \underline h)^{1-\beta}-C_1\wtd C_2(\ln \underline{h})^{-\beta}\\
		&\leq \Big[\frac{K_1^\beta K_2K_3^{\rho}\rho}{\beta}\wtd C_\epsilon^{\rho-1}-C_1\wtd C_2\Big](\ln \underline{h})^{-\beta}=
		0.
	\end{align*}
	For  $(1-\epsilon)\underline h<|x|\leq [1-\frac{\wtd C_\epsilon}{(\ln \underline h)^{1/(\rho-1)}}]\underline h$,   from the definition of $\underline u$, we deduce
	\begin{align*}
		\underline	u_t-C_1 \underline	u=&\lf[\frac{K_1^\beta \rho}{\beta}\lf(\frac{\underline h-|x|}{\underline h}\rr)^{-1}\frac{|x|}{\underline{h}}(\ln \underline h)^{1-\beta}-C_1\rr]\underline u\\
		\leq&\lf[\frac{K_1^\beta \rho}{\wtd C_\epsilon\beta}(\ln \underline h)^{1-\beta+(\rho-1)^{-1}}-C_1\rr]\underline u\leq 0
	\end{align*}
	since  $\underline h(t)\geq e^{K_1\theta^{1/\beta}}\gg 1$ and we may choose $\rho$ large enough such that $1-\beta+(\rho-1)^{-1}<0$. The desired inequality \eqref{5.9} is thus proved.

	{\bf Step 3}. Completion of the proof by the comparison principle.
	
	Since spreading happens, there is $t_0>0$ large enough such that $[g(t_0),h(t_0)]\supset[-\underline{h}(0),\underline{h}(0)]$, and also
	\begin{align*}
		u(t_0,x)\geq K_2=1-\epsilon \geq \underline{u}(0,x)\ \  \mbox{ for } 	x\in[-\underline{h}(0),\underline{h}(0)].
	\end{align*}
Moreover,	from the definition of $\underline u$, we see $\underline{u}(x,t)=0$ for $x=\pm \underline{h}(t)$ and $t\geq0$. Thus we are in a position to apply  the comparison principle to conclude that
	\[
	-\underline h(t)\geq g(t_0+t),
	\ \ \underline{h}(t)\leq h(t_0+t) \mbox{ for }t\geq0.
	\]
The desired conclusion then follows from the   arbitrariness of $\epsilon>0$ and the fact that  $D_{\epsilon/(2-\epsilon)}\to D_0$ as $\epsilon\to 0$. 	The proof is finished.
\end{proof}

\subsubsection{The case  that \eqref{8.1} holds with ${\alpha}=2$}

\begin{lemma}\label{lemma7.5}
If the conditions in Lemma {\rm \ref{lemma5.3}}  are satisfied except that $J$ satisfies \eqref{8.1} with ${\alpha}=2$, then 
	\begin{align}\label{7.13a}
	\dd \liminf_{t\to\yy}\frac{h(t)}{t\ln t}\geq \dd	 \mu \underline \lambda. 
	\end{align}
\end{lemma}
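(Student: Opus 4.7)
The strategy is to mimic the three-step lower-solution construction of Lemma \ref{lemma5.3}: choose $\underline h(t)$ with the expected asymptotic rate, a profile $\underline u$ that is $\approx K_2=1-\epsilon$ on an interior plateau and decays polynomially across a narrow boundary layer, then verify the free-boundary inequality (Step 1), the differential inequality (Step 2), and conclude by the comparison principle. Since the asymptotic relation we want is $\underline h'(t) \sim K_1 \ln \underline h(t)$ with $K_1$ slightly below $\mu\underline\lambda$, I will take
\[
\underline h(t) := K_1(t+\theta)\ln(t+\theta),\quad \underline g(t) := -\underline h(t),\quad \theta \gg 1,
\]
so that $\underline h'(t) = K_1[\ln(t+\theta)+1] = K_1 \ln\underline h(t)\,(1+o(1))$ as $t\to\infty$.

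The essential new feature compared with Lemma \ref{lemma5.3} is the width of the boundary layer. Because Lemma \ref{lemma5.1} for $\alpha=2$ produces $\ln k$ behaviour only when the inner limit of integration is $k^\delta$ rather than $\epsilon k$, a plateau of relative width $(1-\epsilon)\underline h$ would give only an $O(1)$ outer integral. I therefore fix small $\delta\in(0,1)$, take $\rho\geq 2$ large (with $\delta\rho>1$, to be used in Step 2), and set
\[
\underline u(t,x) := K_2\min\Big\{1,\ \Big[\frac{\underline h(t)-|x|}{\underline h(t)^{\delta}}\Big]^{\rho}\Big\}, \quad K_2=1-\epsilon.
\]
Thus $\underline u\equiv K_2$ on the plateau $|x|\le \underline h-\underline h^{\delta}$ and decays polynomially to $0$ across a boundary layer of width $\underline h^{\delta}$. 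This matches Lemma \ref{lem2.2a} with $L(t)=\underline h(t)$ and $\xi(t)=\underline h(t)^{1-\delta}$, so $\int_{-\underline h}^{\underline h}J(x-y)\underline u(t,y)\,dy\geq (1-\eta)\underline u(t,x)$ for arbitrarily small $\eta>0$ once $\underline h(t)^{\delta}\geq \theta^*$.

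For Step 1, bounding from below by restricting $x$ to the plateau and using the symmetry of $J$ together with the substitutions $s=\underline h-x$, $w=y-\underline h$, the right-hand side of the free-boundary inequality becomes $\mu K_2\int_{\underline h^\delta}^{2\underline h-\underline h^\delta}\int_0^\infty J(s+w)\,dw\,ds$, which contains $\mu K_2\int_{\underline h^\delta}^{\underline h}\int_0^\infty J(s+w)\,dw\,ds$. Applying Lemma \ref{lemma5.1} with $k=\underline h$ yields the lower bound $\mu(1-\epsilon)(1-\delta)\underline\lambda \ln\underline h(t)+o(\ln\underline h(t))$, which dominates $\underline h'(t)\sim K_1\ln\underline h(t)$ as soon as $K_1 < \mu(1-\epsilon)(1-\delta)\underline\lambda$ and $\theta$ is large enough.

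Step 2 is the main obstacle. As in Lemma \ref{lemma5.3} one first derives a ``Claim'' of the form $d\int J\underline u-d\underline u+f(\underline u)\geq C_1\big[\int J\underline u+\underline u\big]$ from $\mathbf{(f_{KPP})}$ and Lemma \ref{lem2.2a}. The remaining task is $\underline u_t\leq C_1[\int J\underline u+\underline u]$. A direct calculation gives, on the boundary layer,
\[
\underline u_t \approx K_2\rho A^{\rho-1}\,K_1\ln\underline h\cdot\underline h^{-\delta}, \qquad A:=(\underline h-|x|)\underline h^{-\delta}\in[0,1].
\]
Unlike in Lemma \ref{lemma5.3}, the factor $\ln\underline h\cdot\underline h^{-\delta}$ decays only slowly, so the split of the boundary layer into a near-boundary region (where $A^{\rho-1}$ is small and one controls $\underline u_t$ by the lower bound $\int J\underline u\gtrsim \underline\lambda/\underline h$ computed as in Lemma \ref{lemma5.3}) and a middle region (where $\underline u$ itself is bounded below and one uses $\underline u_t\le C_1\underline u$) must be performed at a split-point $C_\epsilon(t)\sim [\underline h^{\delta-1}/\ln\underline h]^{1/(\rho-1)}$ that depends on $t$. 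The choice $\delta\rho>1$ ensures the two sub-region estimates overlap for all large $\underline h$, so the differential inequality holds throughout the layer. Step 3 is unchanged: spreading supplies an initial time $T$ with $u(T,\cdot)\ge K_2$ on $[-\underline h(0),\underline h(0)]$, the comparison principle yields $h(t+T)\ge\underline h(t)$, and letting first $\delta\to 0$ then $\epsilon\to 0$ gives $\liminf_{t\to\infty} h(t)/(t\ln t)\ge \mu\underline\lambda$.
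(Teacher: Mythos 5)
Your construction is correct and follows the same overall strategy as the paper's proof: a lower solution with $\underline h(t)=K_1(t+\theta)\ln(t+\theta)$, a plateau profile, Lemma \ref{lemma5.1} for the free-boundary inequality, the ``Claim'' via Lemma \ref{lem2.2a}, and a split of the boundary layer for the differential inequality. There are two cosmetic-but-real points where you deviate. First, your boundary-layer width is $\underline h(t)^{\delta}$, while the paper uses $(t+\theta)^{\tilde\epsilon}$; these are essentially equivalent (both polynomially narrow in $t$), and your choice in fact lines up a bit more directly with the $k^{\delta}$ inner limit in Lemma \ref{lemma5.1}. Second, in the near-boundary subregion of Step 2 you use the cruder estimate $\int J\,\underline u\gtrsim \underline\lambda/\underline h$ (the literal $\alpha=2$ analogue of the Lemma \ref{lemma5.3} calculation), whereas the paper exploits the narrowness of the layer to get the much stronger $\int J\,\underline u\gtrsim \underline\lambda/\eta$ with $\eta$ the layer width; this is what pushes your split-point down to $A\sim[\underline h^{\delta-1}/\ln\underline h]^{1/(\rho-1)}$ and forces the side condition $\delta\rho>1$ to make the two subregion estimates overlap (your overlap computation $\underline h^{\delta\rho-1}\gtrsim(\ln\underline h)^{\rho}$ is exactly where $\delta\rho>1$ is used). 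The paper's sharper integral bound makes the split-point only logarithmically small, so no constraint relating $\rho$ to $\delta$ (or $\tilde\epsilon$) is needed there. Both routes are valid; yours costs a bit more bookkeeping ($\rho$ must grow as $\delta\to0$, though the constants $C_1,C_2$ do not depend on $\rho$, so no circularity arises), but it does yield $\liminf_{t\to\infty}h(t)/(t\ln t)\geq\mu\underline\lambda$ after sending $\delta,\epsilon\to 0$.
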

\begin{proof}
For fixed $\rho\geq 2$,  $0<\epsilon \ll 1$,  $0<\td \epsilon\ll 1$ and $\theta\gg 1$,  define 
	\[
	\begin{cases}
	\underline h(t):=K_1(t+\theta)\ln (t+\theta), & t\geq 0,\\
	\dd	\underline u(t,x):=K_2\min\lf\{1, \lf[\frac{\underline h(t)-|x|}{(t+\theta)^{\td\epsilon}}\rr]^{\rho}\rr\}, & t\geq 0,\ x\in [-\underline h(t), \underline h(t)],
	\end{cases}
	\]
	where
	\begin{align*}
		K_1:=(1-\epsilon)^3 (1-\td\epsilon)\mu \underline \lambda,\ \ K_2:=1-\epsilon.
	\end{align*}
	Note that
	\[
	{\color{red} \underline u(t,x)=K_2=1-\epsilon \mbox{ for } |x|\leq \underline h(t)-(t+\theta)^{\td\epsilon}.}
	\]
	Obviously, for any $t> 0$, $\partial_t \underline u(t,x)$ exists for $x\in [-\underline h(t), \underline h(t)]$  except 
	when $|x|=\underline h(t)-(t+\theta)^{\td\epsilon}$. However, the one-sided partial derivatives  $\partial_t\underline u(t\pm 0, x)$ always exist.

	{\bf Step 1.}	 We show that for  $\theta\gg 1$,
	\begin{align}\label{5.13}
	&\underline h'(t)\leq   \mu \int_{-\underline h(t)}^{\underline h(t)} \int_{\underline h(t)}^{+\yy} J(y-x) \underline u(t,x)\rd y\rd x\  \mbox{ for }  t> 0,
	\end{align}
	which clearly implies,  due to $\underline u(t,x)=\underline u(t,-x)$ and ${J}(x)={J}(-x)$, that
	\[
	-\underline h'(t)\geq -  \mu\int_{-\underline h(t)}^{\underline h(t)} \int_{-\yy}^{-\underline h(t)} J(y-x) \underline u(t,x)\rd y\rd x\   \mbox{ for }   t> 0.
	\]

Making use of the definition of $\underline u$ and 
\begin{align*}
\big[-2(1-\epsilon)\underline h,-[2(1-\epsilon)\underline h]^{\td\epsilon}\big]\subset 	[-2\underline h+(t+\theta)^{\td\epsilon},-(t+\theta)^{\td\epsilon}]
\end{align*}
for  $\theta\gg 1$, we obtain
	\begin{align*}
	&\mu \int_{-\underline h}^{\underline h} \int_{\underline h}^{+\yy} J(y-x) \underline u(t,x)\rd y\rd x\geq  (1-\epsilon)\mu  \int_{-\underline h+(t+\theta)^{\td\epsilon}}^{\underline h-(t+\theta)^{\td\epsilon}} \int_{\underline h}^{+\yy} J(y-x) \rd y\rd x\\
	=&\ (1-\epsilon)\mu  \int_{-2\underline h+(t+\theta)^{\td\epsilon}}^{-(t+\theta)^{\td\epsilon}} \int_{0}^{+\yy} J(y-x) \rd y\rd x\geq (1-\epsilon)\mu  \int_{-2(1-\epsilon)\underline h}^{-[2(1-\epsilon)\underline h]^{\td\epsilon}} \int_{0}^{+\yy} J(y-x) \rd y\rd x.
\end{align*}
	Thanks to Lemma \ref{lemma5.1},   for large $\underline h$ (which is guaranteed by $\theta\gg1$),
	\begin{align*}
	\int_{-2(1-\epsilon)\underline h}^{-[2(1-\epsilon)\underline h]^{\td\epsilon}} \int_{0}^{+\yy} J(y-x) \rd y\rd x\geq (1-\epsilon)(1-\td\epsilon)\underline \lambda\ln [2(1-\epsilon)\underline h].
	\end{align*}
	Hence, with $\theta\gg1$, we have
	\begin{align*}
		&{\mu} \int_{-\underline h(t)}^{\underline h(t)} \int_{\underline h(t)}^{+\yy}  J(y-x) \underline u(t,x)\rd y\rd x\\
		&\geq (1-\epsilon)^2\mu(1-\td\epsilon)\underline \lambda\ln [2(1-\epsilon)\underline h]\\
		&= (1-\epsilon)^2\mu(1-\td\epsilon)\underline \lambda\big\{\ln (t+\theta)+\ln [ \ln(t+\theta)]+\ln [2(1-\epsilon)K_1]\big\}\\
		&\geq K_1\ln (t+\theta)+K_1=\underline h'(t) \mbox{ for  all $t>0$,}
	\end{align*}
	 which 	proves \eqref{5.13}.

	{\bf Step 2.} We show that	  for $t>0$ and $x\in [-\underline h(t),\underline h(t)]$ with $|x|\not=\underline h(t)-(t+\theta)^{\td\epsilon}$, 
	\begin{align}\label{5.14}
	\underline u_t(t,x)\leq  d \int_{-\underline h(t)}^{\underline h(t)}  {J}(x-y) \underline u(t,y)\rd y -d\underline u(t,x)+f(\underline u(t,x))
	\end{align}
 for  $\theta\gg1$.
	
	From the definition of $\underline u$, we obtain by direct calculation that, for $t>0$,
	\begin{align}\label{5.15}
	\underline u_t(t,x)=\begin{cases} 
		\rho K_2^{\rho^{-1}} \underline u^{1-\rho^{-1}}\lf[K_1\frac{(1-\td\epsilon)\ln (t+\theta)+1}{(t+\theta)^{\td\epsilon}} +\frac{\td\epsilon |x|}{(t+\theta)^{1+\td\epsilon}}\rr]& \mbox{ if } \underline h(t)-(t+\theta)^{\td\epsilon}<|x|\leq \underline h(t),\\
	0 & \mbox{ if } 0\leq |x|< \underline h(t)-(t+\theta)^{\td\epsilon}.
	\end{cases}
	\end{align}
	
	Making use of  Lemma \ref{lem2.2a}  with 
	\[
	(L(t),\phi(t,x),\xi(t))=(\underline h(t),\underline u(t,x)/K_2, \frac{\underline h(t)}{(t+\theta)^{\td\epsilon}}), 
	\]
	   for any given small $\delta>0$, we can find a large $\theta_*=\theta_*(\delta,\epsilon)$ such that for $\theta\geq \theta_*$ and $|x|\leq \underline h(t)$,
	\begin{align*}
		&\int^{\underline h(t)}_{-\underline h(t)}J(x-y)\underline{u}(y,t)dy\geq (1-\delta)\underline u(x,t).
	\end{align*}
Then, a similar analysis as in the proof of Lemma \ref{lemma5.3} shows  that there exists $C_1>0$, depending on $\epsilon$ and $\delta$, such that for $\theta\gg 1$, $x\in [-\underline h(t),\underline h(t)]$ and $t\geq 0$,
\begin{align*}
	d \int_{-\underline h(t)}^{\underline h(t)}  {J}(x-y) \underline u(t,y)\rd y -d \underline u+f(\underline u)
	\geq  C_1\lf[\int_{-\underline h(t)}^{\underline h(t)}  {J}(x-y) \underline u(t,y)\rd y+\underline u\rr].
\end{align*}
Hence, to verify \eqref{5.14}, we only need to show that 
\begin{align}\label{5.16}
	\underline u_t\leq C_1\lf[\int_{-\underline h(t)}^{\underline h(t)}  {J}(x-y) \underline u(t,y)\rd y+\underline u\rr] \ \mbox{ for } \ |x|\in [0, \underline h(t)]\setminus\{ \underline h(t)-(t+\theta)^{\td\epsilon}\}.
\end{align}

Clearly, \eqref{5.16} holds trivially for $0\leq |x|<\underline h(t)-(t+\theta)^{\td\epsilon}$ due to $\underline u_t=0$ for such $x$. We next consider the remaining  case $\underline h(t)-(t+\theta)^{\td\epsilon}<|x|<\underline h(t)$.

Denote $\eta=\eta(t):=(t+\theta)^{{\td\epsilon}}$. 
	Using $\theta\gg1$ and \eqref{8.1},  we obtain, for $x\in [\underline h(t)-\eta(t),\underline h(t)]$,
\begin{align*}
	&\int_{-\underline h}^{\underline h}  {J}(x-y) \underline u(t,y)\rd y\geq \int_{-\underline h+\eta}^{\underline h-\eta}  {J}(x-y) \underline u(t,y)\rd y= K_2\int_{-\underline h+\eta}^{\underline h-\eta}   {J}(x-y) \rd y\nonumber\\
	&=K_2\int_{-\underline h+\eta-x}^{\underline h-\eta-x}  {J}(y) \rd y\geq K_2\int_{-\underline h}^{-\eta}   {J}(y) \rd y\geq  \frac{K_2\underline\lambda}2\int_{\eta}^{\underline h} y^{-2}\rd y\nonumber\\
	&=\frac{K_2\underline\lambda}{2}(\eta^{-1}-\underline{h}^{-1})\geq \frac{(1-\epsilon)\underline\lambda}{4}\eta^{-1}=:C_2(t+\theta)^{-\td\epsilon}.
\end{align*}

The same estimate also holds for   $x\in [-\underline h(t),-\underline h(t)+\eta(t)]$.  Therefore, for $|x|\in [\underline h(t)-\eta(t),\underline h(t)]$, due to $\rho>2$ and $0<\td\epsilon\ll1$,
we have
\begin{align*}
	&\underline u_t(t,x)-C_1\int_{-\underline h}^{\underline h}  {J}(x-y) \underline u(t,y)\rd y\\
	&\leq  	\rho K_2^{1/\rho} \underline u^{(\rho-1)/\rho}\lf[K_1\frac{(1-{\td\epsilon})\ln (t+\theta)+1}{(t+\theta)^{{\td\epsilon}}} +\frac{{\td\epsilon} \underline h}{(t+\theta)^{1+{\td\epsilon}}}\rr]-C_1C_2(t+\theta)^{-\td\epsilon}\\
	&\leq 2K_1	\rho K_2^{1/\rho} \underline u^{(\rho-1)/\rho}\frac{\ln (t+\theta)}{(t+\theta)^{{\td\epsilon}}} -C_1C_2(t+\theta)^{-{\td\epsilon}}\\
	&=\frac{2K_1	\rho K_2 [(\underline h-|x|)/(t+\theta)^{{\td\epsilon}}]^{\rho-1}\ln (t+\theta)-C_1C_2}{(t+\theta)^{{\td\epsilon}}} 
		\leq 0
\end{align*}
if $|x|$ further satisfies 
\begin{align*}
	|x|\geq \underline h(t)-\lf(\frac{C_1C_2}{2K_1	\rho K_2}\rr)^{1/(\rho-1)}\frac{(t+\theta)^{\td\epsilon}}{[\ln (t+\theta)]^{1/(\rho-1)}}=:\underline h(t)-C_3\frac{(t+\theta)^{{\td\epsilon}}}{[\ln (t+\theta)]^{1/(\rho-1)}}.
\end{align*}

On the other hand, for $\underline h(t)-(t+\theta)^{\td\epsilon}<|x|<\underline h(t)-C_3{(t+\theta)^{{\td\epsilon}}}/[\ln (t+\theta)]^{1/(\rho-1)}$, using \eqref{5.15} and $0<\td\epsilon\ll 1$, $\theta\gg1$,  we deduce
\begin{align*}
	\underline u_t-C_1\underline u&
	\leq 2K_1	\rho K_2^{1/\rho} \underline u^{(\rho-1)/\rho}\frac{\ln (t+\theta)}{(t+\theta)^{{\td\epsilon}}} -C_1\underline u\\
	&=\underline u \lf(\frac{2K_1	\rho  [(\underline h-|x|)/(t+\theta)^{{\td\epsilon}}]^{-1/\rho}\ln (t+\theta)}{(t+\theta)^{{\td\epsilon}}}-C_1\rr)\\
	&\leq \underline u \lf(\frac{2K_1	\rho [\ln (t+\theta)]^{1+\frac1{\rho(\rho-1)}}}{C_3^{1/\rho}(t+\theta)^{{\td\epsilon}}}-C_1\rr)<0.
\end{align*}
Hence, \eqref{5.16} holds true. This concludes Step 2. 

\smallskip
	
	{\bf Step 3.} We finally prove \eqref{7.13a}.
	
The definition of $\underline u$ clearly gives  $\underline u(t,\pm \underline h(t))=0$ for $t\geq 0$.  Since spreading happens for $(u,g,h)$ and $K_2=1-\epsilon<1$, there is a large constant $t_0>0$ such that 
	\begin{align*}
	&[-\underline h(0),\underline h(0)]\subset (g (t_0),h(t_0)),\\
	&\underline u(0,x)\leq K_2\leq u(t_0,x)\ \mbox{ for } \ x\in [-\underline h(0),\underline h(0)].
	\end{align*}
	 It follows that
	\begin{align*}
	&[-\underline h(t),\underline h(t)]\subset [g(t+t_0),h(t+t_0)] \ && \mbox{ for } t\geq 0,\\
	& \underline u(t,x)\leq u(t+t_0,x)&& \mbox{ for } t\geq 0,\ x\in [-\underline h(t),\underline h(t)],
	\end{align*}
which implies
\begin{align*}
		\dd \liminf_{t\to\yy}\frac{h(t)}{t\ln t}\geq (1-\epsilon)^3 (1-{\td\epsilon})\mu\underline \lambda.
\end{align*}
Since $\epsilon>0$ and ${\td\epsilon}>0$ can be arbitrarily small,  we thus obtain	\eqref{7.13a}  by letting $\epsilon\to 0$ and ${\td\epsilon}\to 0$. This completes the proof of the lemma.	
\end{proof}

\subsection{Upper bounds} 
Recall that we will only state and prove the conclusions for $h(t)$, as the corresponding conclusion for $-g (t)$ follows directly by considering the problem with initial function $u_0(-x)$.
\begin{lemma}\label{lem2.2}
Assume that $J$ satisfies {\bf (J)} and one of the conditions \eqref{8.1} and \eqref{jbeta}, $f$ satisfies {\bf (f)}, and spreading happens to \eqref{e1}.  Then
	\begin{equation}\label{8.2a}
 \begin{cases} 
 	\dd \limsup_{t\to\yy}\frac{h(t)}{t^{1/({\alpha}-1)}}\leq \Big(\frac{2^{2-\alpha}}{2-\alpha} \mu\bar \lambda\Big)^{1/(\alpha-1)} &\mbox{ if \eqref{8.1} holds with }\ {\alpha}\in (1,2),\smallskip\\
\dd \limsup_{t\to\yy}\frac{h(t)}{t\ln t}\leq \dd	 \mu \bar \lambda &\mbox{ if  \eqref{8.1} holds with } \alpha=2,\\[2mm]
	\dd  \limsup_{t\to\yy}\frac{\ln h(t)}{t^{1/\beta}}\leq \lf( \frac{2\beta\mu\overline \lambda}{\beta-1}\rr)^{1/\beta} & \mbox{ if \eqref{jbeta} holds.}
\end{cases}
\end{equation}
\end{lemma}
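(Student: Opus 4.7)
The strategy is to construct an explicit upper solution $(\bar u,\bar g,\bar h)$ of \eqref{e1} whose right boundary $\bar h$ grows at precisely the target rate in \eqref{8.2a}, and then invoke the comparison principle Theorem \ref{thm-CP} to conclude $h(t)\leq\bar h(t)$. Throughout, I will exploit the symmetry $J(x)=J(-x)$ to set $\bar g(t)=-\bar h(t)$, so that the analogous upper bound for $-g(t)$ follows automatically by the same construction. The proof is parallel to, but simpler than, the lower-bound construction in Lemmas \ref{lemma5.3} and \ref{lemma7.5}, because here a constant-in-$x$ profile for $\bar u$ suffices.

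\textbf{Reduction to a constant upper profile.} Since $v(t)$ defined by $v'=f(v),\ v(0)=\|u_0\|_\infty$ satisfies $v(t)\to 1$ by ${\bf (f_{KPP})}$, and $u(t,x)\leq v(t)$ by comparison with the ODE, for any fixed $\epsilon>0$ there exists $T_0=T_0(\epsilon)>0$ such that $u(T_0+t,x)\leq K:=1+\epsilon$ for all $t\geq 0$ and $x\in[g(T_0+t),h(T_0+t)]$. Take
\[
\bar u(t,x)\equiv K\quad\text{for }t\geq 0,\ x\in[-\bar h(t),\bar h(t)].
\]
Since $\bar u_t=0$ and $\int_{-\bar h}^{\bar h}J(x-y)\,dy\leq 1$, the differential inequality reduces to $dK\bigl(\int_{-\bar h}^{\bar h}J\,dy-1\bigr)+f(K)\leq 0$, which holds because $K>1$ forces $f(K)\leq 0$.

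\textbf{Choice of $\bar h$ and the free-boundary inequality.} In each regime, set
\[
\bar h(t)=\begin{cases}(C_1 t+\theta)^{1/(\alpha-1)}& \text{if \eqref{8.1} holds with }\alpha\in(1,2),\\ C_1(t+\theta)\ln(t+\theta)& \text{if \eqref{8.1} holds with }\alpha=2,\\ \exp\bigl[C_1(t+\theta)^{1/\beta}\bigr]& \text{if \eqref{jbeta} holds},\end{cases}
\]
where $C_1$ is chosen slightly larger than the threshold constant on the right-hand side of \eqref{8.2a} and $\theta\gg 1$ is a large parameter. The change of variable $z=\bar h-x$ combined with $J(x)=J(-x)$ yields
\[
\int_{-\bar h(t)}^{\bar h(t)}\int_{\bar h(t)}^{\infty}J(y-x)\,dy\,dx=\int_{-2\bar h(t)}^{0}\int_0^\infty J(y-x)\,dy\,dx=A\bigl(2\bar h(t),0,J\bigr),
\]
with $A$ as in Lemma \ref{lemma5.1}. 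Using the $\limsup$ bounds from Lemma \ref{lemma5.1} with $\delta=0$ and comparing against the explicit derivative $\bar h'(t)$ in each case, one checks that the required inequality
\[
\bar h'(t)\geq \mu K\cdot A\bigl(2\bar h(t),0,J\bigr)
\]
holds for every $t\geq 0$, provided $C_1$ exceeds the corresponding threshold and $\theta$ is chosen large enough. By the symmetry of $\bar u$ and $J$, the analogous inequality for $\bar g'(t)\leq -\mu K\int\!\!\int J$ follows at once.

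\textbf{Conclusion and main obstacle.} Enlarging $\theta$ further so that $\bar h(0)>h(T_0)$ and $\bar u(0,\cdot)=K\geq u(T_0,\cdot)$, the hypotheses of Theorem \ref{thm-CP} are met with initial time $T_0$; this gives $h(T_0+t)\leq\bar h(t)$ for all $t>0$. Reading off $\limsup_{t\to\infty}h(t)/\Phi(t)\leq C_1$ for the appropriate growth profile $\Phi$ and then letting $C_1$ decrease to its threshold value and $\epsilon\to 0$ yields \eqref{8.2a}. The chief technical point is verifying the boundary inequality $\bar h'(t)\geq\mu K\,A(2\bar h(t),0,J)$ uniformly in $t\geq 0$, not merely asymptotically: in the $\alpha=2$ case the leading order of $\bar h'$ is $C_1\ln(t+\theta)$ while $A$ behaves like $\bar\lambda\ln(2\bar h(t))=\bar\lambda[\ln(t+\theta)+\ln\ln(t+\theta)+O(1)]$, and in the \eqref{jbeta} case one must compare $(\ln 2\bar h)^{1-\beta}$ with $(\ln\bar h)^{1-\beta}$. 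These sub-leading log-corrections are absorbed into the prefactor $1+\epsilon$ by taking $\theta$ sufficiently large, exactly as in the lower-bound arguments of Lemmas \ref{lemma5.3} and \ref{lemma7.5}.
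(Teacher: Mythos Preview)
Your proposal is correct and follows essentially the same approach as the paper: a constant upper profile $\bar u\equiv 1+\epsilon$, the same three explicit choices for $\bar h(t)$, verification of the free-boundary inequality via Lemma \ref{lemma5.1} with $\delta=0$, and application of the comparison principle after a time shift. The paper's proof differs only in cosmetic details, such as writing the constants $K$ explicitly as $(1+\epsilon)^3$ times the threshold value rather than leaving them as ``$C_1$ slightly larger than threshold.''
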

\begin{proof} For any given small $\epsilon>0$, 
	define, for $t\geq 0$,
	\begin{align*}
	&\bar h(t):=\begin{cases}
	(Kt+\theta)^{1/({\alpha}-1)}&\mbox{ if \eqref{8.1} holds with}\ {\alpha}\in (1,2],\\
	K(t+\theta)\ln(t+\theta)&\mbox{ if \eqref{8.1} holds with }\ {\alpha}=2,\\
	e^{K(t+\theta)^{1/\beta}}&\mbox{ if \eqref{jbeta} holds},
	\end{cases}
\\
&	 \ol u(t,x):=1+\epsilon,\ \ \ x\in [-\bar h(t), \bar h(t)],
	\end{align*}
	where  $\theta\gg 1$ and
	\begin{equation}
K:=\begin{cases}
\dd (1+\epsilon)^3\frac{2^{2-\alpha}}{2-\alpha}\mu\bar \lambda& \mbox{ if \eqref{8.1} holds with }  \alpha\in (1, 2),\\[3mm]
\dd (1+\epsilon)^3\mu \bar \lambda & \mbox{ if \eqref{8.1} holds with }\  \alpha=2,\\[3mm]
 \dd \Big[\frac{2(1+\epsilon)^3\beta\mu\overline \lambda}{\beta-1}\Big]^{1/\beta}&\mbox{ if \eqref{jbeta} holds},
\end{cases}
	\end{equation}

	We verify that for $t>0$,
	\begin{align}\label{8.2}
	\bar h'(t)\geq \mu \int_{-\bar h(t)}^{\bar h(t)} \int_{\bar h(t)}^{+\yy} J(y-x) \bar u(t,x)\rd y\rd x,
	\end{align}
	which clearly implies
	\[
	-\bar h'(t)\leq - {\mu} \int_{-\bar h(t)}^{\bar h(t)} \int_{-\yy}^{-\bar h(t)} J(y-x) \bar u(t,x)\rd y\rd x
	\]
since $\ol u(t,x)=\ol u(t,-x)$ and $J(x)=J(-x)$.
	
Using $\bar u=1+\epsilon$, we have
\begin{align*}
	\mu \int_{-\bar h}^{\bar h} \int_{\bar h}^{+\yy} J(y-x) \bar u(t,x)\rd y\rd x&= (1+\epsilon)\mu  \int_{-\bar h}^{\bar h} \int_{\bar h}^{+\yy} J(y-x) \rd y\rd x\\	
	&= (1+\epsilon)\mu  \int_{-2\bar h}^{0} \int_{0}^{+\yy} J(y-x) \rd y\rd x.
\end{align*}
By Lemma \ref{lemma5.1} with $\delta=0$,  we see that for large $\bar h$, which is guaranteed by $\theta\gg 1$,
\begin{align*}
\begin{cases}
\dd	\int_{-2\bar h}^{0} \int_{0}^{+\yy} J(y-x) \rd y\rd x\leq (1+\epsilon)\frac{\bar \lambda}{(\alpha-1)(2-\alpha)}(2\bar h)^{2-\alpha}, & \mbox{ if \eqref{8.1} holds with } \alpha\in (1,2),\\[3mm]
	\dd	\int_{-2\bar h}^{0} \int_{0}^{+\yy} J(y-x) \rd y\rd x\leq (1+\epsilon)\bar \lambda\ln (2\bar h), & \mbox{ if \eqref{8.1} holds with } \alpha=2,\\[3mm]
	\dd	\int_{-2\bar h}^{0} \int_{0}^{+\yy} J(y-x) \rd y\rd x\leq \dd (1+\epsilon)(2\bar h) [\ln (2\bar h) ]^{1-\beta}\frac{\bar \lambda}{\beta-1} &\mbox{ if \eqref{jbeta} holds}.
		\end{cases}
\end{align*}
	Therefore, when \eqref{8.1} holds with $\alpha\in (1,2)$, by the definition of $K$, we have
	\begin{align*}
	{\mu} \int_{-\bar h}^{\bar h} \int_{\bar h}^{+\yy}  J(y-x) \bar u(t,x)\rd y\rd x
	&\leq (1+\epsilon)^2\mu\frac{\bar \lambda }{(\alpha-1)(2-\alpha)}(2\bar h)^{2-\alpha}\\
	&= (1+\epsilon)^2\mu\frac{\bar \lambda }{(\alpha-1)(2-\alpha)}
	2^{2-\alpha}(Kt+\theta)^{(2-\alpha)/({\alpha}-1)}\\
	&\leq   \frac{K}{{\alpha}-1} (Kt+\theta)^{(2-\alpha)/({\alpha}-1)}=\bar h'(t).
	\end{align*}
	
 When \eqref{8.1} holds with  ${\alpha}=2$, we similarly obtain, due to $\theta\gg 1$,
	\begin{align*}
	{\mu} \int_{-\bar h}^{\bar h} \int_{\bar h}^{+\yy}  J(y-x) \bar u(t,x)\rd y\rd x
	&\leq (1+\epsilon)^2\mu \bar \lambda \ln (2\bar h)\\
	&= (1+\epsilon)^2\mu \bar \lambda \big\{\ln (t+\theta)+\ln [ \ln(t+\theta)]+\ln 2K\big\}\\
	&\leq K\ln (t+\theta)+K=\bar h'(t).
	\end{align*}
	
Finally, when \eqref{jbeta} holds, we have 
	\begin{align*}
			{\mu} \int_{-\bar h}^{\bar h} \int_{\bar h}^{+\yy}  J(y-x) \bar u(t,x)\rd y\rd x
		&\leq (1+\epsilon)^2\mu (2\bar h) [\ln (2\bar h) ]^{1-\beta}\frac{\bar \lambda}{\beta-1} \\
		&\leq(1+\epsilon)^3\mu (2\bar h) (\ln \bar h)^{1-\beta}\frac{\bar \lambda}{\beta-1}\\
		&= \frac{K^\beta}{\beta}\underline h (\ln \underline h)^{1-\beta}=\underline h'(t).
	\end{align*}

		Thus \eqref{8.2} always holds. 
	
	Recalling that  $\ol u\geq 1$ is a constant,  we get, 
	for $t>0$, $x\in [-\bar h(t),\bar h(t)]$, 
	\begin{align*}
	\ol u_t(t,x)\equiv 0\geq  d \int_{-\bar h}^{\bar h}  {J}(x-y) \ol u(t,y)\rd y -d\ol u(t,x)+f(\ol u(t,x)).
	\end{align*}
	
Note that condition {\bf (f)} implies, by simple comparison with ODE solutions,  
\[\limsup_{t\to\yy} \max_{x\in [g(t),h(t)]}u(t,x)\leq 1;
\]
 hence there is $t_0>0$ such that 
\begin{align*}
	u(t_0,x)\leq 1+\epsilon= \ol u(t_0,x) \ \mbox{ for } x\in [g(t_0),h(t_0)]\subset [-\bar h(0),\bar h(0)]
\end{align*}
with the last part holding for large $\theta$. 

We are now in a position  to use  the comparison principle  (Theorem 1.3) to conclude that
	\begin{align*}
	&[g(t+t_0),h(t+t_0)]\subset  [-\bar h(t),\bar h(t)]\ {\rm for}\ t\geq 0,\ \\
	& u(t+t_0,x)\leq \ol u(t,x)\ {\rm for}\ t\geq 0,\ x\in [ g (t+t_0), h(t+t_0)].
	\end{align*}
By the arbitrariness of $\epsilon>0$, we get  \eqref{8.2a}.  The proof is finished.
\end{proof}

{\bf Proof of Theorem \ref{thm1}:} The conclusions for $g(t)$ and $h(t)$ follow directly from the above lower and upper bounds. The conclusion on $\lim_{t\to\infty} u(t,x)$ follows from the definitions of the lower and upper solutions.

	\end{document}